\documentclass[12pt,reqno]{amsart}
\usepackage{amsmath}
\usepackage{amssymb}
\usepackage{amstext}
\usepackage{mathrsfs}
\usepackage{a4wide}
\usepackage{graphicx}
\usepackage{bbm}
\usepackage{verbatim}
\usepackage{bm}
\usepackage{graphicx}
\usepackage{tikz}
\usepackage{pgfplots}
\usepackage{titletoc}

\allowdisplaybreaks \numberwithin{equation}{section}
\usepackage{color}
\usepackage{cases}
\usepackage{hyperref}
\hypersetup{hypertex=true,
	colorlinks=true,
	linkcolor=black,
	anchorcolor=black,
	citecolor=black}

\numberwithin{equation}{section}

\newtheorem{theorem}{Theorem}[section]
\newtheorem{proposition}[theorem]{Proposition}
\newtheorem{corollary}[theorem]{Corollary}
\newtheorem{lemma}[theorem]{Lemma}

\theoremstyle{definition}

\newtheorem{definition}[theorem]{Definition}

\theoremstyle{remark}
\newtheorem{remark}[theorem]{Remark}

\newcommand{\ep}{\varepsilon}

\begin{document}
		
	\title[Traveling-wave solutions for the gSQG equation]{Slow traveling-wave solutions for the generalized surface quasi-geostrophic equation }
	
	\author{Daomin Cao, Shanfa Lai, Guolin Qin}
	
	\address{Institute of Applied Mathematics, Chinese Academy of Sciences, Beijing 100190, and University of Chinese Academy of Sciences, Beijing 100049,  P.R. China}
	\email{dmcao@amt.ac.cn}
	
	\address{Institute of Applied Mathematics, Chinese Academy of Sciences, Beijing 100190, and University of Chinese Academy of Sciences, Beijing 100049,  P.R. China}
	\email{laishanfa@amss.ac.cn}
	
	\address{Institute of Applied Mathematics, Chinese Academy of Sciences, Beijing 100190, and University of Chinese Academy of Sciences, Beijing 100049,  P.R. China}
	\email{qinguolin18@mails.ucas.ac.cn}

	\begin{abstract}
		In this paper, we systematically study the existence, asymptotic behaviors, uniqueness, and nonlinear orbital stability of traveling-wave solutions with small propagation speeds for the generalized surface quasi-geostrophic (gSQG) equation.  Firstly we obtain the existence of a new family of global solutions via the variational method. Secondly we show the uniqueness of maximizers under our variational setting. Thirdly by using the variational framework, the uniqueness of maximizers and a concentration-compactness principle  we establish some stability theorems. Moreover, after a suitable transformation, these solutions constitute the desingularization of  traveling point vortex pairs.
	\end{abstract}
	
	\maketitle{\small{\bf Keywords:} The gSQG equation; Traveling-wave solutions; Variational methods; Existence and uniqueness of maximizer; Nonlinear stability.   \\
		
		{\bf 2020 MSC} Primary: 76B47; Secondary: 76B03, 35A02.}
	\setcounter{tocdepth}{3}
	\\
	\\
	
	\section{Introduction and Main results}\label{sec1}

	In this paper, we are concerned with the  following generalized surface quasi-geostrophic (gSQG) equation
	\begin{align}\label{1-1}
		\begin{cases}
			\partial_t\theta+\mathbf{u}\cdot \nabla \theta =0&\text{in}\ \mathbb{R}^2\times (0,T),\\
			\ \mathbf{u}=\nabla^\perp(-\Delta)^{-s}\theta     &\text{in}\ \mathbb{R}^2\times (0,T),\\
		\end{cases}
	\end{align}
	where $0< s< 1$, $\theta(x,t):\mathbb{R}^2\times (0,T)\to \mathbb{R}$ is the active scalar being transported by the velocity field $\mathbf{u}(x,t):\mathbb{R}^2\times (0,T)\to \mathbb{R}^2$ generated by $\theta$, and $(a_1,a_2)^\perp=(a_2,-a_1)$. The operator $(-\Delta)^{-s}$ is defined by
	\begin{equation*}
		(-\Delta)^{-s}\omega(x)=\mathcal{G}_s\omega(x)=\int_{\mathbb{R}^2}G_s(x-y)\omega(y)dy,
	\end{equation*}
	where $G_s$ is the fundamental solution of $(-\Delta)^{s}$ in $\mathbb{R}^2$ given by
	\begin{equation*}
		G_s(z)=
			\frac{c_s}{|z|^{2-2s}}, \ \ \ c_s=\frac{\Gamma(1-s)}{2^{2s}\pi\Gamma(s)}.	
	\end{equation*}

	When $s={1}/{2}$,  \eqref{1-1} corresponds to the inviscid surface quasi-geostrophic (SQG) equation, which models the evolution of the temperature from a general quasi-geostrophic system for atmospheric and atmospheric flows (see e.g. \cite{Con, Lap1}).  The  SQG equation has received  extensive concern as a simplified model for the three-dimensional Euler equations since \cite{Con}. Formally at least, in the limit $s\uparrow1$ we obtain the well-known two-dimensional Euler equation in vorticity formulation \cite{MB}. For general $0<s<1$, \eqref{1-1} was proposed by C\'{o}rdoba et al. in \cite{Cor} as an interpolation between the Euler equation and the SQG equation.
	
	The global well-posedness for the Cauchy problem for two-dimensional incompressible Euler equation (i.e., $s=1$ in \eqref{1-1}) has been well studied. Global well-posedness  for Cauchy problems with initial data in $L^1\cap L^\infty$ was established by Yudovich \cite{Yud}. The $L^1$ assumption can be replaced by an appropriate symmetry condition thanks to the work of Elgindi and Jeong \cite{Elg}.  We refer to \cite{Elg, MB} and references therein for more discussions. However, to the best of our knowledge, the problem of whether the gSQG system presents finite time singularities or there is global well-posedness of classical solutions is still open; see \cite{Cas2, Chae, Hes, Kis1, Kis2} and references therein for more details.
	
	For  vortex patch type global solutions, the first non-trivial example was constructed in \cite{Has} for $\frac{1}{2}<s<1$ by using the contour dynamics equation and bifurcation theory.  Numerous results on the vortex patch type solutions for the gSQG equations were then obtained in different situations (see e.g. \cite{Cas1, Cas4, Cora, de1, Gar2, Go1, Has2, HM}). In \cite{Cas2}, Castro et al. established the first  result of existence on global smooth solutions for the gSQG equation by developing a bifurcation argument from a specific radially symmetric function. In \cite{Gra}, Gravejat and Smets, for the first time, proved the existence of smooth translating vortex pairs for the SQG equation. This result was then  generalized to the gSQG equation with $s\in(0,1)$ by Godard-Cadillac \cite{Go0}. In \cite{Asqg}, Ao et al. successfully  constructed traveling and rotating smooth solutions to the gSQG equation with $s\in(0,1)$ by the Lyapunov-Schmidt reduction method.
	
	In this paper, we are interested in traveling-wave solutions for the gSQG equation. Up to a rotation, we may assume, without loss of generality, that these waves have a negative speed $-\mathcal W$ in the vertical direction, so that
	\begin{equation*}
		\theta(x,t)=\omega(x_1, x_2+\mathcal Wt).
	\end{equation*}
	In this setting, the first equation in \eqref{1-1} is also reduced to a stationary equation
	\begin{equation}\label{1-2}
		\nabla^\perp(\mathcal{G}_s\omega-\mathcal Wx_1)\cdot\nabla\omega=0,
	\end{equation}
	which has a weak form
	\begin{equation}\label{1-3}
		\int_{\mathbb{R}^2}\omega\nabla^\perp(\mathcal{G}_s\omega-\mathcal Wx_1)\cdot\nabla \varphi dx=0, \ \ \ \forall\,\varphi\in C_0^\infty(\mathbb{R}^2).
	\end{equation}

     In the study of traveling-wave solutions for ideal incompressible fluids, translating vortex pairs is  the main concern. The literature on vortex pairs can be traced back to the work of Pocklington \cite{Poc} in 1895. In 1906, Lamb \cite{Lamb} founded an explicit solution for the Euler equation which is now generally referred to as the Lamb dipole or Chaplygin-Lamb dipole; see also \cite{Mel}.   Besides those exact solutions, the existence (and abundance) of translating vortex pairs for the Euler equation has been rigorously established in \cite{Amb, Bu0, SV, T1, Yang} and so on. As mentioned above, for the gSQG equation, some examples of traveling-wave solutions were constructed in \cite{Asqg, CQZZ1, CQZZ, Go0, Gra,  Has2, HM}.

   In this paper, we will  obtain a new family of traveling-wave solutions for the gSQG  equation and further investigate their  asymptotic behaviors, uniqueness, and nonlinear orbital stability.

   \subsection{Main results}

    As pointed out by Arnol'd \cite{Ar1}, a natural way of obtaining solutions to the stationary problem \eqref{1-2} is to impose that $\omega$ and $\mathcal{G}_s\omega-\mathcal Wx_1$ are (locally) functional dependent. That is, one may impose that
  $$\omega=f(\mathcal{G}_s\omega-\mathcal Wx_1),$$ for some Borel measurable function $f:\mathbb{R}\to \mathbb{R}$.

  Usually $f$ is supposed to satisfy the following hypotheses
  \begin{itemize}
  	\item[$(\mathbf{H_{1}}).$] $f(0)=0$, $f$ is   nonnegative and strictly increasing for $t>0$;
  	\item[$(\mathbf{H_{2}}).$] $\lim_{t\to 0_+} t^{-\frac{1}{1-s}}f(t)=+\infty$ and  $\lim_{t\to +\infty} t^{-\frac{1}{1-s}}f(t)=0$.
  \end{itemize}

  Our first main result concerns the existence of traveling-wave solutions with slow traveling speeds and the fine asymptotic behaviors of these solutions. For convenience, we will take $\mathcal W=W\ep^{3-2s}$ for some constant $W>0$ and some small $\ep>0$. Let $\mathbf{e}_2=(0,1)$ be the unit vector along the $x_2$-direction. Let $\mathbb{R}_+^2:=\{x\in\mathbb{R}^2\mid x_1>0\}$   be the right half plane and  $1_S$ represents  the characteristic function of a set $S$. Denote by $\text{spt}(\omega)$  the support of a function $\omega$.   For fixed $W>0,\kappa>0$ denote
 \begin{equation}\label{d0}
 	d_0=\left(\frac{(1-s)c_s\kappa}{2^{2-2s} W}\right)^{\frac{1}{3-2s}}.
 \end{equation}

  Our first result is as follows.
  \begin{theorem}\label{thmE}
  	  Let $0<s<1$, $W>0$ $\kappa>0$ be given. Suppose that $f$ is a measurable function satisfying $(\mathbf{H_{1}}) $ and $(\mathbf{H_{2}}) $ and $f\in C^{1-2s }$ if $0<s<\frac{1}{2}$. Then there is a number $\ep_0>0$ small such that for any $\ep\in (0, \ep_0)$, \eqref{1-1} has a traveling-wave solution of the form $\theta_\ep(x,t)=\omega_{tr,\ep}(x+W\ep^{3-2s} t\mathbf{e}_2)$ for some function $\omega_{tr,\ep}\in L^\infty(\mathbb R^2)$ in the sense that $\omega_{tr,\ep}$ solves \eqref{1-3} with $\mathcal W=W\ep^{3-2s}$. Moreover, $\omega_{tr,\ep}$ has the following properties:
  	  \begin{itemize}
  	  	\item[(i)] $\omega_{tr,\ep}$ is odd in $x_1$ and even in $x_2$. That is,
  	  	$$\omega_{tr,\ep}(-x_1,x_2)=-\omega_{tr,\ep}(x_1,x_2),\ \ \omega_{tr,\ep}(x_1,-x_2)= \omega_{tr,\ep}(x_1,x_2),\ \ \forall\ x\in\mathbb{R}^2;$$
  	  	\item[(ii)] It holds for some constant $\mu_\ep$
  	  	$$\omega_{tr,\ep}=f(\mathcal G_s\omega_{tr,\ep}-W\ep^{3-2s}x_1-\mu_\ep),\ \ \text{in}\ \mathbb{R}_+^2; $$
  	  	\item[(iii)] Let $\omega_\ep:=\omega_{tr,\ep} 1_{\mathbb{R}_+^2}$ and denote the center of mass of $\omega_\ep$ by $x_\ep:=\kappa^{-1}\int x\omega_\ep$. Then, there hold $$\int_{\mathbb{R}_+^2} \omega_\ep=\kappa  \ \text{and}\ \ep x_\ep=(d_0,0)+o(1), \ \ \text{as}\ \ep\to0,$$
  	  	where  $d_0$ s given by \eqref{d0}.
  	  	Furthermore, there is a constant $R>0$ independent of $\ep$ such that $\text{spt}(\omega_\ep)$ is contained in the  disk with center $x_\ep$ and radius $R$.
  	  \end{itemize}
  \end{theorem}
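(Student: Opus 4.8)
The plan is to realize $\omega_\ep$ as a maximizer of the kinetic energy functional under a constraint involving the "impulse'' that reflects the slow traveling speed, a technique adapted from Turkington, Burton, and Smets--Van Schaftingen to the fractional kernel $G_s$. Concretely, fix $\kappa>0$ and work on the right half-plane $\mathbb R_+^2$. Let $F(t)=\int_0^t f(\tau)\,d\tau$ and consider, for a small parameter $\ep>0$, the functional
\begin{equation*}
	E_\ep(\omega)=\frac12\int_{\mathbb R_+^2}\int_{\mathbb R_+^2}\bigl(G_s(x-y)-G_s(x-\bar y)\bigr)\omega(x)\omega(y)\,dx\,dy-W\ep^{3-2s}\int_{\mathbb R_+^2}x_1\omega(x)\,dx,
\end{equation*}
where $\bar y=(-y_1,y_2)$, so that the first term is the energy of the odd extension $\omega_{tr,\ep}=\omega 1_{\mathbb R_+^2}-\omega(\cdot_1\mapsto-\cdot_1)1_{\mathbb R_-^2}$. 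After the rescaling $x\mapsto \ep^{-1}x$ this becomes a balanced problem on scale $O(1)$; one maximizes $E_\ep$ (appropriately rescaled) over the admissible set
\begin{equation*}
	\mathcal A_\ep=\Bigl\{\omega\in L^\infty(\mathbb R_+^2): 0\le \omega\le \text{(bound from $f$)},\ \int_{\mathbb R_+^2}\omega=\kappa,\ \text{spt}(\omega)\subset\mathbb R_+^2\Bigr\},
\end{equation*}
or more precisely over a class with a prescribed distribution function dictated by $f$, following the standard device that turns the Euler--Lagrange relation into $\omega=f(\psi_\omega-\text{Lagrange multiplier})$. Hypotheses $(\mathbf H_1)$–$(\mathbf H_2)$ are exactly the growth conditions that make $F$ superlinear/sublinear at the two ends so that the constrained supremum is finite and attained.

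The key steps, in order, are: (1) show $E_\ep$ is bounded above on $\mathcal A_\ep$ and weak-$*$ upper semicontinuous, using the Hardy--Littlewood--Sobolev-type estimates for $G_s$ (here the condition $f\in C^{1-2s}$ for $s<\tfrac12$ enters to control the kernel's mild singularity and guarantee enough regularity of the induced stream function); (2) extract a maximizer $\omega_\ep$ via concentration-compactness, ruling out vanishing by a lower bound on $E_\ep$ obtained from an explicit test function (a rescaled patch placed near $x_1=d_0$), and ruling out dichotomy by the strict superadditivity of the energy — this is where the $-W\ep^{3-2s}x_1$ term and the half-plane image charge cooperate to pin the mass; (3) derive the Euler--Lagrange equation, obtaining a Lagrange multiplier $\mu_\ep$ and the pointwise identity $\omega_\ep=f(\mathcal G_s\omega_{tr,\ep}-W\ep^{3-2s}x_1-\mu_\ep)$ on $\mathbb R_+^2$, and checking that the odd-in-$x_1$, even-in-$x_2$ symmetry of the maximizer can be arranged (by symmetrization / uniqueness of the maximizing profile) so that $\omega_{tr,\ep}$ solves the weak equation \eqref{1-3} on all of $\mathbb R^2$; (4) establish the asymptotics: a Pohozaev/variational identity plus the balance of the self-interaction energy $\sim \tfrac{(1-s)c_s\kappa^2}{2^{2-2s}}\cdot(\text{something})$ against $W\ep^{3-2s}\kappa\cdot(\text{center})$ forces the center of mass to satisfy $\ep x_\ep\to(d_0,0)$ with $d_0$ as in \eqref{d0}, and a standard "bathtub''/rearrangement argument together with the strict monotonicity in $(\mathbf H_1)$ gives the uniform support bound $\text{spt}(\omega_\ep)\subset B_R(x_\ep)$.

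The main obstacle I expect is step (2), specifically excluding dichotomy and proving that the maximizing sequence concentrates at a single bubble at the correct location. Unlike the Newtonian case $s=1$, the kernel $G_s(z)=c_s|z|^{2s-2}$ is not a fundamental solution of a local operator, so the usual "cut the stream function'' localization arguments do not apply directly; one must instead exploit the positivity and the precise decay/near-diagonal behavior of $G_s$ to get the strict binding inequality $I_\ep>I_\ep^{(1)}+I_\ep^{(2)}$ for any nontrivial splitting of the mass, uniformly as $\ep\to0$. The subtlety is compounded by the interaction with the image charge across $\partial\mathbb R_+^2$, which is attractive toward the axis and must be shown not to overwhelm the $-W\ep^{3-2s}x_1$ confinement at the relevant scale; getting these two competing effects to balance at exactly the scale $\ep^{-1}$ and location $d_0$ is the heart of the argument. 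A secondary technical point is the regularity bookkeeping when $0<s<\tfrac12$: one needs $f\in C^{1-2s}$ to bootstrap $\omega_\ep$ to enough smoothness that the identity in (ii) is classical and that $\nabla^\perp(\mathcal G_s\omega_{tr,\ep}-W\ep^{3-2s}x_1)$ is well-defined along $\text{spt}(\omega)$, which is needed to pass from the Euler--Lagrange relation to the weak formulation \eqref{1-3}.
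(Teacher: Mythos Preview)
Your overall strategy is on the right track, but the variational setup differs from the paper in two important ways, and one of them is a genuine gap.

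First, your energy functional is missing the penalty term. To obtain the Euler--Lagrange relation $\omega=f(\psi-\mu)$ for a \emph{prescribed} $f$, the standard device is not to constrain the distribution function of $\omega$, but rather to add $-\int J(\omega)$ to the energy, where $J(t)=\int_0^t f^{-1}(\tau)\,d\tau$. The paper's functional is
\[
E_\ep(\omega)=\frac12\int\!\!\int G_s^+(x,y)\omega(x)\omega(y)\,dx\,dy-W\ep^{3-2s}\int x_1\omega-\int J(\omega),
\]
and hypotheses $(\mathbf H_1')$--$(\mathbf H_2')$ on $J$ (equivalent to $(\mathbf H_1)$--$(\mathbf H_2)$ on $f$) are what make this well-posed. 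Maximizing over a rearrangement class, as you half-suggest, would instead produce $\omega=g(\psi)$ for some $g$ determined a posteriori by the distribution of the maximizer, not the given $f$; this is a different construction and does not yield (ii) as stated.

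Second, and more consequential: the paper does \emph{not} use concentration--compactness to obtain the maximizer of $E_\ep$. Instead it imposes from the outset the support constraint $\text{spt}(\omega)\subset B\bigl((\ep^{-1}d_0,0),\ep^{-1}d_0/2\bigr)$ (plus a temporary $L^\infty$ bound $\omega\le\Gamma$). With this, existence is trivial by weak-$*$ compactness in $L^\infty$ on a bounded domain, and the dichotomy issue you flag as the ``main obstacle'' simply does not arise. The work is then shifted to showing, a posteriori, that the support of the maximizer stays strictly inside this ball so that the constraint is inactive and the Euler--Lagrange equation extends to all of $\mathbb R_+^2$. This is done by comparing with the \emph{limiting problem} $E_0(\omega)=\frac12\int\omega\,\mathcal G_s\omega-\int J(\omega)$ over $\{\omega\ge0,\ \int\omega=\kappa\}$ on $\mathbb R^2$: one first proves (using Lions' compactness for $E_0$) that maximizers of $E_0$ are uniformly bounded with uniformly bounded support, then shows $E_\ep(\omega_\ep)=I_0+O(\ep^{2-2s})$ so that $\omega_\ep$ is a maximizing sequence for $E_0$, converges (after translation) to a maximizer $\omega_0$, and inherits the uniform support bound. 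The location $d_\ep\to d_0$ then follows not from a Pohozaev identity but from a simple test-function comparison: translating $\omega_\ep$ to center $\ep^{-1}d_0$ cannot increase $E_\ep$, which forces $d_\ep$ to minimize $h(\tau)=\frac{c_s\kappa}{2^{3-2s}\tau^{2-2s}}+W\tau$.

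So your plan would require solving a harder compactness problem than necessary, and as written does not correctly produce the prescribed profile $f$. The limiting-problem comparison, which you do not mention, is the engine behind both the support bound and the asymptotic location in the paper.
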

  \begin{remark}
  	The assumption $f\in  C^{1-2s}$ in the case  $0<s<\frac{1}{2}$ is used to improve the regularity of $\mathcal G_s\omega_\ep$ (see e.g. Propositions 2.8 and 2.9 in \cite{Sil}) so that the integral in \eqref{1-3} makes sense. 
  \end{remark}
   \begin{remark}
   	Typical examples of $f$ satisfying the assumptions in Theorem \ref{thmE} includes any $C^{0,1}$ smooth bounded strictly increasing functions with $f(0)=0$, such as $f(t)=\arctan(t_+)$ as well as some unbounded functions, for example,  $f(t)=t_+^p$ with $p\in(1,\frac{1}{1-s})$. Here $t_+$ means $\max\{0, t\}$. We will obtain finer asymptotic behaviors  and prove the uniqueness and stability in the later case.
   \end{remark}
\begin{remark}
	As we shall see in the proof of Lemma \ref{lem2-21}, up to some translation, $\omega_\ep$ tends to a nontrivial function $\omega_0$ in $L^1\cap L^{2-s}(\mathbb{R}^2)$, which is a maximizer of the limiting problem considered in subsection \ref{sec2-1}. Therefore, the amplitude of the solutions obtained in Theorem \ref{thmE} does not vanish as $\ep\to0$.
\end{remark}

  Let $G_s^+(x,y):=\frac{c_s}{|x-y|^{2-2s}}-\frac{c_s}{|x-\bar y|^{2-2s}}$ with $\bar y=(-y_1,y_2)$ and define $$\mathcal G_s^+\omega:=\int_{\mathbb{R}_+^2} G_s^+(x,y) \omega(y) dy.$$ The proof of Theorem \ref{thmE} is based on a constrained maximization method. More precisely, take $J$ be defined by $J(t)=\int_0^t f^{-1}(\tau) d\tau$ and let  $B(x,r)$ stand for the disk with center $x$ and radius $r$ and $\kappa>0$ be a constant. We are to consider the problem of maximizing the following functional
  \begin{equation}\label{energy}
  	E_\ep(\omega):=\frac{1}{2}\int_{\mathbb{R}^2_+}\int_{\mathbb{R}^2_+}\omega(x)G_s^+(x,y)\omega(y)dxdy-W\ep^{3-2s}\int_{\mathbb{R}^2_+} x_1 \omega(x) dx  -\int_{\mathbb{R}^2_+} J(\omega(x)) dx,
  \end{equation}
 over the constraint
 \begin{equation}\label{constraint}
 \mathcal{A}_\ep:=\bigg\{\omega\in L^1\cap L^{2-s}(\mathbb{R}^2_+)\mid \omega\geq 0,\, \text{spt}(\omega)\subset B\left((\frac{d_0}{\ep},0), \frac{d_0}{2\ep}\right), \,  \int_{\mathbb{R}^2_+} \omega(x) dx=\kappa\bigg\}.
 \end{equation}

 	Consider the following
 maximization problem:
 \begin{equation}\label{max**}
 	e_\ep:=\sup_{\omega\in \mathcal{A}_\ep} E_\ep(\omega).
 \end{equation}
 For the above maximizing problem we have the following result.
  \begin{theorem}\label{thmax*}
  	Let $0<s<1$ and $W>0$. Suppose that $f$ is a measurable function satisfying $(\mathbf{H_{1}}) $ and $(\mathbf{H_{2}}) $. Then there is a number $\ep_0>0$ small such that $e_\ep $ can be achieved for any $\ep\in (0, \ep_0)$,  that is $E_\ep$ admits a maximizer $\omega_\ep$ in $\mathcal{A}_\ep$.  Moreover, $\omega_\ep$ has the following properties:
  \begin{itemize}
  	\item[(i)] $\omega_{\ep }$ is Steiner symmetric with respect to some plane $\{x_2=const.\}$;
  	\item[(ii)] There is a constant $\mu_\ep$ such that
  	\begin{equation*} 
  		\omega_{\ep }=f\left( \mathcal{G}_s^+\omega_{\ep   }- W\ep^{3-2s}x_1-\mu_{\ep } \right), \ \ \ \text{in}\ B(\ep^{-1}(d_0,0), \ep^{-1} d_0/2);
  	\end{equation*}
  	\item[(iii)] The energy satisfies $$I_0+O(\ep^{2-2s})\leq E_\ep(\omega_\ep)\leq I_0,$$
  	where $I_0$ is given by \eqref{I0};
  	\item[(iv)] There exists a constant $0<C<+\infty$ independent of $\ep$ such that
  	\begin{equation*} 
  		\limsup_{\ep\to0_+}\|\omega_{\ep }\|_{L^\infty}\leq C.
  	\end{equation*}
  	\item[(v)] Denote the center of mass of $\omega_\ep$ by $x_\ep:=\kappa^{-1}\int x\omega_\ep$. Then, $$\ep x_\ep=(d_0,0)+o(1), \ \ \text{as}\ \ep\to0,$$
  where  $d_0$ is given by \eqref{d0}.
  Furthermore, there is a constant $R>0$ independent of $\ep$ such that $\text{spt}(\omega_\ep)$ is contained in the  disk with center $x_\ep$ and radius $R$.
  \end{itemize}
  \end{theorem}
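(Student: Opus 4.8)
The plan is to run the direct method on \eqref{max**}, using a truncation to compensate for the lack of an $L^\infty$ bound in $\mathcal A_\ep$, then extract the Euler--Lagrange relation, prove a sharp two--sided energy estimate, and finish with rearrangement. Throughout write $B=B(\ep^{-1}(d_0,0),\ep^{-1}d_0/2)$.

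\emph{Existence, and (ii), (iv).} Since $G_s^+(x,y)\le c_s|x-y|^{2s-2}$ and $x_1>0$ on $\mathbb R^2_+$, every $\omega\in\mathcal A_\ep$ satisfies $E_\ep(\omega)\le \tfrac12\iint \omega(x)\,c_s|x-y|^{2s-2}\,\omega(y)\,dx\,dy-\int J(\omega)\le I_0$, with $I_0$ the value of the limiting problem of subsection \ref{sec2-1}; in particular $e_\ep\le I_0<\infty$. For $N$ large I first maximize $E_\ep$ over $\mathcal A_\ep\cap\{\omega\le N\}$: here the quadratic term is weakly continuous since $\mathcal G_s^+$ is compact from $L^{2-s}(B)$ to $L^{(2-s)'}(B)$, the linear term is weak-$*$ continuous, and $-\int J(\cdot)$ is weak-$*$ upper semicontinuous by convexity of $J$, so a maximizer $\omega_\ep^N$ exists. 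Its Euler--Lagrange relation is $\omega_\ep^N=\min\{N,\,f((\mathcal G_s^+\omega_\ep^N-W\ep^{3-2s}x_1-\mu_\ep^N)_+)\}$ a.e.\ in $B$, with $\mu_\ep^N\le\|\mathcal G_s^+\omega_\ep^N\|_{L^\infty}$ on $\{\omega_\ep^N>0\}$, while the mass constraint $\int\omega_\ep^N=\kappa$ together with $|B|\to\infty$ prevents $\mu_\ep^N$ from being too negative, so $\mu_\ep^N\ge -C$. Combining this with the elementary estimate $\|\mathcal G_s^+\omega\|_{L^\infty}\le C\|\omega\|_{L^\infty}^{1-s}\|\omega\|_{L^1}^{s}$ gives $\|\omega_\ep^N\|_{L^\infty}\le f\big(C\|\omega_\ep^N\|_{L^\infty}^{1-s}\kappa^{s}+C\big)$, and the sublinear growth $f(t)=o(t^{1/(1-s)})$ from $(\mathbf H_2)$ forces $\|\omega_\ep^N\|_{L^\infty}\le C$ uniformly in $N$ (and in $\ep$). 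Hence for $N$ large the truncation is inactive, $\omega_\ep:=\omega_\ep^N$ maximizes $E_\ep$ over all of $\mathcal A_\ep$, which yields (iv) together with (ii) with $|\mu_\ep|\le C$.

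\emph{(iii) and the location in (v).} The upper bound $E_\ep(\omega_\ep)\le I_0$ is the displayed inequality applied to $\omega_\ep$. Let $\omega_0$ be a maximizer of the limiting problem; by $(\mathbf H_2)$ and $\mu_0>0$ it is bounded and compactly supported with $\omega_0=f((\mathcal G_s\omega_0-\mu_0)_+)$. Testing with the translate $\omega_0(\cdot-(\tau/\ep,0))\in\mathcal A_\ep$ and expanding,
$$E_\ep\big(\omega_0(\cdot-(\tfrac{\tau}{\ep},0))\big)=I_0-\ep^{2-2s}h(\tau)+o(\ep^{2-2s}),\qquad h(\tau)=\tfrac{c_s\kappa^2}{2^{3-2s}}\tau^{-(2-2s)}+W\kappa\tau,$$
where $h$ collects the reflected-interaction energy $\tfrac{c_s\kappa^2}{2^{3-2s}}\tau^{-(2-2s)}$ (since $|x-\bar y|\approx 2\tau/\ep$ on the support) and the linear term $W\kappa\tau$. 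As $h$ is strictly convex on $(0,\infty)$ its minimizer is exactly $\tau_*=d_0$ from \eqref{d0}, so $e_\ep\ge I_0-\ep^{2-2s}h(d_0)+o(\ep^{2-2s})\ge I_0-C\ep^{2-2s}$, which is (iii). For the location, a concentration--compactness argument (cf.\ the proof of Lemma \ref{lem2-21}) shows that, after a translation, $\omega_\ep$ is an almost-maximizing sequence for the limiting problem and $\omega_\ep(\cdot-x_\ep)\to\omega_0$ in $L^1\cap L^{2-s}$, with $\mu_\ep\to\mu_0>0$. Writing $\delta_\ep=\ep x_{\ep,1}$ and using $\int x_1\omega_\ep=\kappa x_{\ep,1}$ and the uniform smallness of $\int_{|x-x_\ep|>T}\omega_\ep$, the same expansion gives $E_\ep(\omega_\ep)\le I_0-\ep^{2-2s}h(\delta_\ep)+o(\ep^{2-2s})$; comparison with the lower bound forces $h(\delta_\ep)\to\min h=h(d_0)$, and strict convexity of $h$ gives $\delta_\ep\to d_0$. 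Since $E_\ep$ is invariant under $x_2$-translations that keep the support inside $B$, we may take the maximizer with $x_{\ep,2}=0$, so $\ep x_\ep=(d_0,0)+o(1)$.

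\emph{Support in (v) and symmetry (i).} From the Euler--Lagrange relation, $\omega_\ep(x)>0$ implies $\mathcal G_s^+\omega_\ep(x)\ge W\ep^{3-2s}x_1+\mu_\ep\ge\mu_0/2$ for $\ep$ small; splitting $\mathcal G_s^+\omega_\ep(x)\le\mathcal G_s\omega_\ep(x)$ over $|x-y|<1$, $1\le|x-y|<T/2$ and $|x-y|\ge T/2$ and using $\|\omega_\ep\|_{L^\infty}\le C$, $\int\omega_\ep=\kappa$ and the smallness of the tail mass shows $\mathcal G_s^+\omega_\ep(x)<\mu_0/2$ whenever $|x-x_\ep|>R$ for some $R$ independent of $\ep$; hence $\mathrm{spt}(\omega_\ep)\subset B(x_\ep,R)$, which lies well inside $B$ because $\ep x_\ep\to(d_0,0)$, so the support constraint in $\mathcal A_\ep$ is inactive. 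Finally, for fixed $x_1,y_1>0$ the map $t\mapsto c_s((x_1-y_1)^2+t^2)^{s-1}-c_s((x_1+y_1)^2+t^2)^{s-1}$ is positive, even, and strictly decreasing in $|t|$, so $G_s^+(x,y)$ is a symmetric decreasing function of $x_2-y_2$; by the Riesz (Steiner) rearrangement inequality the $x_2$-Steiner symmetrization of $\omega_\ep$ about $\{x_2=x_{\ep,2}\}$ does not decrease $\tfrac12\iint\omega G_s^+\omega$, leaves $\int x_1\omega$ and $\int J(\omega)$ unchanged, and keeps the support in $B$ by the previous estimate; replacing $\omega_\ep$ by its symmetrization proves (i), and (ii)--(v) are preserved.

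\emph{Main difficulty.} I expect the delicate point to be part (v), namely pinning down $\ep x_\ep\to(d_0,0)$: one must expand both the reflected interaction and the linear term to the precise order $\ep^{2-2s}$ in terms of the center of mass, which requires the concentration--compactness convergence of $\omega_\ep$ to be strong enough for those expansions to be uniform, and then match against the sharp lower bound coming from the optimally placed competitor, so that strict convexity of $h$ identifies the minimizer $d_0$. A secondary technical hurdle is the uniform-in-$N$ $L^\infty$ bootstrap that removes the truncation, which relies crucially on the sublinearity $(\mathbf H_2)$ and on controlling the Lagrange multiplier $\mu_\ep^N$ through the mass constraint.
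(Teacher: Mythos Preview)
Your proposal is correct and follows essentially the same strategy as the paper: maximize first over a truncated class, derive the Euler--Lagrange relation, use the growth condition $(\mathbf H_2)$ to obtain an $L^\infty$ bound independent of the truncation, invoke Lions' concentration--compactness on the resulting maximizing sequence for the limiting functional $E_0$, and read off the support and location from the lower bound on $\mu_\ep$ together with an energy comparison. Two execution details differ from the paper but are harmless: you obtain the $L^\infty$ bound in one step from $\|\omega\|_\infty\le f(C\|\omega\|_\infty^{1-s}\kappa^s+C)$ rather than by the paper's iterative bootstrap from $L^{2-s}$ (Corollary~\ref{lem2-19}), and you reverse the order of (v), proving $\ep x_\ep\to(d_0,0)$ first via the sharp $\ep^{2-2s}$ energy expansion and strict convexity of $h$, and only then the support bound, whereas the paper first establishes $\mathrm{spt}(\omega_\ep)\subset B(x_\ep,R)$ (Lemma~\ref{lem2-22}) and afterwards compares $E_\ep(\omega_\ep)$ with $E_\ep(\omega_\ep(\cdot+x_\ep-(\ep^{-1}d_0,0)))$ (Lemma~\ref{lem2-23}); the paper's comparison is slightly cleaner because the self--interaction and $J$--terms cancel identically, while your route requires controlling the $o(\ep^{2-2s})$ error in the reflected term via the $L^1$ tail smallness from concentration--compactness, which works since on $B$ one always has $|x-\bar y|\gtrsim \ep^{-1}$.
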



  As we will see that Theorem \ref{thmE} can be derived from Theorem \ref{thmax*}. Indeed, we will prove in Lemma \ref{lem2-24} that if we further assume $f\in C^{1-2s }$ in the case $0<s<\frac{1}{2}$, then after a translation in $x_2$, $\omega_\ep(x)-\omega_\ep(\bar x)$ is the desired function $\omega_{tr,\ep}$ in Theorem \ref{thmE}.

  Next, we shall investigate the problem of uniqueness, which is crucial in the study of stability. We focus our attention on the case  $f=t_+^p$ for some $p\in(1,\frac{1}{1-s})$.  It can be seen that such $f$ satisfies all the assumptions in Theorem \ref{thmE} for $s\in(0,1)$. Therefore, for every $\ep>0$ small, Theorem \ref{thmE} ensures a traveling-wave solution with $\omega_{tr,\ep}(x)=\omega_\ep(x)-\omega_\ep(\bar x)$ and $\omega_\ep$ being a maximizer of $E_\ep$ over constraint $\mathcal{A}_\ep$. Inspired by the work on rotating stars \cite{Jan}, we will prove that $\omega_\ep$ is the unique maximizer in the sense that any maximizer of $E_\ep$ is a translation of $\omega_\ep$.

  For fixed $\ep_0$ as in Theorem \ref{thmax*} we denote by $\Sigma_\ep$ the set of all maximizers of $E_\ep$  over $\mathcal{A}_\ep$ for $\ep\in (0,\ep_0)$. Our second main result is as follows.
  \begin{theorem}\label{thmU}
  	Suppose that $f(t)=t_+^p$ for some $p\in(1,\frac{1}{1-s})$. Let $\ep_0$ be as in Theorem \ref{thmax*} and $\ep\in (0,\ep_0)$.  Let $\omega_\ep\in \Sigma_\ep$ be a maximizer as obtained in Theorem \ref{thmax*}. Then there is a number $\ep_1\in (0,\ep_0]$ such that for all $\ep\in (0,\ep_1)$, $$\Sigma_\ep=\{\omega_\ep(\cdot+c\mathbf{e}_2)\mid c\in\mathbb{R}\}.$$
  \end{theorem}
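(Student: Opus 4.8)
The plan is to reduce the statement to a rigidity property of the limiting desingularization profile, and then close the argument by a blow‑up/compactness contradiction built on a non‑degeneracy estimate. The first step is to neutralise the constraint. Applying Theorem~\ref{thmax*}(v) to an arbitrary $\omega\in\Sigma_\ep$ (its proof uses only that $\omega$ maximises $E_\ep$ over $\mathcal A_\ep$), for $\ep$ small one has $\text{spt}(\omega)\subset B(x_\omega,R)$ with $R$ independent of $\ep$ and $\ep x_\omega=(d_0,0)+o(1)$; hence $\text{spt}(\omega)$ sits strictly inside the constraint disk $B(\ep^{-1}(d_0,0),\ep^{-1}d_0/2)$, so the support constraint is inactive. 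Consequently every $\omega\in\Sigma_\ep$ satisfies the Euler--Lagrange equation $\omega=(\mathcal G_s^+\omega-W\ep^{3-2s}x_1-\mu_\omega)_+^{p}$ on a neighbourhood of $\text{spt}(\omega)$, with $\mathcal G_s^+\omega-W\ep^{3-2s}x_1-\mu_\omega\le 0$ elsewhere, for some $\mu_\omega\in\mathbb R$, and $\Sigma_\ep$ is stable under those $x_2$‑translations whose translate stays in $\mathcal A_\ep$ (in which sense the inclusion $\{\omega_\ep(\cdot+c\mathbf e_2)\}\subset\Sigma_\ep$ is to be read). It thus suffices to show: if $\tilde\omega\in\Sigma_\ep$ then, after a translation in $x_2$, $\tilde\omega=\omega_\ep$; translating so that the centers of mass agree, $x_{\tilde\omega}=x_{\omega_\ep}$, the goal becomes $\tilde\omega=\omega_\ep$.

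Next I would isolate the limiting problem. From Theorem~\ref{thmax*} and its proof one has $\|\omega\|_{L^\infty}\le C$ uniformly over $\omega\in\Sigma_\ep$, and $\Omega_\ep:=\omega(\cdot+x_\omega)$ is supported in $B(0,R)$ with $\int\Omega_\ep=\kappa$. Expanding $G_s^+$ about $x_\omega\sim\ep^{-1}(d_0,0)$ — the reflected kernel contribution and the term $W\ep^{3-2s}x_1$ being $O(\ep^{2-2s})$ uniformly on $B(0,R)$ — the Euler--Lagrange equation for $\Omega_\ep$ reduces, up to an $O(\ep^{2-2s})$ shift of the profile, to the limiting equation $W=(\mathcal G_s W-\mu_0)_+^{p}$, $W\ge 0$, $\int_{\mathbb R^2}W=\kappa$, for some $\mu_0>0$; the homogeneity of $f=t_+^p$ is exactly what makes this limit clean and scale‑covariant. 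Using the Steiner symmetry of the maximizers together with a moving‑plane argument, $W$ is radially symmetric about its center of mass, and one shows (for $f=t_+^p$, a plasma/Rankine‑type statement) that $W$ is, up to translation, the unique maximizer of the associated limiting variational problem. In particular both $\Omega_\ep$ and $\tilde\Omega_\ep:=\tilde\omega(\cdot+x_{\omega_\ep})$ converge in $L^1\cap L^{2-s}(\mathbb R^2)$ to the same radial profile $W$.

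The core step is the non‑degeneracy of $W$: the linearised operator $L_0 v:=v-p\,(\mathcal G_s W-\mu_0)_+^{p-1}\,\mathcal G_s v$ has kernel in $L^2(\mathbb R^2)$ exactly $\mathrm{span}\{\partial_1 W,\partial_2 W\}$. I would obtain this by conjugating $L_0$ through $\mathcal G_s$ on $\{W>0\}$ to a standard non‑local Schr\"odinger‑type operator and adapting known non‑degeneracy results for fractional semilinear problems, together with the mass normalisation $\int v=0$. Granting this, conclude by contradiction: if $\tilde\omega_{\ep_n}\in\Sigma_{\ep_n}$, $\ep_n\to0$, with $x_{\tilde\omega_{\ep_n}}=x_{\omega_{\ep_n}}$ but $\tilde\omega_{\ep_n}\neq\omega_{\ep_n}$, set $v_n:=(\tilde\Omega_{\ep_n}-\Omega_{\ep_n})/\|\tilde\Omega_{\ep_n}-\Omega_{\ep_n}\|_{L^2}$. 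Subtracting the two Euler--Lagrange equations and using that the free boundaries and the multipliers of $\tilde\omega_{\ep_n},\omega_{\ep_n}$ are $O(\ep_n^{2-2s})$‑close (the boundedness of the rescaled multiplier quotient being read off by integrating the linearised equation against $1$), one finds that $v_n$ solves, up to $o(1)$ in $L^2(B_R)$, the equation $v_n=p(\mathcal G_s W-\mu_0)_+^{p-1}\mathcal G_s v_n$. Since $v\mapsto p(\mathcal G_s W-\mu_0)_+^{p-1}\mathcal G_s v$ is compact on $L^2(B_R)$, $\{v_n\}$ is precompact; a subsequential limit $v_\infty$ has $\|v_\infty\|_{L^2}=1$, $L_0v_\infty=0$, $\int v_\infty=0$ and $\int x\,v_\infty=0$ (the last from $x_{\tilde\omega_{\ep_n}}=x_{\omega_{\ep_n}}$). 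But $L_0v_\infty=0$ forces $v_\infty\in\mathrm{span}\{\partial_1W,\partial_2W\}$, while $\big(\int x_i\,\partial_jW\big)_{i,j}=-\kappa\,\mathrm{Id}$ is invertible, so $\int x\,v_\infty=0$ yields $v_\infty=0$ — contradicting $\|v_\infty\|_{L^2}=1$. Hence $\tilde\omega$ is an $x_2$‑translate of $\omega_\ep$, which is the claim.

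The principal obstacle is the non‑degeneracy of $L_0$ (and, hand in hand, the symmetry and uniqueness of the limiting profile $W$), where the power nonlinearity is indispensable; a secondary, more technical difficulty is making the passage to the limit of the linearised equation uniform in $\ep$ — controlling $\mu_{\tilde\omega_\ep}-\mu_{\omega_\ep}$, controlling the discrepancy between the two free boundaries, and handling the merely H\"older‑continuous map $t\mapsto t_+^{p-1}$ when $p<2$. I note finally that the purely variational ``difference of convex functions'' structure of $E_\ep$ (a nonnegative quadratic form plus a linear term minus the strictly convex $\int J$) motivates this route and yields, along the segment $\omega_t=(1-t)\omega_0+t\omega_1$ between two maximizers, the second‑variation identity $\frac{d^2}{dt^2}E_\ep(\omega_t)=\langle\omega_0-\omega_1,\mathcal G_s^+(\omega_0-\omega_1)\rangle-\int J''(\omega_t)|\omega_0-\omega_1|^2$; but this is by itself inconclusive, since on a support of fixed size $\langle\psi,\mathcal G_s^+\psi\rangle$ need not be dominated by $\int J''(\omega_t)|\psi|^2$, and it is precisely the concentrated, desingularized character of the maximizers that supplies the missing rigidity.
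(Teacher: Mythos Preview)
Your overall strategy matches the paper's: reduce to the limiting fractional plasma problem, use uniqueness and non-degeneracy of its radial ground state (the paper's Proposition~\ref{lem2-13}), and close by a linearised contradiction on normalised differences. The non-degeneracy you flag as the ``principal obstacle'' is in fact imported from \cite{Asqg} as a black box, so that part is fine.

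There is, however, a genuine gap in how you handle the horizontal location and the Lagrange multiplier. You write ``translating so that the centers of mass agree, $x_{\tilde\omega}=x_{\omega_\ep}$'', but only $x_2$-translations are available in $\mathcal A_\ep$; the $x_1$-coordinates $d_\ep=\kappa^{-1}\int x_1\omega$ of two distinct maximizers need not coincide a~priori, and the discrepancy $d_{\tilde\omega}-d_{\omega_\ep}$ enters the subtracted Euler--Lagrange equation (through the $W\ep^{3-2s}x_1$ term and the reflected kernel) at the same order as $\|\tilde\Omega_\ep-\Omega_\ep\|$. Likewise, ``the multipliers are $O(\ep^{2-2s})$-close'' is not enough: you need $|\mu_{\tilde\omega}-\mu_{\omega}|$ controlled by the \emph{function difference}, not merely by $\ep$. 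The paper resolves both issues by Pohozaev-type identities: scaling $\omega\mapsto t^{-2}\omega(t^{-1}\cdot)$ yields an explicit formula $\mu_\ep\kappa=A_\gamma E_0(\omega_\ep)+B_\gamma W\ep^{3-2s}\int x_1\omega_\ep+C_\gamma\iint\frac{c_s\omega_\ep(x)\omega_\ep(y)}{|x-\bar y|^{2-2s}}\,dxdy$, eliminating $\mu_\ep$ as a functional of $\omega_\ep$; and the $x_1$-translation variation produces the identity $2(1-s)c_s\iint\frac{\omega_\ep(x)(x_1+y_1)\omega_\ep(y)}{|x-\bar y|^{4-2s}}\,dxdy=W\ep^{3-2s}\kappa$, which is turned into a second equation $Q_\ep(\tilde\omega_\ep,d_\ep)=0$ for the unknown $d_\ep$. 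The pair $(\tilde\omega_\ep,d_\ep)$ then solves a coupled system whose linearisation at $(\omega_0,d_0)$ is triangular on $\{\int\phi=0,\ \int x\phi=0\}$ (the $Q$-equation reads $-\tfrac{(1-s)(3-2s)c_s\kappa^2}{2^{2-2s}d_0^{4-2s}}\,l=\mathcal R_2$ once $\int\phi=0$), and the contradiction is run on the pair $(\phi,l)$ in $L^2\times\mathbb R$ rather than on $\phi$ alone. Without this device --- or an equivalent quantitative control of $d_{\tilde\omega}-d_{\omega_\ep}$ and $\mu_{\tilde\omega}-\mu_{\omega_\ep}$ relative to $\|\tilde\omega-\omega_\ep\|$ --- your passage from the subtracted equations to $L_0 v_\infty=0$ is not justified, and the constraint $\int x\,v_\infty=0$ that you invoke to kill the kernel is unavailable in the $x_1$-component.
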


  For relative equilibria of fluids, there are fewer mathematical results available on the uniqueness. The first result was due to Amick and Fraenkel \cite{AF}, who proved that Hill's spherical vortex is the unique solution when viewed in a natural weak formulation by the method of moving planes. Later Amick and Fraenkel \cite{AF88} also established local uniqueness for Norbury's nearly spherical vortex. The uniqueness of the Chaplygin-Lamb dipole was shown by Burton \cite{B96} using a similar method as \cite{AF}. Recently, Jang and Seok \cite{Jan} proved the uniqueness of maximizers of a variational problem related to rotating binary stars, which inspired our proof of Theorem \ref{thmU}. As for the gSQG equation, to the best of our knowledge, the only result  on this issue is the recent work \cite{CQZZ1}, where a very special case was considered in order to apply the method of moving planes. Our result Theorem \ref{thmU} provides uniqueness in a wide range of cases.

  Our last main result concerns the orbital stability of the traveling-wave solutions obtained in Theorem \ref{thmE}.  We first prove a general stability theorem in a similar spirit as \cite{BNL13}, where the stability of vortex pairs for the 2D Euler equation was considered. To be precise, we will consider the maximization problem of the functional $$\tilde E_{\mathcal W}(\zeta):=\frac{1}{2}\int_{\mathbb{R}^2_+} \zeta(x)\mathcal{G}_s^+\zeta(x )  dx-\mathcal W \int_{\mathbb{R}^2_+}  x_1\zeta(x) dx$$ over the set $\overline{\mathcal{R}(\zeta_0)^w}$, which means the weak closure of the  rearrangement class of a given function $\zeta_0$. Using the concentrate compactness principle due to Lions \cite{L84}, we establish the compactness of maximizing sequence and derive a general  nonlinear stability theorem on the set of maximizers (see Theorem \ref{Sset}).

  In the case  $f=t_+^p$ for  $p\in(1,\frac{1}{1-s})$, Theorem \ref{thmE} provides a traveling-wave solution  with $\omega_{tr,\ep}(x)=\omega_\ep(x)-\omega_\ep(\bar x)$ and $\omega_\ep$ being a maximizer of $E_\ep$. To apply the stability theorem of the set of maximizers, we need to consider an auxiliary variational problem: maximize $\tilde E_{\mathcal W}$ with $\mathcal W=W\ep^{3-2s}$ over the set $\overline{\mathcal{R}(\omega_\ep)^w}$. By studying the asymptotic behaviors of maximizers and using the uniqueness result in Theorem \ref{thmU}, we are able to show that all the maximizers of the second variational problem are actually translations of $\omega_\ep$ in the $x_2$-direction. As a consequence, we obtain the orbital stability of $\omega_\ep$ by applying   the nonlinear stability theorem on the set of maximizers proved in Theorem \ref{Sset}.

  Roughly speaking, our stability result is as follows.
  \begin{theorem}\label{thmS}
  	Suppose that $f(t)=t_+^p$ for some $p\in(1,\frac{1}{1-s})$. Let $\omega_{tr,\ep}$ be the traveling-wave solution obtained in Theorem \ref{thmE}. Then for $\ep$ fixed small,  $\omega_{tr,\ep}$ is orbitally stable in the following sense:
  	for arbitrary  $M>0$ and $\eta>0$, there exists $\delta>0$ such that for non-negative function $\xi_0\in L^1\cap L^{\infty}(\mathbb{R}^2_+)$ with   $||\xi_0||_{\infty} <M$ and
  	\begin{equation}\label{1-4}
  		\begin{array}{ll}
  		\inf_{c\in \mathbb{R}}\Big{\{}\|\xi_0-\omega_{tr, \ep}(\cdot+c\mathbf{e}_2)\|_{L^1(\mathbb{R}^2_+)}+\|\xi_0-\omega_{tr,\ep}(\cdot+c\mathbf{e}_2)\|_{L^2(\mathbb{R}^2_+)}&\\
  			\qquad\qquad\qquad\qquad\qquad\qquad\qquad\quad\,\,+\|x_1(\xi_0-\omega_{tr,\ep}(\cdot+c\mathbf{e}_2))\|_{L^1(\mathbb{R}^2_+)}\Big{\}} \leq \delta,&
  			\end{array}
  	\end{equation}
  	if there exists a $L^{\infty}$-regular solution $\xi(t)$ with initial data $ \xi_0(x) $ for $t\in [0, T)$ with $0<T\leq \infty$, then all $t\in[0,T)$,
  	\begin{equation}\label{1-5}
  		\begin{array}{ll}
  		\inf_{c\in \mathbb{R}} \Big{\{} \|\xi(t)-\omega_{tr, \ep}(\cdot+c\mathbf{e}_2)\|_{L^1(\mathbb{R}^2_+)}+\|\xi(t)-\omega_{tr,\ep}(\cdot+c\mathbf{e}_2)\|_{L^2(\mathbb{R}^2_+)}&\\
  		\qquad\qquad\qquad\qquad\qquad\qquad\qquad\qquad\,+\|x_1(\xi(t)-\omega_{tr,\ep}(\cdot+c\mathbf{e}_2))\|_{L^1(\mathbb{R}^2_+)}\Big{\}}\leq \eta.&
  		\end{array}
  	\end{equation}
  \end{theorem}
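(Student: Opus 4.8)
The plan is to deduce Theorem~\ref{thmS} from the general orbital–stability principle of Theorem~\ref{Sset}, applied with $\zeta_0=\omega_\ep$ and $\mathcal W=W\ep^{3-2s}$; the genuine work is to identify the set of maximizers appearing there. First note that on $\mathbb{R}_+^2$ the reflected piece $\omega_\ep(\bar x)$ vanishes, so $\omega_{tr,\ep}(\cdot+c\mathbf{e}_2)|_{\mathbb{R}_+^2}=\omega_\ep(\cdot+c\mathbf{e}_2)$ for every $c$, and that an $L^\infty$-regular solution $\xi(t)$ of the half-plane equation transports the scalar by a divergence-free field, hence preserves the distribution function of $\xi_0$, the mass $\int_{\mathbb{R}_+^2}\xi$, the impulse $\int_{\mathbb{R}_+^2}x_1\xi$, and the energy $\tilde E_{\mathcal W}$. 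With these conservation laws, Theorem~\ref{Sset} asserts that the set $S_\ep$ of maximizers of $\tilde E_{\mathcal W}$ over $\overline{\mathcal R(\omega_\ep)^w}$ is orbitally stable precisely in the metric $\|\cdot\|_{L^1\cap L^2}+\|x_1(\cdot)\|_{L^1}$ of \eqref{1-4}--\eqref{1-5}. Hence Theorem~\ref{thmS} will follow once we show that, for all small $\ep$,
\[
S_\ep=\{\omega_\ep(\cdot+c\mathbf{e}_2):c\in\mathbb{R}\}.
\]

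The decisive elementary observation is that the ``entropy'' term $\int_{\mathbb{R}_+^2}J(\omega)\,dx$ is a rearrangement invariant: since $J(t)=\int_0^t f^{-1}$ is increasing with $J(0)=0$, this integral depends only on the distribution function of $\omega$. Consequently, on $\mathcal R(\omega_\ep)$ one has $E_\ep(\zeta)=\tilde E_{\mathcal W}(\zeta)-C_0$ with $C_0=\int_{\mathbb{R}_+^2}J(\omega_\ep)$, so maximizing $E_\ep$ and maximizing $\tilde E_{\mathcal W}$ over $\mathcal R(\omega_\ep)$ are the same problem. By the rearrangement arguments used in the proof of Theorem~\ref{Sset}, the supremum of $\tilde E_{\mathcal W}$ over $\overline{\mathcal R(\omega_\ep)^w}$ is attained, every maximizer $\hat\omega_\ep$ lies in $\mathcal R(\omega_\ep)$, and $\hat\omega_\ep=\phi_\ep(\mathcal G_s^+\hat\omega_\ep-W\ep^{3-2s}x_1-\nu_\ep)$ for some nondecreasing profile $\phi_\ep$ and constant $\nu_\ep$.

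The crux is then to show that, for $\ep$ small, every such maximizer $\hat\omega_\ep$ actually belongs to the constraint set $\mathcal A_\ep$, i.e. $\text{spt}(\hat\omega_\ep)\subset B((d_0/\ep,0),d_0/(2\ep))$. This requires redoing, for the rearrangement-constrained and entropy-free problem, the asymptotic analysis underlying Theorem~\ref{thmE}: \emph{(a)} a Steiner-symmetrization and energy-estimate argument shows that $\text{spt}(\hat\omega_\ep)$ has diameter bounded by a constant independent of $\ep$, because spreading the mass out strictly lowers the quadratic self-interaction while membership in $\mathcal R(\omega_\ep)$ keeps all $L^q$ norms fixed; \emph{(b)} writing $\tilde E_{\mathcal W}(\hat\omega_\ep)$ in terms of the center of mass $z_\ep$ of $\hat\omega_\ep$ and noting that the self-interaction contributes, up to $o(1)$, the same rearrangement-invariant amount for all competitors, the residual $z_{\ep,1}$-dependence is governed to leading order by the same balance between the $-\mathcal W x_1$ term and the image-vortex interaction that forces $\ep x_\ep\to(d_0,0)$ in Theorem~\ref{thmE}; hence $\ep z_\ep\to(d_0,0)$. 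Combining \emph{(a)} and \emph{(b)}, for $\ep$ small $\text{spt}(\hat\omega_\ep)\subset B((d_0/\ep,0),d_0/(2\ep))$, so $\hat\omega_\ep\in\mathcal A_\ep$.

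Once $\hat\omega_\ep\in\mathcal A_\ep$ the loop closes: $E_\ep(\hat\omega_\ep)=\tilde E_{\mathcal W}(\hat\omega_\ep)-C_0=\sup_{\overline{\mathcal R(\omega_\ep)^w}}\tilde E_{\mathcal W}-C_0\ge \tilde E_{\mathcal W}(\omega_\ep)-C_0=E_\ep(\omega_\ep)=e_\ep$, while $\hat\omega_\ep\in\mathcal A_\ep$ forces $E_\ep(\hat\omega_\ep)\le e_\ep$; therefore $\hat\omega_\ep$ is a maximizer of $E_\ep$ over $\mathcal A_\ep$, and Theorem~\ref{thmU} yields $\hat\omega_\ep=\omega_\ep(\cdot+c\mathbf{e}_2)$ for some $c\in\mathbb{R}$. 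Conversely, each $\omega_\ep(\cdot+c\mathbf{e}_2)$ belongs to $\mathcal R(\omega_\ep)$ and, $\tilde E_{\mathcal W}$ being invariant under translations in $x_2$, realizes the maximal value; hence $S_\ep=\{\omega_\ep(\cdot+c\mathbf{e}_2):c\in\mathbb{R}\}$. Feeding this into Theorem~\ref{Sset} and unwinding the definition of the distance to $S_\ep$ gives exactly the implication \eqref{1-4}$\Rightarrow$\eqref{1-5}. I expect steps \emph{(a)}--\emph{(b)} — the uniform diameter bound and the localization of the center of mass for an \emph{arbitrary} maximizer of the auxiliary problem — to be the main obstacle, since they must be carried out without the coercivity supplied by the entropy term and uniformly as $\ep\to0$.
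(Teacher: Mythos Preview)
Your proposal is correct and follows essentially the same route as the paper: the paper's proof is precisely ``combine Theorem~\ref{Sset} with Proposition~\ref{lem4-20}'', where Proposition~\ref{lem4-20} identifies $\tilde\Sigma_\ep=\{\omega_\ep(\cdot+c\mathbf{e}_2):c\in\mathbb{R}\}$ by first establishing (Theorem~\ref{thm4-10}) that every maximizer of $\tilde E_\ep$ over $\overline{\mathcal R(\omega_\ep)^w}$ lies in $\mathcal R(\omega_\ep)\cap\mathcal A_\ep$ and then invoking the uniqueness Theorem~\ref{thmU}, exactly as you outline. Your anticipated obstacles \emph{(a)}--\emph{(b)} are indeed the substance of the paper's Lemmas~\ref{lem4-15}--\ref{lem4-17} and Corollaries~\ref{lem4-18}--\ref{lem4-19}; note only that the paper obtains the uniform diameter bound not via a direct ``spreading lowers self-interaction'' energy argument but by first extracting convergence to the limiting profile (Lions' concentration--compactness), deducing a uniform positive lower bound on the Lagrange multiplier $\tilde\mu_\ep$, and then reading off compact support from the Euler--Lagrange relation $\zeta^\ep=g_\ep(\mathcal G_s^+\zeta^\ep-W\ep^{3-2s}x_1-\tilde\mu_\ep)$.
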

  \begin{remark}
  	For the rigorous definition of $L^{\infty}$-regular solutions for the gSQG equation, please see Section 4. Once the uniqueness of other solutions  was established, one may apply the general stability theorem and the framework in this paper to obtain their orbital stability. Compared with the result in \cite{BNL13},  we admit perturbations with non-compact supports, which is achieved by   bringing in   the $L^1$-norm in our theorem.
  \end{remark}

  Much work has been done on the stability of steady solutions to the Euler equations, for which we refer the interested reader to \cite{Abe, Bu5, Bu6, CW,  Choi20, CL, CJ2, Gal, Wan} and references therein.

  \subsection{Desingularize the traveling point vortices}
  The result in Theorem \ref{thmE} also provides a family of solutions that desingularize the traveling point vortices for the gSQG equation. Indeed,  taking the transformation $\hat \omega_{tr,\ep}(x)=\ep^{-2}\omega_{tr,\ep}(\ep^{-1}x)$, we conclude from Theorem \ref{thmE} immediately that
   \begin{corollary}\label{corE}
   	Let $0<s<1$. Suppose that $f$ is a function satisfying $(\mathbf{H_{1}}) $ and $(\mathbf{H_{2}}) $ and $f\in C^{1-2s}$ if $0<s<\frac{1}{2}$. Then there is a constant $\ep_0>0$ small such that for any $\ep\in (0, \ep_0)$, \eqref{1-1} has a traveling-wave solution of the form $\theta_\ep(x,t)=\hat\omega_{tr,\ep}(x+ Wt\mathbf{e}_2 )$ for some function $\hat \omega_{tr,\ep}\in L^\infty(\mathbb{R}^2)$ in the sense that $\hat \omega_{tr,\ep}$ solves \eqref{1-3} with $\mathcal W=W$. Moreover, $\hat\omega_{tr,\ep}$ has the following properties:
   	\begin{itemize}
   		\item[(i)] $\hat\omega_{tr,\ep}$ is odd in $x_1$ and even in $x_2$. That is,
   		$$\hat\omega_{tr,\ep}(-x_1,x_2)=-\hat\omega_{tr,\ep}(x_1,x_2),\ \ \hat\omega_{tr,\ep}(x_1,-x_2)= \hat\omega_{tr,\ep}(x_1,x_2),\ \ \forall\ x\in\mathbb{R}^2;$$
   		\item[(ii)] It holds $$\hat\omega_{tr,\ep}=f(\ep^{2-2s}(\mathcal G_s\hat\omega_{tr,\ep}-Wx_1)-\mu_\ep),\ \ \text{in}\ \mathbb{R}_+^2, $$
   		for some constant $\mu_\ep$;
   		\item[(iii)] There holds in the sense of measure $$\hat\omega_{tr,\ep}(x)\rightharpoonup \kappa\mathbf{\delta}(x-(d_0,0))-\kappa\mathbf{\delta}(x+(d_0,0)),$$
   		 where  $d_0=\left(\frac{(1-s)c_s\kappa}{2^{2-2s} W}\right)^{\frac{1}{3-2s}}.$
   		Furthermore, there is a constant $R>0$ independent of $\ep$ such that $\text{spt}(\hat\omega_\ep)$ is contained in $B((d_0,0), R\ep)\cup B((-d_0,0), R\ep)$.
   	\end{itemize}
   \end{corollary}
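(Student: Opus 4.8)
The plan is to deduce Corollary~\ref{corE} directly from Theorem~\ref{thmE} by using the scaling structure of \eqref{1-1}. Put $\hat\omega_{tr,\ep}(x):=\ep^{-2}\omega_{tr,\ep}(\ep^{-1}x)$, where $\omega_{tr,\ep}$ is the solution furnished by Theorem~\ref{thmE}; this normalization preserves the total mass, $\int_{\mathbb{R}^2_+}\hat\omega_{tr,\ep}=\int_{\mathbb{R}^2_+}\omega_{tr,\ep}=\kappa$. The first step is to record how the Riesz potential behaves under dilations: a change of variables shows that
\begin{equation*}
	\mathcal{G}_s\hat\omega_{tr,\ep}(x)=\ep^{2s-2}(\mathcal{G}_s\omega_{tr,\ep})(\ep^{-1}x).
\end{equation*}
Hence, writing $\Psi_\ep(y):=\mathcal{G}_s\omega_{tr,\ep}(y)-W\ep^{3-2s}y_1$ and $\hat\Psi_\ep(x):=\mathcal{G}_s\hat\omega_{tr,\ep}(x)-Wx_1$, one obtains the clean relations $\hat\Psi_\ep(x)=\ep^{2s-2}\Psi_\ep(\ep^{-1}x)$ and $\nabla^\perp\hat\Psi_\ep(x)=\ep^{2s-3}(\nabla^\perp\Psi_\ep)(\ep^{-1}x)$.

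Next I would verify that $\hat\omega_{tr,\ep}$ solves \eqref{1-3} with $\mathcal{W}=W$. For $\varphi\in C_0^\infty(\mathbb{R}^2)$, inserting the identities above into $\int_{\mathbb{R}^2}\hat\omega_{tr,\ep}\,\nabla^\perp\hat\Psi_\ep\cdot\nabla\varphi\,dx$ and substituting $x=\ep y$ turns this integral, up to the nonzero factor $\ep^{2s-4}$, into $\int_{\mathbb{R}^2}\omega_{tr,\ep}\,\nabla^\perp\Psi_\ep\cdot\nabla\tilde\varphi\,dy$ with the admissible test function $\tilde\varphi(y):=\varphi(\ep y)\in C_0^\infty(\mathbb{R}^2)$; this vanishes since $\omega_{tr,\ep}$ satisfies \eqref{1-3} with $\mathcal{W}=W\ep^{3-2s}$. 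The regularity making these integrals meaningful --- in particular the role of $f\in C^{1-2s}$ when $0<s<\frac{1}{2}$ --- is inherited from Theorem~\ref{thmE}, since $\hat\omega_{tr,\ep}$ is merely a dilate of $\omega_{tr,\ep}$. Consequently $\theta_\ep(x,t)=\hat\omega_{tr,\ep}(x+Wt\mathbf{e}_2)$ solves \eqref{1-1}.

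It then remains to read off properties (i)--(iii). Property (i) is immediate: dilations commute with the reflections $x_1\mapsto-x_1$ and $x_2\mapsto-x_2$, so the parity of $\omega_{tr,\ep}$ from Theorem~\ref{thmE}(i) is preserved. Property (ii) follows by evaluating the profile identity $\omega_{tr,\ep}=f(\Psi_\ep-\mu_\ep)$ of Theorem~\ref{thmE}(ii) at $\ep^{-1}x$ (legitimate because $\ep^{-1}x\in\mathbb{R}^2_+\Leftrightarrow x\in\mathbb{R}^2_+$) and using $\Psi_\ep(\ep^{-1}x)=\ep^{2-2s}\hat\Psi_\ep(x)=\ep^{2-2s}(\mathcal{G}_s\hat\omega_{tr,\ep}(x)-Wx_1)$. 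For property (iii): by Theorem~\ref{thmE}(iii), $\ep x_\ep=(d_0,0)+o(1)$ and $\omega_\ep=\omega_{tr,\ep}1_{\mathbb{R}^2_+}$ is supported in a ball of $\ep$-independent radius about $x_\ep$, so after the dilation the restriction $\hat\omega_\ep:=\hat\omega_{tr,\ep}1_{\mathbb{R}^2_+}\geq0$ has mass $\kappa$ and support in $B(\ep x_\ep,R\ep)$, a ball shrinking to the point $(d_0,0)$; testing against $\psi\in C_0(\mathbb{R}^2)$ then gives $\hat\omega_\ep\rightharpoonup\kappa\,\delta(\cdot-(d_0,0))$ in the sense of measures, and, since $\hat\omega_{tr,\ep}(x)=\hat\omega_\ep(x)-\hat\omega_\ep(\bar x)$ (with $\hat\omega_\ep$ extended by zero outside $\mathbb{R}^2_+$), the odd reflection yields $\hat\omega_{tr,\ep}\rightharpoonup\kappa\,\delta(\cdot-(d_0,0))-\kappa\,\delta(\cdot+(d_0,0))$; the containment of $\text{spt}(\hat\omega_{tr,\ep})$ in $B((d_0,0),R\ep)\cup B((-d_0,0),R\ep)$ is obtained by rescaling the corresponding localization estimate for $\omega_\ep$.

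The argument is essentially a single change of variables, so no step is genuinely hard; the only points requiring care are the accounting of the scaling exponents in the weak formulation \eqref{1-3}, the check that the integrability and regularity making \eqref{1-3} meaningful (notably for $0<s<\frac{1}{2}$) transfer to the dilated profile, and --- for the uniform-in-$\ep$ support radius in property (iii) --- the use of the quantitative localization of $\omega_\ep$ obtained in the course of proving Theorem~\ref{thmE}.
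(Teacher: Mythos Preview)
Your proposal is correct and follows exactly the approach the paper takes: the paper's entire argument is the single sentence ``taking the transformation $\hat\omega_{tr,\ep}(x)=\ep^{-2}\omega_{tr,\ep}(\ep^{-1}x)$, we conclude from Theorem~\ref{thmE} immediately,'' and you have simply supplied the scaling computations the paper leaves implicit. Your bookkeeping of the exponents in the Riesz potential, in the weak formulation~\eqref{1-3}, and in the support localization is accurate and matches what the paper intends.
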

	Corollary \ref{corE} (iii) implies that $\{\hat\omega_{tr,\ep}\}_{\ep\in(0,\ep_0)}$ is a sequence of regular solutions approximating the traveling point vortex pair for the gSQG equations.
	
	In \cite{Asqg}, Ao et al. constructed a family of solutions closed to the points vortices of the gSQG equation with the profile function $f(t)=t_+^p$ for $p\in(1, \frac{1+s}{1-s})$ by the Lyapunov-Schmidt reduction method. It can be seen that our result Corollary \ref{corE} covers the remaining case $p\in (0,1]$ for $\frac{1}{2}\leq s<1$.
	
	In the recent paper \cite{CQZZ}, a family of traveling solutions  for the gSQG equations with $\frac{1}{2}\leq s<1$ were constructed by the variational method. The solutions $\{\tilde\omega_{tr,\ep}\}$ obtained in \cite{CQZZ} solve the integral equation
	\begin{equation*}
		 \tilde\omega_{tr,\ep}=g( \mathcal G_s \tilde\omega_{tr,\ep}-Wx_1 -\tilde\mu_\ep),\ \ \text{in}\ \mathbb{R}^2_+,
	\end{equation*}
		for some constant $\tilde\mu_\ep$ and bounded non-decreasing function $g$. It is obvious that Corollary \ref{corE} can not be deduced from the result in  \cite{CQZZ}, since the profile function $f(\ep^{2-2s} \cdot )$ in Corollary \ref{corE} (ii) varies along with $\ep$ and is allowed to be unbounded. Therefore,  Corollary \ref{corE} provides a  new family of traveling-wave solutions for the gSQG equations.
	
	The paper is organized as follows.  In Section \ref{sec2}, for a large class of $f$, we construct traveling-wave solutions for the gSQG equation via a variational method. We first study the properties of maximizers of a limiting problem.  Based on these properties, we are able to construct traveling-wave solutions with small traveling speeds by maximizing the energy functional $E_\ep$. Then, we  study the asymptotic behavior of the  maximizers carefully in several steps.   With detailed asymptotic behaviors in hand, we prove the uniqueness of maximizers in Section \ref{sec3}.  Section \ref{sec4} is devoted to investigating   nonlinear stability.  We first prove a general   orbital stability theorem for the set of maximizers based on a combination of the variational method and the concentrated compactness lemma of Lions \cite{L84}. Then, we investigate the asymptotic behavior of maximizers in the rearrangement class and obtain the  orbital stability   Theorem \ref{thmS} by using the uniqueness result in Theorem \ref{thmU}.

\section{Proofs of Theorem 1.1 and Theorem 1.5}\label{sec2}
	In this section, we first consider the maximization problem \eqref{max**} and prove Theorem \ref{thmax*}.  Theorem \ref{thmE} follows immediately.
	
We assume that  $J: [0, +\infty) \to [0, +\infty)$ satisfies
	\begin{itemize}
		\item[$(\mathbf{H_{1}'}).$] $J$ is strictly convex and nonnegative;
		\item[$(\mathbf{H_{2}'}).$] $\lim_{t\to 0_+} J'(t)t^{s-1}=0$ and  $\liminf_{t\to +\infty} J'(t)t^{s-1}\geq K$.
	\end{itemize}
	Here $K$ is a large constant, which will be determined later. Note that if  $J(t)=\int_0^t f^{-1}(\tau) d\tau$ for some $f$ satisfying $(\mathbf{H_{1}})$ and $(\mathbf{H_{2}})$, then one can check that $J$ satisfies $(\mathbf{H_{1}'})$ and $(\mathbf{H_{2}'})$.

	Let $E_\ep(\omega)$ and $\mathcal{A}_\ep$ be defined as in \eqref{energy} and \eqref{constraint}. To obtain the existence of maximizers for \eqref{max**} we need to consider its limiting problem first.
	
	\subsection{The limiting problem}\label{sec2-1}
	We start with definitions of the energy functional and set of constraints for the limiting problem corresponding to \eqref{max**}. The energy functional associated with $E_\ep$ is
	\begin{equation*}
		E_0(\omega):=\frac{c_s}{2}\int_{\mathbb{R}^2}\int_{\mathbb{R}^2}\frac{ \omega(x)\omega(y)}{|x-y|^{2-2s}} dxdy -\int_{\mathbb{R}^2 } J(\omega(x)) dx,
	\end{equation*}
	and the constraint associated with $\mathcal{A}_\ep$ is
	$$\mathcal{A}_0:=\left\{\omega\in L^1 \cap L^{2-s}(\mathbb{R}^2 )\mid  \omega\geq 0, \ \ \  \int_{\mathbb{R}^2 } \omega(x) dx=\kappa\right\}.$$
	The limiting maximization problem associated with \eqref{max**} is
	
	 \begin{equation}\label{limitmax**}
		e_0:=\sup_{\omega\in \mathcal{A}_0} E_0(\omega).
	\end{equation}

	In the classical paper \cite{L84}, under a bit weaker assumption   $\lim_{t\to 0_+} J(t)t^{-1}=0$, Lions showed the existence of maximizers of $E_0$ over $\mathcal{A}_0$ (see Theorem \uppercase\expandafter{\romannumeral2}.2 and Corollary \uppercase\expandafter{\romannumeral2}.1 in \cite{L84}). As we shall see later, our assumption $\lim_{t\to 0_+} J'(t)t^{s-1}=0$ ensures that every maximizer is compactly supported, which is an essential property used in the next subsection (for similar results on rotating stars, we refer to \cite{Auc, Li91, McC}).
	
	In what follows, we will investigate some essential properties of maximizers under our hypotheses $(\mathbf{H_{1}'})$ and $(\mathbf{H_{2}'}).$
	
	Recall that $\mathcal{G}_s \omega:=\frac{c_s}{|x|^{2-2s}}\ast \omega$. Denote $\| \cdot\|_p:=\|\cdot\|_{L^p(\mathbb{R}^2)}$ for simplicity. The following two lemmas concerning convolution inequalities are needed in our later discussion.
	\begin{lemma}\label{lem2-1}
		Assume that $\omega\in L^1\cap L^{p}(\mathbb{R}^2 )$ for some $p>1$. If $1<p\leq s^{-1}$, then $\mathcal{G}_s \omega \in L^q(\mathbb{R}^2 ) $ for any $\frac{1}{1-s}<q<\frac{p}{1-sp}$ and for some constants $0<a,b<1$,
		\begin{equation}\label{2-3}
			\|\mathcal{G}_s \omega\|_q\leq C \left(\|   \omega\|_1^a\|   \omega\|_p^{1-a} +\|   \omega\|_1^b\|   \omega\|_p^{1-b}\right).
		\end{equation}

		If $ p> s^{-1}$, then \eqref{2-3} holds with $q=\infty$.
	\end{lemma}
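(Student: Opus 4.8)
The plan is to estimate $\mathcal{G}_s\omega=K\ast\omega$, with $K(x)=c_s|x|^{-(2-2s)}$, by splitting the kernel at the unit scale: write $K=K_1+K_2$, where $K_1:=K\mathbf 1_{B(0,1)}$ carries the local singularity and $K_2:=K\mathbf 1_{B(0,1)^c}$ the tail. A computation in polar coordinates shows, using $0<s<1$, that $K_1\in L^{r}(\mathbb{R}^2)$ for every $r\in[1,\tfrac1{1-s})$ (since $(2-2s)r<2$ makes $|x|^{-(2-2s)r}$ integrable near $0$) and $K_2\in L^{r}(\mathbb{R}^2)$ for every $r\in(\tfrac1{1-s},\infty]$ (since $(2-2s)r>2$ makes it integrable near $\infty$, and $K_2$ is bounded). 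The two contributions $K_1\ast\omega$ and $K_2\ast\omega$ will yield the two summands on the right-hand side of \eqref{2-3}.

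First I would handle the case $1<p\le s^{-1}$ with $\tfrac1{1-s}<q<\tfrac{p}{1-sp}$. For the singular part, Young's convolution inequality gives $\|K_1\ast\omega\|_q\le\|K_1\|_{r_1}\|\omega\|_{t_1}$ whenever $1+\tfrac1q=\tfrac1{r_1}+\tfrac1{t_1}$; I choose $t_1$ so that the conjugate exponent $r_1$ falls in the admissible range $[1,\tfrac1{1-s})$, which unwinds to $\tfrac1{t_1}\in\bigl(\max\{\tfrac1q,\tfrac1p\},\min\{\tfrac1q+s,1\}\bigr)$. This interval is nonempty precisely because $q>\tfrac1{1-s}$ forces $\tfrac1q+s<1$ and $q<\tfrac{p}{1-sp}$ forces $\tfrac1q+s>\tfrac1p$; moreover it is automatically contained in $(\tfrac1p,1)$, so $t_1\in(1,p)$ and the $L^1$–$L^p$ interpolation inequality $\|\omega\|_{t_1}\le\|\omega\|_1^{a}\|\omega\|_p^{1-a}$ holds with $\tfrac1{t_1}=a+\tfrac{1-a}{p}$ and $a\in(0,1)$. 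Hence $\|K_1\ast\omega\|_q\le C\|\omega\|_1^{a}\|\omega\|_p^{1-a}$. For the tail part the identical scheme, now with $r_2\in(\tfrac1{1-s},\infty]$, forces $\tfrac1{t_2}\in\bigl(\tfrac1q+s,1\bigr)\subset(\tfrac1p,1)$, and gives $\|K_2\ast\omega\|_q\le C\|\omega\|_1^{b}\|\omega\|_p^{1-b}$ with $b\in(0,1)$. Adding the two estimates proves \eqref{2-3}.

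For the remaining case $p>s^{-1}$ and $q=\infty$, Young's inequality is replaced by Hölder's inequality ($\tfrac1{r_i}+\tfrac1{t_i}=1$). For the singular part one needs $r_1<\tfrac1{1-s}$, equivalently $t_1>s^{-1}$; as $p>s^{-1}$ one may pick $t_1\in(s^{-1},p)$, and interpolation again gives $a\in(0,1)$. For the tail part one needs $r_2>\tfrac1{1-s}$, equivalently $t_2<s^{-1}$; any $t_2\in(1,s^{-1})$ works and gives $b\in(0,1)$. Summing the two Hölder bounds yields \eqref{2-3} with $q=\infty$.

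In short, the argument is bookkeeping of Lebesgue exponents built on Young's inequality, Hölder's inequality, and the $L^1$–$L^p$ interpolation inequality. The one point demanding care — and where all the hypotheses are consumed — is checking, for each kernel piece, that the admissible interval for the intermediate exponent $t_i$ is nonempty and sits strictly inside $(1,p)$; this is exactly equivalent to the constraints $\tfrac1{1-s}<q<\tfrac{p}{1-sp}$ (respectively $p>s^{-1}$ in the $q=\infty$ case).
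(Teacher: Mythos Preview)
Your proof is correct and follows essentially the same approach as the paper's: split the Riesz kernel at the unit scale, note that the inner piece lies in $L^r$ for $r<\tfrac{1}{1-s}$ and the outer piece for $r>\tfrac{1}{1-s}$, then apply Young's inequality and $L^1$--$L^p$ interpolation to each piece. Your write-up is in fact more thorough than the paper's, since you explicitly verify that the admissible intervals for $1/t_i$ are nonempty and lie in $(\tfrac1p,1)$---the paper simply asserts the existence of the required exponents.
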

	\begin{proof}
		We first consider the case  $1<p\leq s^{-1}$. We split the function $|x|^{ 2s-2}$ into two parts: $|x|^{ 2s-2}=|x|^{ 2s-2} 1_{\{|x|<1\}}+|x|^{ 2s-2} 1_{\{|x|\geq 1\}}$.
		It is easy to check that $|x|^{ 2s-2} 1_{\{|x|<1\}}\in L^r, \     \forall \ 1\leq r <\frac{1}{1-s}$ and $|x|^{ 2s-2} 1_{\{|x|\geq 1\}}\in L^r, \     \forall  \ r  >\frac{1}{1-s}$. Suppose $\frac{1}{1-s}<q<\frac{p}{1-sp}$, then there exist $1\leq r_1<\frac{1}{1-s}$, $r_2  >\frac{1}{1-s}$ and $1<p_1,p_2<p$ such that  $1+q^{-1}=r_1^{-1}+p_1^{-1}$ and $1+q^{-1}=r_2^{-1}+p_2^{-1}$.
		
		Then it remains to apply the following Young inequality   $$\| f \ast g\|_r\leq \|f\|_u \|g\|_v,\ \ \ \forall f\in L^u, \  g\in L^v, $$
		for $1\leq r, u, v \leq +\infty  \   \text{such that } 1+r^{-1}=u^{-1}+v^{-1},$ and the interpolation inequality
		\begin{equation}\label{2-4}
			\| f  \|_r\leq \|f\|_u^a \|f\|_v^{1-a},\ \ \ \forall f\in L^u\cap L^v,
		\end{equation}
		with $1\leq u<r<v \leq +\infty$ and $a=(r^{-1}-v^{-1})/(u^{-1}-v^{-1})\in (0,1)$.
		
		For $p> s^{-1}$, the proof is similar, so we will omit the detail  and finish the proof.
	\end{proof}
	
	\begin{lemma}\label{lem2-2}
		Suppose that $\omega\in L^1\cap L^{2-s}(\mathbb{R}^2 )$, then it holds
		\begin{equation}\label{2-5}
			\Big|\int_{\mathbb{R}^2}  \omega(x)\mathcal{G}_s \omega(x) dx  \Big| \leq C \|\omega\|_1^s \int_{\mathbb{R}^2} |\omega|^{2-s}.
		\end{equation}
	\end{lemma}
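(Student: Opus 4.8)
The plan is to estimate the bilinear form $\int_{\mathbb{R}^2}\omega\,\mathcal{G}_s\omega$ by applying the Hardy--Littlewood--Sobolev (HLS) inequality in a form adapted to the kernel $|x|^{2s-2}$ in dimension $2$, and then to interpolate between the $L^1$ and $L^{2-s}$ norms of $\omega$. Since $\mathcal{G}_s\omega = (c_s|x|^{2s-2})\ast\omega$ and the Riesz potential of order $2s$ in $\mathbb{R}^2$ sends $L^r$ into $L^t$ with $\tfrac1t = \tfrac1r - s$ whenever $1<r<\tfrac1s$, the natural choice is to pick $r$ so that the pairing $\int (\mathcal{G}_s\omega)\,\omega$ closes by H\"older, i.e. with $\tfrac1t + \tfrac1r = 1$. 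Combining these two relations gives $\tfrac2r = 1+s$, that is $r = \tfrac{2}{1+s}$, and correspondingly $t = \tfrac{2}{1-s}$; note $1 < r = \tfrac{2}{1+s} < 2-s$ and $1 < r < \tfrac1s$ for all $s\in(0,1)$, so $\omega\in L^1\cap L^{2-s}$ indeed lies in $L^r$. The HLS inequality then yields
\begin{equation*}
	\Big|\int_{\mathbb{R}^2}\omega(x)\,\mathcal{G}_s\omega(x)\,dx\Big| \leq C\,\|\omega\|_r^2 = C\,\|\omega\|_{2/(1+s)}^2.
\end{equation*}

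Next I would interpolate: since $1 < \tfrac{2}{1+s} < 2-s$, write $\tfrac{1+s}{2} = \tfrac{\theta}{1} + \tfrac{1-\theta}{2-s}$ for the appropriate $\theta\in(0,1)$, so that $\|\omega\|_{2/(1+s)} \leq \|\omega\|_1^{\theta}\,\|\omega\|_{2-s}^{1-\theta}$. A direct computation of the exponent gives $\theta = \tfrac{s(2-s)}{2(1-s)}$... which would produce a bound $\|\omega\|_1^{2\theta}\|\omega\|_{2-s}^{2(1-\theta)}$; one checks $2(1-\theta) = \tfrac{2-s}{1-s}$ is not quite the clean form $\|\omega\|_1^s\int|\omega|^{2-s}$ claimed. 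The cleaner route to exactly the stated inequality is to avoid matching the H\"older exponents and instead split the kernel as in Lemma \ref{lem2-1}: write $|x|^{2s-2} = |x|^{2s-2}1_{\{|x|<1\}} + |x|^{2s-2}1_{\{|x|\geq 1\}}$, use Young's inequality with the near-diagonal piece in $L^{r_1}$ for $r_1<\tfrac{1}{1-s}$ and the far piece in $L^{r_2}$ for $r_2>\tfrac{1}{1-s}$, pairing each resulting term against $\omega\in L^{2-s}$ via H\"older with exponent $2-s$ and its conjugate; the remaining factors of $\|\omega\|_1$ and $\|\omega\|_{2-s}$ are then combined by interpolation so that every term carries the total homogeneity degree $2$ in $\omega$ and reduces to $\|\omega\|_1^s\,\|\omega\|_{2-s}^{2-s}$. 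One verifies that the dimensional/homogeneity bookkeeping forces precisely the exponent pair $(s, 2-s)$: scaling $\omega\mapsto\lambda\omega(\mu\,\cdot)$ shows both sides of \eqref{2-5} scale as $\lambda^2\mu^{-(2+2s)}$, which pins down the exponents uniquely given that the right side is a monomial in $\|\omega\|_1$ and $\int|\omega|^{2-s} = \|\omega\|_{2-s}^{2-s}$.

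The main obstacle is purely bookkeeping: getting the interpolation exponents to land on exactly $s$ and $2-s$ rather than some equivalent-but-messier pair, and making sure the endpoint cases ($r_i = \tfrac{1}{1-s}$, or $p = s^{-1}$) are excluded correctly so that Young's inequality applies. Since $\tfrac{1}{1-s} > 1$ for all $s\in(0,1)$ there is genuine room on both sides of $\tfrac{1}{1-s}$, and $2-s > \tfrac{2}{1+s}$ guarantees $L^1\cap L^{2-s}\subset L^r$ for the relevant $r$, so no borderline Sobolev issue arises. I expect this to be a short argument, essentially a corollary of (the method of) Lemma \ref{lem2-1} combined with H\"older's inequality, with the scaling heuristic above serving as the guide for which interpolation to perform. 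No compactness or symmetrization is needed here.
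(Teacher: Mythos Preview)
Your first approach via the symmetric Hardy--Littlewood--Sobolev inequality is correct and already gives exactly \eqref{2-5}; you simply made an arithmetic slip computing the interpolation exponent. Solving $\tfrac{1+s}{2} = \theta + \tfrac{1-\theta}{2-s}$ gives $s(1-s) = 2\theta(1-s)$, hence $\theta = s/2$, not $\tfrac{s(2-s)}{2(1-s)}$. Then $\|\omega\|_{2/(1+s)}^2 \le \|\omega\|_1^{2\theta}\|\omega\|_{2-s}^{2(1-\theta)} = \|\omega\|_1^{s}\|\omega\|_{2-s}^{2-s} = \|\omega\|_1^s\int|\omega|^{2-s}$, which is precisely the claimed bound. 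The kernel-splitting detour you propose afterwards is therefore unnecessary.

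The paper's proof is essentially the same idea arranged slightly differently: instead of the symmetric HLS bound $\int\omega\,\mathcal{G}_s\omega \le C\|\omega\|_{2/(1+s)}^2$, it first applies H\"older with the conjugate pair $(2-s,\tfrac{2-s}{1-s})$ to get $\|\omega\|_{2-s}\|\mathcal{G}_s\omega\|_{(2-s)/(1-s)}$, then uses HLS to bound $\|\mathcal{G}_s\omega\|_{(2-s)/(1-s)} \le C\|\omega\|_p$ for the appropriate $p$, and finally interpolates $\|\omega\|_p \le \|\omega\|_1^s\|\omega\|_{2-s}^{1-s}$. Both routes are one-line applications of HLS plus interpolation and yield the identical inequality; your symmetric version is arguably the more direct of the two.
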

	\begin{proof}
		The well-known Hardy-Littlewood-Sobolev inequality states that
		\begin{equation}\label{2-6}
			\| \mathcal{G}_s \omega\|_q \leq C \|   \omega\|_p, \ \ \ \forall 1<p<q<+\infty \ \text{with} \ \frac{1}{q} =\frac{1}{p}-s.
		\end{equation}
		
		Applying H\"older's inequality, \eqref{2-6} with $q=\frac{2-s}{1-s}$ and the interpolation inequality \eqref{2-4}, for any $\omega_1, \omega_2\in  L^1\cap L^{2-s}(\mathbb{R}^2 )$ we obtain
		\begin{equation}\label{2-6-1}
			\Big|\int_{\mathbb{R}^2}  \omega_1(x)\mathcal{G}_s \omega_2(x) dx  \Big| \leq \|\omega_1\|_{2-s}\|\mathcal{G}_s \omega_2\|_{\frac{2-s}{1-s}}\leq C \|\omega_1\|_{2-s} \|\omega_2\|_{2-s}^{1-s}\|\omega_2\|_{1}^s,
		\end{equation}
		which implies \eqref{2-5} by taking $\omega_1=\omega_2=\omega$ and completes the proof.
	\end{proof}
	
	We first show the radial symmetry and derive the Euler-Lagrange equation for a maximizer of \eqref{limitmax**}.
	\begin{lemma}\label{lem2-3}
		Let   $\omega_0 \in \mathcal{A}_0$ be a maximizer of $E_0$ over $\mathcal{A}_0$. Then $\omega_0$ must be radially symmetric and non-increasing with respect to some point. Moreover,  there exists a constant $\mu_0\in \mathbb{R}$ such that
		\begin{equation}\label{2-7}
			\begin{cases}
				\mathcal{G}_s\omega_0-J'(\omega_0) \leq  \mu_0,\quad &\text{on}\ \ \{\omega_0=0\},\\
				\mathcal{G}_s\omega_0 - J'(\omega_0) =  \mu_0,\quad &\text{on}\ \ \{ \omega_0>0\}.
			\end{cases}	
		\end{equation}
	\end{lemma}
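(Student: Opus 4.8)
The plan is to prove the radial symmetry by a rearrangement argument and then derive the Euler--Lagrange system by a variational perturbation in the spirit of classical constrained maximization. For the symmetry, I would let $\omega_0^*$ denote the symmetric decreasing rearrangement of $\omega_0$. Since the $L^1$ and $L^{2-s}$ norms are preserved under rearrangement, $\omega_0^*\in\mathcal{A}_0$; moreover $\int J(\omega_0^*)=\int J(\omega_0)$ since $J$ acts pointwise. The Riesz rearrangement inequality applied to the kernel $c_s|x-y|^{2s-2}$, which is a strictly symmetric decreasing function of $|x-y|$, gives
\[
\int_{\mathbb{R}^2}\int_{\mathbb{R}^2}\frac{\omega_0(x)\omega_0(y)}{|x-y|^{2-2s}}\,dx\,dy\le \int_{\mathbb{R}^2}\int_{\mathbb{R}^2}\frac{\omega_0^*(x)\omega_0^*(y)}{|x-y|^{2-2s}}\,dx\,dy,
\]
hence $E_0(\omega_0^*)\ge E_0(\omega_0)$, so $\omega_0^*$ is also a maximizer and $E_0(\omega_0)=E_0(\omega_0^*)$, which forces equality in the Riesz inequality. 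Invoking the equality case (e.g.\ Lieb--Loss, Theorem~3.9, applicable since the kernel is \emph{strictly} decreasing), $\omega_0$ must equal a translate of $\omega_0^*$, i.e.\ it is radially symmetric and non-increasing about some point.

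For the Euler--Lagrange equation, fix the maximizer $\omega_0$ and consider admissible variations that preserve the mass constraint. Given a bounded measurable $\phi$ supported in a set of finite measure with $\int_{\mathbb{R}^2}\phi=0$, and such that $\omega_0+t\phi\ge 0$ for small $t\ge 0$ (which holds away from $\{\omega_0=0\}$, or on $\{\omega_0=0\}$ if $\phi\ge 0$ there), set $\omega_t:=\omega_0+t\phi\in\mathcal{A}_0$. Differentiating $E_0(\omega_t)$ at $t=0^+$, using that $t\mapsto\int J(\omega_0+t\phi)$ is differentiable with derivative $\int J'(\omega_0)\phi$ (justified by convexity of $J$, the hypothesis $(\mathbf{H_2'})$ controlling $J'$ at infinity via the $L^{2-s}$ bound, and $\lim_{t\to0^+}J'(t)t^{s-1}=0$ controlling it near $0$), and that the quadratic term has derivative $c_s\int\int |x-y|^{2s-2}\omega_0(y)\phi(x)=\int(\mathcal{G}_s\omega_0)\phi$, one gets
\[
\frac{d}{dt}\Big|_{t=0^+}E_0(\omega_t)=\int_{\mathbb{R}^2}\bigl(\mathcal{G}_s\omega_0-J'(\omega_0)\bigr)\phi\,dx\le 0.
\]
Since on $\{\omega_0>0\}$ we may take $\phi$ of either sign, the standard argument with a Lagrange multiplier $\mu_0$ yields $\mathcal{G}_s\omega_0-J'(\omega_0)=\mu_0$ a.e.\ on $\{\omega_0>0\}$; on $\{\omega_0=0\}$ only one-sided variations $\phi\ge 0$ are allowed, giving $\mathcal{G}_s\omega_0-J'(\omega_0)\le\mu_0$ there. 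The multiplier is identified by testing against any $\phi$ with $\int\phi=0$ supported in $\{\omega_0>0\}$.

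The main obstacle I anticipate is the rigorous justification of differentiability of the penalization term $\int J(\omega_0+t\phi)$ and of the finiteness/regularity needed for all the integrals: one must ensure $J(\omega_0)\in L^1$, that $J'(\omega_0)\phi\in L^1$, and that dominated convergence applies to pass the derivative inside the integral. This uses convexity of $J$ to get monotone difference quotients, the growth bound from $(\mathbf{H_2'})$ together with $\omega_0\in L^{2-s}$ (so that $J(\omega_0)\lesssim 1+|\omega_0|^{2-s}$, roughly, after integrating the bound on $J'$), and the decay near zero to handle the set where $\omega_0$ is small. A secondary technical point is verifying $\mathcal{G}_s\omega_0\in L^\infty_{loc}$ or at least that $\mathcal{G}_s\omega_0\,\phi\in L^1$, which follows from Lemma~\ref{lem2-1} since $\omega_0\in L^1\cap L^{2-s}$ and $2-s>s^{-1}$ for $s\in(0,1)$ would need checking — in any case Lemma~\ref{lem2-2} already guarantees the quadratic form is finite, and local integrability of $\mathcal{G}_s\omega_0$ against the compactly supported bounded $\phi$ follows from the convolution estimates. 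Once these integrability facts are in place, the variational inequalities above are routine.
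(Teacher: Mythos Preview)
Your proposal is correct and follows essentially the same approach as the paper: the strict Riesz rearrangement inequality (Lieb--Loss, Theorems~3.7 and~3.9) for the symmetry, and a standard constrained variation for the Euler--Lagrange conditions. The only cosmetic difference is that the paper fixes a reference function $\phi_0$ supported in $\{\omega_0>\delta\}$ with $\int\phi_0=1$, uses test functions $\omega^t=\omega_0+t(\phi-\phi_0\int\phi)$ (so the mass constraint is automatic), and thereby identifies the multiplier explicitly as $\mu_0=\int(\mathcal{G}_s\omega_0-J'(\omega_0))\phi_0$; your treatment of one-sided versus two-sided variations is in fact slightly more careful than the paper's.
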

	\begin{proof}
		The radial symmetry and monotonicity of $\omega_0$ are easy consequences of the strict rearrangement inequality (see Theorems 3.7 and 3.9 in \cite{Lie}).
		
		Note that for $\delta>0$ small, the set $\{\omega_0>\delta\}\not=\emptyset$ due to $\int_{\mathbb{R}^2} \omega_0=\kappa>0$. We fix a $\delta>0$ such that $\{\omega_0>\delta\}\not=\emptyset$ and take a function $\phi_0$ so that $\int_{\mathbb{R}^2} \phi_0=1$ and $\text{spt}(\phi_0)\subset \{\omega_0>\delta\}$. For any function $\phi $ bounded from below such that $\phi\geq 0$ on the set $\{\omega_0\leq\delta\}$, we take a  family of test functions as follows:
		$$\omega^t:=\omega_0+t(\phi-\phi_0\int_{\mathbb{R}^2}\phi),$$
		which belong  to $\mathcal{A}_0$ for $|t|$ small.
		Since $\omega_0$ is a maximizer, we have
		\begin{equation*}
			0= \frac{d E_0(\omega^t)}{d t} \Bigg{|}_{t=0}=\int_{\mathbb{R}^2}  (\mathcal{G}_s\omega_0-J'(\omega_0)-\mu_0)\phi  dx,
		\end{equation*}
		where $\mu_0:=\int_{\mathbb{R}^2}  (\mathcal{G}_s\omega_0-J'(\omega_0))\phi_0$.
		Then \eqref{2-7} follows from the arbitrariness of $\phi$ and hence the proof is   complete.
	\end{proof}
	
	Denote
	\begin{equation}\label{I0}
	I_0:=\sup_{\omega\in \mathcal{A}_0} E_0(\omega).
	\end{equation}
	Then $I_0<+\infty$ by \cite{L84}.
	\begin{lemma}\label{lem2-4}
		There is a constant  $c_0 >0$ such that if $\omega_0\in \mathcal{A}_0 $ is a maximizer of $E_0$ over $\mathcal{A}_0 $, then one has
		\begin{equation}\label{2-8}
			\|\omega_0\|_{2-s}\leq c_0.
		\end{equation}
	\end{lemma}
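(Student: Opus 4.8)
The plan is to estimate $\|\omega_0\|_{2-s}$ by balancing the interaction energy — which Lemma~\ref{lem2-2} bounds by a fixed power of $\|\omega_0\|_{2-s}$ — against the convex term $\int J(\omega_0)$, which the hypothesis $(\mathbf{H_{2}'})$ forces to control $\|\omega_0\|_{2-s}^{2-s}$ with a prefactor that can be made as large as we wish by enlarging $K$.

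First I would record the identity coming from maximality. Since $\omega_0$ maximizes $E_0$ over $\mathcal{A}_0$, we have $E_0(\omega_0)=I_0$, and $I_0$ is a finite real number ($I_0<+\infty$ by \cite{L84}, while $I_0>-\infty$ since $\mathcal{A}_0$ contains bounded, compactly supported functions on which $E_0$ is finite). Because the interaction term $\tfrac12\int_{\mathbb{R}^2}\omega_0\,\mathcal{G}_s\omega_0$ is finite by \eqref{2-5}, rearranging and using $\omega_0\ge0$, $\|\omega_0\|_1=\kappa$ yields
\[
\int_{\mathbb{R}^2}J(\omega_0)=\tfrac12\int_{\mathbb{R}^2}\omega_0\,\mathcal{G}_s\omega_0-I_0\le \tfrac{C}{2}\kappa^{s}\|\omega_0\|_{2-s}^{2-s}-I_0,
\]
with $C$ the constant from \eqref{2-5}.

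Next I would produce a matching lower bound for $\int J(\omega_0)$. By $(\mathbf{H_{2}'})$ there is $\Lambda>0$, depending only on $J$ and $K$, such that $J'(t)\ge\tfrac{K}{2}t^{1-s}$ for $t\ge\Lambda$; integrating this gives $J(t)\ge c_1 t^{2-s}$ for $t\ge 2\Lambda$, with $c_1$ comparable to $K$. For the small values I would use the crude inequality $\omega_0^{2-s}\le(2\Lambda)^{1-s}\omega_0$ on $\{\omega_0<2\Lambda\}$, so that $\int_{\{\omega_0<2\Lambda\}}\omega_0^{2-s}\le(2\Lambda)^{1-s}\kappa$. Adding the two regions,
\[
\|\omega_0\|_{2-s}^{2-s}\le\frac{1}{c_1}\int_{\mathbb{R}^2}J(\omega_0)+(2\Lambda)^{1-s}\kappa,
\]
and combining with the previous display,
\[
\Big(1-\frac{C\kappa^{s}}{2c_1}\Big)\|\omega_0\|_{2-s}^{2-s}\le(2\Lambda)^{1-s}\kappa-\frac{I_0}{c_1}.
\]
This is exactly where the prescription to take $K$ large in $(\mathbf{H_{2}'})$ is used: I would fix $K$ (hence $c_1$) large enough that $\frac{C\kappa^{s}}{2c_1}\le\frac12$, and conclude $\|\omega_0\|_{2-s}^{2-s}\le 2(2\Lambda)^{1-s}\kappa+2|I_0|/c_1=:c_0^{2-s}$, a bound depending only on $s$, $\kappa$ and $J$, hence uniform over all maximizers.

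The lemma is not deep; the only point requiring care is the bookkeeping. Since $(\mathbf{H_{2}'})$ controls the growth of $J$ only near infinity, one is forced to split $\mathbb{R}^2$ according to whether $\omega_0$ exceeds the threshold $2\Lambda$, and one must check that every constant produced ($\Lambda$, $c_1$, $C$, $K$, and finally $c_0$) depends only on the fixed data $s,\kappa,J$ and not on the particular maximizer $\omega_0$; this is automatic once $K$ has been taken large relative to the convolution constant $C$. I do not expect any genuine obstacle beyond getting this dependence right.
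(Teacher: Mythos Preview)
Your proof is correct and follows essentially the same route as the paper: bound the interaction term via Lemma~\ref{lem2-2}, use $(\mathbf{H_{2}'})$ with $K$ chosen large relative to $C\kappa^s$ to get $J(t)\gtrsim t^{2-s}$ for large $t$, split the integral at a threshold, and absorb. If anything, your bookkeeping is slightly more explicit than the paper's, which asserts directly that a specific choice of $K$ yields $J(t)>(C\kappa^s+1)t^{2-s}$ for large $t$ without spelling out the integration of $J'$.
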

	\begin{proof}
		Take the constant $K$ in  the hypothesis $(\mathbf{H_{2}'})$ as $K=C \kappa^s+2$, where $C$ is the constant in \eqref{2-5}. Then there is a constant  $t_0>0$ such that $J(t)> (C \kappa^s+1) t^{2-s}$ for $t>t_0$.   On the one hand,  by the definition of $E_0$ and \eqref{2-5}, we deduce
		\begin{equation*}
			\int_{\mathbb{R}^2} J(\omega_0)\leq -I_0 +C\kappa^s \int_{\mathbb{R}^2}(\omega_0)^{2-s} dx.
		\end{equation*}
		On the other hand, we infer from the choice of $t_0$ that
		\begin{align*}
			(C\kappa^s+1) \int_{\mathbb{R}^2}(\omega_0)^{2-s} dx&= (C\kappa^s+1) \int_{ \{ \omega_0\leq t_0\}}(\omega_0)^{2-s} dx +(C\kappa^s+1) \int_{ \{ \omega_0> t_0\}}(\omega_0)^{2-s} dx\\
			&\leq (C\kappa^s+1)t_0^{1-s}\kappa+\int_{\mathbb{R}^2} J(\omega_0).
		\end{align*}
		Therefore, we arrive at \eqref{2-8} by taking $c_0= ( -I_0+(C\kappa^s+1)t_0^{1-s}\kappa)^{\frac{1}{2-s}}.$
	\end{proof}
	
	\begin{lemma}\label{lem2-5}
		Let $\omega_0\in \mathcal{A}_0 $ be a maximizer and $\mu_0$ be the constant defined in Lemma \ref{lem2-3}, then  one has
		\begin{equation}\label{2-9}
			\mu_0>0.
		\end{equation}
	\end{lemma}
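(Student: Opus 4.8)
The idea is to pair the Euler--Lagrange relation \eqref{2-7} with a dilation (virial) identity. Throughout set
$$A:=\int_{\mathbb{R}^2}\int_{\mathbb{R}^2}|x-y|^{2s-2}\omega_0(x)\omega_0(y)\,dx\,dy,$$
which is finite by Lemma \ref{lem2-2} and strictly positive since $\omega_0\ge0$ is not identically zero. Testing \eqref{2-7} against $\omega_0$ itself (the locus $\{\omega_0=0\}$ contributes nothing, and the identity holds on $\{\omega_0>0\}$) gives
$$\mu_0\kappa=\int_{\mathbb{R}^2}\omega_0\,\mathcal G_s\omega_0\,dx-\int_{\mathbb{R}^2}\omega_0J'(\omega_0)\,dx=c_sA-\int_{\mathbb{R}^2}\omega_0J'(\omega_0)\,dx,$$
so it suffices to bound $\int_{\mathbb{R}^2}\omega_0J'(\omega_0)\,dx$ from above.

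For $\lambda>0$ put $\omega_0^\lambda(x):=\lambda^2\omega_0(\lambda x)\in\mathcal A_0$; a change of variables gives $E_0(\omega_0^\lambda)=\tfrac{c_s}{2}\lambda^{2-2s}A-\lambda^{-2}\int_{\mathbb{R}^2}J(\lambda^2\omega_0)\,dx$, which is maximal at $\lambda=1$ because $\omega_0$ maximizes $E_0$ over $\mathcal A_0$. Rather than differentiate under the integral sign, I would use convexity: the tangent-line inequality of $J$ at $\lambda^2\omega_0$ gives $J(\lambda^2\omega_0)\le J(\omega_0)-(1-\lambda^2)\,\omega_0J'(\lambda^2\omega_0)$, hence for $0<\lambda<1$
$$E_0(\omega_0^\lambda)\ge\frac{c_s}{2}\lambda^{2-2s}A-\lambda^{-2}\int_{\mathbb{R}^2}J(\omega_0)\,dx+(\lambda^{-2}-1)\int_{\mathbb{R}^2}\omega_0J'(\lambda^2\omega_0)\,dx.$$
Comparing with $E_0(\omega_0^\lambda)\le E_0(\omega_0)=\tfrac{c_s}{2}A-\int J(\omega_0)$ and dividing by $\lambda^{-2}-1>0$ yields $\int\omega_0J'(\lambda^2\omega_0)\le\frac{c_sA}{2}\cdot\frac{1-\lambda^{2-2s}}{\lambda^{-2}-1}+\int J(\omega_0)$. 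Letting $\lambda\uparrow1$, the left side converges to $\int\omega_0J'(\omega_0)$ by monotone convergence ($J'$ is non-decreasing, so $J'(\lambda^2\omega_0)\uparrow$), while $\tfrac{1-\lambda^{2-2s}}{\lambda^{-2}-1}\to1-s$; hence the virial inequality $\int_{\mathbb{R}^2}\omega_0J'(\omega_0)\,dx\le\tfrac{(1-s)c_s}{2}A+\int_{\mathbb{R}^2}J(\omega_0)\,dx$. Plugging this into the identity above and eliminating $\int J(\omega_0)$ via $E_0(\omega_0)=\tfrac{c_s}{2}A-\int J(\omega_0)=I_0$ gives $\mu_0\kappa\ge\tfrac{s}{2}c_sA+I_0$.

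It remains to observe $I_0\ge0$. For any $\zeta\in\mathcal A_0$ the same rescaling gives $E_0(\zeta^\lambda)\to0$ as $\lambda\downarrow0$: the interaction term carries the factor $\lambda^{2-2s}\to0$, and since $J$ is convex with $J(0)=0$ the ratio $J(u)/u$ is non-decreasing with $\lim_{u\to0^+}J(u)/u=0$ by $(\mathbf{H_{2}'})$, so $\lambda^{-2}J(\lambda^2\zeta)\le J(\zeta)\in L^1$ and $\lambda^{-2}J(\lambda^2\zeta)\to0$ pointwise, whence $\lambda^{-2}\int J(\lambda^2\zeta)\to0$ by dominated convergence. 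Thus $I_0=\sup_{\mathcal A_0}E_0\ge0$, and therefore $\mu_0\kappa\ge\tfrac{s}{2}c_sA>0$ (recall $0<A<\infty$, $c_s>0$, $s>0$, $\kappa>0$), which is \eqref{2-9}.

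The point requiring care is the passage $\lambda\uparrow1$ together with keeping every integral finite throughout the rescaling: the only term that could misbehave, $\int J(\lambda^2\omega_0)$, is dominated for $\lambda\le1$ by $\int J(\omega_0)<\infty$ (finite because $\omega_0$ is a maximizer, by Lemma \ref{lem2-2}), so the argument only ever uses $\lambda\in(0,1]$. If $J$ is merely convex one should read $J'$ as the left derivative and note $J'(\omega_0^-)=J'(\omega_0)$ a.e., which holds because, by Lemma \ref{lem2-3}, a maximizer cannot be constant on a set of positive measure (on $\{\omega_0>0\}$ one has $J'(\omega_0)=\mathcal G_s\omega_0-\mu_0$ with $\mathcal G_s\omega_0$ strictly radially decreasing); in the case $J\in C^1$ relevant here this subtlety disappears. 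Everything else is a routine change of variables and the convexity of $J$.
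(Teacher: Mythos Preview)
Your argument is correct and takes a genuinely different route from the paper. The paper proves $\mu_0>0$ by a \emph{pointwise} estimate: using radial symmetry, at any radius $r$ there is a point $x^r$ with $\omega_0(x^r)\le C r^{-2}$, so $(\mathbf{H_{2}'})$ gives $J'(\omega_0(x^r))=o(r^{2s-2})$, while $\mathcal G_s\omega_0(x^r)\gtrsim r^{2s-2}$ from the mass in $B(0,r)$; plugging into \eqref{2-7} yields $\mu_0\ge (c+o(1))r^{2s-2}>0$ for large $r$. Your proof instead integrates \eqref{2-7} against $\omega_0$ and combines it with a dilation (virial) inequality, obtaining the quantitative bound $\mu_0\kappa\ge \tfrac{s}{2}c_sA+I_0$. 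This global approach is in fact the same scaling trick the paper deploys later in Lemma~\ref{lem2-12} (for the power nonlinearity) and Lemma~\ref{lem2-26}; you are essentially anticipating that machinery. What you gain is an explicit lower bound depending only on integral quantities, at the price of slightly more care (monotone convergence, the $J'(\omega_0^-)=J'(\omega_0)$ a.e.\ step) than the paper's four-line pointwise argument.

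Two small remarks. First, when you show $I_0\ge0$ by sending $\lambda\downarrow0$ you invoke $J(\zeta)\in L^1$; this is not automatic for an arbitrary $\zeta\in\mathcal A_0$, so you should say explicitly that you take, e.g., a bounded compactly supported $\zeta$ (one such test function suffices). Second, your justification that $\omega_0$ is not constant on a set of positive measure (needed for $J'(\omega_0^-)=J'(\omega_0)$ a.e.) is fine but uses the strict radial monotonicity of $\mathcal G_s\omega_0$, which is standard but perhaps worth stating rather than attributing to Lemma~\ref{lem2-3} itself.
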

	\begin{proof}
		Take $r_0>0$ such that $\int_{B(0,r_0)} \omega_0\geq\frac{\kappa}{2}$. Since for any $r\geq 0$, $\int_{B(0,r)} \omega_0\leq \kappa$, there is a point $x^r\in B(0,r)$ such that $\omega_0(x^r)\leq \pi^{-1}\kappa r^{-2}.$ Then we infer from the hypothesis $(\mathbf{H_{2}'})$ that
		$$J'(\omega_0(x^r))=o(r^{2s-2}), \ \  \text{as}\ r\to+\infty.$$
		On the other hand, we have
		$$\mathcal{G}_s\omega_0(x^r)\geq \frac{c_s}{(2r)^{2-2s}} \int_{B(0,r)} \omega_0\geq 4^{s-2}c_s\kappa r^{2s-2},\  \  \forall \ r\geq r_0.$$
		Thus, we derive from \eqref{2-7} that
		$$\mu_0\geq \mathcal{G}_s\omega_0(x^r)-J'(\omega_0(x^r))\geq (4^{s-2}c_s\kappa+o(1)) r^{2s-2}>0,$$ for $r$ sufficiently large and hence the proof is finished.
	\end{proof}
	
	Lemma \ref{lem2-5} will give a uniform bound for all maximizers.
	\begin{corollary}\label{lem2-6}
		There is a constant  $ c_1>0$ such that if $\omega_0\in \mathcal{A}_0 $ is a maximizer, then
		\begin{equation}\label{2-10}
			\|\omega_0\|_{\infty}\leq c_1.
		\end{equation}
	\end{corollary}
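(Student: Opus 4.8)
The plan is to combine the Euler--Lagrange relation of Lemma \ref{lem2-3} with the convolution estimates of Lemma \ref{lem2-1} in a bootstrap on the integrability of $\mathcal G_s\omega_0$. Since $\omega_0$ is a maximizer, Lemma \ref{lem2-5} gives $\mu_0>0$, so on $\{\omega_0>0\}$ the second line of \eqref{2-7} yields the pointwise bound $J'(\omega_0)=\mathcal G_s\omega_0-\mu_0\le \mathcal G_s\omega_0$. By $(\mathbf{H_2'})$ there is a threshold $T_0>0$, depending only on $K$ and $J$, with $J'(t)\ge \tfrac{K}{2}\,t^{1-s}$ for all $t\ge T_0$; hence on $\{\omega_0>T_0\}$ one gets $\omega_0\le \big(\tfrac{2}{K}\mathcal G_s\omega_0\big)^{1/(1-s)}$, while elsewhere $\omega_0\le T_0$. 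Crucially, by Chebyshev's inequality and Lemma \ref{lem2-4}, $|\{\omega_0>T_0\}|\le T_0^{-(2-s)}\|\omega_0\|_{2-s}^{2-s}\le T_0^{-(2-s)}c_0^{2-s}$ is finite and uniform over all maximizers.

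The bootstrap step: assume $\omega_0\in L^1\cap L^{p}$ with $\|\omega_0\|_1=\kappa$ and $\|\omega_0\|_p$ bounded uniformly, for some $p\in[2-s,s^{-1})$ (the base case $p=2-s$ being Lemma \ref{lem2-4}). Lemma \ref{lem2-1} gives $\mathcal G_s\omega_0\in L^q$ with $\|\mathcal G_s\omega_0\|_q$ uniformly bounded for every $q\in\big[\tfrac{2-s}{1-s},\tfrac{p}{1-sp}\big)$, this interval being nonempty since $t\mapsto t/(1-st)$ is increasing on $(0,s^{-1})$ and $p\ge 2-s$. Splitting $\mathbb R^2$ into $\{\omega_0>T_0\}$ and $\{0<\omega_0\le T_0\}$ and inserting the two pointwise bounds, one obtains, whenever $q(1-s)\ge 2-s$,
\[ \int_{\mathbb R^2}\omega_0^{\,q(1-s)}\,dx\le \Big(\tfrac{2}{K}\Big)^{q}\|\mathcal G_s\omega_0\|_q^{q}+T_0^{\,q(1-s)-(2-s)}c_0^{2-s}, \]
so $\|\omega_0\|_{q(1-s)}$ is again uniformly bounded. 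Thus one may pass from $p$ to any $p'\in[2-s,\phi(p))$, where $\phi(p):=\tfrac{p(1-s)}{1-sp}$, keeping all constants uniform.

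Iterating, take $p_0=2-s$ and, as long as $\phi(p_k)\le s^{-1}$, set $p_{k+1}=\tfrac12\big(p_k+\phi(p_k)\big)\in(2-s,\phi(p_k))$. Since $\phi(p)>p$ on $(1,s^{-1})$ (the only fixed point of $\phi$ is $p=1$) while $\phi(p)\to+\infty$ as $p\uparrow s^{-1}$, this increasing sequence must satisfy $\phi(p_N)>s^{-1}$ after finitely many steps $N=N(s)$; then one chooses $p_{N+1}\in(s^{-1},\phi(p_N))$, and the second part of Lemma \ref{lem2-1} gives $\mathcal G_s\omega_0\in L^\infty$ with a uniform bound. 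Feeding this back into $\omega_0\le T_0+\big(\tfrac{2}{K}\mathcal G_s\omega_0\big)^{1/(1-s)}$ gives \eqref{2-10} with $c_1=T_0+\big(\tfrac{2}{K}\|\mathcal G_s\omega_0\|_\infty\big)^{1/(1-s)}$, a constant depending only on $s,\kappa,W$ and $J$.

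The main obstacle is precisely that a direct estimate cannot work: $L^1\cap L^{2-s}$ never embeds into a space mapped by $\mathcal G_s$ into $L^\infty$, since $2-s\le s^{-1}$ for every $s\in(0,1)$, so the extra integrability has to be squeezed out of \eqref{2-7}. The delicate points are to track the uniformity of all constants through the (uniformly) finitely many iterations, and to handle the ``low'' region $\{0<\omega_0\le T_0\}$ where the Euler--Lagrange inequality provides no gain and one instead leans on the uniform $L^{2-s}$ bound of Lemma \ref{lem2-4}.
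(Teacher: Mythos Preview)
Your proof is correct and follows the same strategy as the paper: combine the Euler--Lagrange inequality $J'(\omega_0)\le \mathcal G_s\omega_0$ from Lemmas \ref{lem2-3}--\ref{lem2-5} with the growth condition $(\mathbf{H_2'})$ to bootstrap integrability via Lemma \ref{lem2-1}, passing from $L^p$ to $L^{p'}$ with $p'<\phi(p)=\frac{(1-s)p}{1-sp}$ until $p$ exceeds $s^{-1}$. Your treatment is in fact more explicit than the paper's---you spell out how the low region $\{0<\omega_0\le T_0\}$ is controlled by the uniform $L^{2-s}$ bound of Lemma \ref{lem2-4} and give a concrete iteration rule with a fixed-point contradiction argument, whereas the paper simply notes that the increment $\phi(p)-p=\frac{sp(p-1)}{1-sp}$ is increasing in $p$ and appeals to a ``simple bootstrap''. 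One small slip: $W$ does not enter the limiting problem, so $c_1$ depends only on $s,\kappa$ and $J$.
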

	\begin{proof}
		By   Lemmas  \ref{lem2-3} and \ref{lem2-5},   we have
		$$J'(\omega_0)= \mathcal{G}_s\omega_0-\mu_0\leq\mathcal{G}_s\omega_0  , \ \ \  \text{on}\ \{\omega_0>0\}.$$
		Let $p_1=2-s$. Then we derive from Lemmas \ref{lem2-1} and \ref{lem2-4} that $ \|\mathcal{G}_s\omega_0\|_q\leq C$ for $\frac{1}{1-s}<q< \frac{p_1}{1-sp_1}$. Using the fact the $J'(t)\geq \frac{K}{2} t^{1-s}$ for $t>t_1$ due to the hypothesis $(\mathbf{H_{2}'})$ with $t_1$   a large constant, we obtain $\|\omega_0\|_r\leq C$ for $1\leq r<p_2$  with $p_2:=\frac{(1-s)p_1}{ 1-sp_1 }$. Notice that $p_2= p_1 +\frac{s p_1(p_1-1)}{ 1-sp_1 }$ and $\frac{s p_1(p_1-1)}{ 1-sp_1 }>0$ is increasing in $p_1\in (1,s^{-1})$. So, using Lemma \ref{lem2-1}, a simple bootstrap argument will prove this lemma.
	\end{proof}
	
	\begin{lemma}\label{lem2-7}
		It holds $$I_0>0.$$
	\end{lemma}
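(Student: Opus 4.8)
The plan is to prove $I_0>0$ by producing a single admissible function with strictly positive energy. Since $I_0=\sup_{\omega\in\mathcal{A}_0}E_0(\omega)$, it suffices to exhibit some $\omega\in\mathcal{A}_0$ with $E_0(\omega)>0$. The mechanism I would use is a spreading-out (scaling) argument: because $0<s<1$, the interaction kernel $|z|^{2s-2}$ is only mildly singular, so the quadratic part of $E_0$ decays slowly when a fixed mass is spread over larger and larger regions, whereas hypothesis $(\mathbf{H_{2}'})$ forces the convex penalization $\int J(\omega)$ to decay strictly faster; tuning the spreading parameter then makes the quadratic term win.

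Concretely, I would fix the normalized indicator $\omega^\ast:=\frac{\kappa}{\pi}\,1_{B(0,1)}\in L^1\cap L^\infty(\mathbb{R}^2)$, which lies in $\mathcal{A}_0$, and for $\lambda>0$ set $\omega_\lambda(x):=\lambda^2\omega^\ast(\lambda x)$. One checks at once that $\omega_\lambda\geq0$ and $\int_{\mathbb{R}^2}\omega_\lambda\,dx=\kappa$, so $\omega_\lambda\in\mathcal{A}_0$ for every $\lambda>0$. Two elementary changes of variables give the scaling identities
$$\int_{\mathbb{R}^2}\int_{\mathbb{R}^2}\frac{\omega_\lambda(x)\omega_\lambda(y)}{|x-y|^{2-2s}}\,dx\,dy=\lambda^{2-2s}A,\qquad \int_{\mathbb{R}^2}J(\omega_\lambda)\,dx=\pi\lambda^{-2}J\Big(\frac{\kappa\lambda^2}{\pi}\Big),$$
where $A:=\int_{\mathbb{R}^2}\int_{\mathbb{R}^2}\frac{\omega^\ast(x)\omega^\ast(y)}{|x-y|^{2-2s}}\,dx\,dy\in(0,+\infty)$ --- finite since $\omega^\ast$ is bounded with compact support, strictly positive since $\omega^\ast\geq0$ is nontrivial and the kernel is positive --- and the second identity uses $J(0)=0$.

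The key estimate, and the only place where $(\mathbf{H_{2}'})$ is genuinely needed, is that $J(t)=o(t^{2-s})$ as $t\to0_+$: from $\lim_{t\to0_+}J'(t)t^{s-1}=0$, for every $\delta>0$ there is $t_\delta>0$ with $J'(\tau)\leq\delta\tau^{1-s}$ for $0<\tau<t_\delta$, and integrating from $0$ (using $J(0)=0$) gives $J(t)\leq\frac{\delta}{2-s}t^{2-s}$ for $0<t<t_\delta$. Hence $\int_{\mathbb{R}^2}J(\omega_\lambda)\,dx=\pi\lambda^{-2}J(\kappa\lambda^2/\pi)=o(\lambda^{2-2s})$ as $\lambda\to0_+$, so
$$E_0(\omega_\lambda)=\frac{c_s}{2}A\,\lambda^{2-2s}-o(\lambda^{2-2s})=\lambda^{2-2s}\Big(\frac{c_sA}{2}-o(1)\Big)>0$$
once $\lambda$ is chosen small enough, and therefore $I_0\geq E_0(\omega_\lambda)>0$. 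I expect the only delicate point to be precisely this near-origin bound $J(t)=o(t^{2-s})$ together with the observation that the resulting rate $o(\lambda^{2-2s})$ is strictly smaller than the quadratic term's rate $\lambda^{2-2s}$; everything else should be routine bookkeeping with the scaling.
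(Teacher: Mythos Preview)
Your proof is correct and follows essentially the same approach as the paper: both use a one-parameter family of rescaled indicator functions (your $\omega_\lambda(x)=\lambda^2\omega^\ast(\lambda x)$ with $\lambda\to 0_+$ is exactly the paper's $\rho_r(x)=r^{-2}\rho(r^{-1}x)$ with $r\to+\infty$, under $\lambda=1/r$), observe that the interaction term scales like $\lambda^{2-2s}$, and use $(\mathbf{H_2'})$ to show the $J$-term is $o(\lambda^{2-2s})$. Your derivation of $J(t)=o(t^{2-s})$ from the hypothesis on $J'$ is spelled out more explicitly than in the paper, but the argument is the same.
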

	\begin{proof}
		We take a   function $\rho:=1_{B(0,\sqrt{\kappa/\pi})}$ and define $\rho_r(x):=(  r^{-2})\rho( r^{-1} x)$. It can be seen that $\rho_r\in \mathcal{A}_0$.
		
		By the hypothesis $(\mathbf{H_{2}'})$, we find
		$$\int_{\mathbb{R}^2} J(\rho_r)=o(1) \int_{\mathbb{R}^2}  (\rho_r)^{2-s}=  o(r^{2s-2}), \ \ \ \text{as}\ r\to+\infty. $$
		On the other hand, a change of variables gives
		$$\int_{\mathbb{R}^2} \rho_r \mathcal{G}_s \rho_r=  r^{2s-2}\int_{\mathbb{R}^2} \rho  \mathcal{G}_s \rho.$$
		Thus, we can take a constant $r_1$ sufficiently large such that $$I_0\geq E_0(\rho_{r_1}) \geq c_3 r_1^{2s-2}>0.$$
	\end{proof}
	
	\begin{corollary}\label{lem2-8}
		If $\omega_0\in \mathcal{A}_0 $ is a maximizer of $E_0$ over $\mathcal{A}_0$, then  it holds
		\begin{equation}\label{2-11}
			\|\mathcal{G}_s\omega_0\|_\infty \geq 2 I_0\kappa^{-1}.
		\end{equation}
	\end{corollary}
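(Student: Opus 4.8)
The plan is to combine the nonnegativity of $J$ with the positivity of $I_0$ from Lemma \ref{lem2-7}. Since $J\geq 0$ by hypothesis $(\mathbf{H_{1}'})$, the very definition of $E_0$ gives, for the maximizer $\omega_0$,
\begin{equation*}
	I_0=E_0(\omega_0)=\frac{c_s}{2}\int_{\mathbb{R}^2}\int_{\mathbb{R}^2}\frac{\omega_0(x)\omega_0(y)}{|x-y|^{2-2s}}\,dxdy-\int_{\mathbb{R}^2}J(\omega_0)\,dx\leq \frac{1}{2}\int_{\mathbb{R}^2}\omega_0(x)\,\mathcal{G}_s\omega_0(x)\,dx,
\end{equation*}
where the last identity is just the definition of $\mathcal{G}_s$ together with Tonelli's theorem (all integrands are nonnegative).

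Next I would estimate the remaining quadratic term crudely, using $\omega_0\geq 0$, the $L^\infty$ bound on $\mathcal{G}_s\omega_0$, and the mass constraint $\int_{\mathbb{R}^2}\omega_0=\kappa$ defining $\mathcal{A}_0$:
\begin{equation*}
	\frac{1}{2}\int_{\mathbb{R}^2}\omega_0(x)\,\mathcal{G}_s\omega_0(x)\,dx\leq \frac{1}{2}\,\|\mathcal{G}_s\omega_0\|_\infty\int_{\mathbb{R}^2}\omega_0(x)\,dx=\frac{\kappa}{2}\,\|\mathcal{G}_s\omega_0\|_\infty.
\end{equation*}
Here one should check that $\|\mathcal{G}_s\omega_0\|_\infty<+\infty$, so that this quantity is meaningful: by Corollary \ref{lem2-6} we have $\omega_0\in L^1\cap L^\infty(\mathbb{R}^2)$, hence $\omega_0\in L^1\cap L^p$ for some $p>s^{-1}$, and Lemma \ref{lem2-1} then yields $\mathcal{G}_s\omega_0\in L^\infty(\mathbb{R}^2)$. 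Chaining the two displayed estimates gives $I_0\leq \tfrac{\kappa}{2}\|\mathcal{G}_s\omega_0\|_\infty$, which is exactly \eqref{2-11}; the bound is nontrivial precisely because $I_0>0$ by Lemma \ref{lem2-7}.

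There is essentially no obstacle in this argument — it is a one-line consequence of $J\geq 0$ and the $L^1$–$L^\infty$ pairing. The only point deserving a remark is ensuring $\mathcal{G}_s\omega_0$ is a genuine bounded function, which is immediate from the uniform sup-norm bound in Corollary \ref{lem2-6}.
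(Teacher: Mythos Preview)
Your argument is correct and follows exactly the same route as the paper's proof: drop the nonnegative $J$-term to get $I_0\leq \tfrac{1}{2}\int\omega_0\,\mathcal{G}_s\omega_0$, then use the $L^1$--$L^\infty$ pairing with the mass constraint. Your additional check that $\mathcal{G}_s\omega_0\in L^\infty$ via Corollary~\ref{lem2-6} and Lemma~\ref{lem2-1} is a welcome bit of care that the paper leaves implicit.
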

	\begin{proof}
		One has
		$$I_0 = E_0(\omega_0)\leq \frac{1}{2}\int_{\mathbb{R}^2} \omega_0 \mathcal{G}_s \omega_0 dx  \leq \frac{\kappa}{2} \|\mathcal{G}_s\omega_0\|_\infty,$$
		which implies \eqref{2-11} and completes the proof.
	\end{proof}
	
	\begin{lemma}\label{lem2-9}
		There is a constant  $\eta>0$ such that if $\omega_0\in \mathcal{A}_0 $ is a maximizer of $E_0$ over $\mathcal{A}_0$, then we have
		\begin{equation}\label{2-12}
			\sup_{x\in \mathbb{R}^2}\int_{|x-y|<1} \omega_0(y)dy \geq \eta>0.
		\end{equation}
	\end{lemma}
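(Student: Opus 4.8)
The plan is to argue by contradiction, extracting from the energy bound $E_0(\omega_0)=I_0>0$ (Lemma \ref{lem2-7}) a quantitative lower bound on the mass concentration of $\omega_0$ at unit scale. Suppose the conclusion fails along a sequence of maximizers $\omega_0^{(n)}$ with $\sup_{x}\int_{|x-y|<1}\omega_0^{(n)}(y)\,dy\to 0$. The idea is that a small local $L^1$-mass, combined with the uniform $L^\infty$ bound of Corollary \ref{lem2-6} and the uniform $L^{2-s}$ bound of Lemma \ref{lem2-4}, forces the quadratic (interaction) term $\int \omega_0^{(n)}\mathcal G_s\omega_0^{(n)}$ to be small, which contradicts $E_0(\omega_0^{(n)})=I_0>0$ since the remaining term $-\int J(\omega_0^{(n)})\le 0$.

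The key step is the decomposition of the Riesz kernel into a near part and a far part, $|z|^{2s-2}=|z|^{2s-2}\mathbbm 1_{\{|z|<1\}}+|z|^{2s-2}\mathbbm 1_{\{|z|\ge 1\}}$, exactly as in Lemma \ref{lem2-1}. For the near part, write
\begin{equation*}
\int_{\mathbb R^2}\int_{|x-y|<1}\frac{c_s\,\omega_0(x)\omega_0(y)}{|x-y|^{2-2s}}\,dx\,dy
\le \|\omega_0\|_\infty\int_{\mathbb R^2}\omega_0(x)\Big(\int_{|x-y|<1}\frac{c_s}{|x-y|^{2-2s}}\,dy\Big)dx
\le C\|\omega_0\|_\infty\kappa,
\end{equation*}
which is merely bounded, not small — so this crude estimate must be refined. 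The refinement is to use the smallness hypothesis: on $\{|x-y|<1\}$ bound one factor of $\omega_0(y)$ against $\sup_x\int_{|x-y|<1}\omega_0(y)\,dy=:\eta_n\to 0$ after a further dyadic splitting of the annulus $\{2^{-k-1}\le|x-y|<2^{-k}\}$, on each of which $\int_{|x-y|<2^{-k}}\omega_0(y)\,dy\le \eta_n$ (covering the ball of radius $2^{-k}$ by boundedly many unit balls is not available, so instead one uses that for $r<1$, $\int_{|x-y|<r}\omega_0\le \min\{\pi r^2\|\omega_0\|_\infty,\eta_n\}$, interpolate, and sum $\sum_k 2^{k(2-2s)}\min\{2^{-2k}\|\omega_0\|_\infty,\eta_n\}$, which is $O(\eta_n^{1-s})$ by optimizing the cutoff $k$). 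For the far part, Young's inequality as in Lemma \ref{lem2-1} with an exponent pair adapted to $L^1\cap L^{2-s}$ gives $\int\int_{|x-y|\ge1}|z|^{2s-2}\omega_0(x)\omega_0(y)\le C\|\omega_0\|_1\|\mathcal G_s^{far}\omega_0\|_\infty$; but this is again only bounded. To make the far part small one instead observes that the support of a maximizer, being radially decreasing with $\|\omega_0\|_\infty\le c_1$ and mass $\kappa$, cannot be too spread out — or, more simply, one keeps the far part bounded by a fixed constant $C_*$ and notes $\frac12\int\omega_0\mathcal G_s\omega_0\le \frac12(C\eta_n^{1-s}+C_{\mathrm{far}})$; this does not immediately contradict $I_0>0$ unless the far contribution is also controlled.

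The cleanest route, which I would adopt, avoids the far-part difficulty entirely: since $\omega_0$ is radially non-increasing about its center $x_0$ (Lemma \ref{lem2-3}) with $\|\omega_0\|_\infty\le c_1$, the level set $\{\omega_0>\lambda\}$ is a disk of area at most $\kappa/\lambda$, so $\omega_0\le c_1\mathbbm 1_{B(x_0,\rho)}$ with $\pi\rho^2\|\omega_0\|_\infty\le$ (something) is false in general — instead use $\int_{B(x_0,r)}\omega_0\ge\kappa/2$ for $r=r(\kappa,c_1)$ fixed, hence $\int\omega_0\mathcal G_s\omega_0\ge c_s(2r)^{2s-2}\cdot(\kappa/2)^2$ is a lower bound, not what we want. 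So the correct contradiction comes from the lower bound on $I_0$ via a competitor and the upper bound $E_0(\omega_0)\le\frac12\int\omega_0\mathcal G_s\omega_0$: if $\sup_x\int_{|x-y|<1}\omega_0\le\eta_n$, cover $\mathrm{spt}(\omega_0)$ — which has finite diameter $D$ depending only on $\kappa,c_1$ because $\omega_0$ is radial decreasing — by $O(D^2)$ unit balls, so that $\|\omega_0\|_1\le CD^2\eta_n\to0$, contradicting $\|\omega_0\|_1=\kappa>0$. Thus the heart of the argument is establishing a uniform diameter bound for the support of any maximizer; this follows from radial monotonicity plus the $L^\infty$ bound ($\{\omega_0\ge c_1/2\}$ has area $\ge$ a positive constant once one shows $\|\omega_0\|_\infty$ is bounded below, or more carefully from the Euler–Lagrange relation \eqref{2-7}: on the support $\mathcal G_s\omega_0=J'(\omega_0)+\mu_0\ge\mu_0>0$, and $\mathcal G_s\omega_0(x)\le C\|\omega_0\|_1^s\|\omega_0\|_{2-s}^{1-s}\cdot(\text{decay in }|x|)$ forces $x$ to lie in a bounded set). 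I expect this uniform support-diameter bound — genuinely using $\mu_0>0$ from Lemma \ref{lem2-5} and the decay of $\mathcal G_s\omega_0$ at infinity from Lemma \ref{lem2-1} — to be the main obstacle; once it is in hand, the covering argument closes the contradiction immediately.
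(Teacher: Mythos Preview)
Your final approach has a genuine circularity. You propose to close the contradiction by first obtaining a \emph{uniform} diameter bound for the support of any maximizer, and you point to $\mu_0>0$ from Lemma~\ref{lem2-5} together with decay of $\mathcal G_s\omega_0$ as the input. But Lemma~\ref{lem2-5} only says that $\mu_0>0$ for each individual maximizer; it gives no uniform lower bound $\mu_0\ge\mu_*$ independent of $\omega_0$. In the paper's logical order, that uniform lower bound is Lemma~\ref{lem2-10}, whose proof \emph{uses} Lemma~\ref{lem2-9}, and the uniform diameter bound is Lemma~\ref{lem2-11}, which in turn relies on Lemma~\ref{lem2-10}. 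Along a hypothetical sequence $\omega_0^{(n)}$ with $\eta_n\to 0$ you cannot rule out $\mu_0^{(n)}\to 0$ and $\mathrm{diam}(\mathrm{spt}(\omega_0^{(n)}))\to\infty$ without already knowing some concentration, which is precisely what you are trying to prove. (Radial monotonicity plus $\|\omega_0\|_\infty\le c_1$ alone cannot help: the profiles $\frac{\kappa}{\pi R^2}\mathbbm 1_{B(0,R)}$ satisfy both but have arbitrarily large support.)

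The paper avoids this circularity by working with a \emph{pointwise} bound on $\mathcal G_s\omega_0$ rather than the double integral, and by comparing against Corollary~\ref{lem2-8}, $\|\mathcal G_s\omega_0\|_\infty\ge 2I_0\kappa^{-1}$, which follows directly from $E_0(\omega_0)=I_0>0$ and needs no diameter or $\mu_0$ information. Concretely, for any $x$ one splits
\[
\mathcal G_s\omega_0(x)=\int_{|x-y|<1}+\int_{1\le|x-y|<r}+\int_{|x-y|\ge r}=:A_1+A_2+A_3,
\]
with $r=(c_s\kappa^2/I_0)^{1/(2-2s)}$ chosen so that $A_3\le c_s\kappa r^{2s-2}=I_0\kappa^{-1}$. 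The near part $A_1$ is controlled by Lemma~\ref{lem2-1} applied to $\omega_0\mathbbm 1_{B(x,1)}$, using $\|\omega_0\mathbbm 1_{B(x,1)}\|_1\le\eta_0$ and $\|\omega_0\|_\infty\le c_1$, giving $A_1\le C(\eta_0^a+\eta_0^b)$. The intermediate annulus is covered by $O(r^2)$ unit balls, so $A_2\le Cr^2\eta_0$. Taking the supremum in $x$ and using Corollary~\ref{lem2-8} yields $C(\eta_0^a+\eta_0^b+\eta_0)\ge I_0\kappa^{-1}$, hence a uniform lower bound on $\eta_0$. The three-scale split with a \emph{fixed, explicit} intermediate radius $r$ is exactly the idea you were missing to handle the far part without appealing to support control.
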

	\begin{proof}
		Denote $\eta_0:=\sup_{x\in \mathbb{R}^2}\int_{|x-y|<1} \omega_0(y)dy$. Let $r:=(c_s I_0^{-1}\kappa^2)^{\frac{1}{2-2s}}$. For any $x\in \mathbb{R}^2$, we calculate
		\begin{equation*}
			\begin{split}
				\mathcal{G}_s \omega_0(x)&=\int_{|x-y|<1} \frac{c_s\omega_0(y)}{|x-y|^{2-2s}} dy+\int_{1\leq |x-y|<r}\frac{c_s\omega_0(y)}{|x-y|^{2-2s}} dy+\int_{|x-y|\geq r} \frac{c_s\omega_0(y)}{|x-y|^{2-2s}} dy\\
				&=: A_1+A_2+A_3.
			\end{split}
		\end{equation*}
		For the first term $A_1$, we use Lemma  \ref{lem2-1} and Corollary \ref{lem2-6} to obtain
		$$A_1\leq C(\eta_0^a+\eta_0^b),\ \ \ \text{for some } \ a,b\in (0,1).$$
		For the second term $A_2$, noticing that the annulus $\{y \ | \ 1<|x-y|<r\}$ can be covered by $\leq Cr^2$ disks with radius $1$, we find $$A_2\leq C c_s r^2 \eta_0.$$
		Now, by the choice of $r$, the last integral $A_3$ can be estimated by $$A_3\leq c_s r^{2s-2} \kappa=I_0\kappa^{-1}.$$
		So, by the arbitrariness of $x$ and Corollary \ref{lem2-8}, we arrive at
		$$C(\eta_0^a+\eta_0^b+\eta_0)\geq I_0\kappa^{-1},$$ which implies \eqref{2-12} and completes the proof.
	\end{proof}
	
	Now we show that the Lagrange multiplier $\mu_0$ is uniformly bounded from below.
	\begin{lemma}\label{lem2-10}
		There is a constant  $\mu_*>0$ such that if $\omega_0\in \mathcal{A}_0 $ is a maximizer of $E_0$ over $\mathcal{A}_0$, then there holds
		\begin{equation}\label{2-13}
			\mu_0\geq \mu_*.
		\end{equation}
	\end{lemma}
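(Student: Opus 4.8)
The plan is to combine the Euler--Lagrange identity \eqref{2-7} with the uniform concentration bound of Lemma \ref{lem2-9} and the uniform $L^\infty$ bound of Corollary \ref{lem2-6}. After a translation we may assume, by Lemma \ref{lem2-3}, that $\omega_0$ is radially decreasing about the origin. Since for a symmetric decreasing function the mass inside a ball of fixed radius is largest when the ball is centred at the centre of symmetry, Lemma \ref{lem2-9} gives $\int_{B(0,1)}\omega_0\ge\eta$. Together with $\|\omega_0\|_\infty\le c_1$ this yields a \emph{core estimate}: there are constants $m_0>0$ and $\rho_*>0$, depending only on $\eta$ and $c_1$, with $\omega_0\ge m_0$ on $B(0,\rho_*)$; consequently, for every $x\in\mathbb{R}^2$,
\begin{equation*}
	\mathcal{G}_s\omega_0(x)\ \ge\ \int_{B(0,\rho_*)}\frac{c_sm_0}{|x-y|^{2-2s}}\,dy\ \ge\ \frac{c_sm_0\pi\rho_*^2}{(|x|+\rho_*)^{2-2s}}.
\end{equation*}

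\textbf{Choice of test point.} Next I would evaluate \eqref{2-7} at a point lying near the outer edge of $\mathrm{supp}(\omega_0)$, where $\omega_0$ (hence $J'(\omega_0)$, by monotonicity of $J'$) is small while $\mathcal{G}_s\omega_0$ is still controlled from below by the core. Fix a small $\delta>0$ and set $\bar r=\bar r(\delta):=(\kappa/\pi\delta)^{1/2}$; by radial monotonicity and $\int\omega_0=\kappa$ one has $\omega_0\le\delta$ on $\{|x|\ge\bar r\}$. Let $R_0\in(0,+\infty]$ denote the radius of $\mathrm{supp}(\omega_0)$. If $R_0>\bar r$ (in particular if the support is unbounded), take $x$ with $|x|=\bar r$, so that $0<\omega_0(x)\le\delta$, and \eqref{2-7} together with the core estimate gives
\begin{equation*}
	\mu_0=\mathcal{G}_s\omega_0(x)-J'(\omega_0(x))\ \ge\ \frac{c_sm_0\pi\rho_*^2}{(\bar r+\rho_*)^{2-2s}}-J'(\delta)\ =:\ \beta(\delta)-J'(\delta).
\end{equation*}
If $R_0\le\bar r$, then $\mathrm{supp}(\omega_0)$ is bounded; taking $x^*\in\partial B(0,R_0)$ and using that $\mathcal{G}_s\omega_0$ is continuous (since $\omega_0\in L^1\cap L^\infty$) together with \eqref{2-7} on both sides of $\partial B(0,R_0)$, one gets $\mathcal{G}_s\omega_0(x^*)=\mu_0$, hence $\mu_0\ge c_sm_0\pi\rho_*^2(R_0+\rho_*)^{2s-2}\ge\beta(\delta)$. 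In either case $\mu_0\ge\beta(\delta)-J'(\delta)$.

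\textbf{Conclusion.} Finally, letting $\delta\to0^+$: since $\bar r(\delta)\to\infty$ we have $\beta(\delta)\sim C_*\delta^{1-s}$ for a universal constant $C_*>0$, while hypothesis $(\mathbf{H_{2}'})$, i.e. $\lim_{t\to0^+}J'(t)t^{s-1}=0$, gives $J'(\delta)=o(\delta^{1-s})$. Hence $\beta(\delta)-J'(\delta)>0$ once $\delta$ is small; fixing such a $\delta=\delta_*$ gives $\mu_*:=\beta(\delta_*)-J'(\delta_*)>0$, which depends only on $s$, $\kappa$, the universal constants $\eta,c_1$ and $J$ — not on the particular maximizer — and this proves \eqref{2-13}.

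\textbf{Main obstacle.} The delicate point is that the support radius $R_0$ is not known in advance to be bounded uniformly over all maximizers (compact support of maximizers is established only later, and even then without a uniform radius). The case distinction above is tailored to sidestep this: when the support is large, $\omega_0$ is already at most $\delta$ at the \emph{fixed} radius $\bar r$ by a pure mass argument; when it is small, one uses the boundary of the support directly; in both regimes the chosen point lies within distance $\bar r+\rho_*$ of the core, which is exactly what the core estimate requires. The remaining verifications — that $m_0$, $\rho_*$ and $\bar r(\delta)$ are genuinely independent of $\omega_0$, that $\{\omega_0>0\}$ is a ball so the selected point lies in $\{\omega_0>0\}$ (or on its boundary), and the elementary asymptotics of $\beta(\delta)$ — are routine.
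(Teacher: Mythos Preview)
Your argument is correct and rests on the same idea as the paper's: evaluate the Euler--Lagrange relation \eqref{2-7} at a point where $\omega_0$ is small (so $J'(\omega_0)=o(\delta^{1-s})$ by $(\mathbf{H_{2}'})$) yet close enough to the mass of $\omega_0$ that $\mathcal{G}_s\omega_0$ admits a lower bound of order $\delta^{1-s}$. The execution, however, is more elaborate than necessary. The paper avoids both the core estimate and the case distinction: having established $\int_{B(0,1)}\omega_0\ge\eta/2$ from Lemma~\ref{lem2-9} and radial monotonicity, it simply notes that for any large $r$ the ball $B(0,r)$ contains a point $x^r$ with $\omega_0(x^r)\le\kappa/(\pi r^2)$ (an averaging argument, requiring no $L^\infty$ bound and no knowledge of whether $x^r$ lies in the support), and then $\mathcal{G}_s\omega_0(x^r)\ge c_s(2r)^{2s-2}\int_{B(0,1)}\omega_0$. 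Since \eqref{2-7} gives $\mu_0\ge\mathcal{G}_s\omega_0-J'(\omega_0)$ in both cases (equality on $\{\omega_0>0\}$, inequality on $\{\omega_0=0\}$), there is no need to worry about whether $x^r$ belongs to the support. Your detour through Corollary~\ref{lem2-6} and the boundary analysis in Case~2 is valid---in fact your continuity argument at $\partial B(0,R_0)$ incidentally forces $\omega_0(R_0^-)=0$, so the claimed equality $\mathcal{G}_s\omega_0(x^*)=\mu_0$ does hold---but it is not needed: the paper's route bypasses the support radius entirely.
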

	\begin{proof}
		By the previous lemma, one can find a point $x_\eta\in \mathbb{R}^2$ such that $$\int_{|x_\eta-y|<1} \omega_0(y)dy \geq \frac{\eta }{2}>0.$$ In view of Lemma \ref{lem2-3}, we may assume that $\omega_0$ is radially symmetric with respect to the origin and non-increasing. Thus, we get $$\int_{| y|<1} \omega_0(y)dy \geq \int_{|x_\eta-y|<1} \omega_0(y)dy\geq \frac{\eta}{2}.$$
		
		For any $r>1$ large, there is a point $x^r\in B(0,r)$ such that $\omega_0(x^r)\leq \pi^{-1}\kappa r^{-2}.$ Then we infer from the hypothesis $(\mathbf{H_{2}'})$ that
		$$J'(\omega_0(x^r))=o(r^{2s-2}), \ \  \text{as}\ r\to+\infty.$$
		On the other hand, we have
		$$\mathcal{G}_s\omega_0(x^r)\geq \frac{c_s}{(2r)^{2-2s}} \int_{B(0,1)} \omega_0\geq 4^{s-2}c_s \eta  r^{2s-2}.$$
		Thus, by \eqref{2-7}, we can take a large constant $r_0$  such that
		$$\mu_0\geq \mathcal{G}_s\omega_0(x^{r_0})-J'(\omega_0(x^{r_0}))\geq (4^{s-2}c_s \eta_0+o(1)) r_0^{2s-2}\geq 4^{s-3}c_s \eta_0r_0^{2s-2}=:\mu_*>0.$$
		The proof is therefore finished.
	\end{proof}

	\begin{lemma}\label{lem2-11}
		There exists a constant $R_*>0$ such that if $\omega_0\in \mathcal{A}_0 $ is a maximizer of $E_0$ over $\mathcal{A}_0 $, then the diameter of the support of $\omega_0$ is less than $2R_*$. That is,
		\begin{equation}\label{2-14}
			\text{diam}(\text{spt}(\omega_0))\leq 2 R_*.
		\end{equation}
	\end{lemma}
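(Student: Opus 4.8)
The plan is to combine the Euler--Lagrange equation \eqref{2-7}, the radial monotonicity of the maximizer, and the uniform lower bound $\mu_0\ge\mu_*>0$ from Lemma \ref{lem2-10}. By Lemma \ref{lem2-3}, after a translation we may assume $\omega_0$ is radially symmetric and non-increasing about the origin, so it suffices to produce $R_*>0$ depending only on $s$ and $\kappa$ with $\omega_0\equiv 0$ a.e.\ on $\{|x|\ge R_*\}$; then $\text{spt}(\omega_0)\subset\overline{B(0,R_*)}$ and \eqref{2-14} follows with this uniform $R_*$.

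First I would record the pointwise bound $\omega_0(y)\le\kappa\pi^{-1}|y|^{-2}$ for a.e.\ $y\ne 0$: radial monotonicity gives $\omega_0\ge\omega_0(y)$ on $B(0,|y|)$, so $\kappa=\int_{\mathbb{R}^2}\omega_0\ge\omega_0(y)\,|B(0,|y|)|$. Next, for $|x|=r>0$ I would estimate $\mathcal{G}_s\omega_0(x)$ by splitting the convolution according to the size of $|y|$ relative to $r$:
\[
\mathcal{G}_s\omega_0(x)=\Big(\int_{|y|<r/2}+\int_{r/2\le|y|<2r}+\int_{|y|\ge 2r}\Big)\frac{c_s\,\omega_0(y)}{|x-y|^{2-2s}}\,dy .
\]
On the first region $|x-y|>r/2$, and on the third region $|x-y|\ge|y|-r\ge r$, so each of these contributes at most $c_s\,2^{2-2s}\kappa\,r^{2s-2}$. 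On the middle region --- where the kernel may be singular --- I would invoke the bound of the first step in the form $\omega_0(y)\le 4\kappa\pi^{-1}r^{-2}$ (valid since $|y|\ge r/2$) together with $\{r/2\le|y|<2r\}\subset B(x,3r)$ to obtain a contribution at most $\frac{4c_s\kappa}{\pi r^2}\int_{B(x,3r)}|x-y|^{2s-2}\,dy=\frac{4c_s\kappa}{\pi r^2}\cdot\frac{\pi}{s}(3r)^{2s}$. Since $2s-2<0$, summing the three bounds yields $\mathcal{G}_s\omega_0(x)\le C_*|x|^{2s-2}$ for all $x\ne 0$, where $C_*$ depends only on $s$ and $\kappa$, hence is the same for every maximizer.

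To close, I would use that $(\mathbf{H_{2}'})$ forces $J'(0^+)=0$ and that $J$ is convex, so $J'$ is non-decreasing and $J'\ge 0$; thus on $\{\omega_0>0\}$ the equation \eqref{2-7} gives $\mathcal{G}_s\omega_0(x)=\mu_0+J'(\omega_0(x))\ge\mu_0\ge\mu_*$. Comparing with the decay estimate, any $x$ with $\omega_0(x)>0$ must obey $\mu_*\le C_*|x|^{2s-2}$, i.e.\ $|x|\le(C_*/\mu_*)^{1/(2-2s)}=:R_*$. Therefore $\omega_0$ vanishes a.e.\ outside $\overline{B(0,R_*)}$, which gives $\text{diam}(\text{spt}(\omega_0))\le 2R_*$ with $R_*$ independent of the maximizer.

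The main obstacle is the middle region $\{r/2\le|y|<2r\}$: there $|x-y|$ may be of order one, and a crude estimate only bounds the contribution by the tail mass $\int_{\{|y|\gtrsim r\}}\omega_0$, which I have no uniform control on over the family of maximizers. The point that makes the estimate uniform is to trade that tail mass for the pointwise decay $\omega_0\lesssim|y|^{-2}$ coming from radial monotonicity and the mass constraint; the near-diagonal singularity of the kernel is then harmless because $2s-2<0$ makes $|z|^{2s-2}$ locally integrable in $\mathbb{R}^2$.
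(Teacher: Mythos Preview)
Your proof is correct. Both your argument and the paper's rest on the same skeleton---radial monotonicity gives a pointwise decay $\omega_0(y)\lesssim |y|^{-2}$, this forces $\mathcal{G}_s\omega_0(x)\to 0$ as $|x|\to\infty$ uniformly over maximizers, and then comparison with $\mu_0\ge\mu_*$ via \eqref{2-7} truncates the support---but the decay estimate is carried out differently. The paper splits the convolution by $|x-y|$ (following the template of Lemma~\ref{lem2-9}) and invokes Lemma~\ref{lem2-1} to handle the near-diagonal piece, arriving at a bound of the form $C(|x|^{-2a}+|x|^{-2b}+r^2|x|^{-2})+c_s\kappa r^{2s-2}$ which requires choosing $r$ first and then $|x|$ large. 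You split by $|y|$ relative to $|x|=r$ and exploit the pointwise decay directly to obtain the sharper and cleaner $\mathcal{G}_s\omega_0(x)\le C_*|x|^{2s-2}$ in one stroke; this avoids Lemma~\ref{lem2-1} entirely and yields an explicit $R_*=(C_*/\mu_*)^{1/(2-2s)}$. Your handling of the middle annulus $\{r/2\le |y|<2r\}$---trading tail mass for the $L^\infty$ bound $\omega_0\le 4\kappa/(\pi r^2)$ and integrating the kernel over $B(x,3r)$---is the key simplification.
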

	\begin{proof}
		
		Without loss of generality, we may assume that $\omega_0$ is radially symmetric and non-increasing with respect to the origin. Since $\int \omega_0=\kappa<+\infty$ and $\omega_0(x)=\omega_0(|x|)$ is   non-increasing in $|x|$, we have $$\int_{|x-y|<1} \omega_0(y)\leq C|x|^{-2} $$ for $|x|$ large. Then, through similar calculations as the proof of Lemma \ref{lem2-9}, for large $|x|$ and any constant $r$, we get
		\begin{equation*}
			\begin{split}
				\mathcal{G}_s \omega_0(x)&=\int_{|x-y|<1} \frac{c_s\omega_0(y)}{|x-y|^{2-2s}} dy+\int_{1\leq |x-y|<r}\frac{c_s\omega_0(y)}{|x-y|^{2-2s}} dy+\int_{|x-y|\geq r} \frac{c_s\omega_0(y)}{|x-y|^{2-2s}} dy\\
				&\leq C(|x|^{-2a}+|x|^{-2b}+r^2|x|^{-2})+c_s\kappa r^{2s-2},
			\end{split}
		\end{equation*}
		which will tend  to $0$ if we first take $|x|\to+\infty$   then $r\to+\infty$. Fixed $r$ such that $c_s\kappa r^{2s-2}\leq \frac{\mu_*}{2}$. Then, one can find a constant $R_*$ such that $\mathcal{G}_s \omega_0(x)< \mu_* $ whenever $|x|>R_*$. Take $R_4=\max\{R_3, c_1, R_*+1\}$. We infer from Lemmas \ref{lem2-3} and \ref{lem2-10}  that $\omega_0(x)=0$ for any $|x|>R_*$. The proof of this lemma is hence completed.
	\end{proof}

	Next, we further study the properties of the maximizers in the special case $J(t)=L t^{1+\frac{1}{p}}$ for some constants $L>0$ and $p\in(0, \frac{1}{1-s})$. We first determine the Lagrange multiplier $\mu_0$. Recall that we denote $I_0=\sup_{\omega\in \mathcal{A}_0} E_0(\omega)\in(0,+\infty)$ to be maximum value of $E_0$.
	\begin{lemma}\label{lem2-12}
		Suppose that $J(t)=L t^{1+\frac{1}{p}}$ for some constants $L>0$ and $p\in(0,  \frac{1}{1-s})$. Let $\omega_0$ be a maximizer of $E_0$ over $\mathcal{A}_0$ with \eqref{2-7} for some $\mu_0$. Then, we have
		\begin{equation}\label{2-15}
			\mu_0\kappa =C_{s,p }I_0,
		\end{equation}
		for some constant $C_{s,p }$ depending only on $s,p $.
	\end{lemma}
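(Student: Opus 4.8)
The plan is to exploit the homogeneity of $J(t)=Lt^{1+1/p}$, which makes $J'(t) = L(1+\tfrac1p)t^{1/p}$ and hence $J'(\omega_0)\omega_0 = (1+\tfrac1p)J(\omega_0)$ pointwise. First I would multiply the Euler--Lagrange relation from Lemma~\ref{lem2-3} by $\omega_0$ and integrate: on $\{\omega_0>0\}$ one has $\mathcal G_s\omega_0 - J'(\omega_0) = \mu_0$, so multiplying by $\omega_0\ge 0$ and integrating over $\mathbb R^2$ (the contribution of $\{\omega_0=0\}$ vanishes) yields
\begin{equation*}
\int_{\mathbb R^2}\omega_0\,\mathcal G_s\omega_0\,dx - \int_{\mathbb R^2}J'(\omega_0)\omega_0\,dx = \mu_0\int_{\mathbb R^2}\omega_0\,dx = \mu_0\kappa .
\end{equation*}
Using $J'(\omega_0)\omega_0 = (1+\tfrac1p)J(\omega_0)$, this becomes $\int\omega_0\mathcal G_s\omega_0 - (1+\tfrac1p)\int J(\omega_0) = \mu_0\kappa$. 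I should check that all integrals are finite: $\int J(\omega_0)<\infty$ and $\int\omega_0\mathcal G_s\omega_0<\infty$ follow from Lemmas~\ref{lem2-4}, \ref{lem2-2} and Corollary~\ref{lem2-6} (boundedness and compact support of $\omega_0$), so the multiplication and integration are legitimate.

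Next I would bring in the definition of the energy $I_0 = E_0(\omega_0) = \tfrac12\int\omega_0\mathcal G_s\omega_0 - \int J(\omega_0)$, which gives a second linear relation between the two quantities $X:=\int\omega_0\mathcal G_s\omega_0$ and $Y:=\int J(\omega_0)$, namely $\tfrac12 X - Y = I_0$. Together with $X - (1+\tfrac1p)Y = \mu_0\kappa$ this is a linear system in $X,Y$. Solving it expresses $\mu_0\kappa$ as a linear combination of $I_0$; eliminating $X = 2I_0 + 2Y$ gives $2I_0 + 2Y - (1+\tfrac1p)Y = \mu_0\kappa$, i.e. $\mu_0\kappa = 2I_0 + (1-\tfrac1p)Y = 2I_0 - \tfrac{1-p}{p}Y$. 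This still contains $Y$, so one more relation is needed to pin down $Y$ in terms of $I_0$.

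The extra relation comes from a scaling (Pohozaev-type) identity, obtained by testing with the dilation family $\omega_0^\lambda(x):=\lambda^2\omega_0(\lambda x)$, which preserves the constraint $\int\omega_0^\lambda = \kappa$ for all $\lambda>0$. One computes $\int\omega_0^\lambda\mathcal G_s\omega_0^\lambda\,dx = \lambda^{2-2s}\int\omega_0\mathcal G_s\omega_0$ and $\int J(\omega_0^\lambda)\,dx = \lambda^{2/p}\int J(\omega_0)$ (using $J(\lambda^2 t)=\lambda^{2(1+1/p)}J(t)$ and the Jacobian $\lambda^{-2}$). Hence $g(\lambda):=E_0(\omega_0^\lambda) = \tfrac12\lambda^{2-2s}X - \lambda^{2/p}Y$ attains its maximum at $\lambda=1$ since $\omega_0$ is a maximizer; setting $g'(1)=0$ gives $(1-s)X = \tfrac2p Y$, i.e. $Y = \tfrac{p(1-s)}{2}X$. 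Combined with $\tfrac12 X - Y = I_0$ this gives $X\big(\tfrac12 - \tfrac{p(1-s)}{2}\big) = I_0$, so $X = \dfrac{2I_0}{1-p(1-s)}$ (note $1-p(1-s)>0$ because $p<\tfrac1{1-s}$), and then $Y = \dfrac{p(1-s)I_0}{1-p(1-s)}$. Substituting back into $\mu_0\kappa = X - (1+\tfrac1p)Y$ yields $\mu_0\kappa = C_{s,p}I_0$ with an explicit constant $C_{s,p}$ depending only on $s$ and $p$.

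The main obstacle is justifying the Pohozaev identity rigorously — specifically, that $\lambda\mapsto E_0(\omega_0^\lambda)$ is differentiable at $\lambda=1$ with the derivative computed by differentiating under the integral sign, and that $\omega_0^\lambda$ indeed lies in $\mathcal A_0$ (membership in $L^1\cap L^{2-s}$ is clear from $\|\omega_0^\lambda\|_1=\|\omega_0\|_1$ and $\|\omega_0^\lambda\|_{2-s}^{2-s}=\lambda^{2-(2-s)\cdot 0}\cdots$, a finite power of $\lambda$). The differentiability of the two homogeneous terms $\lambda^{2-2s}X$ and $\lambda^{2/p}Y$ in $\lambda$ is trivial once $X,Y<\infty$ are established as above, so the identity $g'(1)=0$ follows from the maximality of $\omega_0$; everything else is elementary linear algebra.
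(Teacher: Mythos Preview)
Your proposal is correct and follows essentially the same route as the paper: both multiply the Euler--Lagrange relation by $\omega_0$ to get $\mu_0\kappa = X - \gamma Y$ (with $\gamma=1+1/p$), and both derive the Pohozaev-type identity $(1-s)X=\tfrac{2}{p}Y$ by differentiating $E_0$ along the mass-preserving dilation family $\lambda^2\omega_0(\lambda x)$ (the paper writes this as $t^{-2}\omega_0(t^{-1}x)$), then combine with $\tfrac12 X-Y=I_0$. The only difference is cosmetic: you solve the $2\times 2$ linear system explicitly for $X,Y$ before substituting, whereas the paper substitutes directly and records the constant as $A_\gamma=\frac{2-\gamma-s\gamma}{2-s-\gamma}$, which agrees with your expression.
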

	\begin{proof}
		Let $\gamma:=1+1/p$. We take a family of functions $(\omega_0)_t(x):=t^{-2}\omega_0(t^{-1}x)$. By changing of variables, we find
		$$E_0((\omega_0)_t)=\frac{t^{2s-2}}{2}\int_{\mathbb{R}^2} \omega_0\mathcal{G}_s \omega_0- t^{2-2\gamma} \int_{\mathbb{R}^2} J(\omega_0).$$
		Since $\omega_0$ is a maximizer, we have
		\begin{equation}\label{2-16}
			0=\frac{d}{dt}\bigg|_{t=1} E_0((\omega_0)_t)= (s-1)\int_{\mathbb{R}^2} \omega_0\mathcal{G}_s \omega_0-  (2-2\gamma)  \int_{\mathbb{R}^2} J(\omega_0).
		\end{equation}
		By \eqref{2-7}, one has
		$$L\gamma  \omega_0^{\gamma-1}=(\mathcal{G}_s\omega_0-\mu_0)_+.$$
		Multiplying the above equation by $\omega_0$ and integrating, we obtain
		\begin{equation}\label{2-17}
			\kappa\mu_0= \int_{\mathbb{R}^2} \omega_0\mathcal{G}_s \omega_0- \gamma\int_{\mathbb{R}^2} J(\omega_0).
		\end{equation}
		Then \eqref{2-15} follows from simply calculations by using  the definition of $E_0$, \eqref{2-16} and \eqref{2-17}. The constant $C_{s,p }=A_\gamma:=\frac{2-\gamma-s\gamma}{ 2-s-\gamma}$ with $\gamma =1+1/p$.
	\end{proof}

	Lemma \ref{lem2-12} states that the Lagrange multiplier is the same for all maximizers. Set $\psi_0:=\mathcal{G}_s \omega_0$. Then, $\psi_0$ is radially symmetric and  satisfies the  following equation by Lemma \ref{lem2-3}.
	\begin{equation}\label{2-18}
		\begin{cases}
			(-\Delta)^s\psi_0=\left(\frac{p}{L(p+1)}\right)^p (\psi_0-\mu_0)_+^p,\ \  \text{in}\ \mathbb{R}^2,\\
			\psi_0(x)\to0, \ \ \text{as} \  |x|\to+\infty.
		\end{cases}
	\end{equation}
	
	Thus, the uniqueness result in \cite{C20} is applicable for $p\in (1, \frac{1}{1-s})$.  Furthermore, Ao et al. \cite{Asqg} showed the non-degeneracy of the linearized equation for $p\in (1, \frac{1+s}{1-s})$. Note that for $p\in (0,1]$ the uniqueness of maximizers has been proved in \cite{DYY}.  Summarizing these results, we obtain
	\begin{proposition}\label{lem2-13}
		Suppose that $J(t)=L t^{1+\frac{1}{p}}$ for some constants $L>0$ and $p\in(0, \frac{1}{1-s})$. Then, up to translations $E_0$ has a unique maximizer $\omega_0$ over $\mathcal A_0$. Moreover, the following properties hold:
		\begin{itemize}
			\item[(i)] $\omega_0$ is compact supported and radially symmetric and decreasing about some point;
			\item[(ii)] $   \omega_0 =\left(\frac{p}{L(p+1)}\right)^p(\mathcal{G}_s\omega_0-\mu_0)_+^p$ for $\mu_0=\kappa^{-1} C_{s,p }E_0(\omega_0)>0$;
			\item[(iii)] If in addition $p\in (1, \frac{1+s}{1-s})$, then $\omega_0\in C^1$ and the kernal of the linearized operator $$\omega\mapsto \omega-p\left(\frac{p}{L(p+1)}\right)^p\left(\mathcal{G}_s\omega_0-\mu_0\right)_+^{p-1}\mathcal{G}_s\omega$$ in the space $L^1\cap L^\infty(\mathbb{R}^2)$ is $$\text{span}\{  \partial_{x_1}\omega_0, \partial_{x_2}\omega_0\}.$$
		\end{itemize}
	\end{proposition}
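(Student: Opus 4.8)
The plan is to reduce the whole statement to the already-developed machinery of this section together with the known uniqueness and non-degeneracy theory for the fractional ground-state equation \eqref{2-18}, so the first thing I would do is check that $J(t)=Lt^{1+1/p}$ with $p\in(0,\tfrac1{1-s})$ is an admissible choice. Strict convexity is clear, and since $\tfrac1p+s-1>0$ for $p<\tfrac1{1-s}$ one has $J'(t)t^{s-1}=L\tfrac{p+1}{p}t^{1/p+s-1}\to0$ as $t\to0_+$ and $\to+\infty$ as $t\to+\infty$, so $(\mathbf{H_1'})$ and $(\mathbf{H_2'})$ hold (the latter for any prescribed $K$). Hence a maximizer exists by \cite{L84}, and for any maximizer $\omega_0$ Lemmas \ref{lem2-3}, \ref{lem2-5} and \ref{lem2-11} give, after a translation, that $\omega_0$ is radially symmetric, non-increasing and compactly supported and satisfies \eqref{2-7} with a multiplier $\mu_0>0$; this is (i). Substituting $J'(t)=L\tfrac{p+1}{p}t^{1/p}$ (with $J'(0):=0$) into \eqref{2-7} collapses the two cases of \eqref{2-7} into the single integral equation $\omega_0=(\tfrac{p}{L(p+1)})^p(\mathcal G_s\omega_0-\mu_0)_+^p$, while Lemma \ref{lem2-12} identifies $\mu_0=\kappa^{-1}C_{s,p}I_0$, the same number for every maximizer; positivity of $\mu_0$ is consistent with $I_0>0$ from Lemma \ref{lem2-7}, and this proves (ii).

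Next I would pass to the potential $\psi_0:=\mathcal G_s\omega_0$. Applying $(-\Delta)^s=\mathcal G_s^{-1}$ to the integral equation yields exactly \eqref{2-18}, and by Lemma \ref{lem2-3} the solution $\psi_0$ is radially symmetric, non-increasing and vanishes at infinity. A rescaling $\psi_0(x)=\mu_0\,\phi(\lambda x)$ with $\lambda^{2s}=(\tfrac{p}{L(p+1)})^p\mu_0^{p-1}$ normalizes \eqref{2-18} to $(-\Delta)^s\phi=(\phi-1)_+^p$ when $p\neq1$, and the uniqueness theorem of \cite{C20} then says this equation has a unique positive radial decaying solution up to translation for $p\in(1,\tfrac1{1-s})$, the remaining range $p\in(0,1]$ being handled directly by \cite{DYY}. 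Since $\mu_0$ was already pinned down in the previous step, this forces $\psi_0$, hence $\omega_0=(\tfrac{p}{L(p+1)})^p(\psi_0-\mu_0)_+^p$, to be unique up to translation (the mass constraint $\int\omega_0=\kappa$ being then automatically satisfied), which is the uniqueness assertion.

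Finally, for (iii) with $p\in(1,\tfrac{1+s}{1-s})$, I would transfer the non-degeneracy proved in \cite{Asqg} from the $\psi$-formulation back to the $\omega$-formulation. The displayed linearized operator is precisely the Fréchet derivative at $\omega_0$ of the map $\omega\mapsto\omega-(\tfrac{p}{L(p+1)})^p(\mathcal G_s\omega-\mu_0)_+^p$, and under the conjugation $\omega\mapsto\mathcal G_s\omega$ (a bijection on the relevant class, since $\omega_0$ is bounded and compactly supported) it becomes the linearization of \eqref{2-18} at $\psi_0$. By \cite{Asqg} the kernel of the latter, in the appropriate space, is $\mathrm{span}\{\partial_{x_1}\psi_0,\partial_{x_2}\psi_0\}$; applying $(-\Delta)^s$ maps this isomorphically onto $\mathrm{span}\{\partial_{x_1}\omega_0,\partial_{x_2}\omega_0\}\subset L^1\cap L^\infty$, and the $C^1$ regularity of $\omega_0$ is inherited from that of $\psi_0$ in this range of $p$.

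No new estimate is needed; the substantive input comes from \cite{C20,DYY,Asqg}. Accordingly, the step requiring the most care is the bookkeeping around those citations: checking that the maximizers produced here lie in exactly the function class, decay and parameter range assumed in the uniqueness and non-degeneracy theorems, and — the key structural point — invoking Lemma \ref{lem2-12} so that a uniqueness statement for \eqref{2-18} \emph{at a fixed multiplier $\mu_0$} actually yields uniqueness of the variational maximizer. The second delicate point is the matching of kernels in Step 3 across the integral and differential formulations, where one must be precise about which space each operator acts on.
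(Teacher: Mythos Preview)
Your proposal is correct and follows essentially the same route as the paper: the paper's own proof is a terse two-sentence referral to ``the above lemmas'' for existence, (i), (ii), to \cite{C20,DYY} for uniqueness, and to Proposition~3.2 of \cite{Asqg} (stated for $\psi_0=\mathcal G_s\omega_0$) for (iii), which is precisely the structure you have unpacked. Your additional remarks---verifying $(\mathbf{H_1'})$--$(\mathbf{H_2'})$ for this specific $J$, the rescaling to normalize \eqref{2-18}, and the observation that Lemma~\ref{lem2-12} fixes $\mu_0$ so that uniqueness at fixed multiplier transfers to uniqueness of maximizers---are exactly the details one needs to make the paper's citations rigorous.
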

	\begin{proof}
		The existence of a maximizer $\omega_0$, (i) and (ii) follow  from the above lemmas. For   uniqueness, we refer to \cite{C20, DYY}. The non-degeneracy   was proved in Proposition 3.2 in \cite{Asqg} in terms of $\psi_0=\mathcal{G}_s\omega_0$, form which one can obtain (iii) easily.
	\end{proof}
	\subsection{Existence of traveling-wave solutions via maximization}\label{sec2-2}
	
	In this subsection,	we will obtain the existence of traveling-wave solutions by considering the maximization problem \eqref{max**}, whose associated limiting problem has been studied in the preceding subsection.

	\subsubsection{Existence of maximizers}\label{sec2-2-1}
	 To obtain the compactness of maximizing sequence, we first maximize 	$E_\ep$ over a set smaller than $\mathcal{A}_{\ep}$ given by
	$$\mathcal{A}_{\ep, \Gamma}:=\Big\{\omega\in L^1\cap L^{\infty}(\mathbb{R}^2_+)\ \Big|\ 0\leq \omega \leq \Gamma ,\ \ \ \text{spt}(\omega)\subset B(\ep^{-1}(d_0,0), \ep^{-1} d_0/2), \  \  \  \int_{\mathbb{R}^2_+} \omega(x) dx=\kappa\Big\},$$
	where $\Gamma>0$ is a number that will be fixed later.
	\begin{lemma}\label{lem2-14}
		For given $\ep, \Gamma>0$, there exists a  function $\omega_{\ep, \Gamma}\in \mathcal{A}_{\ep, \Gamma}$ such that $$ E_\ep(\omega_{\ep, \Gamma})=\sup_{\omega\in \mathcal{A}_{\ep, \Gamma}} E_\ep(\omega).$$
	\end{lemma}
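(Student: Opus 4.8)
\noindent\emph{Proof strategy.} The plan is the direct method in the calculus of variations; the only genuinely nonroutine point is the weak continuity of the nonlocal quadratic term. Throughout write $B_\ep:=B(\ep^{-1}(d_0,0),\ep^{-1}d_0/2)$, and recall that $0\le G_s^+(x,y)\le c_s|x-y|^{2s-2}$ for $x,y\in\mathbb{R}^2_+$, while the reflected kernel $c_s|x-\bar y|^{2s-2}$ is smooth and bounded on $\overline{B_\ep}\times\overline{B_\ep}$ since $B_\ep$ is uniformly separated from $\{x_1=0\}$. First I would note that $\mathcal{A}_{\ep,\Gamma}\neq\emptyset$: the competitor $\kappa|B_\ep|^{-1}1_{B_\ep}$ belongs to it whenever $\Gamma\ge\kappa|B_\ep|^{-1}$, which holds for every fixed $\Gamma>0$ once $\ep$ is small since $|B_\ep|\to\infty$. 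Next, $E_\ep$ is bounded above on $\mathcal{A}_{\ep,\Gamma}$: using $\|\omega\|_1=\kappa$, $\int\omega^{2-s}\le\Gamma^{1-s}\kappa$, and Lemma \ref{lem2-2} (after extending $\omega$ by zero and bounding $G_s^+$ by the Riesz kernel), the quadratic term is $\le C\kappa^{1+s}\Gamma^{1-s}$; the linear term is bounded because $x_1\le\tfrac32 d_0\ep^{-1}$ on $B_\ep$; and $-\int J(\omega)\le0$ by $(\mathbf{H_{1}'})$. Hence $e_{\ep,\Gamma}:=\sup_{\mathcal{A}_{\ep,\Gamma}}E_\ep\in\mathbb{R}$.

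Let $\{\omega_n\}\subset\mathcal{A}_{\ep,\Gamma}$ be a maximizing sequence. Since $0\le\omega_n\le\Gamma$ and $\text{spt}(\omega_n)\subset B_\ep$ with $B_\ep$ bounded, $\{\omega_n\}$ is bounded in $L^2(B_\ep)$, so along a subsequence $\omega_n\rightharpoonup\omega_*$ weakly in $L^2(B_\ep)$ (equivalently weak-$*$ in $L^\infty$). The three constraints defining $\mathcal{A}_{\ep,\Gamma}$ cut out convex, strongly closed — hence weakly closed — subsets of $L^2(B_\ep)$, so $0\le\omega_*\le\Gamma$ a.e., $\text{spt}(\omega_*)\subset\overline{B_\ep}$, and testing weak convergence against $1_{B_\ep}\in L^2(B_\ep)$ gives $\int\omega_*=\lim_n\int\omega_n=\kappa$; therefore $\omega_*\in\mathcal{A}_{\ep,\Gamma}$.

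It remains to show $E_\ep(\omega_*)\ge\limsup_nE_\ep(\omega_n)$. The linear term $-W\ep^{3-2s}\int x_1\omega$ is weakly continuous because $x_11_{B_\ep}\in L^2(B_\ep)$. The functional $\omega\mapsto\int_{B_\ep}J(\omega)$ is convex (by $(\mathbf{H_{1}'})$) and, by Fatou, strongly lower semicontinuous on $L^1(B_\ep)$, hence weakly lower semicontinuous, so $\liminf_n\int J(\omega_n)\ge\int J(\omega_*)$. For the quadratic term I would use compactness of the potential operator: since $\{\omega_n\}$ is bounded in $L^p(B_\ep)$ for every $p<\infty$, Lemma \ref{lem2-1} with $p>s^{-1}$ gives a uniform bound for $\{\mathcal{G}_s^+\omega_n\}$ in $L^\infty(\overline{B_\ep})$, and the standard H\"older estimates for Riesz potentials of uniformly bounded, uniformly compactly supported densities (see e.g. \cite{Sil}) make this family equicontinuous on $\overline{B_\ep}$; since moreover $\mathcal{G}_s^+\omega_n(x)\to\mathcal{G}_s^+\omega_*(x)$ for each fixed $x$ by weak-$*$ convergence tested against $G_s^+(x,\cdot)\in L^1(B_\ep)$, Arzel\`a--Ascoli yields $\mathcal{G}_s^+\omega_n\to\mathcal{G}_s^+\omega_*$ uniformly on $\overline{B_\ep}$. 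Writing $\int\omega_n\mathcal{G}_s^+\omega_n=\int\omega_n\mathcal{G}_s^+\omega_*+\int\omega_n\mathcal{G}_s^+(\omega_n-\omega_*)$, the first summand tends to $\int\omega_*\mathcal{G}_s^+\omega_*$ by weak convergence and the second is bounded by $\kappa\,\|\mathcal{G}_s^+(\omega_n-\omega_*)\|_{L^\infty(\overline{B_\ep})}\to0$. Combining the three limits gives $E_\ep(\omega_*)\ge\limsup_nE_\ep(\omega_n)=e_{\ep,\Gamma}$, and since $\omega_*\in\mathcal{A}_{\ep,\Gamma}$ we also have $E_\ep(\omega_*)\le e_{\ep,\Gamma}$; hence $\omega_{\ep,\Gamma}:=\omega_*$ is the desired maximizer.

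I expect the only nonroutine step to be the uniform convergence of $\mathcal{G}_s^+\omega_n$ on $\overline{B_\ep}$, which rests on the equicontinuity estimate for the Riesz potential: this is mildest for $s>\tfrac12$ (the kernel is then square-integrable on $\overline{B_\ep}\times\overline{B_\ep}$, so one could instead invoke Rellich's theorem for $(-\Delta)^{-s}$) and needs slightly more care for $0<s\le\tfrac12$, where the kernel is more singular but still produces H\"older-continuous potentials. Everything else is the standard weak-compactness machinery.
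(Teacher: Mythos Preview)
Your proof is correct and follows essentially the same direct-method approach as the paper: weak-$*$ compactness in $L^\infty(B_\ep)$, weak closedness of $\mathcal{A}_{\ep,\Gamma}$, weak lower semicontinuity of $\int J$ by convexity, and convergence of the nonlocal quadratic term. The only difference is in how the quadratic term is handled: the paper simply records that $G_s\in L^r(B_\ep\times B_\ep)$ for $r<\tfrac{1}{1-s}$ and asserts convergence, whereas you make the compactness explicit via an Arzel\`a--Ascoli argument on the potentials $\mathcal{G}_s^+\omega_n$; both routes are valid and yours is the more self-contained of the two.
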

	\begin{proof}
		Let $\{\omega_j\}_{j=1}^\infty\subset \mathcal{A}_{\ep, \Gamma}$ be a maximizing sequence. By the definition of $\mathcal{A}_{\ep, \Gamma}$, we know that $\{\omega_j\}_{j=1}^\infty$ is uniformly bounded in $L^1 \cap L^\infty(B(\ep^{-1}(d_0,0), \ep^{-1} d_0/2))$.     Passing to a subsequence (still denoted by $\{\omega_j\}_{j=1}^\infty$), we may assume $\omega_j \rightarrow \omega_{\ep, \Gamma}$ weakly star in  $L^\infty$. By the weak star convergence, it holds $$0\leq \omega_{\ep, \Gamma}\leq \Gamma,\ \ \ \int_{\mathbb{R}^2_+ } \omega_{\ep, \Gamma}(x) dx=\lim_{j\to+\infty}\int_{\mathbb{R}^2_+  } \omega_j(x)dx=\kappa.$$   Note that $G_s(x,y)\in L^{r}(B(\ep^{-1}(d_0,0), \ep^{-1} d_0/2)\times B(\ep^{-1}(d_0,0), \ep^{-1} d_0/2))$ for $1\leq r<\frac{1}{1-s}$. We have $$\lim_{j\to+\infty} \frac{c_s}{2}\int_{\mathbb{R}^2_+ }\int_{\mathbb{R}^2_+ }\frac{ \omega_j(x)\omega_j(y)}{|x-y|^{2-2s}} dxdy=\frac{c_s}{2}\int_{\mathbb{R}^2_+ }\int_{\mathbb{R}^2_+ }\frac{ \omega_{\ep, \Gamma}(x)\omega_{\ep, \Gamma}(y)}{|x-y|^{2-2s}} dxdy.$$
		On the other hand, since $J$ is convex, by the lower semi-continuity, we find $$\lim_{j\to+\infty} \int_{\mathbb{R}^2_+  } J(\omega_j(x)) dx\geq \int_{\mathbb{R}^2_+  } J(\omega_{\ep, \Gamma}(x)) dx.$$ Therefore, $$\sup_{\omega\in \mathcal{A}_{\ep, \Gamma}} E_\ep(\omega)=\lim_{j\to+\infty} E_\ep(\omega_j)\leq E_\ep(\omega_{\ep, \Gamma})\leq \sup_{\omega\in \mathcal{A}_{\ep, \Gamma}} E_\ep(\omega), $$ which implies that $\omega_{\ep, \Gamma}$ is a maximizer and completes the proof.
	\end{proof}

	For a non-negative function $\zeta$, we shall say that $\zeta$ is Steiner symmetric in the $x_2$-variable  if for any fixed $x_1$, $\zeta$  is the unique even function of $x_2$  such that
	\begin{equation*}
		\zeta(x_1, x_2)>\tau\ \ \ \text{if and only if}\ \ \ |x_2|<\frac{1}{2}\,|\left\{y_2\in \mathbb{R} \mid\ \zeta(x_1,y_2)>\tau \right\}|_{\mathbb R},
	\end{equation*}
	where $|\cdot|_{\mathbb R}$ denotes the Lebesgue measure on $\mathbb R$.
	
	For a function $0\leq \zeta\in L^1(D)$ with $D$ a domain symmetric with respect to $x_1$-axis, we denote by $\zeta^\star$ the Steiner symmetrization of $\zeta$, which is the unique  function in the rearrangement class that is Steiner symmetric in the $x_2$-variable (see \cite{Lie} for more details about rearrangement). A key fact about the Steiner symmetrization is the rearrangement inequality (see e.g. Theorems 3.7 and 3.9 in  \cite{Lie})
	\begin{equation*}
		\int \zeta^\star\mathcal{G}_s\zeta^\star \geq \int \zeta \mathcal{G}_s\zeta,\ \ \int \zeta^\star\mathcal{G}_s^+\zeta^\star \geq \int \zeta \mathcal{G}_s^+\zeta,
	\end{equation*}
	with strict inequality unless $\zeta(\cdot)\equiv\zeta^\star(\cdot+(0,c))$ for some $c\in\mathbb{R}$.
	
	\begin{lemma}\label{lem2-15}
		For given $\ep, \Gamma>0$, let $\omega_{\ep, \Gamma}\in \mathcal{A}_{\ep, \Gamma}$ be a maximizer of  $E_\ep$ over $\mathcal{A}_{\ep, \Gamma}$. Then  $\omega_{\ep, \Gamma}$ is symmetric non-increasing with respect to some line $\{x_2=const.\}$. Moreover, there exists a constant $\mu_{\ep,\Gamma}\in \mathbb{R}$ such that
		\begin{equation}\label{2-19}
			\begin{cases}
				\mathcal{G}_s^+\omega_{\ep, \Gamma}- W\ep^{3-2s}x_1-J'(\omega_{\ep, \Gamma}) \leq  \mu_{\ep, \Gamma},\quad &\text{on}\ \ \{\omega_{\ep, \Gamma}=0\},\\
				\mathcal{G}_s^+\omega_{\ep, \Gamma}- W\ep^{3-2s}x_1-J'(\omega_{\ep, \Gamma}) =  \mu_{\ep, \Gamma},\quad &\text{on}\ \ \{0<\omega_{\ep, \Gamma}<\Gamma\},\\
				\mathcal{G}_s^+\omega_{\ep, \Gamma}- W\ep^{3-2s}x_1-J'(\omega_{\ep, \Gamma}) \geq  \mu_{\ep, \Gamma},\quad &\text{on}\ \ \{\omega_{\ep, \Gamma}=\Gamma\}.
			\end{cases}	
		\end{equation}
	\end{lemma}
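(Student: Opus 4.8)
The plan is to prove the two conclusions separately: the symmetry is a rearrangement argument, and the Euler--Lagrange inequality \eqref{2-19} comes from one-sided mass-transport perturbations together with a bathtub-principle extraction of the multiplier. For the symmetry, the key point is that the constraint ball $B:=B(\ep^{-1}(d_0,0),\ep^{-1}d_0/2)$ is symmetric about the $x_1$-axis, so each of its horizontal slices is an interval centered at $x_2=0$. Hence for any $\omega\in\mathcal{A}_{\ep,\Gamma}$ its Steiner symmetrization $\omega^\star$ in the $x_2$-variable still has $\mathrm{spt}(\omega^\star)\subset B$; being equimeasurable with $\omega$ it also satisfies $0\le\omega^\star\le\Gamma$, $\int_{\mathbb{R}^2_+}\omega^\star=\kappa$ and $\int_{\mathbb{R}^2_+}J(\omega^\star)=\int_{\mathbb{R}^2_+}J(\omega)$, and since it has the same $x_1$-marginal as $\omega$ we also get $\int_{\mathbb{R}^2_+}x_1\omega^\star=\int_{\mathbb{R}^2_+}x_1\omega$. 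Thus $\omega^\star\in\mathcal{A}_{\ep,\Gamma}$, and the rearrangement inequality for $\mathcal{G}_s^+$ recalled above gives $\int\omega^\star\mathcal{G}_s^+\omega^\star\ge\int\omega\mathcal{G}_s^+\omega$, whence $E_\ep(\omega^\star)\ge E_\ep(\omega)$. Applying this to $\omega=\omega_{\ep,\Gamma}$ and using maximality forces equality in the rearrangement inequality, so by its strict form $\omega_{\ep,\Gamma}$ must be a translate of $\omega_{\ep,\Gamma}^\star$ in the $x_2$-direction; i.e. $\omega_{\ep,\Gamma}$ is symmetric non-increasing about some line $\{x_2=\mathrm{const.}\}$.

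For \eqref{2-19}, set $g:=\mathcal{G}_s^+\omega_{\ep,\Gamma}-W\ep^{3-2s}x_1-J'(\omega_{\ep,\Gamma})$. Since $\omega_{\ep,\Gamma}\in L^\infty$ has support in the bounded set $B$, Lemma \ref{lem2-1} (with $p=\infty$) gives $\mathcal{G}_s^+\omega_{\ep,\Gamma}\in L^\infty(B)$; together with boundedness of $J'$ on $[0,\Gamma]$ (note $(\mathbf{H_{2}'})$ forces $J'(0^+)=0$) this shows $g\in L^\infty(B)$ and legitimizes differentiating $E_\ep$ along admissible one-sided paths. Fix a small $\delta>0$ and take nonnegative $\phi_1,\phi_2\in L^\infty$ with disjoint supports, $\mathrm{spt}(\phi_1)\subset\{\omega_{\ep,\Gamma}<\Gamma-\delta\}\cap B$, $\mathrm{spt}(\phi_2)\subset\{\omega_{\ep,\Gamma}>\delta\}\cap B$, and $\int\phi_1=\int\phi_2$; then $\omega_{\ep,\Gamma}+t(\phi_1-\phi_2)\in\mathcal{A}_{\ep,\Gamma}$ for all sufficiently small $t>0$. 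Maximality of $\omega_{\ep,\Gamma}$ gives $0\ge\frac{d}{dt}\big|_{t=0^+}E_\ep(\omega_{\ep,\Gamma}+t(\phi_1-\phi_2))=\int g(\phi_1-\phi_2)\,dx$, i.e. $\int g\phi_1\le\int g\phi_2$. Letting $\phi_1$ concentrate where $g$ is essentially largest on $\{\omega_{\ep,\Gamma}<\Gamma-\delta\}\cap B$ and $\phi_2$ where $g$ is essentially smallest on $\{\omega_{\ep,\Gamma}>\delta\}\cap B$, and then sending $\delta\to0^+$, we obtain $\mathrm{ess\,sup}_{\{\omega_{\ep,\Gamma}<\Gamma\}\cap B}\,g\le\mathrm{ess\,inf}_{\{\omega_{\ep,\Gamma}>0\}}\,g$ (the latter set already lies in $B$).

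Choosing $\mu_{\ep,\Gamma}$ to be any number between these two quantities, the three alternatives in \eqref{2-19} follow at once: on $\{\omega_{\ep,\Gamma}=0\}\cap B\subset\{\omega_{\ep,\Gamma}<\Gamma\}\cap B$ one has $g\le\mu_{\ep,\Gamma}$; on $\{\omega_{\ep,\Gamma}=\Gamma\}\subset\{\omega_{\ep,\Gamma}>0\}$ one has $g\ge\mu_{\ep,\Gamma}$; and on $\{0<\omega_{\ep,\Gamma}<\Gamma\}$, which lies in both sets, the two bounds coincide and force $g=\mu_{\ep,\Gamma}$. I expect the main technical care to lie in the variational step: checking that the transported competitors $\omega_{\ep,\Gamma}+t(\phi_1-\phi_2)$ genuinely remain in $\mathcal{A}_{\ep,\Gamma}$ over a range of $t$ (simultaneously respecting the two-sided pointwise bound $0\le\cdot\le\Gamma$, the support constraint, and the fixed mass $\kappa$), verifying that the relevant sub/superlevel sets meet $B$ in sets of positive measure so such $\phi_i$ exist, and justifying the differentiation of $E_\ep$ along these one-sided paths. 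Granting these routine verifications, the bathtub-type extraction of the single multiplier $\mu_{\ep,\Gamma}$ is automatic, while the symmetry part is a standard consequence of Steiner symmetrization.
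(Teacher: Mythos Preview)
Your proof is correct and follows essentially the same route as the paper. The symmetry part is identical (Steiner symmetrization plus the strict rearrangement inequality for $\mathcal{G}_s^+$), and for \eqref{2-19} the only cosmetic difference is that the paper perturbs along convex segments $\omega_t=\omega_{\ep,\Gamma}+t(\omega-\omega_{\ep,\Gamma})$ toward an arbitrary competitor $\omega\in\mathcal{A}_{\ep,\Gamma}$, concludes that $\omega_{\ep,\Gamma}$ maximizes the linear functional $\omega\mapsto\int g\,\omega$ over $\mathcal{A}_{\ep,\Gamma}$, and then invokes the bathtub principle (\cite{Lie}, \S1.14), whereas you build the specific mass-exchange perturbations $\phi_1-\phi_2$ and extract the $\mathrm{ess\,sup}\le\mathrm{ess\,inf}$ comparison by hand---which is exactly what the bathtub principle encodes.
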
	
	\begin{proof}
		The   symmetry and monotonicity of $\omega_{\ep, \Gamma}$ with respect to some line $\{x_2=const.\}$ is an easy consequence of the strict rearrangement inequality.
		
		For any $\omega\in \mathcal{A}_{\ep, \Gamma}$, we take a  family of test functions as follows:
		$$\omega_t:=\omega_{\ep, \Gamma}+t(\omega-\omega_{\ep, \Gamma}),\ \ \  t\in[0,1].$$
		Since $\omega_{\ep, \Gamma}$ is a maximizer, we have
		\begin{equation*}
			0\geq \frac{d E_\ep(\omega_t)}{d t} \Bigg{|}_{t=0_+}=\int_{\mathbb{R}^2_+ }  (\mathcal{G}_s^+\omega_{\ep, \Gamma}- W\ep^{3-2s}x_1-J'(\omega_{\ep, \Gamma}))(\omega-\omega_{\ep, \Gamma})  dx.
		\end{equation*}
		That is,
		\begin{equation*}
			\int_{\mathbb{R}^2_+ }  (\mathcal{G}_s^+\omega_{\ep, \Gamma}- W\ep^{3-2s}x_1-J'(\omega_{\ep, \Gamma})) \omega   dx\geq \int_{\mathbb{R}^2_+ }  (\mathcal{G}_s^+\omega_{\ep, \Gamma}- W\ep^{3-2s}x_1-J'(\omega_{\ep, \Gamma})) \omega_{\ep, \Gamma}   dx.
		\end{equation*}
		Then \eqref{2-19} follows by applying an adaption of the bathtub principle (see section 1.14 in \cite{Lie}). The proof is thus complete.
	\end{proof}
	
	By the definition of  $I_0$ (see \eqref{I0}) in the previous subsection we have
	\begin{lemma}\label{lem2-16}
		There are two constants $\ep_0, \Gamma_0>0$ such that for any $\ep\in (0,\ep_0)$ and $ \Gamma>\Gamma_0$, it holds
		\begin{equation}\label{2-20}
			\sup_{\omega\in \mathcal{A}_{\ep, \Gamma}} E_\ep(\omega)\geq I_0+O(\ep^{2-2s}), \ \ \ \text{as}\ \ep\to0.
		\end{equation}
	\end{lemma}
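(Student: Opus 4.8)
The plan is to produce the lower bound in \eqref{2-20} by testing $E_\ep$ against an explicit competitor: a far-away translate of a maximizer $\omega_0$ of the limiting functional $E_0$ over $\mathcal A_0$, placed near the point $(d_0/\ep,0)$ so that it sits well inside the admissible disk. The point is that under such a translation the ``local'' self-interaction and the $J$-term are unchanged, while the two error contributions — the interaction of $\omega_0^\ep$ with its reflection across the $x_2$-axis, and the linear momentum term $W\ep^{3-2s}\int x_1\omega_0^\ep$ — are each of size $O(\ep^{2-2s})$, the former because the separation between $\mathrm{spt}(\omega_0^\ep)$ and its mirror image is of order $1/\ep$, the latter because the center of mass of $\omega_0^\ep$ lies at distance $\sim 1/\ep$ from the $x_2$-axis.

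Concretely, I would first fix a maximizer $\omega_0$ of $E_0$ over $\mathcal A_0$ (its existence is the result of \cite{L84} recorded after \eqref{I0}). By Lemma \ref{lem2-3} we may assume, after a translation, that $\omega_0$ is radially symmetric and non-increasing about the origin; Corollary \ref{lem2-6} gives $\|\omega_0\|_\infty\le c_1$, and Lemma \ref{lem2-11} gives $\mathrm{spt}(\omega_0)\subset \overline{B(0,R_*)}$. Set $z_\ep:=(d_0/\ep,0)$, define $\omega_0^\ep(x):=\omega_0(x-z_\ep)$, and put $\Gamma_0:=c_1$, $\ep_0:=d_0/(2R_*)$. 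For $\ep\in(0,\ep_0)$ and $\Gamma>\Gamma_0$ one checks directly that $\omega_0^\ep\in\mathcal A_{\ep,\Gamma}$: indeed $0\le\omega_0^\ep\le c_1\le\Gamma$, $\int_{\mathbb R^2_+}\omega_0^\ep=\kappa$, and $\mathrm{spt}(\omega_0^\ep)\subset \overline{B(z_\ep,R_*)}\subset B(\ep^{-1}(d_0,0),\ep^{-1}d_0/2)\subset\mathbb R^2_+$ since $R_*\le d_0/(2\ep)<d_0/\ep$.

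Next I would evaluate $E_\ep(\omega_0^\ep)$ term by term using $G_s^+(x,y)=c_s|x-y|^{2s-2}-c_s|x-\bar y|^{2s-2}$. The diagonal part of the quadratic form and the integral of $J$ are translation invariant, so together they contribute exactly $E_0(\omega_0)=I_0$. For the reflected part, if $x,y\in\mathrm{spt}(\omega_0^\ep)$ then $x_1,y_1\ge d_0/\ep-R_*$, hence $|x-\bar y|\ge x_1+y_1\ge 2(d_0/\ep-R_*)\ge d_0/\ep$ for $\ep<\ep_0$, so
\begin{equation*}
0\le\frac{c_s}{2}\iint_{\mathbb R^2_+\times\mathbb R^2_+}\frac{\omega_0^\ep(x)\omega_0^\ep(y)}{|x-\bar y|^{2-2s}}\,dx\,dy\le\frac{c_s\kappa^2}{2\,d_0^{2-2s}}\,\ep^{2-2s}.
\end{equation*}
Finally, by radial symmetry of $\omega_0$ about the origin, $\int_{\mathbb R^2_+}x_1\omega_0^\ep\,dx=\int_{\mathbb R^2}(x_1+d_0/\ep)\omega_0(x)\,dx=d_0\kappa/\ep$, so the linear term equals $W d_0\kappa\,\ep^{2-2s}$. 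Collecting everything yields $E_\ep(\omega_0^\ep)\ge I_0-\big(\tfrac{c_s\kappa^2}{2d_0^{2-2s}}+Wd_0\kappa\big)\ep^{2-2s}$, and \eqref{2-20} follows from $\sup_{\mathcal A_{\ep,\Gamma}}E_\ep\ge E_\ep(\omega_0^\ep)$. No step presents a genuine difficulty; the only points needing care are the verification that $\omega_0^\ep$ respects all three constraints defining $\mathcal A_{\ep,\Gamma}$ — which dictates the choice $\ep_0\le d_0/(2R_*)$ and $\Gamma_0\ge c_1$ — and the quantitative estimate of the reflected interaction, which is where the gain of the factor $\ep^{2-2s}$ comes from.
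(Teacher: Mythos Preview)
Your proof is correct and follows essentially the same approach as the paper: translate a maximizer $\omega_0$ of the limiting problem to the center $\ep^{-1}(d_0,0)$ and compute $E_\ep$ on this competitor. The paper's proof merely asserts that ``direct computation shows $E_\ep(\bar\omega_0)=E_0(\omega_0)+O(\ep^{2-2s})$,'' whereas you have carried out that computation explicitly, with concrete choices $\Gamma_0=c_1$, $\ep_0=d_0/(2R_*)$, and explicit constants in the $O(\ep^{2-2s})$ error.
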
	
	\begin{proof}
		Let $\omega_0$ be the maximizer of $E_0$ over $\mathcal{A}_0$. We may assume that $\omega_0$ is symmetric and non-increasing with respect to the origin. Since $\omega_0$ has compact support and is bounded, we may take $\Gamma_0>0$ large and $\ep_0>0$ small such that $$\bar \omega_0(x):=\omega_0(x-\ep^{-1}(d_0,0))\in \mathcal{A}_{\ep, \Gamma}.$$ Then direct computation shows
		$$\sup_{\omega\in \mathcal{A}_{\ep,\Gamma}} E_\ep(\omega)\geq E_\ep(\bar\omega_0)=E_0(\omega_0)+O(\ep^{2-2s}),$$
		which implies the desired estimate and finishes the proof.
	\end{proof}	
	
	\begin{lemma}\label{lem2-17}
		There are two constants $C_0, \ep_1>0$ such that if  $\omega_{\ep, \Gamma}\in \mathcal{A}_{\ep, \Gamma}$ is a maximizer of  $E_\ep$ over $\mathcal{A}_{\ep, \Gamma}$ for $\ep\in (0,\ep_1)$ and $\Gamma>\Gamma_0$, then it holds
		\begin{equation}\label{2-21}
			\|\omega_{\ep, \Gamma}\|_{2-s}\leq C_0.
		\end{equation}
	\end{lemma}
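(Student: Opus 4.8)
The plan is to repeat the argument of Lemma \ref{lem2-4} with $E_0$ replaced by $E_\ep$, exploiting two structural features of $E_\ep$ on the set $\mathcal{A}_{\ep,\Gamma}$. First, every $\omega\in\mathcal{A}_{\ep,\Gamma}$ is supported in the disk $B(\ep^{-1}(d_0,0),\ep^{-1}d_0/2)$, whose leftmost point has first coordinate $d_0/(2\ep)>0$; hence $\text{spt}(\omega)$ lies strictly inside $\mathbb{R}^2_+$, so there $0<G_s^+(x,y)\le G_s(x-y)$, and, extending $\omega$ by zero, Lemma \ref{lem2-2} gives
\[
\frac12\int_{\mathbb{R}^2_+}\int_{\mathbb{R}^2_+}\omega(x)G_s^+(x,y)\omega(y)\,dxdy\le\frac12\int_{\mathbb{R}^2}\int_{\mathbb{R}^2}\frac{c_s\,\omega(x)\omega(y)}{|x-y|^{2-2s}}\,dxdy\le C\kappa^s\int_{\mathbb{R}^2_+}\omega^{2-s},
\]
with $C$ the constant in \eqref{2-5}. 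Second, since $x_1>0$ on the support, the linear term $-W\ep^{3-2s}\int_{\mathbb{R}^2_+}x_1\omega\,dx$ is non-positive. Combining these with the lower bound of Lemma \ref{lem2-16}, namely $E_\ep(\omega_{\ep,\Gamma})\ge I_0+O(\ep^{2-2s})$, which exceeds $I_0-1$ once $\ep$ is small, I obtain
\[
\int_{\mathbb{R}^2_+}J(\omega_{\ep,\Gamma})\,dx\le C\kappa^s\int_{\mathbb{R}^2_+}(\omega_{\ep,\Gamma})^{2-s}\,dx-I_0+1 .
\]

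Next I would invoke the growth hypothesis $(\mathbf{H_{2}'})$ with the same choice $K=C\kappa^s+2$ already fixed in the proof of Lemma \ref{lem2-4}, which yields $t_0>0$ with $J(t)\ge(C\kappa^s+1)t^{2-s}$ for all $t>t_0$. Splitting $\int(\omega_{\ep,\Gamma})^{2-s}$ over $\{\omega_{\ep,\Gamma}\le t_0\}$ and $\{\omega_{\ep,\Gamma}>t_0\}$, the first contribution is at most $t_0^{1-s}\int_{\mathbb{R}^2_+}\omega_{\ep,\Gamma}=t_0^{1-s}\kappa$ and the second is at most $(C\kappa^s+1)^{-1}\int_{\mathbb{R}^2_+}J(\omega_{\ep,\Gamma})$, so
\[
(C\kappa^s+1)\int_{\mathbb{R}^2_+}(\omega_{\ep,\Gamma})^{2-s}\,dx\le(C\kappa^s+1)t_0^{1-s}\kappa+\int_{\mathbb{R}^2_+}J(\omega_{\ep,\Gamma})\,dx .
\]
Inserting the upper bound for $\int J(\omega_{\ep,\Gamma})$ from the previous step absorbs the term $C\kappa^s\int(\omega_{\ep,\Gamma})^{2-s}$ into the left side, leaving $\int_{\mathbb{R}^2_+}(\omega_{\ep,\Gamma})^{2-s}\,dx\le(C\kappa^s+1)t_0^{1-s}\kappa-I_0+1=:C_0^{2-s}$, which is \eqref{2-21}. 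All the integrals involved are finite a priori because $\omega_{\ep,\Gamma}$ is bounded with compact support.

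I do not expect a genuine obstacle here; the only point requiring care is the bookkeeping needed to ensure $C_0$ and the threshold $\ep_1$ depend on neither $\ep$ nor $\Gamma$. For this I would take $\ep_1\in(0,\ep_0]$ (with $\ep_0,\Gamma_0$ as in Lemma \ref{lem2-16}) small enough that the $O(\ep^{2-2s})$ remainder there is $\ge-1$ uniformly in $\Gamma>\Gamma_0$; then $C$, $K$, $t_0$, $\kappa$ and $I_0$ are all fixed independently of the parameters, so $C_0$ is as well. The inclusion $\text{spt}(\omega_{\ep,\Gamma})\subset\mathbb{R}^2_+$, used both for the pointwise bound $G_s^+\le G_s$ and for the sign of the linear term, is immediate from the definition of $\mathcal{A}_{\ep,\Gamma}$.
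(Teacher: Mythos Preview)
Your proposal is correct and follows essentially the same approach as the paper. The paper uses Lemma \ref{lem2-16} to get $E_\ep(\omega_{\ep,\Gamma})\ge I_0/2>0$, then bounds $\int J(\omega_{\ep,\Gamma})\le\frac12\int\omega_{\ep,\Gamma}\mathcal G_s\omega_{\ep,\Gamma}-E_\ep(\omega_{\ep,\Gamma})<\frac12\int\omega_{\ep,\Gamma}\mathcal G_s\omega_{\ep,\Gamma}$ (implicitly using $G_s^+\le G_s$ and the sign of the linear term, exactly as you spell out), and then invokes Lemma \ref{lem2-2} and the argument of Lemma \ref{lem2-4}; your version is the same up to the inessential choice of lower bound $I_0-1$ in place of $I_0/2$.
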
	
	\begin{proof}
		Notice that by Lemma \ref{lem2-16}, there exists $0<\ep_1\leq \ep_0$ such that $E_\ep(\omega_{\ep, \Gamma})\geq \frac{I_0}{2}>0$ for $\ep\in(0,\ep_1)$ and $\Gamma>\Gamma_0$. Thus, we have $$\int_{\mathbb{R}^2_+ } J(\omega_{\ep, \Gamma})\leq \frac{1}{2}\int_{\mathbb{R}^2_+ } \omega_{\ep, \Gamma}\mathcal{G}_s \omega_{\ep, \Gamma}-E_\ep(\omega_{\ep, \Gamma})<\frac{1}{2}\int_{\mathbb{R}^2_+ } \omega_{\ep, \Gamma}\mathcal{G}_s \omega_{\ep, \Gamma},$$ from which, by Lemma \ref{lem2-2} and a similar argument as the proof of Lemma \ref{lem2-4}, we obtain the estimate \eqref{2-21} and complete the proof.
	\end{proof}	
	
	\begin{lemma}\label{lem2-18}
		There is a constant $ \ep_2>0$ such that if  $\omega_{\ep, \Gamma}\in \mathcal{A}_{\ep, \Gamma}$ is a maximizer of  $E_\ep$ over $\mathcal{A}_{\ep, \Gamma}$ for $\ep\in (0,\ep_2)$ and $\Gamma>\Gamma_0$, then it holds
		\begin{equation}\label{2-22}
			\mu_{\ep, \Gamma}>0.
		\end{equation}
	\end{lemma}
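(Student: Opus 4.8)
The plan is to adapt the arguments used for the limiting problem in Lemmas~\ref{lem2-5}, \ref{lem2-9} and \ref{lem2-10}. The idea is to produce a point $x^*$ lying \emph{inside} the constraint ball $B:=B(\ep^{-1}(d_0,0),\ep^{-1}d_0/2)$ at which $\omega_{\ep,\Gamma}(x^*)$ is as small as we wish while $\mathcal{G}_s^+\omega_{\ep,\Gamma}(x^*)$ stays bounded below by a fixed positive constant; then \eqref{2-22} follows at once from the Euler--Lagrange relation \eqref{2-19}. The two new features relative to the limiting problem are that one works with $\mathcal{G}_s^+$ instead of $\mathcal{G}_s$ and that \eqref{2-19} carries the extra term $-W\ep^{3-2s}x_1$; both are lower order, because on $B$ one has $x_1\le\tfrac32 d_0\ep^{-1}$ and $|x-\bar y|\ge\ep^{-1}d_0$ for $x,y\in\overline B$, so that $W\ep^{3-2s}x_1=O(\ep^{2-2s})$ and $0\le\mathcal{G}_s\omega_{\ep,\Gamma}-\mathcal{G}_s^+\omega_{\ep,\Gamma}\le c_sd_0^{2s-2}\kappa\,\ep^{2-2s}$ uniformly on $B$.

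First, exactly as in the proof of Lemma~\ref{lem2-17}, $E_\ep(\omega_{\ep,\Gamma})\ge I_0/2$ for $\ep$ small; since $W\ep^{3-2s}\int x_1\omega_{\ep,\Gamma}$ and $\int J(\omega_{\ep,\Gamma})$ are nonnegative and $0\le\mathcal{G}_s^+\omega\le\mathcal{G}_s\omega$, this forces $\int\omega_{\ep,\Gamma}\mathcal{G}_s\omega_{\ep,\Gamma}\ge I_0$, whence $\|\mathcal{G}_s\omega_{\ep,\Gamma}\|_\infty\ge I_0/\kappa$. Picking $z_\ep$ with $\mathcal{G}_s\omega_{\ep,\Gamma}(z_\ep)\ge I_0/(2\kappa)$ and splitting this value into near/intermediate/far parts as in Lemma~\ref{lem2-9} — the near part now being estimated via Lemma~\ref{lem2-1} together with the crude a priori bound $\omega_{\ep,\Gamma}\le\Gamma$ (which is the only place a $\Gamma$-dependent constant enters) — yields a constant $\eta=\eta(\Gamma)>0$ and a point $z_\ep$ with $\int_{B(z_\ep,1)}\omega_{\ep,\Gamma}\ge\eta$. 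Since $\mathrm{spt}(\omega_{\ep,\Gamma})\subset\overline B$, replacing $z_\ep$ by its nearest point in $\overline B$ we may assume $z_\ep\in\overline B$ and $\int_{B(z_\ep,2)}\omega_{\ep,\Gamma}\ge\eta$.

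Now fix $r>4$. For $\ep$ small we have $\ep^{-1}d_0/2\gg r$, so $|B(z_\ep,r)\cap B|\ge\tfrac14\pi r^2$, and hence there is $x^*\in B(z_\ep,r)\cap B$ with $\omega_{\ep,\Gamma}(x^*)\le 4\kappa/(\pi r^2)$, which is $<\Gamma$ for $r$ large. Then \eqref{2-19} (first or second alternative) gives
\[
\mu_{\ep,\Gamma}\ \ge\ \mathcal{G}_s^+\omega_{\ep,\Gamma}(x^*)-W\ep^{3-2s}x^*_1-J'\bigl(\omega_{\ep,\Gamma}(x^*)\bigr).
\]
By the remarks above, $W\ep^{3-2s}x^*_1\le\tfrac32 Wd_0\,\ep^{2-2s}$ and the reflected contribution to $\mathcal{G}_s^+\omega_{\ep,\Gamma}(x^*)$ is $\le c_sd_0^{2s-2}\kappa\,\ep^{2-2s}$, while $\mathcal{G}_s\omega_{\ep,\Gamma}(x^*)\ge\int_{B(z_\ep,2)}c_s|x^*-y|^{2s-2}\omega_{\ep,\Gamma}(y)\,dy\ge c_s(r+2)^{2s-2}\eta$, and $J'(\omega_{\ep,\Gamma}(x^*))\le J'\bigl(4\kappa/(\pi r^2)\bigr)=o(r^{2s-2})$ as $r\to\infty$ by $(\mathbf{H_{2}'})$. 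Choosing first $r$ so large that $J'(4\kappa/(\pi r^2))\le\tfrac12 c_s(r+2)^{2s-2}\eta$ and $4\kappa/(\pi r^2)<\Gamma_0$, and then $\ep_2>0$ so small that $(\tfrac32 Wd_0+c_sd_0^{2s-2}\kappa)\ep^{2-2s}\le\tfrac14 c_s(r+2)^{2s-2}\eta$ for $\ep\in(0,\ep_2)$, we conclude $\mu_{\ep,\Gamma}\ge\tfrac14 c_s(r+2)^{2s-2}\eta>0$.

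The main obstacle is the localization step of the second paragraph: in the limiting case this rested on the uniform bound $\|\omega_0\|_\infty\le c_1$ of Corollary~\ref{lem2-6}, whereas here we only know $\omega_{\ep,\Gamma}\le\Gamma$, so $\eta$, the admissible $r$, and the threshold $\ep_2$ may all depend on $\Gamma$ — which is acceptable since \eqref{2-22} is merely a positivity statement. A secondary point requiring care is that $x^*$ must be taken inside $B$ so that \eqref{2-19} is available there; this is arranged by first moving $z_\ep$ into $\overline B$ and intersecting with $B$ before invoking the pigeonhole argument.
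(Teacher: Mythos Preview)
Your argument is correct as far as it goes, but it proves a slightly weaker statement than the lemma and takes a genuinely different route from the paper.

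The paper does not redo the Lemma~\ref{lem2-9} localization at all. Instead it observes (via Lemma~\ref{lem2-16}) that for fixed $\Gamma>\Gamma_0$ the family $\{\omega_{\ep,\Gamma}\}_{\ep}$ is a maximizing sequence for the limiting functional $E_0$, and then invokes Lions' concentration--compactness (Theorem~II.2 and Corollary~II.1 of \cite{L84}) to extract a subsequence converging in $L^1\cap L^{2-s}$, after translation, to a maximizer $\omega_0$ of $E_0$. Since $\omega_0$ has total mass $\kappa$ and is supported in a ball of the universal radius $R_*$ from Lemma~\ref{lem2-11}, this immediately locates mass $\ge\kappa/2$ for $\omega_{\ep,\Gamma}$ in some ball $B(p_\ep,R_*)$; the remainder is then exactly your Step~3 with $r=\ep^{-1/2}$.

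The payoff of the paper's route is that the localization constants ($\kappa/2$ and $R_*$) are $\Gamma$-independent, because they come from the limiting problem and only use the $\Gamma$-uniform bounds of Lemmas~\ref{lem2-16}--\ref{lem2-17}. Running the same Lions argument on an arbitrary sequence $(\ep_j,\Gamma_j)$ with $\ep_j\to0$, $\Gamma_j>\Gamma_0$ then yields, by contradiction, a single $\ep_2$ valid for all $\Gamma>\Gamma_0$, as the lemma asserts. Your approach is more self-contained (no concentration--compactness), but the near-field bound in your Lemma~\ref{lem2-9} step genuinely needs an $L^p$ control with $p>s^{-1}$, and the only such bound available before Corollary~\ref{lem2-19} is $\omega_{\ep,\Gamma}\le\Gamma$; hence $\eta=\eta(\Gamma)$ and your $\ep_2$ depends on $\Gamma$. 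You are right that this suffices downstream (one eventually fixes a single $\Gamma>\max\{\Gamma_0,C_1\}$ in Lemma~\ref{lem2-20}), but it does not match the lemma exactly as stated.
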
	
	\begin{proof}
		For fixed $\Gamma>\Gamma_0$, we see from Lemma \ref{lem2-16} that $\{\omega_{\ep,\Gamma}\}_{\ep\in(0, \ep_0)}$ is  a maximizing sequence of $E_0$. Then   Theorem \uppercase\expandafter{\romannumeral2}.2 and Corollary \uppercase\expandafter{\romannumeral2}.1 in \cite{L84}  give  a subsequence (still denoted by $\{\omega_{\ep,\Gamma}\} $ for convenience) converges to a maximizer $\omega_0$ of $E_0$ in $L^1\cap L^{2-s}(\mathbb{R}^2)$ after suitable translations. Thus, for $\ep$ small , we have $\int_{B(p_\ep, R_*)} \omega_{\ep,\Gamma} \geq \frac{\kappa}{2}$ for some point $p_\ep$. Here, $R_*$ is the constant in Lemma \ref{lem2-11}.
		Since $\int \omega_{\ep, \Gamma}=\kappa$, for $R_*<r <\frac{d_0}{2\ep}$ large, we can take a point $x^r\in B(p_\ep, r)\cap B(\ep^{-1}(d_0,0), \ep^{-1} d_0/2)$ such that $\omega_{\ep, \Gamma}(x^r)\leq C r^{-2}$. Then, using \eqref{2-19} and the hypothesis  $(\mathbf{H_{2}'})$, by similar calculations as the proof of Lemma \ref{lem2-10}, we have
		$$\mu_{\ep, \Gamma}\geq \mathcal{G}_s^+\omega_{\ep, \Gamma}- W\ep^{3-2s}x_1-J'(\omega_{\ep, \Gamma})\geq (4^{s-2}c_s\kappa+o(1))r^{2s-2}-  O(\ep^{2-2s}),$$
		which implies \eqref{2-22} by taking $r=\ep^{-\frac{1}{2}}$ and $\ep$ sufficiently small.
	\end{proof}		
	
	As an immediate consequence of Lemmas \ref{lem2-15}, \ref{lem2-17}	and \ref{lem2-18}, we can obtain the following result through a similar argument as the proof of Corollary \ref{lem2-6}. We leave the details of the proof to   readers.
	\begin{corollary}\label{lem2-19}
		There is a constant  $ C_1>0$ such that if $\omega_{\ep, \Gamma}\in \mathcal{A}_{\ep, \Gamma}$ is a maximizer of  $E_\ep$ over $\mathcal{A}_{\ep, \Gamma}$ for $\ep\in (0,\ep_2)$ and $\Gamma>\Gamma_0$, then
		\begin{equation}\label{2-24}
			\|\omega_{\ep, \Gamma}\|_{\infty}\leq C_1.
		\end{equation}
		Moreover, if  $\Gamma> \max\{\Gamma_0, C_1\}$, then
		\begin{equation}\label{2-25}
			\omega_{\ep, \Gamma}=(J')^{-1}\left((\mathcal{G}_s^+\omega_{\ep, \Gamma}- W\ep^{3-2s}x_1-\mu_{\ep, \Gamma})_+\right), \ \ \ \text{in}\ B(\ep^{-1}(d_0,0), \ep^{-1} d_0/2).
		\end{equation}
	\end{corollary}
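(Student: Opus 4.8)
The plan is to run exactly the integrability bootstrap used in the proof of Corollary \ref{lem2-6}, now starting from the variational inequalities \eqref{2-19} in place of \eqref{2-7}. First I would observe that, on the set $\{\omega_{\ep,\Gamma}>0\}$, all three alternatives of \eqref{2-19} yield the pointwise bound
\begin{equation*}
	J'(\omega_{\ep,\Gamma})\leq \mathcal{G}_s^+\omega_{\ep,\Gamma}-W\ep^{3-2s}x_1-\mu_{\ep,\Gamma}\leq \mathcal{G}_s^+\omega_{\ep,\Gamma}\leq \mathcal{G}_s\big(\omega_{\ep,\Gamma}1_{\mathbb{R}^2_+}\big);
\end{equation*}
on $\{\omega_{\ep,\Gamma}=\Gamma\}$ the first inequality is a rearrangement of the third line of \eqref{2-19} (using $J'(\omega_{\ep,\Gamma})=J'(\Gamma)$ there), on $\{0<\omega_{\ep,\Gamma}<\Gamma\}$ it is the equality in the second line, and the remaining inequalities use $x_1>0$ on $\mathbb{R}^2_+$, Lemma \ref{lem2-18} ($\mu_{\ep,\Gamma}>0$), and $0\leq G_s^+(x,y)\leq G_s(x-y)$. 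Since $J$ is strictly convex, $J'$ is strictly increasing and invertible, and $(\mathbf{H_{2}'})$ gives $J'(t)\geq \tfrac{K}{2}t^{1-s}$ for $t\geq t_1$; hence
\begin{equation*}
	\omega_{\ep,\Gamma}(x)\leq C\big(\mathcal{G}_s(\omega_{\ep,\Gamma}1_{\mathbb{R}^2_+})(x)\big)^{\frac{1}{1-s}}+C
\end{equation*}
with $C$ independent of $\ep$ and $\Gamma$.

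Next I would iterate this estimate. By construction $\|\omega_{\ep,\Gamma}\|_{1}=\kappa$, and Lemma \ref{lem2-17} gives $\|\omega_{\ep,\Gamma}\|_{2-s}\leq C_0$ uniformly for $\ep\in(0,\ep_2)$, $\Gamma>\Gamma_0$. Applying Lemma \ref{lem2-1} with $p_1=2-s$ gives $\mathcal{G}_s(\omega_{\ep,\Gamma}1_{\mathbb{R}^2_+})\in L^q$ with a uniform bound for every $q<\tfrac{p_1}{1-sp_1}$; feeding this back into the pointwise estimate improves the integrability of $\omega_{\ep,\Gamma}$ to $L^r$ for every $r<p_2:=\tfrac{(1-s)p_1}{1-sp_1}$, again uniformly. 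Because $p_2=p_1+\tfrac{sp_1(p_1-1)}{1-sp_1}$ and the gain $\tfrac{sp_1(p_1-1)}{1-sp_1}$ is positive and increasing in $p_1\in(1,s^{-1})$, finitely many iterations push the exponent past $s^{-1}$; at that stage Lemma \ref{lem2-1} gives $\|\mathcal{G}_s(\omega_{\ep,\Gamma}1_{\mathbb{R}^2_+})\|_\infty\leq C$ uniformly, and the pointwise bound then yields $\|\omega_{\ep,\Gamma}\|_\infty\leq C_1$ with $C_1$ independent of $\ep$ and $\Gamma$. This is \eqref{2-24}.

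For the representation formula \eqref{2-25}, I would note that once $\Gamma>\max\{\Gamma_0,C_1\}$ the bound $\omega_{\ep,\Gamma}\leq C_1<\Gamma$ forces $|\{\omega_{\ep,\Gamma}=\Gamma\}|=0$, so only the first two alternatives of \eqref{2-19} survive. Using $J'(0)=0$, which follows from $\lim_{t\to0_+}J'(t)t^{s-1}=0$, these two alternatives collapse into the single pointwise identity $\omega_{\ep,\Gamma}=(J')^{-1}\big((\mathcal{G}_s^+\omega_{\ep,\Gamma}-W\ep^{3-2s}x_1-\mu_{\ep,\Gamma})_+\big)$ on $B(\ep^{-1}(d_0,0),\ep^{-1}d_0/2)$: on $\{\omega_{\ep,\Gamma}=0\}$ the right-hand side equals $(J')^{-1}(0)=0$, while on $\{\omega_{\ep,\Gamma}>0\}$ one simply inverts the equality $J'(\omega_{\ep,\Gamma})=\mathcal{G}_s^+\omega_{\ep,\Gamma}-W\ep^{3-2s}x_1-\mu_{\ep,\Gamma}>0$.

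The bootstrap itself is routine; the only point that needs genuine care is the \emph{uniformity} of every constant in $\ep$ and $\Gamma$, which is precisely why the restricted class $\mathcal{A}_{\ep,\Gamma}$ was introduced — its maximizers share the mass $\kappa$, the uniform $L^{2-s}$ bound of Lemma \ref{lem2-17}, and the uniform lower bound on $\mu_{\ep,\Gamma}$ from Lemma \ref{lem2-18}, whereas the structural constants $K,t_1$ in $(\mathbf{H_{2}'})$ and the convolution constants of Lemma \ref{lem2-1} do not depend on $\ep,\Gamma$. A minor secondary check is that the extra term $W\ep^{3-2s}x_1$, of size $O(\ep^{2-2s})$ on the ball, only helps since it is subtracted, which is already built into the chain of inequalities above.
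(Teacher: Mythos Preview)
Your proposal is correct and follows precisely the approach indicated by the paper, which itself omits the details and refers back to the bootstrap of Corollary~\ref{lem2-6}. You have faithfully reproduced that iteration from the inputs of Lemmas~\ref{lem2-15}, \ref{lem2-17} and \ref{lem2-18}, correctly handled the extra case $\{\omega_{\ep,\Gamma}=\Gamma\}$ arising from the truncation, and your derivation of the representation formula \eqref{2-25} once $\Gamma>C_1$ is exactly what the paper has in mind.
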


	 Having made the necessary preparation, we next establish the existence and properties of  maximizers for \eqref{max**}.
	
	\subsubsection{Existence and properties of maximizers}
	We first state some basic properties of maximizers.
	\begin{lemma}\label{lem2-20}
		For each $\ep\in (0,\ep_2)$, there exists a maximizer for \eqref{max**}. Let $\omega_\ep$ be a maximizer of $E_\ep$ over $\mathcal{A}_\ep$. Then the following assertions hold:
		\begin{itemize}
			\item[(i)] $\omega_{\ep }$ is Steiner symmetric with respect to some plane $\{x_2=const.\}$;
			\item[(ii)] There is a constant $\mu_\ep$ such that
			\begin{equation}\label{2-26}
				\omega_{\ep }=(J')^{-1}\left((\mathcal{G}_s^+\omega_{\ep   }- W\ep^{3-2s}x_1-\mu_{\ep })_+\right), \ \ \ \text{in}\ B(\ep^{-1}(d_0,0), \ep^{-1} d_0/2);
			\end{equation}
		\item[(iii)] The energy satisfies $$I_0+O(\ep^{2-2s})\leq E_\ep(\omega_\ep)\leq I_0;$$
			\item[(iv)] There exists a constant $0<C<+\infty$ independent of $\ep$ such that
			\begin{equation}\label{2-27}
				\limsup_{\ep\to0_+}\|\omega_{\ep }\|_\infty\leq C.
			\end{equation}
		\end{itemize}
	\end{lemma}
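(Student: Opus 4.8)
The plan is to take the maximizer $\omega_\ep:=\omega_{\ep,\Gamma}$ of $E_\ep$ over the truncated class $\mathcal{A}_{\ep,\Gamma}$ with $\Gamma$ fixed large, read off properties (i), (ii), (iv) from the results already proved in this subsection, and then upgrade ``maximizer over $\mathcal{A}_{\ep,\Gamma}$'' to ``maximizer over $\mathcal{A}_\ep$''. Concretely, fix $\Gamma>\max\{\Gamma_0,C_1\}$, with $\Gamma_0$ as in Lemma~\ref{lem2-16} and $C_1$ as in Corollary~\ref{lem2-19}, and let $\omega_{\ep,\Gamma}$ be a maximizer of $E_\ep$ over $\mathcal{A}_{\ep,\Gamma}$ (Lemma~\ref{lem2-14}). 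By Lemma~\ref{lem2-15}, $\omega_{\ep,\Gamma}$ is symmetric and non-increasing about some line $\{x_2=\mathrm{const.}\}$, hence Steiner symmetric, which is (i); since $\|\omega_{\ep,\Gamma}\|_\infty\le C_1<\Gamma$ by Corollary~\ref{lem2-19}, the upper obstacle is inactive and \eqref{2-25} holds, which is (ii) with $\mu_\ep:=\mu_{\ep,\Gamma}$; and the same bound, with $C_1$ independent of $\ep$, gives (iv). It remains to show $E_\ep(\omega_{\ep,\Gamma})=e_\ep:=\sup_{\mathcal{A}_\ep}E_\ep$ and to establish the two-sided estimate in (iii).

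For the upper bound I would compare $E_\ep$ with the limiting functional $E_0$ of Subsection~\ref{sec2-1}. Any $\omega\in\mathcal{A}_\ep$, extended by zero to the left half-plane, lies in $\mathcal{A}_0$; since $G_s^+(x,y)\le G_s(x-y)$ and $\omega\ge0$, since $x_1>0$ on $\mathbb{R}^2_+$ so that $-W\ep^{3-2s}\int_{\mathbb{R}^2_+}x_1\omega\le0$, and since $J(0)=0$, one gets $E_\ep(\omega)\le E_0(\omega)\le I_0$; hence $e_\ep\le I_0$, which is the right-hand inequality of (iii). The left-hand inequality then reads $I_0+O(\ep^{2-2s})\le\sup_{\mathcal{A}_{\ep,\Gamma}}E_\ep\le e_\ep$, whose first step is Lemma~\ref{lem2-16}, once we know $E_\ep(\omega_{\ep,\Gamma})=e_\ep$.

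The crux is the identity $\sup_{\mathcal{A}_{\ep,\Gamma}}E_\ep=e_\ep$ for $\Gamma$ large; ``$\le$'' is trivial since $\mathcal{A}_{\ep,\Gamma}\subset\mathcal{A}_\ep$. For ``$\ge$'' I would argue in two steps. First, Corollary~\ref{lem2-19} bounds \emph{every} maximizer over $\mathcal{A}_{\ep,\Gamma}$ (for any $\Gamma>\Gamma_0$) by the fixed constant $C_1$, so for $\Gamma'>\Gamma>\max\{\Gamma_0,C_1\}$ both $\omega_{\ep,\Gamma}$ and $\omega_{\ep,\Gamma'}$ lie in $\mathcal{A}_{\ep,\Gamma}\cap\mathcal{A}_{\ep,\Gamma'}$; hence the non-decreasing map $\Gamma\mapsto\sup_{\mathcal{A}_{\ep,\Gamma}}E_\ep$ is \emph{constant} for $\Gamma>\max\{\Gamma_0,C_1\}$. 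Second, I would show $\lim_{\Gamma\to\infty}\sup_{\mathcal{A}_{\ep,\Gamma}}E_\ep\ge e_\ep$ by truncation and redistribution: take a maximizing sequence $\{\omega_j\}\subset\mathcal{A}_\ep$ for $e_\ep$, which is bounded in $L^{2-s}$ by the coercivity estimate in the proof of Lemma~\ref{lem2-17} (that argument needs only $E_\ep(\omega_j)$ bounded away from $0$, which holds as $e_\ep\ge I_0/2>0$); set $\omega_j^\Gamma:=\min\{\omega_j,\Gamma\}$, so the shed mass $m_j:=\int(\omega_j-\Gamma)_+$ is $O(\Gamma^{-(1-s)})$ uniformly in $j$ by Chebyshev and H\"older; and add it back as $m_j|A_j|^{-1}1_{A_j}$ on a set $A_j$ of measure $1$ contained in the disk $B(\ep^{-1}(d_0,0),\ep^{-1}d_0/2)$ on which $\omega_j$ is bounded by an $\ep$-independent constant, obtaining $\tilde\omega_j\in\mathcal{A}_{\ep,\Gamma}$ once $\Gamma$ is large. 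A direct estimate gives $E_\ep(\tilde\omega_j)-E_\ep(\omega_j)\ge-o_\Gamma(1)$ uniformly in $j$: the change in $\int J$ is $O(m_j)$ by the mean value theorem (as $J'=f^{-1}$ is continuous and the relevant arguments stay bounded) together with the monotonicity of $J$; the change in the linear term is $O(\ep^{2-2s}m_j)$; and the change in the quadratic term, written as $\int(\tilde\omega_j-\omega_j)\mathcal{G}_s^+\omega_j+\frac{1}{2}\int(\tilde\omega_j-\omega_j)\mathcal{G}_s^+(\tilde\omega_j-\omega_j)$, is controlled by H\"older against $\|\mathcal{G}_s^+\omega_j\|_q\le C$ for a suitable $q>(2-s)/(1-s)$ (Lemma~\ref{lem2-1}, using $\|(\omega_j-\Gamma)_+\|_{q'}\to0$) and by Lemma~\ref{lem2-2} applied to $|\tilde\omega_j-\omega_j|$, whose $L^1$-norm is $O(m_j)$. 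Letting $j\to\infty$ and then $\Gamma\to\infty$, and using that the left-hand side has already stabilized, yields $\sup_{\mathcal{A}_{\ep,\Gamma}}E_\ep\ge e_\ep$, hence equality. Thus $\omega_\ep:=\omega_{\ep,\Gamma}$ is a maximizer over $\mathcal{A}_\ep$, and combining with the comparison bound above gives (iii).

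I expect the main obstacle to be that last estimate: since $\mathcal{G}_s^+$ does not map $L^{2-s}$ into $L^\infty$, the quadratic cross term cannot be bounded by $\|\mathcal{G}_s^+\omega_j\|_\infty\|\tilde\omega_j-\omega_j\|_1$, and one must instead use the sharp exponent range of Lemma~\ref{lem2-1}, which places $\mathcal{G}_s^+\omega_j$ in some $L^q$ with $q$ strictly larger than the conjugate exponent of $2-s$, so that the smallness of the $L^{q'}$-mass of the truncated-off part closes the estimate uniformly in $j$.
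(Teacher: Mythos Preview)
Your approach coincides with the paper's: set $\omega_\ep:=\omega_{\ep,\Gamma}$ for fixed $\Gamma>\max\{\Gamma_0,C_1\}$ and show this already maximizes $E_\ep$ over all of $\mathcal{A}_\ep$; the paper asserts this step in one line, and your truncation--redistribution argument (together with the stabilization of $\Gamma\mapsto\sup_{\mathcal{A}_{\ep,\Gamma}}E_\ep$) is a correct way to supply the missing justification. One small addendum: the lemma claims (i)--(iv) for an \emph{arbitrary} maximizer over $\mathcal{A}_\ep$, not only for the particular $\omega_{\ep,\Gamma}$; the paper covers this by remarking that the proofs of Lemmas~\ref{lem2-15}--\ref{lem2-18} and Corollary~\ref{lem2-19} go through verbatim (indeed more simply, since there is no upper obstacle in \eqref{2-19}) for any maximizer over $\mathcal{A}_\ep$, a sentence you should add as well.
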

	\begin{proof}
		Fix a 	$\Gamma> \max\{\Gamma_0, C_1\}$, then $\omega_{\ep, \Gamma}\in \mathcal{A}_{\ep, \Gamma}$ is a maximizer of  $E_\ep$ over $\mathcal{A}_{\ep }$ for $\ep\in (0,\ep_2)$.  That is, we obtain a maximizer of $E_\ep$ over $\mathcal{A}_{\ep }$ for each $\ep$ small, which we denote by $\omega_\ep$ for simplicity.	The properties of these maximizers can be derived by arguments quite similar to those in the preceding subsection, so we omit the details.
	\end{proof}
	
	We note that \eqref{2-26} is not yet sufficient to provide a dynamically possible steady vortex flow for the gSQG equation. This is because of the presence of the truncation function $1_{B(\ep^{-1}(d_0,0), \ep^{-1} d_0/2)}$, which makes $\omega_{\ep }$ and $\mathcal{G}_s^+\omega_{\ep   }- W\ep^{3-2s}x_1-\mu_{\ep }$ may	be not functional dependent in the whole space $\mathbb{R}^2$.   To get the desired solution, we need to prove that the support of $\omega_\varepsilon$ is away from the boundary of $B(\ep^{-1}(d_0,0), \ep^{-1} d_0/2)$. We will show that this is the case when $\varepsilon$ is sufficiently small. It is based on the observation that in order to maximize energy, the diameter of the support of a maximizer can not be too large. We will reach this conclusion in several steps. We begin by giving a lower bound of $\mu_\ep$.

	\begin{lemma}\label{lem2-21}
		For any $\delta>0$, there exist an $\ep_\delta>0$ such that
		\begin{equation}\label{2-28}
			\mu_\ep\geq \mu_*-\delta, \quad \forall \ \ep\in(0,\ep_\delta),
		\end{equation}
	where $\mu_*>0$ is the constant in Lemma \ref{lem2-10}
	\end{lemma}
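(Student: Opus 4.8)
The plan is to reduce this estimate to the corresponding lower bound for the limiting problem. I will show that $\{\omega_\ep\}$ is, up to translations, a maximizing sequence for $E_0$ over $\mathcal{A}_0$, extract a strong limit $\omega_0$ (a maximizer of $E_0$), and then pass to the limit in the Lagrange multiplier identity for $\omega_\ep$ to obtain $\mu_\ep\to\mu_0$, where $\mu_0$ is the multiplier attached to $\omega_0$ by Lemma \ref{lem2-3}. Since $\mu_0\ge\mu_*$ by Lemma \ref{lem2-10}, and since the argument applies along every subsequence, this gives $\liminf_{\ep\to0}\mu_\ep\ge\mu_*$, which is exactly \eqref{2-28}.

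First I would observe that $E_0(\omega_\ep)=E_\ep(\omega_\ep)+O(\ep^{2-2s})$. Indeed, on $\mathrm{spt}(\omega_\ep)\subset B(\ep^{-1}(d_0,0),\ep^{-1}d_0/2)$ one has $x_1\in(d_0/2\ep,\,3d_0/2\ep)$, hence $|x-\bar y|\ge x_1+y_1\ge d_0/\ep$ for $x,y\in\mathrm{spt}(\omega_\ep)$, so that
\[
0\le \iint \frac{c_s\,\omega_\ep(x)\omega_\ep(y)}{|x-\bar y|^{2-2s}}\,dx\,dy\le c_s\big(\ep/d_0\big)^{2-2s}\kappa^2,\qquad 0\le W\ep^{3-2s}\!\int x_1\omega_\ep\,dx\le \tfrac32 Wd_0\kappa\,\ep^{2-2s}.
\]
Combining this with the lower bound of Lemma \ref{lem2-20}(iii) and with $E_0(\omega_\ep)\le I_0$ (valid since $\omega_\ep\in\mathcal{A}_0$ after extension by zero) yields $E_0(\omega_\ep)\to I_0=e_0$, i.e.\ $\{\omega_\ep\}$ is a maximizing sequence for $E_0$. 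By the uniform bound of Lemma \ref{lem2-20}(iv) and the concentration-compactness argument of Lions (Theorem II.2 and Corollary II.1 in \cite{L84}), there are translations $\tau_\ep\in\mathbb{R}^2$ such that, along a subsequence, $\tilde\omega_\ep:=\omega_\ep(\cdot+\tau_\ep)\to\omega_0$ strongly in $L^1\cap L^{2-s}(\mathbb{R}^2)$, with $\omega_0$ a maximizer of $E_0$ over $\mathcal{A}_0$.

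Next I would multiply the Euler--Lagrange equation \eqref{2-26} by $\omega_\ep$ and integrate over $\mathbb{R}^2_+$ (legitimate since $\mathrm{spt}(\omega_\ep)$ sits inside the ball where \eqref{2-26} holds), obtaining
\[
\mu_\ep\kappa=\iint G_s^+(x,y)\,\omega_\ep(x)\omega_\ep(y)\,dx\,dy-W\ep^{3-2s}\!\int x_1\omega_\ep\,dx-\int J'(\omega_\ep)\,\omega_\ep\,dx .
\]
By the estimates above and translation invariance, the first two terms on the right equal $\int\omega_0\mathcal{G}_s\omega_0+o(1)$ as $\ep\to0$; here the bilinear bound \eqref{2-6-1} together with the strong $L^{2-s}$-convergence of $\tilde\omega_\ep$ is used to pass $\int\tilde\omega_\ep\mathcal{G}_s\tilde\omega_\ep\to\int\omega_0\mathcal{G}_s\omega_0$. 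For the last term, $\|\tilde\omega_\ep\|_\infty$ is uniformly bounded by Lemma \ref{lem2-20}(iv) and $0\le J'(\tilde\omega_\ep)\tilde\omega_\ep\le J'(\|\tilde\omega_\ep\|_\infty)\tilde\omega_\ep$, so strong $L^1$-convergence of $\tilde\omega_\ep$, continuity of $J'$ and a generalized dominated convergence argument give $\int J'(\omega_\ep)\omega_\ep\to\int J'(\omega_0)\omega_0$. On the other hand, multiplying the relation \eqref{2-7} for $\omega_0$ by $\omega_0$ and integrating gives $\int\omega_0\mathcal{G}_s\omega_0-\int J'(\omega_0)\omega_0=\mu_0\kappa$. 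Hence $\mu_\ep\to\mu_0$ along the subsequence, and $\mu_0\ge\mu_*$ by Lemma \ref{lem2-10}. Since any sequence $\ep_k\to0$ admits such a subsequence, $\liminf_{\ep\to0}\mu_\ep\ge\mu_*$, which is \eqref{2-28}.

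The delicate step is the strong convergence in $L^1\cap L^{2-s}$ of the translated maximizers: one must rule out vanishing and dichotomy for $\{\omega_\ep\}$ viewed as a maximizing sequence of $E_0$, which is precisely where the structural hypotheses on $J$ — especially $\lim_{t\to0_+}J'(t)t^{s-1}=0$, forcing maximizers to be compactly supported and controlling $\int J(\omega)$ — enter, following \cite{L84}. By contrast, the corrections coming from the half-plane kernel $G_s^+$ and from the slow speed $W\ep^{3-2s}$ are harmless (of order $\ep^{2-2s}$) because $\mathrm{spt}(\omega_\ep)$ lies at distance $\asymp\ep^{-1}$ from the $x_1$-axis, and passing to the limit in the nonlinear term $\int J'(\omega_\ep)\omega_\ep$ rests on the uniform $L^\infty$ bound already established in Lemma \ref{lem2-20}(iv).
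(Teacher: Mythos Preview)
Your proof is correct and reaches the same conclusion, but it proceeds along a different route than the paper. Both arguments start the same way: one shows $E_0(\omega_\ep)\to I_0$ and invokes Lions' concentration--compactness to obtain, up to translations and along a subsequence, strong convergence $\tilde\omega_\ep\to\omega_0$ in $L^1\cap L^{2-s}$ to a maximizer of $E_0$. From there, however, the paper argues pointwise: it upgrades to a.e.\ convergence of both $\tilde\omega_\ep$ and $\mathcal G_s\tilde\omega_\ep$, writes the Euler--Lagrange identity $\mu_{\ep}=\mathcal G_s\omega_\ep-J'(\omega_\ep)+O(\ep^{2-2s})$ on $\mathrm{spt}(\omega_\ep)$, passes to the a.e.\ limit, and derives the contradiction $\mu_*-\delta_0\ge \mathcal G_s\omega_0-J'(\omega_0)=\mu_0\ge\mu_*$. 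You instead test the Euler--Lagrange equation against $\omega_\ep$ itself to obtain the scalar identity $\mu_\ep\kappa=\int\omega_\ep\mathcal G_s^+\omega_\ep-W\ep^{3-2s}\!\int x_1\omega_\ep-\int J'(\omega_\ep)\omega_\ep$, and then pass to the limit term by term using only the $L^1\cap L^{2-s}$ convergence, the uniform $L^\infty$ bound, and the bilinear estimate \eqref{2-6-1}. This yields the stronger statement $\mu_\ep\to\mu_0$ along the subsequence (not merely a one-sided bound), from which \eqref{2-28} follows. Your approach avoids the extra step of extracting a.e.\ convergence of $\mathcal G_s\omega_\ep$; the price is that the convergence $\int J'(\tilde\omega_\ep)\tilde\omega_\ep\to\int J'(\omega_0)\omega_0$ tacitly uses continuity of $J'$, which holds in the paper's setting $J'=f^{-1}$ with $f$ strictly increasing but should be stated explicitly.
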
	
	\begin{proof}
		Suppose on the contrary that there are constant $\delta_0>0$ and  a sequence $\{\ep_j\}_{j=1}^\infty$ with $\ep_j\to0$ as $j\to\infty$ such that $\limsup_{j\to+\infty}\mu_{\ep_j}\leq \mu_*-\delta_0.$ By Lemma \ref{lem2-20}, we know that $\{\omega_{\ep_j}\}_{j=1}^\infty$ is a maximizing sequence of $E_0$. Then   Theorem \uppercase\expandafter{\romannumeral2}.2 and Corollary \uppercase\expandafter{\romannumeral2}.1 in \cite{L84}  give  a subsequence (still denoted by $\{\omega_{\ep_j}\}_{j=1}^\infty$ for convenience) converges to a maximizer $\omega_0$ of $E_0$ in $L^1\cap L^{2-s}(\mathbb{R}^2)$ after suitable translations. Since $|x|^{2s-2}\in L^{\frac{1}{1-s},\infty}$ (see \cite{Grf} for the weak $L^p$ spaces), we deduce from the generalized Young's inequality (  Theorem 1.4.25 in \cite{Grf}) that $\mathcal{G}_s \omega_{\ep_j}$ converges to $\mathcal{G}_s \omega_0$ strongly in $L^{\frac{2-s}{(1-s)^2}}$.  Then, extracting another subsequence, one may assume that both $\omega_{\ep_j}$ and $\mathcal{G}_s \omega_{\ep_j}$ converge to $\omega_0$ and $\mathcal{G}_s \omega_0$ a.e. respectively.
		
		On the support of $\omega_{\ep_j}$, the Euler-Lagrange equation \eqref{2-26} implies
		$$\mu_{\ep_j}=\mathcal{G}_s^+\omega_{\ep_j}- W\ep^{3-2s}_jx_1-J'(\omega_{\ep_j})\geq \mathcal{G}_s \omega_{\ep_j} -J'(\omega_{\ep_j})+O(\ep^{2-2s}).$$
		Letting $j\to+\infty$, the a.e. convergence and the assumption on $\mu_{\ep_j}$ imply that
		$$\mu_*-\delta_0\geq  \mathcal{G}_s \omega_0-J'(\omega_0).$$
		on the other hand, by the Euler-Lagrange equation for $\omega_0$ and Lemma \ref{lem2-10}, one has
		$$\mathcal{G}_s \omega_0-J'(\omega_0)=\mu_0,$$ for some $\mu_0\geq \mu_*$, which is a contradiction. The proof of this lemma is thus finished.
	\end{proof}	
	
	Now, we determine the size of the support of $\omega_\ep$.
	\begin{lemma}\label{lem2-22}
		There exists a constant $R>0$ such that for $\ep$ sufficiently small and for any maximizer $\omega_\ep$,   the support of $\omega_\ep$ is contained in a disk of radius $R$.
	\end{lemma}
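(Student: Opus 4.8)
The plan is to show that a maximizer $\omega_\ep$ cannot have support of unbounded diameter, by exploiting the same mechanism used in Lemma \ref{lem2-11} for the limiting problem: once the support becomes large, the potential $\mathcal{G}_s^+\omega_\ep$ decays pointwise (uniformly in $\ep$) and eventually falls below the Lagrange multiplier $\mu_\ep$, which by Lemma \ref{lem2-21} is bounded below by $\mu_*/2>0$ for $\ep$ small. Combined with the Euler--Lagrange equation \eqref{2-26}, this forces $\omega_\ep$ to vanish outside a fixed-size disk. The first step is to record the uniform estimates we already have at our disposal: $\|\omega_\ep\|_{2-s}\leq C_0$ (Lemma \ref{lem2-17}, inherited via Lemma \ref{lem2-20}), $\|\omega_\ep\|_\infty\leq C_1$ (Corollary \ref{lem2-19}), $\int_{\mathbb{R}^2_+}\omega_\ep=\kappa$, the Steiner symmetry of $\omega_\ep$, and $\mu_\ep\geq \mu_*/2$ for $\ep<\ep_\delta$ with $\delta=\mu_*/2$. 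By Lemma \ref{lem2-1} and the bound on $\|\omega_\ep\|_{2-s}$ we also have $\|\mathcal{G}_s\omega_\ep\|_q\leq C$ for a suitable range of $q$, all constants independent of $\ep$.

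The main work is a decay estimate for $\mathcal{G}_s^+\omega_\ep$. Since $0\le G_s^+(x,y)\le G_s(x-y)$, it suffices to show $\mathcal{G}_s\omega_\ep(x)\to 0$ as the distance from $x$ to (a fixed reference point of) $\mathrm{spt}(\omega_\ep)$ grows, uniformly in $\ep$. Here one must be slightly careful because $\omega_\ep$ need not be radially symmetric (only Steiner symmetric), so the argument of Lemma \ref{lem2-11} must be adapted. The clean way is: let $c_\ep$ be the point such that $\omega_\ep$ is symmetric decreasing in $x_2$ about the line $\{x_2=c_\ep\}$; there is a point $p_\ep$ (e.g. on this line) with $\int_{B(p_\ep,R_*)}\omega_\ep\ge \kappa/2$ for $\ep$ small — this follows, as in the proof of Lemma \ref{lem2-18}, from the fact that $\{\omega_\ep\}$ is a maximizing sequence for $E_0$ and hence (after translation) converges in $L^1\cap L^{2-s}$ to a maximizer $\omega_0$ of $E_0$, whose support has diameter $\le 2R_*$ by Lemma \ref{lem2-11}. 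Then, for $x$ with $|x-p_\ep|$ large, split
\begin{equation*}
\mathcal{G}_s\omega_\ep(x)=\int_{|x-y|<1}+\int_{1\le|x-y|<r}+\int_{|x-y|\ge r}\frac{c_s\,\omega_\ep(y)}{|x-y|^{2-2s}}\,dy=:A_1+A_2+A_3,
\end{equation*}
exactly as in Lemma \ref{lem2-9}/\ref{lem2-11}. For $A_3$ use $A_3\le c_s r^{2s-2}\kappa$, chosen $\le \mu_*/8$ after fixing $r$ large. For $A_1$, use Hölder together with $\|\omega_\ep\|_\infty\le C_1$ and, crucially, the smallness of $\int_{|x-y|<1}\omega_\ep(y)\,dy$ when $x$ is far from $p_\ep$: because $\omega_\ep$ is Steiner symmetric about $\{x_2=c_\ep\}$ with $L^1$ norm $\kappa$ and $L^\infty$ norm $\le C_1$, its "slices'' decay and one gets $\int_{B(x,1)}\omega_\ep\to 0$ as $|x-p_\ep|\to\infty$ uniformly in $\ep$ (this is the one genuinely new estimate, replacing the radial bound $\int_{B(x,1)}\omega_0\le C|x|^{-2}$). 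Then $A_2\le C c_s r^2 \sup_{|z-x|\le r}\int_{B(z,1)}\omega_\ep$, which is also small for $|x-p_\ep|$ large with $r$ fixed. Altogether, there is $R_1>0$, independent of $\ep$, with $\mathcal{G}_s^+\omega_\ep(x)\le \mathcal{G}_s\omega_\ep(x)<\mu_*/2$ whenever $|x-p_\ep|>R_1$.

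Finally, combine this with the Euler--Lagrange equation: on $B(\ep^{-1}(d_0,0),\ep^{-1}d_0/2)$ one has $\omega_\ep=(J')^{-1}\big((\mathcal{G}_s^+\omega_\ep-W\ep^{3-2s}x_1-\mu_\ep)_+\big)$, and since $J'(0_+)=0$ (so $(J')^{-1}(0)=0$), at any point with $|x-p_\ep|>R_1$ and $\ep$ small enough that $W\ep^{3-2s}x_1$ is negligible on the support (using that the support is within the truncation ball, so $x_1\le 2d_0/\ep$, giving $W\ep^{3-2s}x_1\le 2Wd_0\ep^{2-2s}\to 0$) we get $\mathcal{G}_s^+\omega_\ep-W\ep^{3-2s}x_1-\mu_\ep<\mu_*/2 + o(1) - \mu_*/2 \le 0$, hence $\omega_\ep(x)=0$. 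Therefore $\mathrm{spt}(\omega_\ep)\subset B(p_\ep,R_1)$, and we may take $R:=R_1$ (or $R_1$ enlarged by a fixed amount to recenter at the center of mass, matching the statement of Theorem \ref{thmE}(iii)). The main obstacle is the uniform-in-$\ep$ decay of $\int_{B(x,1)}\omega_\ep$ in terms of the distance to $\mathrm{spt}(\omega_\ep)$: with only Steiner symmetry one cannot quote the radial argument verbatim, so one must either run the slice estimate directly or, more robustly, invoke the $L^1\cap L^{2-s}$-convergence of the translated $\omega_\ep$ to the compactly supported $\omega_0$ to get, for any fixed large $R_1$, that $\int_{\mathbb{R}^2_+\setminus B(p_\ep,R_1)}\omega_\ep<\eta$ uniformly for $\ep$ small, which feeds into $A_1,A_2$ and closes the estimate.
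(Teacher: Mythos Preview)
Your proposal is correct and follows essentially the same approach as the paper: use the lower bound $\mu_\ep\ge\mu_*/2$ from Lemma~\ref{lem2-21}, invoke Lions' compactness to obtain a concentration point $p_\ep$ (the paper calls it $\tilde x_\ep$) with $\int_{B(p_\ep,R_*)^c}\omega_\ep$ small, then show $\mathcal{G}_s\omega_\ep(x)<\mu_*/2$ for $|x-p_\ep|$ large and conclude via the Euler--Lagrange equation. The only cosmetic difference is the splitting of $\mathcal{G}_s\omega_\ep(x)$: you decompose by annuli centered at $x$ (as in Lemmas~\ref{lem2-9}/\ref{lem2-11}), while the paper splits the integration domain into $B(\tilde x_\ep,R_*)$ and its complement, bounding the first piece by $c_s\kappa/|R-R_*|^{2-2s}$ and the second by Lemma~\ref{lem2-1} applied to $\omega_\ep 1_{B(\tilde x_\ep,R_*)^c}$, whose $L^1$ norm is $\le C\delta$. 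The paper's decomposition is slightly cleaner because it avoids the intermediate step of controlling $\sup_z\int_{B(z,1)}\omega_\ep$ for $z$ far from $p_\ep$; your ``robust'' alternative at the end (use $\int_{B(p_\ep,R_1)^c}\omega_\ep<\eta$ directly) is exactly the paper's route. One small remark: your handling of the $-W\ep^{3-2s}x_1$ term is overcautious, since this term is nonpositive and only helps.
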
		
	\begin{proof}
		By the previous lemma, we can take $\ep$ small such that $\mu_\ep\geq \frac{\mu_*}{2}$. Then the Euler-Lagrange equation \eqref{2-26} implies
		\begin{equation}\label{2-29}
			J'(\omega_\ep)=(\mathcal{G}_s^+\omega_\ep- W\ep^{3-2s}x_1-\mu_\ep)_+\leq (\mathcal{G}_s \omega_\ep -\frac{\mu_*}{2}+O(\ep^{2-2s}) )_+.
		\end{equation}
		 Theorem \uppercase\expandafter{\romannumeral2}.2 and Corollary \uppercase\expandafter{\romannumeral2}.1 in \cite{L84} provide  a subsequence (still denoted by $\{\omega_\ep\}$), which tends to a maximizer $\omega_0$ of $E_0$ in $L^1\cap L^{2-s}$ after translation. So, for any $\delta>0$, we can  find a constant $\ep_\delta>0$ such that
		\begin{equation*}
			\| \omega_\ep-\omega_{0 }^\ep\|_{2-s}\leq \delta, \ \  \forall \ \ep\in(0,\ep_\delta),
		\end{equation*}
		where $\omega_{0 }^\ep$ is a translation of $\omega_0$.
		By Lemma \ref{lem2-11}, we conclude that $\text{spt}(\omega_{0}^\ep) \subset B(\tilde x_\ep, R_*)$ for some point $\tilde x_\ep$. Thus, we get by using the H\"older inequality
		\begin{equation*}
			\begin{split}
				\int_{B(\tilde x_\ep, R_*)} \omega_\ep&\geq \int_{B(\tilde x_\ep, R_*)} \omega_{0}^{\ep}-\| \omega_\ep-\omega_{0 }^\ep\|_{L^1(B(\tilde x_\ep, R_*))}\\
				&\geq \kappa -C\| \omega_\ep-\omega_{0 }^\ep\|_{2-s}\geq \kappa-C\delta,
			\end{split}
		\end{equation*}
		which, combined with $\int \omega_\ep=\kappa$, implies
		\begin{equation}\label{2-30}
			\int_{B(\tilde x_\ep, R_*)^c}  \omega_\ep\leq C \delta,\ \  \forall \ \ep\in(0,\ep_\delta).
		\end{equation}
		
		Take a large $R\geq R_*$ such that $\frac{c_s \kappa}{|R-R_*|^{2-2s}}\leq \frac{\mu_*}{6}$. Then for any $x\in B(\ep^{-1}(d_0,0), \ep^{-1} d_0/2)\setminus B(\tilde x_\ep, R)$, using Lemma \ref{lem2-1} and \eqref{2-27}, we have
		\begin{align}\label{2-31}
				&\mathcal{G}_s \omega_\ep(x)=\int_{ B(\tilde x_\ep, R_*)} \frac{c_s \omega_\ep(y)}{|x-y|^{2-s}} dy +\int_{ B(\tilde x_\ep, R_*)^c} \frac{c_s \omega_\ep(y)}{|x-y|^{2-s}} dy\nonumber\\
				&\leq \frac{c_s \kappa}{|R-R_*|^{2-2s}}+C\left(\left(\int_{  B(\tilde x_\ep, R_*)^c}  \omega_\ep\right)^a+\left(\int_{ B(\tilde x_\ep, R_*)^c}  \omega_\ep\right)^b\right)\\
				&\leq \frac{\mu_*}{6}+C(\delta^a+\delta^b)\leq \frac{\mu_*}{3},\nonumber
		\end{align}
		by taking $\delta$ small.   \eqref{2-29}  implies that $\omega_\ep(x)=0$ for arbitrary $x\in B(\ep^{-1}(d_0,0), \ep^{-1} d_0/2)\setminus B(\tilde x_\ep, R)$. Hence, we find the constant $R>0$ such that the support of $\omega_\ep$ is contained in a disk of radius $R$ for $\ep$ small and finish the proof.
	\end{proof}	
	
	To find the location of $\omega_\ep$, let $x_\ep:=\kappa^{-1}\int x\omega_\ep$ be the center of mass. Then $x_\ep=	(\ep^{-1} d_\ep,0)$ for some $d_\ep>0$. By replacing $R$ with $2R$, we may assume that for $\ep$ small, $$\text{spt}(\omega_\ep)\subset B(x_\ep, R).$$
	
	\begin{lemma}\label{lem2-23} There holds
		\begin{equation}\label{2-32}
			d_\ep\to d_0, \  \ \text{as}\ \ep\to 0.
		\end{equation}
	\end{lemma}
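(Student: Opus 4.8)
The plan is to pin down the position $\ep^{-1}d_\ep$ by expanding $E_\ep(\omega_\ep)$ to order $\ep^{2-2s}$ and observing that this correction is governed by a one-variable function of $d_\ep$ whose unique minimizer is $d_0$. First I would record the decomposition: since $\text{spt}(\omega_\ep)\subset\mathbb{R}^2_+$ and $G_s^+(x,y)=\frac{c_s}{|x-y|^{2-2s}}-\frac{c_s}{|x-\bar y|^{2-2s}}$,
\begin{equation*}
	E_\ep(\omega_\ep)=E_0(\omega_\ep)-\frac{c_s}{2}\int_{\mathbb{R}^2_+}\int_{\mathbb{R}^2_+}\frac{\omega_\ep(x)\omega_\ep(y)}{|x-\bar y|^{2-2s}}\,dxdy-W\ep^{3-2s}\int_{\mathbb{R}^2_+}x_1\omega_\ep(x)\,dx .
\end{equation*}
By Lemma \ref{lem2-22} (after replacing $R$ by $2R$), $\text{spt}(\omega_\ep)\subset B(x_\ep,R)$ with $x_\ep=(\ep^{-1}d_\ep,0)$; since also $\text{spt}(\omega_\ep)\subset B(\ep^{-1}(d_0,0),\ep^{-1}d_0/2)$, the center of mass forces $d_\ep\in[d_0/2,3d_0/2]$, so $\{d_\ep\}$ stays in a fixed compact subinterval of $(0,\infty)$.

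Next I would estimate the two $\ep$-dependent terms. For $x,y\in\text{spt}(\omega_\ep)$ one has $x_1+y_1=2\ep^{-1}d_\ep+O(R)$ and $|x_2-y_2|\le2R$, hence $|x-\bar y|^{2-2s}=(2\ep^{-1}d_\ep)^{2-2s}(1+O(\ep))$ uniformly (here $R$ is fixed and $d_\ep$ is bounded below); combined with $\int_{\mathbb{R}^2_+}\omega_\ep=\kappa$ this gives
\begin{equation*}
	\frac{c_s}{2}\int_{\mathbb{R}^2_+}\int_{\mathbb{R}^2_+}\frac{\omega_\ep(x)\omega_\ep(y)}{|x-\bar y|^{2-2s}}\,dxdy=\frac{c_s\kappa^2}{2^{3-2s}d_\ep^{2-2s}}\,\ep^{2-2s}+O(\ep^{3-2s}),
\end{equation*}
while the linear term is exact: $W\ep^{3-2s}\int_{\mathbb{R}^2_+}x_1\omega_\ep=W\kappa d_\ep\,\ep^{2-2s}$. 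Setting $\Phi(d):=\frac{c_s\kappa^2}{2^{3-2s}d^{2-2s}}+W\kappa d$, I thus obtain
\begin{equation*}
	E_\ep(\omega_\ep)=E_0(\omega_\ep)-\ep^{2-2s}\Phi(d_\ep)+O(\ep^{3-2s}).
\end{equation*}
A direct computation shows $\Phi$ is strictly convex on $(0,\infty)$, tends to $+\infty$ at both endpoints, and has its unique critical (hence global minimum) point at the solution of $d^{3-2s}=\frac{(1-s)c_s\kappa}{2^{2-2s}W}$, which by \eqref{d0} is exactly $d_0$.

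Then I would compare with a translated copy of the limiting maximizer. Since $\omega_\ep$, extended by zero, lies in $\mathcal{A}_0$, we have $E_0(\omega_\ep)\le I_0$, whence $E_\ep(\omega_\ep)\le I_0-\ep^{2-2s}\Phi(d_\ep)+O(\ep^{3-2s})$. On the other hand, with $\omega_0$ a maximizer of $E_0$ over $\mathcal{A}_0$ (compactly supported by Lemma \ref{lem2-11}), $\bar\omega_0:=\omega_0(\cdot-\ep^{-1}(d_0,0))\in\mathcal{A}_\ep$ for $\ep$ small (as in Lemma \ref{lem2-16}), and the same expansion with $d_\ep$ replaced by $d_0$ gives $E_\ep(\bar\omega_0)=I_0-\ep^{2-2s}\Phi(d_0)+O(\ep^{3-2s})$; since $\omega_\ep$ maximizes $E_\ep$ over $\mathcal{A}_\ep$, $E_\ep(\omega_\ep)\ge E_\ep(\bar\omega_0)$. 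Combining these, dividing by $\ep^{2-2s}$ and letting $\ep\to0$ yields $\limsup_{\ep\to0}\Phi(d_\ep)\le\Phi(d_0)$. Since $\{d_\ep\}\subset[d_0/2,3d_0/2]$, any subsequence has a further subsequence along which $d_{\ep_j}\to d_*$, and $\Phi(d_*)\le\Phi(d_0)$ then forces $d_*=d_0$ by uniqueness of the minimizer of $\Phi$; hence $d_\ep\to d_0$, which is \eqref{2-32}.

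The step I expect to be the main obstacle is the uniformity in the second paragraph: one must verify that the reflected-kernel interaction energy has leading term $\frac{c_s\kappa^2}{2^{3-2s}d_\ep^{2-2s}}\ep^{2-2s}$ with error $O(\ep^{3-2s})$ \emph{uniformly} in $\ep$ (and likewise for the test function $\bar\omega_0$), and with the precise constant. This is exactly where the a priori bound of Lemma \ref{lem2-22}---that $\text{spt}(\omega_\ep)$ lies in a disk of radius $R$ \emph{independent of} $\ep$---together with $d_\ep$ being bounded away from $0$ and $\infty$, becomes indispensable; the remaining manipulations are routine control of lower-order terms.
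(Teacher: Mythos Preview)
Your proof is correct and reaches the same endpoint as the paper --- the inequality $\Phi(d_*)\le\Phi(d_0)$ for the strictly convex function $\Phi(d)=\frac{c_s\kappa^2}{2^{3-2s}d^{2-2s}}+W\kappa d$, whose unique minimizer is $d_0$ --- but you arrive there by a slightly different comparison. You compare $\omega_\ep$ with the translated \emph{limiting} maximizer $\bar\omega_0=\omega_0(\cdot-\ep^{-1}(d_0,0))$ and absorb the resulting mismatch in the $E_0$ part via the trivial bound $E_0(\omega_\ep)\le I_0$; this forces you to track the error $O(\ep^{3-2s})$ carefully so that the $E_0$ slack does not swamp the order-$\ep^{2-2s}$ terms. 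The paper instead compares $\omega_\ep$ with a \emph{self}-translate $\bar\omega_\ep(x)=\omega_\ep(x+x_\ep-\ep^{-1}(d_0,0))$: because the $\frac{c_s}{|x-y|^{2-2s}}$ interaction and $\int J(\omega)$ are translation-invariant, they cancel exactly, and one is left directly with
\[
\frac{c_s}{2}\int\!\!\int\frac{\omega_\ep(x)\omega_\ep(y)}{|x-\bar y|^{2-2s}}+W\ep^{3-2s}\int x_1\omega_\ep\ \le\ \frac{c_s}{2}\int\!\!\int\frac{\bar\omega_\ep(x)\bar\omega_\ep(y)}{|x-\bar y|^{2-2s}}+W\ep^{3-2s}\int x_1\bar\omega_\ep,
\]
from which division by $\ep^{2-2s}$ and passing to the limit along a subsequence immediately gives $\Phi(d_*)\le\Phi(d_0)$ without any appeal to $I_0$. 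The paper's route is a bit cleaner (no need for the sharp $O(\ep^{3-2s})$ control you flagged as the ``main obstacle''), while your route has the mild advantage of simultaneously pinning down the next-order energy expansion $E_\ep(\omega_\ep)=I_0-\ep^{2-2s}\Phi(d_0)+o(\ep^{2-2s})$.
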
		
	\begin{proof}
		Since $x_\ep\in B(\ep^{-1}(d_0,0), \ep^{-1} d_0/2)$, we deduce $\frac{d_0}{2}< d_\ep<\frac{3d_0}{2}.$ Up to a subsequence, we may assume that $$d_\ep\to d_*\in\bigg[\frac{d_0}{2},  \frac{3d_0}{2}\bigg], \ \ \text{as}\  \ep\to0.$$
		We prove that $d_*=d_0$. Indeed, take $$\bar \omega_\ep(x):=\omega_\ep(x+x_\ep-(\ep^{-1}d_0,0))\in \mathcal{A}_\ep.$$
		Noticing that $\omega_\ep$ is a maximizer, i.e. $E_\ep(\omega_\ep)\geq E_\ep(\bar \omega_\ep)$, we find
		\begin{equation*}
			\begin{split}
				\frac{c_s}{2}\int_{\mathbb{R}^2_+} \frac{\omega_\ep(x)\omega_\ep(y)}{|x-\bar y|^{2-2s}}dxdy& + W\ep^{3-2s}\int_{\mathbb{R}^2_+} x_1\omega_\ep(x) dx\\
				&\leq \frac{c_s}{2}\int_{\mathbb{R}^2_+} \frac{\bar\omega_\ep(x)\bar\omega_\ep(y)}{|x-\bar y|^{2-2s}}dxdy +W\ep^{3-2s}\int_{\mathbb{R}^2_+} x_1\bar\omega_\ep(x) dx.
			\end{split}
		\end{equation*}
		Letting $\ep\to0$ in the above inequality, we obtain
		$$\frac{c_s\kappa}{2^{3-2s}d_*^{2-2s}}+ Wd_*\leq \frac{c_s\kappa}{2^{3-2s}d_0^{2-2s}}+W d_0,$$
		which implies $d_*=d_0$ since $d_0$ is the unique minimizer of the function $h(\tau)=\frac{c_s\kappa}{2^{3-2s}\tau^{2-2s}}+ W\tau$ on $(0,+\infty)$.
		
		 Noting that, by the above proof,  one can see that each sequence  in $\{d_\ep\}$ has a convergent  subsequence that   converges   to the same limit   $d_0$. Then a simple contradiction argument shows that  $\{d_\ep\}$ itself  y converges  to $d_0$. The proof of this lemma is thus complete.
	\end{proof}

	Now we are ready to prove Theorem \ref{thmax*}.
	
	\noindent{\bf Proof of Theorem \ref{thmax*}:} Suppose  $J(t)=\int_0^t f^{-1}(\tau) d\tau$ for some $f$ satisfying $(\mathbf{H_{1}})$ and $(\mathbf{H_{2}})$.  Combining Lemma \ref{lem2-20}--\ref{lem2-23}, we can finish
	proof of Theorem \ref{thmax*}.   \qed
	
	  In view of Lemmas \ref{lem2-22} and \ref{lem2-23}, for sufficiently small $\ep$, the support of $\omega_\ep$ is far away from the boundary of $B(\ep^{-1}(d_0,0), \ep^{-1} d_0/2)$. We extend $\omega_\ep$ to the half-space $\mathbb{R}^2_+$ by defining $\omega_\ep$ to be $0$ in  $\mathbb{R}_+^2\setminus B(\ep^{-1}(d_0,0), \ep^{-1} d_0/2)$. With this fact in hand, we can now show that $\omega_{tr,\ep}(x):=\omega_\ep(x)-\omega_\ep(\bar x)$ is a steady solution in the sense of \eqref{1-3}. More precisely, we have
	\begin{lemma}\label{lem2-24}
		Suppose that $J(t)=\int_0^t f^{-1}(\tau) d\tau$ for some $f$ satisfying $(\mathbf{H_{1}})$ and $(\mathbf{H_{2}})$ with $f\in C^{1-2s}$ if $0<s<\frac{1}{2}$. Let $\omega_\ep$ be a maximizer obtained above and $\omega_{tr,\ep}(x):=\omega_\ep(x)-\omega_\ep(\bar x)$, then provided that $\varepsilon$ is sufficiently small, it holds
		\begin{equation}\label{2-33}
			\int_{\mathbb{R}^2}\omega_{tr,\ep} \nabla^\perp(\mathcal{G}_s\omega_{tr,\ep}-W\ep^{3-2s}x_1)\cdot\nabla \varphi  dx=0, \ \ \ \forall\,\varphi\in C_0^\infty(\mathbb{R}^2).
		\end{equation}
	\end{lemma}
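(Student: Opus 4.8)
The plan is to exhibit $\omega_{tr,\ep}$, on a neighborhood of its support, as a nondecreasing (H\"older) continuous function of the co-moving stream function $\Psi_\ep:=\mathcal{G}_s\omega_{tr,\ep}-W\ep^{3-2s}x_1$, so that $\nabla^\perp\Psi_\ep$ is parallel to $\nabla\omega_{tr,\ep}$; then, after multiplying the test function by a cut-off adapted to $\text{spt}(\omega_{tr,\ep})$, the integral in \eqref{2-33} reduces to $\int\nabla^\perp(G\circ\Psi_\ep)\cdot\nabla\phi\,dx$ for a primitive $G$ of the profile, which vanishes because mixed partials commute. First I would record the basic structure. Changing variables $y\mapsto\bar y$ in the reflected part of the kernel and using $\text{spt}(\omega_\ep)\subset\mathbb{R}^2_+$ (recall $\omega_\ep$ has been extended by $0$), one gets $\mathcal{G}_s\omega_{tr,\ep}=\mathcal{G}_s^+\omega_\ep$ on all of $\mathbb{R}^2$; since $G_s^+(\bar x,y)=-G_s^+(x,y)$, this potential is odd in $x_1$ and $\ge0$ on $\mathbb{R}^2_+$, hence $\Psi_\ep$ is odd in $x_1$ and, on $\mathbb{R}^2_+\cap B_\ep$ with $B_\ep:=B(\ep^{-1}(d_0,0),\ep^{-1}d_0/2)$, one has $\Psi_\ep\ge-W\ep^{3-2s}x_1\ge-\tfrac{3}{2}Wd_0\ep^{2-2s}$. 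Also $\omega_{tr,\ep}(\bar x)=-\omega_{tr,\ep}(x)$ by definition, and by Lemmas \ref{lem2-22}--\ref{lem2-23}, for $\ep$ small $\text{spt}(\omega_\ep)\subset B(x_\ep,R)\subset\subset\mathbb{R}^2_+\cap B_\ep$, so $\text{spt}(\omega_{tr,\ep})=\text{spt}(\omega_\ep)\cup\{\bar x\mid x\in\text{spt}(\omega_\ep)\}$ is a compact subset of the open set $U:=(\mathbb{R}^2_+\cap B_\ep)\cup(\{x_1<0\}\cap\bar B_\ep)$, where $\bar B_\ep:=\{\bar x\mid x\in B_\ep\}$.

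The step I expect to require the most care is the regularity of $\nabla\mathcal{G}_s\omega_{tr,\ep}$, which is needed so that the integrand in \eqref{2-33} is defined. By Lemma \ref{lem2-20}(iv) and Lemma \ref{lem2-22}, $\omega_\ep\in L^1\cap L^\infty(\mathbb{R}^2)$ is compactly supported. For $\tfrac12\le s<1$ this already yields $\mathcal{G}_s\omega_\ep\in C^1_{loc}(\mathbb{R}^2)$ by the Riesz-potential and Calder\'on--Zygmund estimates; for $0<s<\tfrac12$ one must invoke the extra hypothesis $f\in C^{1-2s}$. Here I would run a bootstrap on the Euler--Lagrange identity \eqref{2-26}, which (since $(J')^{-1}=f$) reads $\omega_\ep=f\big((\mathcal{G}_s^+\omega_\ep-W\ep^{3-2s}x_1-\mu_\ep)_+\big)$ in $B_\ep$: starting from $\mathcal{G}_s\omega_\ep\in C^{0,2s}_{loc}$ and alternately applying the H\"older continuity of $f$ (to gain regularity of $\omega_\ep$ from that of $\mathcal{G}_s\omega_\ep$) and the Schauder estimates for $(-\Delta)^s$ (Propositions 2.8 and 2.9 in \cite{Sil}, to gain regularity of $\mathcal{G}_s\omega_\ep$ from that of $\omega_\ep$), one upgrades $\nabla\mathcal{G}_s\omega_\ep$ to a locally integrable — indeed continuous — vector field. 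In all cases $\Psi_\ep\in C^1_{loc}(\mathbb{R}^2)$, and the integrand $\omega_{tr,\ep}\nabla^\perp\Psi_\ep\cdot\nabla\varphi$ lies in $L^1(\mathbb{R}^2)$ for every $\varphi\in C_0^\infty(\mathbb{R}^2)$.

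Finally, the functional-dependence identification and the conclusion. By Lemmas \ref{lem2-10} and \ref{lem2-21}, $\mu_\ep\ge\mu_*/2>0$ once $\ep$ is small; set $g(t):=f\big((t-\mu_\ep)_+\big)-f\big((-t-\mu_\ep)_+\big)$, which is continuous, odd, nondecreasing, and $C^{1-2s}$ when $f$ is. On $\mathbb{R}^2_+\cap B_\ep$ the second term vanishes (because $\Psi_\ep\ge-\mu_\ep$ there, by the first paragraph and $\ep$ small), so \eqref{2-26} gives $\omega_{tr,\ep}=\omega_\ep=f((\Psi_\ep-\mu_\ep)_+)=g(\Psi_\ep)$; using the oddness in $x_1$ of $\omega_{tr,\ep}$, $\Psi_\ep$ and $g$, this extends to $\omega_{tr,\ep}=g(\Psi_\ep)$ on all of $U$. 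Now pick $\chi\in C_0^\infty(U)$ with $\chi\equiv1$ on a neighborhood of $\text{spt}(\omega_{tr,\ep})$. For $\varphi\in C_0^\infty(\mathbb{R}^2)$, since $\omega_{tr,\ep}\nabla\chi\equiv0$ and $\chi\equiv1$ on $\text{spt}(\omega_{tr,\ep})$,
\[
\int_{\mathbb{R}^2}\omega_{tr,\ep}\nabla^\perp\Psi_\ep\cdot\nabla\varphi\,dx=\int_{\mathbb{R}^2}\omega_{tr,\ep}\nabla^\perp\Psi_\ep\cdot\nabla(\chi\varphi)\,dx=\int_{\mathbb{R}^2}g(\Psi_\ep)\nabla^\perp\Psi_\ep\cdot\nabla(\chi\varphi)\,dx,
\]
where the last step uses $\text{spt}(\nabla(\chi\varphi))\subset\text{spt}(\chi)\subset U$ and $\omega_{tr,\ep}=g(\Psi_\ep)$ on $U$. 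Writing $G(t):=\int_0^tg(\tau)\,d\tau\in C^1(\mathbb{R})$ and using the chain rule (valid since $\Psi_\ep\in C^1_{loc}$), $g(\Psi_\ep)\nabla^\perp\Psi_\ep=\nabla^\perp(G\circ\Psi_\ep)$, so the last integral equals $\int_{\mathbb{R}^2}\nabla^\perp(G\circ\Psi_\ep)\cdot\nabla(\chi\varphi)\,dx$; one integration by parts ($G\circ\Psi_\ep\in C^1_{loc}\subset W^{1,1}_{loc}$, $\chi\varphi\in C_0^\infty$) turns it into
\[
\int_{\mathbb{R}^2}(G\circ\Psi_\ep)\big(\partial_1\partial_2(\chi\varphi)-\partial_2\partial_1(\chi\varphi)\big)\,dx=0.
\]
Since $\mathcal{G}_s\omega_{tr,\ep}=\mathcal{G}_s^+\omega_\ep$ and $\Psi_\ep=\mathcal{G}_s\omega_{tr,\ep}-W\ep^{3-2s}x_1$, this is exactly \eqref{2-33}.
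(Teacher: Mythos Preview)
Your proof is correct and follows essentially the same approach as the paper's: exhibit $\omega_{tr,\ep}$ as a function of the co-moving stream function $\Psi_\ep$ via the Euler--Lagrange relation \eqref{2-26}, then integrate by parts so that the integral collapses to a difference of mixed partials of the test function. The only cosmetic difference is that the paper reduces (by the odd symmetry of $\omega_{tr,\ep}$) to test functions $\varphi\in C_0^\infty(\mathbb{R}^2_+)$ and works with $F(t)=\int_0^t f(\tau)\,d\tau$ on the half-plane, whereas you treat $\varphi\in C_0^\infty(\mathbb{R}^2)$ directly via the cutoff $\chi$ and the odd profile $g$; your handling of this reduction is in fact more explicit.
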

	\begin{proof}
	     Using the  regularity theory for fractional Laplacians (Propositions 2.8 and 2.9 in \cite{Sil}), for $s\not=\frac{1}{2}$, we can prove that   $\mathcal G_s \omega_{tr,\ep} \in C^{0, 1}$  by  our assumptions on $f$, the fact $\omega_\ep\in L^1\cap L^\infty$ and $\mathcal G_s \omega_{tr,\ep} \in L^\infty$ due to Lemma \ref{lem2-1}. For $s=\frac{1}{2}$, we use the standard theory on potentials to deduce that $\mathcal G_s \omega_{tr,\ep} \in W^{2s,p}$ for any $p>1$. Therefore, the integral in \eqref{2-33} makes sense for all $s\in(0,1)$.
	
	     By the definition of  $\omega_{tr,\ep}$, 		it is sufficient  to prove
	\begin{equation*}
		\int_{\mathbb{R}^2_+}\omega_{\ep} \nabla^\perp(\mathcal{G}_s^+\omega_{\ep}-W\ep^{3-2s}x_1)\cdot\nabla \varphi  dx=0, \ \ \ \forall\,\varphi\in C_0^\infty(\mathbb{R}^2_+).
	\end{equation*}
		Recall that
		\begin{equation*}
			\omega_{\ep }=(J')^{-1}\left((\mathcal{G}_s^+\omega_{\ep   }- W\ep^{3-2s}x_1-\mu_{\ep })_+\right),\ \ \forall\,x\in \mathbb{R}_+^2.
		\end{equation*}
		Let $F(t)=\int_0^t (J')^{-1} (\tau)d \tau$. For any $\varphi\in C_0^\infty(\mathbb{R}^2_+)$, we apply integrate by parts to obtain
		\begin{equation*}
			\begin{split}
				&\int_{\mathbb{R}^2_+}\omega_\ep \nabla^\perp(\mathcal{G}_s^+\omega_\varepsilon-W\ep^{3-2s}x_1)\cdot\nabla \varphi dx\\
				&=-\int_{B(\ep^{-1}(d_0,0), \ep^{-1} d_0/2)} F ((\mathcal{G}_s^+\omega_{\ep   }- W\ep^{3-2s}x_1-\mu_{\ep })_+)(\partial_{x_2}\partial_{x_1}\varphi-\partial_{x_1}\partial_{x_2}\varphi)dx=0,
			\end{split}
		\end{equation*}
		 where we have used the fact $(\mathcal{G}_s^+\omega_{\ep   }-W \ep^{3-2s}x_1-\mu_{\ep })_+=0$  on $\partial B(\ep^{-1}(d_0,0), \ep^{-1} d_0/2)$. This completes the proof.
	\end{proof}

	Now we are ready to prove  Theorem \ref{thmE}.

	\noindent{\bf Proof  of Theorem \ref{thmE}:}
	Let $\omega_{tr,\ep}(x):=\omega_\ep(x)-\omega_\ep(\bar x)$. Then the statements of Theorem \ref{thmE} follow from Theorem \ref{thmax*} and Lemma \ref{lem2-24}. \qed
	
	 \section{Uniqueness of maximizers}\label{sec3}	
	 In this section, we show the uniqueness of maximizers in the case  $J(t)=L t^{1+\frac{1}{p}}$  for some $L>0$ and  $p\in(0, \frac{1}{1-s})$. We first establish some finer estimates of the asymptotic behavior by careful analysis. Then, we obtain the uniqueness by using the non-degeneracy of linearized equations (i.e. Proposition \ref{lem2-13} (iii)). In what follows, we sometimes leave out the domain in the   integral symbol  and abbreviate it to $\int$ when there is no risk of confusion.
	\subsection{Refined estimates on asymptotic behaviors}\label{sec3-1}
	In the cases  $J(t)=L t^{1+\frac{1}{p}}$, we study in detail the asymptotic behaviors of the maximizers.
	\begin{lemma}\label{lem2-25}
		Suppose  that $J(t)=L t^{1+\frac{1}{p}}$  for some $L>0$ and  $p\in(0, \frac{1}{1-s})$. For each $\ep$ small, let $ \omega_\ep$ be a  maximizer  of $E_\ep$ over $\mathcal{A}_\ep$. Define $\tilde \omega_\ep(x):=\omega_\ep(x+x_\ep)$, where $x_\ep=\kappa^{-1}\int x\omega_\ep$ is the center of mass of $\omega_\ep$. Let $\omega_0$ be the unique maximizer of $E_0$ over $\mathcal{A}_0$ with $\int x \omega_0=0$. Then, we have
		$$\lim_{\ep\to0}	\|\tilde \omega_\ep-\omega_0\|_\infty=0.$$
	\end{lemma}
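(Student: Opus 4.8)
The plan is to first establish $L^q$--convergence of $\tilde\omega_\ep$ to $\omega_0$ for every finite $q$, using the compactness already available for the limiting problem, and then to upgrade it to uniform convergence by feeding it back into the Euler--Lagrange equation \eqref{2-26}. Concretely, I would begin by recording that, by Lemma~\ref{lem2-22} and the normalization following Lemma~\ref{lem2-23}, each $\tilde\omega_\ep$ is supported in the fixed disk $B(0,R)$, is uniformly bounded in $L^\infty$ (Lemma~\ref{lem2-20}(iv)), has mass $\kappa$, and has center of mass at the origin. Writing
\begin{equation*}
E_0(\tilde\omega_\ep)=E_\ep(\omega_\ep)+\tfrac12\int_{\mathbb R^2_+}\!\int_{\mathbb R^2_+}\frac{c_s\,\omega_\ep(x)\omega_\ep(y)}{|x-\bar y|^{2-2s}}\,dxdy+W\ep^{3-2s}\int_{\mathbb R^2_+}x_1\omega_\ep\,dx,
\end{equation*}
and using that the last two terms are $O(\ep^{2-2s})$ (since $\mathrm{spt}(\omega_\ep)\subset B(x_\ep,R)$ with $(x_\ep)_1=\ep^{-1}d_\ep$) together with Lemma~\ref{lem2-20}(iii), one sees that $\{\tilde\omega_\ep\}$ is a maximizing sequence for $E_0$ over $\mathcal A_0$. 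The concentration--compactness argument of \cite{L84} invoked in the proof of Lemma~\ref{lem2-22} then gives, along a subsequence and after a translation, a limit in $L^1\cap L^{2-s}$ that maximizes $E_0$; since $\omega_0$ is compactly supported and all $\mathrm{spt}(\tilde\omega_\ep)\subset B(0,R)$, the translations stay bounded, and matching centers of mass (legitimate because all supports lie in a fixed disk) forces the limit to be $\omega_0$ by the uniqueness in Proposition~\ref{lem2-13}. As the subsequential limit is always $\omega_0$, the full family converges, and interpolating $L^1$--convergence against the uniform $L^\infty$ bound yields $\tilde\omega_\ep\to\omega_0$ in $L^q(\mathbb R^2)$ for every $q\in[1,\infty)$.

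Next I would prove two refinements. First, applying Lemma~\ref{lem2-1} with $q=\infty$ to $\tilde\omega_\ep-\omega_0$ (whose $L^1$ norm tends to $0$ and whose $L^p$ norm is bounded for a fixed $p>s^{-1}$) gives $\|\mathcal G_s\tilde\omega_\ep-\mathcal G_s\omega_0\|_\infty\to0$. Second, I would show $\mu_\ep\to\mu_0$. With $J(t)=Lt^{\gamma}$ and $\gamma=1+\tfrac1p$, equation \eqref{2-26} reads $L\gamma\,\omega_\ep^{\gamma-1}=\mathcal G_s^+\omega_\ep-W\ep^{3-2s}x_1-\mu_\ep$ on $\{\omega_\ep>0\}$; multiplying by $\omega_\ep$ and integrating over $\mathbb R^2_+$ gives
\begin{equation*}
\gamma\int J(\omega_\ep)\,dx=\int\omega_\ep\,\mathcal G_s^+\omega_\ep\,dx-W\ep^{3-2s}\int x_1\omega_\ep\,dx-\mu_\ep\kappa.
\end{equation*}
Here $\int J(\omega_\ep)=L\|\tilde\omega_\ep\|_\gamma^\gamma\to\int J(\omega_0)$ by the first step, $\int\omega_\ep\mathcal G_s^+\omega_\ep=\int\tilde\omega_\ep\mathcal G_s\tilde\omega_\ep+O(\ep^{2-2s})\to\int\omega_0\mathcal G_s\omega_0$ (using $\tilde\omega_\ep\to\omega_0$ in $L^1$ and the uniform bound on $\mathcal G_s\tilde\omega_\ep$), and $W\ep^{3-2s}\int x_1\omega_\ep=O(\ep^{2-2s})$. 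Since $\mu_\ep$ is bounded below by Lemma~\ref{lem2-21} and above by the displayed identity (the subtracted terms being nonnegative and $\int\omega_\ep\mathcal G_s^+\omega_\ep\le\kappa\|\mathcal G_s\omega_\ep\|_\infty\le C$ by Lemma~\ref{lem2-1}), every subsequential limit of $\{\mu_\ep\}$ equals $\kappa^{-1}\big(\int\omega_0\mathcal G_s\omega_0-\gamma\int J(\omega_0)\big)$, which is $\mu_0$ by \eqref{2-17}; hence $\mu_\ep\to\mu_0$.

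Finally I would conclude from \eqref{2-26} itself. For $\ep$ small and $|x|\le R$ the point $x+x_\ep$ lies in $B(\ep^{-1}(d_0,0),\ep^{-1}d_0/2)$, so
\begin{equation*}
\tilde\omega_\ep(x)=(L\gamma)^{-p}\big(\mathcal G_s\tilde\omega_\ep(x)-\mu_\ep-W\ep^{3-2s}(x_\ep)_1-W\ep^{3-2s}x_1-r_\ep(x)\big)_+^{\,p},
\end{equation*}
where $r_\ep(x)=\int_{\mathbb R^2_+}\frac{c_s}{|x+x_\ep-\bar y|^{2-2s}}\omega_\ep(y)\,dy$ satisfies $0\le r_\ep\le C\ep^{2-2s}$ uniformly on $B(0,R)$. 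By the previous step the expression inside $(\cdot)_+^{\,p}$ converges uniformly on $B(0,R)$ to $\mathcal G_s\omega_0-\mu_0$ and stays uniformly bounded there, and since $t\mapsto(L\gamma)^{-p}t_+^{\,p}$ is uniformly continuous on bounded sets, $\tilde\omega_\ep\to(L\gamma)^{-p}(\mathcal G_s\omega_0-\mu_0)_+^{\,p}=\omega_0$ uniformly on $B(0,R)$, the last identity being the Euler--Lagrange equation for $\omega_0$ (Proposition~\ref{lem2-13}(ii)). Since $\tilde\omega_\ep$ and $\omega_0$ both vanish off $B(0,R)$, this is exactly $\|\tilde\omega_\ep-\omega_0\|_\infty\to0$.

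The hard part will be the convergence $\mu_\ep\to\mu_0$: the earlier estimates pin $\mu_\ep$ only from below, and identifying the exact limit requires the Pohozaev-type identity obtained by testing \eqref{2-26} against $\omega_\ep$, which in turn relies on the homogeneity of $J(t)=Lt^{1+1/p}$---this is precisely where the restriction to this family of nonlinearities enters. A second delicate point is the endpoint ($q=\infty$) convergence of the potentials $\mathcal G_s\tilde\omega_\ep$, which lets one pass to the limit in the Euler--Lagrange equation without invoking pointwise a.e.\ convergence and which drives the final uniform-continuity argument; this is exactly what the $p>s^{-1}$ case of Lemma~\ref{lem2-1} supplies.
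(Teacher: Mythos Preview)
Your argument is correct, but it takes a genuinely different path to the $L^\infty$ convergence than the paper does.

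For the identification of the limit, the paper works directly with weak-star compactness in $L^\infty$: since the $\tilde\omega_\ep$ are uniformly bounded in $L^\infty$ and supported in a fixed ball, along a subsequence $\tilde\omega_\ep\rightharpoonup\hat\omega_0$ weak-star, and one checks $E_0(\hat\omega_0)=I_0$ and $\int x\,\hat\omega_0=0$, forcing $\hat\omega_0=\omega_0$ by uniqueness. You instead quote Lions' concentration--compactness to get strong $L^1\cap L^{2-s}$ convergence after translation, and then argue the translations are harmless because the supports sit in a fixed ball. Both routes work; the paper's weak-star argument is a bit cleaner, while yours lands you immediately in strong $L^q$ topology.

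The real divergence is in upgrading to $L^\infty$. The paper uses a compactness argument: Silvestre's regularity gives a uniform $C^\alpha$ bound on $\mathcal G_s\omega_\ep$, the Euler--Lagrange representation \eqref{2-26} then gives a uniform $C^{\alpha'}$ bound on $\omega_\ep$ itself, and Arzel\`a--Ascoli closes. In particular the paper never needs $\mu_\ep\to\mu_0$ for this lemma (that convergence is recorded later, in Corollary~\ref{lem2-27}, via the integral identity of Lemma~\ref{lem2-26}). Your route is more elementary in that it avoids the fractional regularity theory: you prove $\|\mathcal G_s\tilde\omega_\ep-\mathcal G_s\omega_0\|_\infty\to0$ via the $p>s^{-1}$ case of Lemma~\ref{lem2-1}, separately establish $\mu_\ep\to\mu_0$ from the Pohozaev-type identity (essentially what the paper does in \eqref{2-38}), and then read off uniform convergence of $\tilde\omega_\ep$ from the uniform continuity of $t\mapsto t_+^p$. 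The cost is that you must front-load the multiplier convergence; the benefit is that you stay entirely within the estimates already proved in Section~\ref{sec2} and need no external $C^\alpha$ input.
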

	\begin{proof}
		We first prove the convergence of the energy. On the one hand, since $\tilde \omega_\ep\in \mathcal{A}_0$ and $\omega_0$ is  a maximizer of $E_0$ over $\mathcal{A}_0$, we have
		$$E_0(\omega_0)\geq E_0(\tilde \omega_\ep).$$
		On the other hand, noticing that $\omega_0(\cdot-\ep^{-1}(d_0,0))\in \mathcal{A}_\ep$, we deduce
		$$E_\ep(\omega_\ep)\geq E_\ep(\omega_0(\cdot-\ep^{-1}(d_0,0))).$$
		It is easy to see that $E_\ep(\omega_\ep)=E_0(\tilde \omega_\ep)+O(\ep^{2-2s})$ and $E_\ep(\omega_0(\cdot-\ep^{-1}(d_0,0)))=E_0(\omega_0)+O(\ep^{2-2s})$. So, we conclude
		$$E_0(\omega_0)\geq E_0(\tilde \omega_\ep)\geq E_0(\omega_0)+O(\ep^{2-2s}),$$
		which implies
		\begin{equation}\label{2-34}
			\lim_{\ep\to0}E_0(\tilde \omega_\ep)=E_0(\omega_0).
		\end{equation}
		
		Note that for arbitrary $\ep$ small, one has $\|\tilde \omega_\ep\|_\infty=\|  \omega_\ep\|_\infty\leq C$ for some $C>0$ independent of $\ep$  and $\text{spt}(\tilde \omega_\ep)\subset B(0,R)$ for some $R$ independent of $\ep$ by previous lemmas. Therefore, we may assume that up to a subsequence $\tilde\omega_\ep\to \hat\omega_0$ weakly star in $L^\infty$ for some $\hat\omega_0$. Similar argument as the proof of Lemma \ref{lem2-14} shows that $E_0(\hat\omega_0)=\lim_{\ep\to0}E_0(\tilde \omega_\ep)=E_0(\omega_0)$, so $\hat\omega_0$ is a maximizer of   $E_0$ over $\mathcal{A}_0$. Moreover, by the weakly star convergence, we find
		$$\int x \hat\omega_0=\lim_{\ep\to0}\int x \tilde \omega_\ep=0.$$
		This indicates that $ \hat\omega_0=\omega_0$ by the uniqueness of maximizers of $E_0$ due to Proposition \ref{lem2-13}.
		
		We have proved the weakly star convergence. Now, we show the strong convergence. Since $ \omega_\ep$ is uniformly bounded in $L^1\cap L^\infty$, then by Lemma \ref{lem2-1} and   Proposition 2.9 in \cite{Sil}, we have $\mathcal{G}_s\omega_\ep$ is uniformly bounded   in $C^\alpha$ for some $0<\alpha<1$. Then, we deduce from the representation \eqref{2-26} that $ \omega_\ep$ is uniform bounded in $C^\alpha$. So by the Arela-Ascoli theorem, we may assume that up to a  subsequence, $\{\tilde\omega_\ep\}$ strongly converges in $L^\infty$ to $\omega_0$.
		
		Since each sequence  in $\{\tilde\omega_\ep\}$ has a convergent  subsequence that strongly converges in $L^\infty$ to the same limit   $\omega_0$, a simple contradiction argument shows that  $\{\tilde\omega_\ep\}$ itself strongly converges in $L^\infty$ to $\omega_0$. The proof of this lemma is thus finished.
	\end{proof}
	
	We derive the integral representation for the Lagrange multipliers $\mu_\ep$ in terms of $\omega_\ep$.
	\begin{lemma}\label{lem2-26}
		Suppose  that $J(t)=L t^{1+\frac{1}{p}}$  for some $L>0$ and  $p\in(0, \frac{1}{1-s})$. Let $\omega_\ep$ be a maximizer of $E_\ep$ over $\mathcal{A}_\ep$ satisfying \eqref{2-26} for some $\mu_\ep$. Then we have
		\begin{equation}\label{2-35}
			\mu_\ep \kappa =A_\gamma E_0(\omega_\ep)+ B_\gamma W  \ep^{3-2s}\int_{\mathbb{R}^2_+ } x_1\omega_\ep +  C_\gamma \int_{\mathbb{R}^2_+ }\int_{\mathbb{R}^2_+ } \frac{ c_s  \omega_\ep(x)\omega_\ep(y)}{|x-\bar y|^{2-2s}} dx dy,
		\end{equation}
		where the constants $A_\gamma=\frac{2-\gamma-s\gamma}{ 2-s-\gamma}$, $B_\gamma= \frac{2-s}{s-1}-\frac{2-\gamma-\gamma s}{2(s-1)(2-s-\gamma)} $, $C_\gamma=-\frac{2-\gamma-\gamma s}{2(2-s-\gamma)} $ and $\gamma=1+\frac{1}{p}$.
	\end{lemma}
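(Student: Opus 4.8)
The plan is to derive \eqref{2-35} by the same device used in Lemma \ref{lem2-12}: produce two scalar identities for the maximizer $\omega_\ep$ — a Pohozaev-type identity from the Euler--Lagrange equation and a dilation identity from the fact that $t\mapsto E_\ep((\omega_\ep)_t)$ is maximized at $t=1$ along the scaling family $(\omega_\ep)_t(x):=t^{-2}\omega_\ep(t^{-1}x)$ — and then solve the resulting $2\times 2$ linear system for $\mu_\ep\kappa$.

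For the first identity, write $\gamma:=1+\tfrac1p$, so $J(t)=Lt^\gamma$, $J'(t)=L\gamma t^{\gamma-1}$ and $(J')^{-1}(\tau)=(\tau/(L\gamma))^{1/(\gamma-1)}$ for $\tau\ge0$. By Lemmas \ref{lem2-22} and \ref{lem2-23}, for $\ep$ small $\mathrm{spt}(\omega_\ep)\subset B(x_\ep,R)$ sits well inside the constraint ball $B(\ep^{-1}(d_0,0),\ep^{-1}d_0/2)$, so \eqref{2-26} holds on all of $\mathrm{spt}(\omega_\ep)$ and there, since $\omega_\ep>0$,
\[
L\gamma\,\omega_\ep^{\gamma-1}=\mathcal{G}_s^+\omega_\ep-W\ep^{3-2s}x_1-\mu_\ep .
\]
Multiplying by $\omega_\ep$, integrating over $\mathbb{R}^2_+$ (the identity holding trivially on $\{\omega_\ep=0\}$), and using $\int J(\omega_\ep)=L\int\omega_\ep^\gamma$ together with $\int_{\mathbb{R}^2_+}\omega_\ep=\kappa$, one gets
\[
\gamma\int_{\mathbb{R}^2_+}J(\omega_\ep)=\int_{\mathbb{R}^2_+}\omega_\ep\,\mathcal{G}_s^+\omega_\ep-W\ep^{3-2s}\int_{\mathbb{R}^2_+}x_1\omega_\ep-\mu_\ep\kappa .
\]

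For the second identity I would use the dilation $(\omega_\ep)_t$. Because $\mathbb{R}^2_+$ and the reflection $y\mapsto\bar y$ are both invariant under $x\mapsto tx$, a change of variables shows that under $\omega\mapsto(\omega_\ep)_t$ the quantity $\tfrac12\int\omega\,\mathcal{G}_s^+\omega$ scales like $t^{2s-2}$, $\int x_1\omega$ like $t$, and $\int J(\omega)$ like $t^{2-2\gamma}$. The delicate point — the only one — is that $(\omega_\ep)_t\in\mathcal{A}_\ep$ for all $t$ in a neighbourhood of $1$: nonnegativity, $\|(\omega_\ep)_t\|_1=\kappa$ and $(\omega_\ep)_t\in L^{2-s}$ are immediate, while $\mathrm{spt}((\omega_\ep)_t)=t\,\mathrm{spt}(\omega_\ep)$ stays inside $B(\ep^{-1}(d_0,0),\ep^{-1}d_0/2)$ for $|t-1|$ small precisely because Lemmas \ref{lem2-22}--\ref{lem2-23} place $\mathrm{spt}(\omega_\ep)$ in a disk of fixed radius $R$ about $x_\ep$ with $\ep x_\ep\to(d_0,0)$, hence at distance $\sim\ep^{-1}d_0$ from the origin inside a ball of radius $\sim\ep^{-1}d_0/2$. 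Thus $t=1$ is an interior maximum point of $t\mapsto E_\ep((\omega_\ep)_t)$, and differentiating at $t=1$ yields
\[
(2s-2)\,\tfrac12\!\int_{\mathbb{R}^2_+}\!\omega_\ep\,\mathcal{G}_s^+\omega_\ep-W\ep^{3-2s}\!\int_{\mathbb{R}^2_+}\!x_1\omega_\ep+(2\gamma-2)\!\int_{\mathbb{R}^2_+}\!J(\omega_\ep)=0 .
\]

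The rest is bookkeeping. Put $\mathcal N:=\frac{c_s}{2}\int_{\mathbb{R}^2_+}\!\int_{\mathbb{R}^2_+}\frac{\omega_\ep(x)\omega_\ep(y)}{|x-y|^{2-2s}}$ and $\mathcal M:=\frac{c_s}{2}\int_{\mathbb{R}^2_+}\!\int_{\mathbb{R}^2_+}\frac{\omega_\ep(x)\omega_\ep(y)}{|x-\bar y|^{2-2s}}$, so that $\tfrac12\int\omega_\ep\mathcal{G}_s^+\omega_\ep=\mathcal N-\mathcal M$, $E_0(\omega_\ep)=\mathcal N-\int J(\omega_\ep)$, and the last integral in \eqref{2-35} equals $2\mathcal M$. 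The two displayed identities form a linear system in the two quantities $\int J(\omega_\ep)$ and $\mathcal N-\mathcal M$; eliminating $\int J(\omega_\ep)$, substituting $\mathcal N=E_0(\omega_\ep)+\int J(\omega_\ep)$, and using $2\gamma+2s-4=-2(2-s-\gamma)$, one expresses $\mu_\ep\kappa$ as a linear combination of $E_0(\omega_\ep)$, $W\ep^{3-2s}\int x_1\omega_\ep$ and $\mathcal M$ with coefficients $A_\gamma=\frac{2-\gamma-s\gamma}{2-s-\gamma}$, $B_\gamma=\frac{2-s}{s-1}-\frac{2-\gamma-\gamma s}{2(s-1)(2-s-\gamma)}$ and $2C_\gamma=-A_\gamma$ respectively — that is, \eqref{2-35}. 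As a sanity check, as $\ep\to0$ both $\mathcal M$ and $W\ep^{3-2s}\int x_1\omega_\ep$ are $O(\ep^{2-2s})$, so \eqref{2-35} degenerates to $\mu_0\kappa=A_\gamma I_0$, consistent with Lemma \ref{lem2-12}. I expect the dilation-admissibility point in the third paragraph to be the only step requiring genuine care; the remaining computations are the same routine as in Lemma \ref{lem2-12}.
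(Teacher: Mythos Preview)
Your proposal is correct and follows essentially the same route as the paper: the paper likewise combines the Pohozaev-type identity obtained by multiplying \eqref{2-26} by $\omega_\ep$ and integrating with the dilation identity $\frac{d}{dt}\big|_{t=1}E_\ep((\omega_\ep)_t)=0$, justifying admissibility of $(\omega_\ep)_t$ for $t$ near $1$ via Lemmas \ref{lem2-22}--\ref{lem2-23}, and then does the same linear algebra. Your sanity check against Lemma \ref{lem2-12} is a nice addition.
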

	\begin{proof}
		We may assume that $\omega_\ep$ is symmetric non-increasing in $x_2$. Then Lemmas \ref{lem2-22} and \ref{lem2-23} shows that the support of $\omega_\ep$ is far away from the boundary $\partial B(\ep^{-1}(d_0,0), \ep^{-1} d_0/2)$. So the function $(\omega_\ep)_t(x):=t^{-2}\omega_\ep(t^{-1}x)$ is supported in $B(\ep^{-1}(d_0,0), \ep^{-1} d_0/2)$ for $t\approx 1$ and hence belongs to $\mathcal{A}_\ep$. Since $\omega_\ep$ is a maximizer, we deduce
		\begin{equation*}
			\frac{d E_\ep((\omega_\ep)_t)}{d t} \Bigg{|}_{t=1} =0,
		\end{equation*}
		which, by the definition of $E_\ep$ and straightforward computations, gives	
		$$(s-1)\int_{\mathbb{R}^2_+ } \omega_\ep\mathcal{G}_s^+\omega_\ep- W\ep^{3-2s}\int_{\mathbb{R}^2_+ } x_1\omega_\ep-(2-2\gamma)\int_{\mathbb{R}^2_+ } J(\omega_\ep)=0.$$
		Then, we infer from the above identity that
		\begin{equation}\label{2-36}
			\int_{\mathbb{R}^2_+ } \omega_\ep\mathcal{G}_s \omega_\ep=\frac{2-2\gamma}{s-1}\int_{\mathbb{R}^2_+ } J(\omega_\ep) +\frac{W}{s-1} \ep^{3-2s}\int_{\mathbb{R}^2_+ }  x_1\omega_\ep+c_s\int_{\mathbb{R}^2_+ }\int_{\mathbb{R}^2_+ } \frac{  \omega_\ep(x)\omega_\ep(y)}{|x-\bar y|^{2-2s}} dx dy.
		\end{equation}
		By the definition of $E_0$ and \eqref{2-36}, we deduce
		\begin{equation}\label{2-37}
			\begin{split}
				E_0(\omega_\ep)&=\frac{1}{2}\int_{\mathbb{R}^2_+ } \omega_\ep\mathcal{G}_s \omega_\ep-\int_{\mathbb{R}^2_+ } J(\omega_\ep)\\
				&=\frac{2-s-\gamma}{s-1}\int_{\mathbb{R}^2_+ } J(\omega_\ep)+\frac{W}{2(s-1)} \ep^{3-2s}\int_{\mathbb{R}^2_+ } x_1\omega_\ep+\frac{c_s}{2}\int_{\mathbb{R}^2_+ }\int_{\mathbb{R}^2_+ } \frac{  \omega_\ep(x)\omega_\ep(y)}{|x-\bar y|^{2-2s}} dx dy.
			\end{split}
		\end{equation}
		
		On the other hand,  multiplying the equation $(J') \left(\omega_{\ep   } \right)=(\mathcal{G}_s^+\omega_{\ep   }- W\ep^{3-2s}x_1-\mu_{\ep })_+$ by $\omega_\ep$ and integrating, we have
		\begin{equation}\label{2-38}
			\begin{split}
				\mu_\ep\kappa&=\int_{\mathbb{R}^2_+ } \omega_\ep\mathcal{G}_s^+\omega_\ep- W\ep^{3-2s}\int_{\mathbb{R}^2_+ } x_1\omega_\ep-\gamma \int_{\mathbb{R}^2_+ } J(\omega_\ep)\\
				&=\frac{2-\gamma-\gamma s}{s-1}\int_{\mathbb{R}^2_+ } J(\omega_\ep)+\frac{2-s}{s-1}W \ep^{3-2s}\int_{\mathbb{R}^2_+ } x_1\omega_\ep\\
				&=\frac{2-\gamma-\gamma s}{2-s-\gamma}E_0(\omega_\ep)+\left(\frac{2-s}{s-1}-\frac{2-\gamma-\gamma s}{2(s-1)(2-s-\gamma)} \right) W\ep^{3-2s}\int_{\mathbb{R}^2_+ } x_1\omega_\ep\\
				&\quad -\frac{2-\gamma-\gamma s}{2(2-s-\gamma)} \int_{\mathbb{R}^2_+ }\int_{\mathbb{R}^2_+ } \frac{ c_s \omega_\ep(x)\omega_\ep(y)}{|x-\bar y|^{2-2s}} dx dy,
			\end{split}
		\end{equation}
		where we have used \eqref{2-36} and \eqref{2-37} in   above calculations. So we obtain \eqref{2-35} and finish the proof of this lemma.
	\end{proof}
	
	As an immediate consequence of Lemma \ref{lem2-26}, we obtain the following estimate for $\mu_\ep$.
	\begin{corollary}\label{lem2-27} One has
		$$|\mu_\ep-\mu_0|\to0,\ \ \text{as}\ \ep\to0_+.$$
	\end{corollary}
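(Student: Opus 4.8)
The plan is to combine the integral representation for $\mu_\ep$ established in Lemma \ref{lem2-26} with the strong convergence $\tilde\omega_\ep\to\omega_0$ in $L^\infty$ from Lemma \ref{lem2-25} and the corresponding identity \eqref{2-15} for $\mu_0$ from Lemma \ref{lem2-12}. Writing $\gamma=1+1/p$, Lemma \ref{lem2-26} gives
\[
\mu_\ep\kappa=A_\gamma E_0(\omega_\ep)+B_\gamma W\ep^{3-2s}\int_{\mathbb{R}^2_+}x_1\omega_\ep+C_\gamma\int_{\mathbb{R}^2_+}\int_{\mathbb{R}^2_+}\frac{c_s\,\omega_\ep(x)\omega_\ep(y)}{|x-\bar y|^{2-2s}}\,dx\,dy,
\]
while Lemma \ref{lem2-12} gives $\mu_0\kappa=A_\gamma E_0(\omega_0)$ (note $C_{s,p}=A_\gamma$). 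So it suffices to show that each of the three terms on the right converges to the corresponding term for $\omega_0$, i.e. that $E_0(\omega_\ep)\to E_0(\omega_0)$ and that the last two terms vanish as $\ep\to0$.

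First I would record that since $E_0$ is translation invariant, $E_0(\omega_\ep)=E_0(\tilde\omega_\ep)$, and \eqref{2-34} in the proof of Lemma \ref{lem2-25} already states $E_0(\tilde\omega_\ep)\to E_0(\omega_0)$; hence $A_\gamma E_0(\omega_\ep)\to A_\gamma E_0(\omega_0)=\mu_0\kappa$. Next, for the second term, by Lemma \ref{lem2-22} (or \ref{lem2-23}) the support of $\omega_\ep$ lies in $B(x_\ep,R)$ with $x_\ep=(\ep^{-1}d_\ep,0)$ and $d_\ep$ bounded, so $\int_{\mathbb{R}^2_+}x_1\omega_\ep\le (\ep^{-1}d_\ep+R)\kappa=O(\ep^{-1})$, whence $B_\gamma W\ep^{3-2s}\int x_1\omega_\ep=O(\ep^{2-2s})\to0$. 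Finally, for the third term, since $\mathrm{spt}(\omega_\ep)\subset\mathbb{R}^2_+$ lies in $B((\ep^{-1}d_\ep,0),R)$, for $x,y$ in the support we have $|x-\bar y|\ge |x_1+y_1|\ge 2(\ep^{-1}d_\ep-R)\to\infty$; therefore the kernel $c_s|x-\bar y|^{2-2s}{}^{-1}\le c_s\big(2(\ep^{-1}d_\ep-R)\big)^{-(2-2s)}=O(\ep^{2-2s})$ uniformly, and the double integral is bounded by $O(\ep^{2-2s})\kappa^2\to0$.

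Putting these together yields $\mu_\ep\kappa\to A_\gamma E_0(\omega_0)=\mu_0\kappa$, hence $|\mu_\ep-\mu_0|\to0$. The argument is essentially bookkeeping: the only point requiring a little care is making sure the support localization estimates (Lemmas \ref{lem2-22}, \ref{lem2-23}) are invoked with the correct scaling so that both the $W\ep^{3-2s}$ moment term and the reflected-kernel interaction term are genuinely $O(\ep^{2-2s})$ rather than merely bounded; I expect this to be the main (and only mild) obstacle, and it is handled exactly as in the $o(1)$ estimates already used in the proofs of Lemmas \ref{lem2-21}--\ref{lem2-23}.
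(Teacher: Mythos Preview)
Your proposal is correct and takes essentially the same approach as the paper, which simply states the result as an immediate consequence of Lemma~\ref{lem2-26} without writing out details. You have correctly identified the three terms in \eqref{2-35}, used the energy convergence \eqref{2-34} for the first, and the support localization from Lemmas~\ref{lem2-22}--\ref{lem2-23} to show the remaining two are $O(\ep^{2-2s})$; this is exactly the intended computation.
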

	
	Next, we derive another integral identity for general $J$, which determines the location of $\omega_\ep$.
	\begin{lemma}\label{lem2-28}
	Suppose that $J$ satisfies $(\mathbf{H_{1}'})$ and $(\mathbf{H_{2}'})$. Let $\omega_\ep$ be a maximizer of $E_\ep$ over $\mathcal{A}_\ep$. Then the following identity is true.
		\begin{equation}\label{2-39}
			2(1-s)c_s  \int_{\mathbb{R}^2_+ }\int_{\mathbb{R}^2_+ } \frac{  \omega_\ep(x)(x_1+y_1)\omega_\ep(y)}{|x-\bar y|^{4-2s}} dx dy= W\ep^{3-2s}\kappa.
		\end{equation}
	\end{lemma}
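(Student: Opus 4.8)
The plan is to exploit the invariance of the configuration under horizontal translations. By Lemmas \ref{lem2-22} and \ref{lem2-23}, for $\ep$ small the support of $\omega_\ep$ lies in $B(x_\ep,R)$ with $x_\ep=(\ep^{-1}d_\ep,0)$ and $d_\ep\to d_0$, hence deep inside $B(\ep^{-1}(d_0,0),\ep^{-1}d_0/2)$ and bounded away from the axis $\{x_1=0\}$. Write $e_1=(1,0)$ and $\omega_\ep^\tau(x):=\omega_\ep(x-\tau e_1)$. First I would check that for $|\tau|$ small (depending on $\ep$) one has $\omega_\ep^\tau\in\mathcal{A}_\ep$: it is non-negative, satisfies $\int_{\mathbb{R}^2_+}\omega_\ep^\tau=\kappa$, and its support still lies in $B(\ep^{-1}(d_0,0),\ep^{-1}d_0/2)\cap\mathbb{R}^2_+$. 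Since $\omega_\ep$ maximizes $E_\ep$ over $\mathcal{A}_\ep$, the scalar map $\tau\mapsto E_\ep(\omega_\ep^\tau)$ has an interior maximum at $\tau=0$, so $\frac{d}{d\tau}\big|_{\tau=0}E_\ep(\omega_\ep^\tau)=0$.

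Next I would compute this derivative term by term, using the change of variables $x\mapsto x+\tau e_1$, $y\mapsto y+\tau e_1$ in the double integrals. The entropy term $\int J(\omega_\ep^\tau)$ does not depend on $\tau$; the self-interaction piece $\frac{c_s}{2}\int\int \omega_\ep^\tau(x)\omega_\ep^\tau(y)|x-y|^{2s-2}\,dx\,dy$ is \emph{exactly} $\tau$-independent because $|x-y|$ is unchanged, so the diagonal singularity is never differentiated and finiteness is guaranteed by Lemma \ref{lem2-2}. The linear term becomes $W\ep^{3-2s}\big(\int x_1\omega_\ep+\tau\kappa\big)$, contributing $-W\ep^{3-2s}\kappa$ to the derivative. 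The only nontrivial contribution is from the reflected kernel: since $\overline{y+\tau e_1}=\bar y-\tau e_1$, this piece equals $-\frac{c_s}{2}\int\int \omega_\ep(x)\omega_\ep(y)|x-\bar y+2\tau e_1|^{2s-2}\,dx\,dy$, and, using $x-\bar y=(x_1+y_1,\,x_2-y_2)$,
\[
\frac{d}{d\tau}\Big|_{\tau=0}\!\left(-\frac{c_s}{2}\int\int \frac{\omega_\ep(x)\omega_\ep(y)}{|x-\bar y+2\tau e_1|^{2-2s}}\,dx\,dy\right)=2(1-s)c_s\int\int \frac{\omega_\ep(x)(x_1+y_1)\omega_\ep(y)}{|x-\bar y|^{4-2s}}\,dx\,dy.
\]
Differentiation under the integral sign is legitimate since $|x-\bar y|\ge x_1+y_1$ is bounded below by a positive constant on $\text{spt}(\omega_\ep)\times\text{spt}(\omega_\ep)$, so this kernel and its $\tau$-derivative are bounded there. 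Adding the three contributions and setting the sum to zero yields exactly \eqref{2-39}.

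I do not expect any genuine obstacle: the two points requiring care are that $\omega_\ep^\tau$ stays admissible — which follows from the already-established bounds on the size and location of $\text{spt}(\omega_\ep)$ — and the interchange of differentiation and integration, which is harmless precisely because the reflected kernel is smooth in a neighborhood of $\text{spt}(\omega_\ep)\times\text{spt}(\omega_\ep)$ while the singular self-interaction kernel is manifestly translation invariant and so contributes nothing. The identity \eqref{2-39} is thus the first-variation identity associated with horizontal translations, to be used in the refined asymptotic analysis of the location of the maximizer.
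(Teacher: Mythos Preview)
Your proposal is correct and follows essentially the same approach as the paper: both use horizontal translations $\omega_\ep(\cdot\pm\tau e_1)$ as admissible variations (justified via Lemmas \ref{lem2-22} and \ref{lem2-23}) and set the first variation of $E_\ep$ at $\tau=0$ to zero. Your write-up is in fact more detailed than the paper's, which simply states that the identity follows ``by the definition of $E_\ep$ and straightforward computations''; in particular, you make explicit the useful observation that the singular self-interaction kernel is translation invariant so only the smooth reflected kernel needs to be differentiated.
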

	\begin{proof}
		We may assume that $\omega_\ep$ is symmetric non-increasing in $x_2$. Then Lemmas \ref{lem2-22} and \ref{lem2-23} show  that the support of $\omega_\ep$ is far away from the boundary $\partial B(\ep^{-1}(d_0,0), \ep^{-1} d_0/2)$. So the function $(\omega_\ep)^t(x):= \omega_\ep( x+(t,0))$ is supported in $B(\ep^{-1}(d_0,0), \ep^{-1} d_0/2)$ for $t\approx 0$ and hence belongs to $\mathcal{A}_\ep$. Since $\omega_\ep$ is a maximizer, we deduce
		\begin{equation*}
			\frac{d E_\ep((\omega_\ep)^t)}{d t} \Bigg{|}_{t=0} =0,
		\end{equation*}
		which, by the definition of $E_\ep$ and straightforward computations, implies \eqref{2-39}. So the proof this lemma is completed.
	\end{proof}
	
	Using the identity \eqref{2-39}, we can sharpen the estimate of $x_\ep$ in Lemma \ref{lem2-23} as follows.
	\begin{lemma}\label{lem2-29}
		Suppose that $J$ satisfies $(\mathbf{H_{1}'})$ and $(\mathbf{H_{2}'})$. Let $\omega_\ep$ be a maximizer of $E_\ep$ over $\mathcal{A}_\ep$. If $x_\ep=\kappa^{-1}\int x\omega_\ep=(\ep^{-1}d_\ep,0)$, then we have
		$$|d_\ep-d_0|=O(\ep^2).$$
	\end{lemma}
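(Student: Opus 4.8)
The plan is to start from the exact identity \eqref{2-39} proved in Lemma \ref{lem2-28}, namely
\[
2(1-s)c_s\int_{\mathbb{R}^2_+}\int_{\mathbb{R}^2_+}\frac{\omega_\ep(x)(x_1+y_1)\omega_\ep(y)}{|x-\bar y|^{4-2s}}\,dx\,dy=W\ep^{3-2s}\kappa,
\]
and to extract from it the sharp rate $|d_\ep-d_0|=O(\ep^2)$ by a careful Taylor expansion of the kernel around the center of mass. First I would recenter: write $x=x_\ep+\ep^{-1}\cdot 0+z$, i.e.\ set $\tilde\omega_\ep(z):=\omega_\ep(z+x_\ep)$ with $x_\ep=(\ep^{-1}d_\ep,0)$, so that $\tilde\omega_\ep$ is supported in $B(0,R)$ with $R$ independent of $\ep$ (Lemma \ref{lem2-22}) and $\int z\,\tilde\omega_\ep=0$. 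In these variables, if $x=z+x_\ep$ and $y=w+x_\ep$, then $\bar y=\bar w+(\ep^{-1}d_\ep,0)$ (since reflection fixes the $x_2$-coordinate and negates the $x_1$-coordinate, and $x_\ep$ lies on the $x_1$-axis we get $\overline{x_\ep}=-x_\ep$... let me be careful: $\bar y = (-(w_1+\ep^{-1}d_\ep), w_2)$, so $x-\bar y = (z_1+w_1+2\ep^{-1}d_\ep,\, z_2-w_2)$). Hence $|x-\bar y|^2 = (z_1+w_1+2\ep^{-1}d_\ep)^2+(z_2-w_2)^2 = 4\ep^{-2}d_\ep^2\big(1+O(\ep)\big)$ uniformly on the supports. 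Likewise $x_1+y_1 = z_1+w_1+2\ep^{-1}d_\ep = 2\ep^{-1}d_\ep(1+O(\ep))$.

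Next I would substitute these expansions into \eqref{2-39}. The left-hand side becomes
\[
2(1-s)c_s\cdot \big(2\ep^{-1}d_\ep\big)\cdot \big(2\ep^{-1}d_\ep\big)^{-(4-2s)}\,\kappa^2\,\big(1+O(\ep)\big)
= 2(1-s)c_s\,2^{2s-3}\,\ep^{3-2s}\,d_\ep^{2s-3}\,\kappa^2\,\big(1+O(\ep)\big),
\]
where the bilinear integral has been evaluated using $\int\int \tilde\omega_\ep(z)\tilde\omega_\ep(w)\,dz\,dw=\kappa^2$, and the $O(\ep)$ error is controlled because $|z_1|,|w_1|\le R$ while $2\ep^{-1}d_\ep\gtrsim \ep^{-1}$, with all denominators bounded away from zero and above (using $\tfrac{d_0}{2}<d_\ep<\tfrac{3d_0}{2}$ from Lemma \ref{lem2-23}). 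Dividing both sides by $\ep^{3-2s}\kappa$ gives
\[
2(1-s)c_s\,2^{2s-3}\,\kappa\,d_\ep^{2s-3}\,\big(1+O(\ep)\big)=W,
\]
i.e.\ $d_\ep^{2s-3}=\dfrac{2^{3-2s}W}{2(1-s)c_s\kappa}\big(1+O(\ep)\big)=d_0^{2s-3}\big(1+O(\ep)\big)$, by the definition \eqref{d0} of $d_0$ (which is exactly the solution of $h'(d_0)=0$, equivalently $2(1-s)c_s\kappa\,2^{2s-3}d_0^{2s-3}=W$). Since $\tau\mapsto\tau^{2s-3}$ is $C^1$ with nonvanishing derivative near $d_0$, inverting yields $d_\ep=d_0\big(1+O(\ep)\big)$, hence $|d_\ep-d_0|=O(\ep)$. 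To upgrade this to $O(\ep^2)$ I would feed $d_\ep=d_0+O(\ep)$ back into the error term: the crucial point is that the first-order correction in the expansion of the kernel has a definite \emph{sign structure} that makes it vanish. Concretely, expanding $|x-\bar y|^{-(4-2s)}$ and $(x_1+y_1)$ to one more order, the $O(\ep)$ term in the product is a linear functional of $z+w$ integrated against $\tilde\omega_\ep(z)\tilde\omega_\ep(w)$; because $\int z\,\tilde\omega_\ep=0$, every such linear term integrates to zero, so the true error is $O(\ep^2)$, not $O(\ep)$. Therefore the identity refines to $d_\ep^{2s-3}=d_0^{2s-3}(1+O(\ep^2))$, and inverting the $C^1$ map gives $|d_\ep-d_0|=O(\ep^2)$.

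The main obstacle is the bookkeeping in the second-order expansion: one must verify that \emph{all} first-order-in-$\ep$ contributions are odd linear functionals of the recentered variables and hence killed by the vanishing first moment $\int z\,\tilde\omega_\ep=0$, while the genuinely second-order remainder is uniformly bounded on $\mathrm{spt}(\tilde\omega_\ep)\times\mathrm{spt}(\tilde\omega_\ep)$. This uniform control uses only that the supports have $\ep$-independent radius $R$ (Lemma \ref{lem2-22}), that $d_\ep$ stays in a fixed compact subinterval of $(0,\infty)$ (Lemma \ref{lem2-23}), and that $\|\omega_\ep\|_\infty$ is bounded (Lemma \ref{lem2-20}(iv)), so that the bilinear integrals are uniformly finite. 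Once the sign cancellation is in place, the inversion step is immediate from the implicit function theorem applied to the $C^1$ function $h$ whose unique critical point is $d_0$.
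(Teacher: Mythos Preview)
Your proposal is correct and follows essentially the same approach as the paper: recenter around $x_\ep$, Taylor-expand the kernel in \eqref{2-39}, and use $\int z\,\tilde\omega_\ep=0$ to kill the first-order term, leaving an $O(\ep^2)$ remainder that inverts via the nondegeneracy of $\tau\mapsto\tau^{2s-3}$ at $d_0$. The only cosmetic difference is that the paper performs the expansion to second order in one pass rather than your two-step ``first $O(\ep)$, then upgrade'' presentation; the substance is identical.
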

	\begin{proof}
		By Lemma \ref{lem2-22}, we know $\text{spt}(\omega_\ep)\subset B(x_\ep, R)$ for some $R$ independent of $\ep$ and $x_\ep=(\ep^{-1}d_\ep,0)$ with $d_\ep\to d_0$ as $\ep\to0_+$.
		We use the Taylor expansion to calculate the left-hand side of \eqref{2-39}
		\begin{align}\label{2-40}
			&\int\int \frac{  \omega_\ep(x)(x_1+y_1)\omega_\ep(y)}{|x-\bar y|^{4-2s}} dx dy= \ep^{3-2s}\int\int \frac{  \tilde \omega_\ep(x)(2d_\ep +\ep(x_1+y_1))\tilde \omega_\ep(y)}{|(2d_\ep ,0)+\ep(x-\bar y)|^{4-2s}} dx dy\nonumber\\
			=&\ep^{3-2s}\int\int \tilde \omega_\ep(x)\tilde \omega_\ep(y)\left[\frac{1}{(2d_\ep)^{3-2s}}+\frac{(2s-3)\ep(x_1+y_1)}{(2d_\ep)^{4-2s}}+O(\ep^2)\right] dx dy\nonumber\\
			=&\frac{\ep^{3-2s}\kappa^2}{(2d_\ep)^{3-2s}} +O(\ep^{5-2s}).
		\end{align}
		Here we have used that     $\int x\tilde \omega_\ep(x)dx=0$, which follows from the definition of $\tilde \omega_\ep$.

		Then using \eqref{2-39} and \eqref{2-40}, we obtain
		$$\frac{	2(1-s)c_s  \kappa^2}{(2d_\ep)^{3-2s}}-W \kappa=O( \ep^2),$$
		which implies
		\begin{equation}\label{2-41}
			d_\ep^{2s-3}=\frac{ 2^{2-2s}W }{(1-s)c_s\kappa}+O( \ep^2)=d_0^{2s-3}+O( \ep^2).
		\end{equation}
		Here we have used the definition of $d_0$.
		Since $\frac{d (t^{2s-3})}{d t} \Big{|}_{t=d_0}\not=0$, we infer from \eqref{2-41} that $|d_\ep-d_0|=O( \ep^2)$ and complete the proof.
	\end{proof}

    \subsection{Proof of the uniqueness} In this subsection, we prove the uniqueness of maximizers when $J(t)=L t^{1+\frac{1}{p}}$  for some $L>0$ and  $p\in(1, \frac{1}{1-s})$. Let $\omega_\ep$ be a maximizer of $E_\ep$ over $\mathcal{A}_\ep$.
	 Recall that $x_\ep=\kappa^{-1}\int x\omega_\ep=(\ep^{-1}d_\ep,0)$ and $\tilde \omega_\ep=\omega_\ep(\cdot+\ep^{-1}(d_\ep,0))$. Set $L_\gamma=\left(\frac{1}{L\gamma}\right)^{\frac{1}{\gamma-1}}$ with $\gamma=1+\frac{1}{p}$. Then, by \eqref{2-26}, \eqref{2-35} and \eqref{2-39}, we obtain the equations of $\tilde \omega_\ep$ and $d_\ep$:
	 \begin{equation}\label{3-1}
	 	\begin{cases}
	 		\tilde \omega_\ep(x)=L_\gamma \left(\mathcal{G}_s\tilde \omega_\ep(x)-\kappa^{-1}A_\gamma E_0(\tilde \omega_\ep)+S_\ep(\tilde\omega_\ep, d_\ep) \right)_+^p,\\
	 		2(1-s)c_s  \int\int \frac{\tilde  \omega_\ep(x)(2d_\ep+\ep(x_1+y_1))\tilde\omega_\ep(y)}{|(2d_\ep,0)+\ep(x-\bar y)|^{4-2s}} dx dy- W \kappa=0,
	 	\end{cases}
	 \end{equation}
	 where the operator 	 	$S_\ep(\tilde \omega_\ep, d_\ep)$ is given by
	 \begin{equation}\label{3-2}
	 	\begin{split}
	 		S_\ep(\tilde \omega_\ep, d_\ep)=&- W\ep^{3-2s}(x_1+\ep^{-1}d_\ep)-\int \frac{\ep^{2-2s} c_s \tilde\omega_\ep(y)}{|(2 d_\ep,0)+\ep(x-\bar y) |^{2-2s}} dy\\
	 		-& \kappa^{-1}B_\gamma W \ep^{2-2s}  d_\ep-\kappa^{-1}C_\gamma \int\int\frac{\ep^{2-2s}c_s\tilde\omega_\ep(x)\tilde\omega_\ep(y)}{|(2d_\ep,0)+\ep(x-\bar y)|^{2-2s}} dxdy.
	 	\end{split}
	 \end{equation}
	 For simplicity of notations, we denote the operators $P_\ep$ and $Q_\ep$ as follows. $$P_\ep(\omega, d):=L_\gamma \left(\mathcal{G}_s  \omega(x)-\kappa^{-1}A_\gamma E_0( \omega )+S_\ep( \omega, d) \right)_+^p,$$
	 and
	 $$Q_\ep(\omega, d):=2(1-s)c_s  \int\int \frac{   \omega(x)(2d +\ep(x_1+y_1)) \omega(y)}{|(2d ,0)+\ep(x-\bar y)|^{4-2s}} dx dy-  W\kappa.$$
	
	 Noting  that for $d_\ep\in (\frac{d_0}{2}, \frac{3d_0}{2})$, $x,y\in B(0,R)$ and $\ep$ small, we always have $|(2 d_\ep,0)+\ep(x-\bar y) |$ is bounded from below by a positive constant. Therefore, we easily obtain the following lemma, whose proof we leave to readers.
	 \begin{lemma}\label{lem3-1}
	 	For $(\omega, d)\in \mathcal{X}^1:=L^1(B(0,R))\times (\frac{d_0}{2}, \frac{3d_0}{2})$ and $\ep$ sufficiently small, there holds
	 	$$\|S_\ep(\omega,d )\|_\infty +\|\nabla S_\ep(\omega,d )\|_{\mathcal{X}^1\to L^\infty}+\|\nabla^2 S_\ep(\omega,d )\|_{\mathcal{X}^1\times \mathcal{X}^1\to L^\infty}=O(\ep^{2-2s}).$$
	 \end{lemma}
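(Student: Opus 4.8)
The plan is to treat the four terms in the explicit formula \eqref{3-2} for $S_\ep(\omega,d)$ separately, together with their first and second Fr\'echet derivatives in $(\omega,d)$, and to observe that, after expanding the first term, each piece carries an explicit factor $\ep^{2-2s}$ (or a higher power of $\ep$) multiplying a quantity that stays bounded for $\omega$ in bounded subsets of $L^1(B(0,R))$. The key elementary fact is that for $d\in(\tfrac{d_0}{2},\tfrac{3d_0}{2})$, $x,y\in B(0,R)$ and $\ep$ small one has
\[
|(2d,0)+\ep(x-\bar y)|\ \geq\ 2d-\ep|x-\bar y|\ \geq\ d_0-2R\ep\ \geq\ \tfrac{d_0}{2},
\]
so the kernel $|(2d,0)+\ep(x-\bar y)|^{-(2-2s)}$ appearing in \eqref{3-2}, and after differentiating in $d$ also $|(2d,0)+\ep(x-\bar y)|^{-(4-2s)}$ and higher negative powers, are bounded above, together with their $d$-derivatives up to second order, by a constant depending only on $s$, $d_0$ and $R$, uniformly in $\ep$, $x$ and $y$.

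Granting this, first I would bound $\|S_\ep(\omega,d)\|_\infty$: the first term equals $-W\ep^{3-2s}x_1-W\ep^{2-2s}d$, which on $B(0,R)$ is $O(\ep^{3-2s})+O(\ep^{2-2s})=O(\ep^{2-2s})$; the second and fourth terms are $\ep^{2-2s}$ times integrals of $\omega$, respectively $\omega\otimes\omega$, against a bounded kernel, hence $O(\ep^{2-2s}(\|\omega\|_{1}+\|\omega\|_{1}^{2}))$; the third term is manifestly $O(\ep^{2-2s})$. For $\nabla S_\ep$, I would use that $S_\ep(\cdot,d)$ is affine in $\omega$ except for the last, quadratic, term, so $\partial_\omega S_\ep(\omega,d)$ is a linear functional whose kernel is $\ep^{2-2s}$ times a bounded kernel (plus a part proportional to $\omega$), giving $\|\partial_\omega S_\ep(\omega,d)\|_{L^1\to L^\infty}=O(\ep^{2-2s})$ on bounded sets, while $\partial_d S_\ep$ is handled exactly as in the previous step, noting that $\partial_d$ of the first term is the constant $-W\ep^{2-2s}$ and $\partial_d$ of the kernels only multiplies by bounded factors without touching the $\ep^{2-2s}$ prefactor. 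For $\nabla^2 S_\ep$, the only nonzero second $\omega$-derivative comes from the quadratic term and is the constant bilinear form with kernel $-2\kappa^{-1}C_\gamma c_s\,\ep^{2-2s}|(2d,0)+\ep(x-\bar y)|^{-(2-2s)}$, hence $O(\ep^{2-2s})$ uniformly; the mixed $\omega$--$d$ and pure $d$ second derivatives are again bounded by the same mechanism, the $\ep^{2-2s}$ prefactor surviving every differentiation.

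Summing the four terms and their derivatives up to order two and using the triangle inequality for the relevant operator norms on $\mathcal X^1$ and $\mathcal X^1\times\mathcal X^1$ yields the claim. I do not expect a genuine obstacle: the whole content is the uniform lower bound on $|(2d,0)+\ep(x-\bar y)|$ displayed above, together with the bookkeeping that $\omega$-differentiation only lowers the polynomial degree in $\omega$ and $d$-differentiation only produces bounded factors, so that the explicit $\ep^{2-2s}$ in \eqref{3-2} is never disturbed. The one point worth stating carefully is that the implied constants depend on $\|\omega\|_{L^1(B(0,R))}$ and are uniform over bounded subsets of $L^1(B(0,R))$ — which is all that is needed, since Lemma~\ref{lem3-1} is only ever applied to $\tilde\omega_\ep$ with $\|\tilde\omega_\ep\|_{1}=\kappa$.
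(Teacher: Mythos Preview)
Your proposal is correct and follows exactly the approach the paper indicates: the paper's ``proof'' consists solely of the observation that $|(2d,0)+\ep(x-\bar y)|$ is bounded below by a positive constant for $d\in(\tfrac{d_0}{2},\tfrac{3d_0}{2})$, $x,y\in B(0,R)$ and $\ep$ small, then explicitly leaves the remaining bookkeeping to the reader. You have supplied precisely that bookkeeping, and your caveat that the implied constants are uniform only on bounded subsets of $L^1(B(0,R))$ is the right qualification to make.
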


	 Using the definition of $E_0$ and \eqref{2-7}, one can see that
	 $$E'_0(\omega_0)\phi=\mu_0\int \phi.$$
	 If we define $$P_0(\omega, d):=L_\gamma \left(\mathcal{G}_s  \omega(x)-\kappa^{-1}A_\gamma E_0( \omega ) \right)_+^p,$$
	 and
	 $$Q_0(\omega, d):=\frac{2(1-s)c_s }{(2d)^{3-2s}} \int\int    \omega(x)  \omega(y) dx dy-  W\kappa.$$
	 By direct calculations, one can verify that $$\omega_0=P_0(\omega_0, d_0),\ \ \nabla P_0(\omega_0, d_0)(\phi,l)= pL_\gamma (\mathcal{G}_s \omega_0-\kappa^{-1}A_\gamma E_0(\omega_0))_+^{p-1}\left(\mathcal{G}_s \phi-\kappa^{-1}A_\gamma \mu_0\int \phi \right)$$ and $$Q_0(\omega_0,d_0)=0, \ \   \nabla Q_0(\omega_0,d_0)(\phi, l)=\frac{ (1-s)c_s \kappa}{2^{1-2s}d_0^{3-2s}}\int \phi-\frac{ (1-s)(3-2s)c_s\kappa^2}{2^{2-2s}d_0^{4-2s}} l.$$

	  By Lemmas \ref{lem2-23} and \ref{lem2-25}, $\tilde \omega_\ep$ and $d_\ep$ take the following form:
	 $$\tilde \omega_\ep=\omega_0+\phi_\ep,\ \  d_\ep=d_0+l_\ep,$$
	 where $\phi_\ep\in L^\infty(B(0,R))$ satisfies $\int x\phi_\ep=\int\phi_\ep=0$,  $\|\phi_\ep\|_\infty=o(1)$ and  $l_\ep$ is a real number with $|l_\ep|=o(1)$ as $\ep\to0$.
	
	 Then we deduce from \eqref{3-1} that $(\phi,l)=(\phi_\ep,l_\ep)$ solves the following system
	 \begin{equation}\label{3-3}
	 	\begin{cases}
	 		\phi-pL_\gamma (\mathcal{G}_s \omega_0-\kappa^{-1}A_\gamma E_0(\omega_0))_+^{p-1}\left(\mathcal{G}_s \phi-\kappa^{-1}A_\gamma\mu_0\int \phi \right) =\mathcal{R}_1(\phi,l),\\
	 		\frac{ (1-s)c_s \kappa}{2^{1-2s}d_0^{3-2s}}\int \phi-\frac{ (1-s)(3-2s)c_s\kappa^2}{2^{2-2s}d_0^{4-2s}} l=\mathcal{R}_2(\phi,l),
	 	\end{cases}
	 \end{equation}
	 where the super-linear terms $\mathcal{R}_1(\phi,l)$ and $\mathcal{R}_2(\phi,l)$ are given by
	 \begin{equation}\label{3-4}
	 	\begin{split}
	 		&\mathcal{R}_1(\phi,l)=L_\gamma \left(\mathcal{G}_s(\omega_0+\phi)-\kappa^{-1}A_\gamma E_0(\omega_0+\phi)+S_\ep(\omega_0+\phi, d_0+l) \right)_+^p\\
	 		-&L_\gamma \left(\mathcal{G}_s \omega_0 -\kappa^{-1}A_\gamma E_0(\omega_0 )  \right)_+^p-pL_\gamma (\mathcal{G}_s \omega_0-\kappa^{-1}A_\gamma E_0(\omega_0))_+^{p-1}\left(\mathcal{G}_s \phi-\kappa^{-1}A_\gamma\mu_0\int \phi \right) \end{split}
	 \end{equation}
	 and
	 \begin{equation}\label{3-5}
	 	\begin{split}
	 		\mathcal{R}_2(\phi,l)&=	2(1-s)c_s  \int\int \frac{(\omega_0+\phi)(x)(2d_0+2l_\ep+\ep(x_1+y_1))(\omega_0+\phi)(y)}{|(2d_0+2l_\ep,0)+\ep(x-\bar y)|^{4-2s}} dx dy- W\kappa\\
	 		-&\frac{ (1-s)c_s \kappa}{2^{1-2s}d_0^{3-2s}}\int \phi+\frac{ (1-s)(3-2s)c_s\kappa^2}{2^{2-2s}d_0^{4-2s}} l \end{split}
	 \end{equation}
	
	 To study the linearized equation \eqref{3-3}, we set $$\mathcal{L}_0\phi:=\phi-pL_\gamma (\mathcal{G}_s \omega_0-\kappa^{-1}A_\gamma E_0(\omega_0))_+^{p-1}\left(\mathcal{G}_s \phi-\kappa^{-1}A_\gamma\mu_0\int \phi \right).$$
	 Using the non-degeneracy Proposition \ref{lem2-13} (iii), we have the following key estimate of $\mathcal{L}_0$.
	 \begin{lemma}\label{lem3-2}
	 	There exists a constant $C_0>0$ such that
	 	$$\|\mathcal{L}_0\phi\|_2\geq C_0\|\phi\|_2, \quad \forall\ \phi \in L^2(B(0,R)) \ \text{with }\ \int x\phi =\int\phi=0.$$
	 \end{lemma}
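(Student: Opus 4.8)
The plan is to regard $\mathcal{L}_0$ as a compact perturbation of the identity and to argue by contradiction and compactness, using the non-degeneracy Proposition~\ref{lem2-13}(iii) to rule out a nontrivial kernel once the two moment constraints $\int\phi=\int x\phi=0$ are imposed. The first step is to simplify $\mathcal{L}_0$. By Proposition~\ref{lem2-13}(ii), $\kappa^{-1}A_\gamma E_0(\omega_0)=\mu_0$, and since $\gamma-1=1/p$ one checks $L_\gamma=\bigl(\tfrac{p}{L(p+1)}\bigr)^p$; hence the weight $w_0:=pL_\gamma\bigl(\mathcal{G}_s\omega_0-\kappa^{-1}A_\gamma E_0(\omega_0)\bigr)_+^{p-1}=pL_\gamma(\mathcal{G}_s\omega_0-\mu_0)_+^{p-1}$ is exactly the one appearing in the linearized operator of Proposition~\ref{lem2-13}(iii); moreover $w_0\in L^\infty$ and $\{w_0>0\}=\{\omega_0>0\}\subset B(0,R)$. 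For $\phi\in L^2(B(0,R))$ with $\int\phi=0$ the term $\kappa^{-1}A_\gamma\mu_0\int\phi$ drops, so $\mathcal{L}_0\phi=(I-\mathcal{K})\phi$ with $\mathcal{K}\phi:=w_0\,\mathcal{G}_s\phi$. Thus it suffices to prove $\|(I-\mathcal{K})\phi\|_2\ge C_0\|\phi\|_2$ on the subspace $V:=\{\phi\in L^2(B(0,R)):\int\phi=\int x\phi=0\}$, which is closed since $1,x_1,x_2\in L^2(B(0,R))$.

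Next I would record that $\mathcal{K}$ is compact on $L^2(B(0,R))$: extending $\phi$ by zero, the Riesz potential $\mathcal{G}_s\phi$ belongs, near $B(0,R)$, to a fractional Sobolev space $W^{\sigma,2}$ for some $\sigma>0$ with norm controlled by $\|\phi\|_2$ (Lemma~\ref{lem2-1} for the integrability gain and the smoothing of $(-\Delta)^{-s}$ for the derivative gain), and $W^{\sigma,2}(B(0,R))$ embeds compactly into $L^2(B(0,R))$; multiplication by the bounded function $w_0$ preserves compactness. Now assume, for contradiction, that $\inf\{\|(I-\mathcal{K})\phi\|_2:\phi\in V,\ \|\phi\|_2=1\}=0$ and choose $\phi_n\in V$ with $\|\phi_n\|_2=1$ and $\|(I-\mathcal{K})\phi_n\|_2\to0$. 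Passing to a subsequence, $\mathcal{K}\phi_n\to\psi$ in $L^2$, hence $\phi_n=(I-\mathcal{K})\phi_n+\mathcal{K}\phi_n\to\psi$ strongly; therefore $\psi\in V$, $\|\psi\|_2=1$ and $(I-\mathcal{K})\psi=0$.

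The decisive step is to show this forces $\psi=0$. From $\psi=w_0\,\mathcal{G}_s\psi$ a.e.\ and $\{w_0>0\}=\{\omega_0>0\}$ we see $\psi$ is supported in $\{\omega_0>0\}$, so, extended by zero, $\psi\in L^1\cap L^2(\mathbb{R}^2)$ with compact support; a bootstrap identical to the one in the proof of Corollary~\ref{lem2-6} (Lemma~\ref{lem2-1} and $w_0\in L^\infty$) then gives $\psi\in L^1\cap L^\infty(\mathbb{R}^2)$. Since $p\in(1,\tfrac{1}{1-s})\subset(1,\tfrac{1+s}{1-s})$, Proposition~\ref{lem2-13}(iii) applies and gives $\psi=a\,\partial_{x_1}\omega_0+b\,\partial_{x_2}\omega_0$ for some $a,b\in\mathbb{R}$. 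Integrating by parts, $\int x_1\psi=-a\kappa$ and $\int x_2\psi=-b\kappa$, so the constraint $\psi\in V$ forces $a=b=0$, i.e.\ $\psi=0$, contradicting $\|\psi\|_2=1$. Hence $C_0:=\inf\{\|(I-\mathcal{K})\phi\|_2:\phi\in V,\ \|\phi\|_2=1\}>0$, and by homogeneity $\|\mathcal{L}_0\phi\|_2=\|(I-\mathcal{K})\phi\|_2\ge C_0\|\phi\|_2$ for every admissible $\phi$, proving the lemma.

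The main obstacle is precisely this last step: transferring the non-degeneracy statement of Proposition~\ref{lem2-13}(iii), which is formulated in $L^1\cap L^\infty(\mathbb{R}^2)$, to an a priori only $L^2(B(0,R))$ solution — this requires the support localization together with the regularity bootstrap — and then checking that the two linear constraints $\int\phi=\int x\phi=0$ are transversal to the two-dimensional kernel $\mathrm{span}\{\partial_{x_1}\omega_0,\partial_{x_2}\omega_0\}$, which is where the normalization $\int x\,\tilde\omega_\ep=0$ built into $\tilde\omega_\ep$ is used. The compactness of $\mathcal{K}$, by contrast, is routine given Lemma~\ref{lem2-1}.
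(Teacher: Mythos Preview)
Your proof is correct and follows essentially the same approach as the paper: argue by contradiction, take a normalized minimizing sequence, use compactness of the map $\phi\mapsto w_0\,\mathcal{G}_s\phi$ on $L^2(B(0,R))$ to upgrade to strong convergence, bootstrap the limit into $L^1\cap L^\infty$, invoke Proposition~\ref{lem2-13}(iii) to place it in $\mathrm{span}\{\partial_{x_1}\omega_0,\partial_{x_2}\omega_0\}$, and then use the moment constraints to force it to vanish. Your presentation is in fact a bit cleaner in two places---you explicitly write $\mathcal{L}_0=I-\mathcal{K}$ with $\mathcal{K}$ compact (the paper instead passes through a weak limit and then argues strong convergence), and you compute $\int x_i\psi=-a_i\kappa$ directly by integration by parts (the paper just appeals to radial symmetry of $\omega_0$)---but the underlying argument is the same.
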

	 \begin{proof}
	 	We normalize $\phi$ by replacing $\phi$ with $\phi/\|\phi\|_2$ so that $\|\phi\|_2=1$. Now suppose on the contrary that there is  a sequence $\{\phi_n\}_{n=1}^\infty\subset L^2(B(0,R))$ such that
	 	$$\|\phi_n\|=1, \ \int x\phi_n =\int\phi_n=0,$$ while $$\|\mathcal{L}_0\phi_n\|\to0,\ \ \text{as}\ n\to+\infty.$$
	 	That is
	 	\begin{equation}\label{3-6}
	 		\phi_n=pL_\gamma (\mathcal{G}_s \omega_0-\kappa^{-1}A_\gamma E_0(\omega_0))_+^{p-1} \mathcal{G}_s \phi_n+o(1).
	 	\end{equation}	
	 	We first assume that up to a subsequence $\phi_n\to \phi_\infty$ weakly in $L^2$. Note that the support of $(\mathcal{G}_s \omega_0-\kappa^{-1}A_\gamma E_0(\omega_0))_+^{p-1}$ is contained in $B(0,R)$. Then by the regularity theory on potentials and the compact embedding theorem for fractional Sobolev spaces (see e.g. \cite{DiN, S70}), we have up to a subsequence $pL_\gamma (\mathcal{G}_s \omega_0-\kappa^{-1}A_\gamma E_0(\omega_0))_+^{p-1} \mathcal{G}_s \phi_n\to pL_\gamma (\mathcal{G}_s \omega_0-\kappa^{-1}A_\gamma E_0(\omega_0))_+^{p-1} \mathcal{G}_s \phi_\infty$ strongly in $L^2(B(0,R))$ and hence by \eqref{3-6}, we conclude  $\phi_n\to \phi_\infty$ strongly in $L^2$. Thus, we have $\phi_\infty$ satisfies
	 	$$\phi_\infty=pL_\gamma (\mathcal{G}_s \omega_0-\kappa^{-1}A_\gamma E_0(\omega_0))_+^{p-1} \mathcal{G}_s \phi_\infty,\ \ \|\phi_\infty\|_2=1, \ \  \int x\phi_\infty =\int\phi_\infty=0.$$
	 	By the regularity theory on potentials, Sobolev embedding and bootstrap argument, one can prove that $\phi_\infty\in L^\infty(B(0,R))$. Thus, we infer from  Proposition \ref{lem2-13} (iii) that $\phi_\infty\in \text{span}\{  \partial_{x_1}\omega_0, \partial_{x_2}\omega_0\}$, which combined with $ \int x\phi_\infty=0$ implies $\phi_\infty\equiv 0$ due to the radial symmetry of $\omega_0$.  This is a contradiction with  $\|\phi_\infty\|_2=1$. Therefore,   this lemma must hold true.
	 \end{proof}

	 Now we study the right-hand side of \eqref{3-3}.
	 \begin{lemma}\label{lem3-3}
	 	For any $\epsilon>0$, there is a $\delta>0$ such that if $\phi_1, \phi_2\in L^\infty(B(0,R))$ and $l_1, l_2\in \mathbb{R}$ satisfy $\int \phi_1=\int \phi_2=0$ and $$\|\phi_1\|_\infty+\|\phi_2\|_\infty+|l_1|+|l_2|\leq \delta,$$ then we have
	 	$$\limsup_{\ep\to0}(\|\mathcal{R}_1(\phi_1,l_1)-\mathcal{R}_1(\phi_2,l_2)\|_2+|\mathcal{R}_2(\phi_1,l_1)-\mathcal{R}_2(\phi_2,l_2)| )\leq \epsilon (\|\phi_1-\phi_2\|_2+|l_1-l_2|).$$
	 \end{lemma}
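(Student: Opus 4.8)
The plan is to treat both $\mathcal R_1$ and $\mathcal R_2$ as ``nonlinear operator at $(\omega_0+\phi,d_0+l)$ minus its affine approximation at $(\omega_0,d_0)$'' and to estimate the resulting first--order Taylor remainders (in integral form), exploiting that the subtracted linearizations already absorb the linear part. Set $g(t):=L_\gamma t_+^p$ and, for $\phi\in L^\infty(B(0,R))$ with $\int\phi=0$ and $l$ near $0$,
\[
\Psi_\ep(\phi,l):=\mathcal G_s(\omega_0+\phi)-\kappa^{-1}A_\gamma E_0(\omega_0+\phi)+S_\ep(\omega_0+\phi,d_0+l),\qquad \Psi_0:=\mathcal G_s\omega_0-\kappa^{-1}A_\gamma E_0(\omega_0),
\]
so that $\Psi_0=\mathcal G_s\omega_0-\mu_0$ by Lemma \ref{lem2-12}, $g(\Psi_0)=\omega_0$, $g(\Psi_\ep(\phi,l))=P_\ep(\omega_0+\phi,d_0+l)$, and, using $\int\phi=0$ with $E_0'(\omega_0)\phi=\mu_0\int\phi$, $\mathcal R_1(\phi,l)=g(\Psi_\ep(\phi,l))-g(\Psi_0)-g'(\Psi_0)\mathcal G_s\phi$. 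By linearity of $\mathcal G_s$ we get $\Psi_\ep(\phi,l)=\Psi_0+\mathcal G_s\phi+\rho_\ep(\phi,l)$ with $\rho_\ep(\phi,l):=-\kappa^{-1}A_\gamma\big(E_0(\omega_0+\phi)-E_0(\omega_0)-\mu_0\textstyle\int\phi\big)+S_\ep(\omega_0+\phi,d_0+l)$, i.e.\ $\rho_\ep$ is the genuinely second--order part of $E_0$ plus the $\ep$--correction.

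For $\mathcal R_1$ I would write, with $\Delta\Psi:=\Psi_\ep(\phi_1,l_1)-\Psi_\ep(\phi_2,l_2)=\mathcal G_s(\phi_1-\phi_2)+\big(\rho_\ep(\phi_1,l_1)-\rho_\ep(\phi_2,l_2)\big)$ and $\Psi^\tau:=\Psi_\ep(\phi_2,l_2)+\tau\Delta\Psi$,
\[
\mathcal R_1(\phi_1,l_1)-\mathcal R_1(\phi_2,l_2)=\int_0^1 pL_\gamma\big[(\Psi^\tau)_+^{p-1}-(\Psi_0)_+^{p-1}\big]\Delta\Psi\,d\tau+pL_\gamma(\Psi_0)_+^{p-1}\big(\rho_\ep(\phi_1,l_1)-\rho_\ep(\phi_2,l_2)\big),
\]
and estimate in $L^2$ as follows. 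The function $(\Psi_0)_+^{p-1}$ is bounded and supported in the fixed ball $\{\omega_0>0\}$ (since $\Psi_0<0$ off $\{\omega_0>0\}$), and for $\delta,\ep$ small so is $(\Psi^\tau)_+^{p-1}$; moreover $t\mapsto t_+^{p-1}$ is uniformly continuous on bounded sets, with modulus $r\mapsto C r^{\min(p-1,1)}$. The a priori bounds on $\{\|\phi_i\|_\infty+|l_i|\le\delta\}$ are: $\|\mathcal G_s\phi_i\|_\infty\le C\delta$ (Lemma \ref{lem2-1}), $\|\mathcal G_s(\phi_1-\phi_2)\|_2\le C\|\phi_1-\phi_2\|_2$ (the Hardy--Littlewood--Sobolev inequality \eqref{2-6}), and $\|\rho_\ep(\phi_1,l_1)-\rho_\ep(\phi_2,l_2)\|_\infty\le C\big(\delta+\delta^{1/p}\big)\|\phi_1-\phi_2\|_2+C\ep^{2-2s}\big(\|\phi_1-\phi_2\|_2+|l_1-l_2|\big)$ --- the bilinear piece of the $E_0$--remainder via Lemma \ref{lem2-2}, the $J$--piece via the $\tfrac{1}{p}$--Hölder continuity of $J'(t)=L(1+\tfrac{1}{p})t_+^{1/p}$ together with the Euler--Lagrange identity for $\omega_0$, and the $S_\ep$--piece via Lemma \ref{lem3-1}. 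Hence $\|\Psi^\tau-\Psi_0\|_\infty\le C\delta+C\ep^{2-2s}$ and $\|\Delta\Psi\|_2\le C\|\phi_1-\phi_2\|_2+C\ep^{2-2s}|l_1-l_2|$; plugging in (using $L^\infty(\{\omega_0>0\})\hookrightarrow L^2$ for the second term) and letting $\ep\to0$, with all $\ep$--dependent constants uniform on the $\delta$--ball, gives $\limsup_{\ep\to0}\|\mathcal R_1(\phi_1,l_1)-\mathcal R_1(\phi_2,l_2)\|_2\le C\big(\delta^{\min(p-1,1)}+\delta^{1/p}+\delta\big)\|\phi_1-\phi_2\|_2$, which is $\le\epsilon(\|\phi_1-\phi_2\|_2+|l_1-l_2|)$ once $\delta$ is small, every exponent being positive since $p>1$.

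The term $\mathcal R_2$ is easier because the kernel in $Q_\ep$ is smooth and bounded, with bounded derivatives, on the relevant ranges of $x,y\in B(0,R)$, $d\approx d_0$, small $\ep$. I would split $Q_\ep=Q_0+(Q_\ep-Q_0)$: Taylor--expanding the kernel in $\ep$ as in the proof of Lemma \ref{lem2-29} shows $Q_\ep-Q_0=O(\ep)$ with $O(\ep)$ Lipschitz constant uniformly on the $\delta$--ball, so this part contributes $0$ to $\limsup_{\ep\to0}$. Since $\int\phi_i=0$, one has $Q_0(\omega_0+\phi_i,d_0+l_i)=\kappa^2 h(d_0+l_i)-W\kappa$ with $h(d):=2(1-s)c_s(2d)^{-(3-2s)}$, which is $C^\infty$ near $d_0$; therefore $Q_0(\omega_0+\phi_i,d_0+l_i)-\nabla Q_0(\omega_0,d_0)(\phi_i,l_i)=\kappa^2\big(h(d_0+l_i)-h(d_0)-h'(d_0)l_i\big)$, and its difference over $i=1,2$ equals $\kappa^2\big|\int_0^1\!\big(h'(d_0+l_2+\tau(l_1-l_2))-h'(d_0)\big)\,d\tau\big|\,|l_1-l_2|\le C\delta|l_1-l_2|$ for $\delta$ small. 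Adding the two contributions gives the bound for $\mathcal R_2$.

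The main obstacle is the low regularity of the nonlinearities when $p\in(1,2)$: the derivative $t_+^{p-1}$ of $t_+^p$ and the map $J'(t)=c\,t_+^{1/p}$ are only Hölder, not Lipschitz, so Taylor remainders cannot be bounded by $O(|h|^2)$ and one must argue instead through moduli of continuity on bounded sets and check super--linearity by hand. This is entangled with the requirement that every implied constant stay uniform in $\ep$ over the whole $\delta$--ball, so that $\limsup_{\ep\to0}$ genuinely annihilates the $S_\ep$-- and $(Q_\ep-Q_0)$--contributions; that uniformity is exactly what Lemma \ref{lem3-1} and the smoothness of the $Q_\ep$--kernel supply.
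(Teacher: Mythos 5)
Your strategy is essentially the paper's: represent the difference of the $(\cdot)_+^p$ terms by an integral remainder, exploit that $t\mapsto t_+^{p-1}$ is only H\"older of exponent $\alpha_p=\min\{p-1,1\}$ together with the uniform $L^\infty$-smallness of the shift of the argument, absorb all $S_\ep$-contributions through Lemma \ref{lem3-1}, and handle $\mathcal R_2$ by smoothness of the $Q_\ep$-kernel; your reduction of the $Q_0$-part to the scalar function $h(d)$ using $\int\phi_i=0$ is in fact cleaner and more explicit than the paper's one-line appeal to $C^2$-smoothness, and the first term of your decomposition (the bracket $[(\Psi^\tau)_+^{p-1}-(\Psi_0)_+^{p-1}]$ against $\Delta\Psi$) is estimated correctly.

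The step that is not justified is the key display $\|\rho_\ep(\phi_1,l_1)-\rho_\ep(\phi_2,l_2)\|_\infty\le C(\delta+\delta^{1/p})\|\phi_1-\phi_2\|_2+C\ep^{2-2s}(\|\phi_1-\phi_2\|_2+|l_1-l_2|)$. Writing $E_0(\omega_0+\phi)-E_0(\omega_0)-\mu_0\int\phi=\int(\mathcal G_s\omega_0-J'(\omega_0)-\mu_0)\phi+(\text{superlinear terms})$, the Euler--Lagrange relation \eqref{2-7} gives $\mathcal G_s\omega_0-J'(\omega_0)=\mu_0$ only on $\{\omega_0>0\}$; on $\{\omega_0=0\}\cap B(0,R)$ one has $J'(0)=0$ and merely the inequality $\mathcal G_s\omega_0\le\mu_0$, which is strict and nonconstant outside $\mathrm{spt}(\omega_0)$. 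Hence $\rho_\ep(\phi_1,l_1)-\rho_\ep(\phi_2,l_2)$ contains the genuinely linear term $-\kappa^{-1}A_\gamma\int_{\{\omega_0=0\}}(\mathcal G_s\omega_0-\mu_0)(\phi_1-\phi_2)\,dx$, and for general mean-zero $\phi_i\in L^\infty(B(0,R))$ this is of size $\|\phi_1-\phi_2\|_2$ with a constant that does not shrink with $\delta$ or $\ep$ (take $\phi_1-\phi_2$ distributed over two regions of $B(0,R)\setminus\mathrm{spt}(\omega_0)$ where $\mathcal G_s\omega_0-\mu_0$ takes two different negative values). In your second term this quantity is multiplied by $pL_\gamma(\Psi_0)_+^{p-1}$, which is of order one on $\{\omega_0>0\}$, and $A_\gamma\neq0$, so the required $\epsilon$-smallness is lost precisely there; neither Lemma \ref{lem2-2}, nor the $1/p$-H\"older continuity of $J'$, nor Lemma \ref{lem3-1} controls it. To be fair, the paper's own proof rests on the same identification $E_0'(\omega_0)\phi=\mu_0\int\phi$ (asserted just before \eqref{3-3}) and silently discards the $\frac{d}{d\tau}E_0(\omega_0+\phi(\tau))$ contribution in its first display, so you have reproduced the paper's reasoning including its weak point rather than missed an ingredient the paper supplies; a genuine repair needs additional information on $\phi_1-\phi_2$ (for instance that, being a difference of maximizers in the intended application, it vanishes wherever $\mathcal G_s\omega_0-\mu_0<-c$ with $c=o(1)$ as $\delta,\ep\to0$, so the weight in the residual integral is small), or a restriction of the admissible class of perturbations in the statement.
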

 \begin{proof}
 	By direct calculations, we find
 	\begin{align*}
 		&\mathcal{R}_1(\phi_1,l_1)-\mathcal{R}_1(\phi_2,l_2)\\
 		=&L_\gamma \left(\mathcal{G}_s(\omega_0+\phi_1)-\kappa^{-1}A_\gamma E_0(\omega_0+\phi_1)+S_\ep(\omega_0+\phi_1, d_0+l_1) \right)_+^p\\
 		&- L_\gamma \left(\mathcal{G}_s(\omega_0+\phi_2)-\kappa^{-1}A_\gamma E_0(\omega_0+\phi_2)+S_\ep(\omega_0+\phi_2, d_0+l_2) \right)_+^p\\
 		&-pL_\gamma (\mathcal{G}_s \omega_0-\kappa^{-1}A_\gamma E_0(\omega_0))_+^{p-1}\mathcal{G}_s \left(\phi_1-\phi_2  \right)\\
 		=&pL_\gamma \int_0^1\left[\left(\mathcal{G}_s(\omega_0+\phi(\tau))-\kappa^{-1}A_\gamma E_0(\omega_0+\phi(\tau))+S_\ep(\omega_0+\phi(\tau), d_0+l(\tau)) \right)_+^{p-1}\right.\\
 		&\left. - (\mathcal{G}_s \omega_0-\kappa^{-1}A_\gamma E_0(\omega_0))_+^{p-1}\right] d\tau  \mathcal{G}_s(\phi_1-\phi_2) +O(\|\nabla S_\ep(\omega_0, d_0)\|)\|\phi_1-\phi_2\|_2,
 	\end{align*}
 	where $\phi(\tau)=\tau\phi_1+(1-\tau)\phi_2,\ l(\tau)=\tau l_1+(1-\tau)l_2$. To continue, we expand $\mathcal{G}_s(\omega_0+\phi(\tau))-\kappa^{-1}A_\gamma E_0(\omega_0+\phi(\tau))+S_\ep(\omega_0+\phi(\tau), d_0+l(\tau))$ as follows.
 	\begin{align*}
 		&\mathcal{G}_s(\omega_0+\phi(\tau))-\kappa^{-1}A_\gamma E_0(\omega_0+\phi(\tau))+S_\ep(\omega_0+\phi(\tau), d_0+l(\tau))\\
 		=&\mathcal{G}_s \omega_0-\kappa^{-1}A_\gamma E_0(\omega_0 )\\
 		&+\mathcal{G}_s\phi(\tau)+\int \phi(\tau)\mathcal{G}_s\omega_0-\int (J(\omega_0+\phi(\tau))-J(\omega_0))+\frac{1}{2}\int \phi(\tau)\mathcal{G}_s\phi(\tau)+O(\|S_\ep\|_\infty)\\
 		=&\mathcal{G}_s \omega_0-\kappa^{-1}A_\gamma E_0(\omega_0 )+O(\|\phi(\tau)\|_\infty+\|\phi(\tau)\|_\infty^2+\|S_\ep\|_\infty),
 	\end{align*}
 	where we have used 	  H\"older's inequality, Lemmas \ref{lem2-1} and \ref{lem2-2}.
 	Using $h(t)=t_+^{p-1}\in C^{0,\alpha_p}$ with $\alpha_p=\min\{1,p-1\}\in (0,1]$, we continue calculating $\mathcal{R}_1(\phi_1,l_1)-\mathcal{R}_1(\phi_2,l_2)$.
 	\begin{align*}
 		&\mathcal{R}_1(\phi_1,l_1)-\mathcal{R}_1(\phi_2,l_2)\\
 		=& O((\|\phi(\tau)\|_\infty+\|\phi(\tau)\|_\infty^2+\|S_\ep\|)^{\alpha_p}) \mathcal{G}_s(\phi_1-\phi_2) +O(\|\nabla S_\ep(\omega_0, d_0)\|)\|\phi_1-\phi_2\|
 	\end{align*}
 	
 	Thus, if $\|\phi_1\|, \|\phi_2\|\leq \delta$, then we obtain
 	\begin{equation}\label{3-7}
 		\|\mathcal{R}_1(\phi_1,l_1)-\mathcal{R}_1(\phi_2,l_2)\|_2\leq C(\delta^{\alpha_p}+\ep^{(2-2s)\alpha_p})\|\phi_1-\phi_2\|_2=o_{\delta,\ep}(1)\|\phi_1-\phi_2\|_2.
 	\end{equation}
 	Since $\mathcal{R}_2$ is $C^2$ smooth, it is easy to see that
 	\begin{equation}\label{3-8}
 		\|\mathcal{R}_2(\phi_1,l_1)-\mathcal{R}_2(\phi_2,l_2)\|\leq C( \|\phi_1-\phi_2\|^2+|l_1-l_2|^2)=o_{\delta, \ep }(1)( \|\phi_1-\phi_2\| +|l_1-l_2| ).
 	\end{equation}
 The proof of this lemma is thus finished.
 \end{proof}
	
Now, we are ready to prove our uniqueness.

\noindent{\bf Proof of Theorem \ref{thmU}:}

Suppose on the contrary that there are two different maximizers $\omega_{1,\ep}\not=\omega_{2,\ep}$. Then, we obtain two pairs $(\phi_{1,\ep}, l_{1,\ep})$ and $(\phi_{2,\ep}, l_{2,\ep})$ such that $\|\phi_{1,\ep}-\phi_{2,\ep}\|_2+|l_{1,\ep}-l_{2,\ep}|\not=0$, $\int x\phi_{1,\ep}=\int\phi_{1,\ep}=\int x\phi_{2,\ep}= \int\phi_{2,\ep}=0$ and $\|\phi_{1,\ep}\|_\infty+\|\phi_{2,\ep}\|_\infty+|l_{1,\ep}|+|l_{2,\ep}|=o(1)$ as $\ep\to0$. More over, both $(\phi_{1,\ep}, l_{1,\ep})$ and $(\phi_{2,\ep}, l_{2,\ep})$  satisfy the system \eqref{3-3}.

On the one hand, by Lemma \ref{lem3-2},  the difference between the left-hand side satisfies
$$\|\mathcal L_0(\phi_{1,\ep}-\phi_{2,\ep})\|_2+ \frac{ (1-s)(3-2s)c_s\kappa^2}{2^{2-2s}d_0^{4-2s}} |l_{1,\ep}-l_{2,\ep}|\geq C_0\|\phi_{1,\ep}-\phi_{2,\ep}\|_2 +\frac{ (1-s)(3-2s)c_s\kappa^2}{2^{2-2s}d_0^{4-2s}} |l_{1,\ep}-l_{2,\ep}|.$$

On the other hand, by Lemma \ref{lem3-3}, the difference between the right-hand side satisfies
$$\|\mathcal{R}_1(\phi_{1,\ep} ,l_{1,\ep})-\mathcal{R}_1(\phi_{2,\ep},l_{2,\ep})\|_2+|\mathcal{R}_2(\phi_{1,\ep} ,l_{1,\ep})-\mathcal{R}_2((\phi_{2,\ep},l_{2,\ep})|=o(1)(\|\phi_{1,\ep}-\phi_{2,\ep}\|_2+|l_{1,\ep}-l_{2,\ep}|),$$
which leads to a contradiction for $\ep$ small. Therefore, we have established the uniqueness of maximizers for $\ep$ small and completed the proof of Theorem \ref{thmU}. \qed

	 \section{Nonlinear orbital stability}\label{sec4}
	 This section is devoted to investigating the nonlinear stability of traveling solutions obtained in Section \ref{sec2}.  We first prove a general stability theorem in a similar spirit as \cite{BNL13}, where the stability of vortex pairs for the 2D Euler equation was considered. Throughout this section, we always assume that $D\subset \mathbb{R}_+^2$ is the domain $D=  \mathbb{R}_+^2$ if $s>\frac{1}{2}$ and $D=\{x_1\geq1\}$ if $0<s\leq \frac{1}{2}$.
	
	 \subsection{A general stability theorem on the set of maximizers}\label{sec4-1}

	 Let $\xi$ be a non-negative Lebesgue integrable function on $\mathbb{R}^2$, we denote by $\mathcal{R}(\xi)$ the set of (equimeasurable) rearrangements of $\xi$  on $D$ defined by
	 \begin{equation*}
	 	\mathcal{R}(\xi)=\Big\{0\leq \zeta\in L^1(D)\Big| |\{x: \zeta(x)>\tau\}| =|\{x: \xi(x)>\tau\}| , \forall\, \tau>0  \Big\}.
	 \end{equation*}
	 Note that all functions in $\mathcal{R}(\xi)$ have the same $L^q$ norm. Following \cite{BNL13}, we also define
	 \begin{equation*}
	 	\mathcal{R}_+(\xi)=\Big\{\zeta 1_S \big|	\zeta\in\mathcal{R}(\xi),\  \   S\subset D \ \text{measurable}  \Big\},
	 \end{equation*}
	 and
	 $$\overline{\mathcal{R}(\xi)^w}=\left\{ \zeta\geq 0 \  \text{measurable} \bigg  | \  \int_{D}(\zeta-\alpha)_+dx \leq \int_{D}(\xi-\alpha)_+dx, \ \ \forall \alpha>0\right\}.   $$
	 It is easy to see that the inclusions $ 	\mathcal{R}(\xi)\subset \mathcal{R}_+(\xi)\subset \overline{\mathcal{R}(\xi)^w}$ hold. The key fact is that $\overline{\mathcal{R}(\xi)^w}$ is convex and is the weak closure of  $ 	\mathcal{R}(\xi)$ in $L^p$ (see \cite{BNL13, Dou}).
	
	  We denote the   kinetic energy	as
	 \begin{equation*}
	 	E(\zeta)=\frac{1}{2}\int_{D}\zeta(x)\mathcal{G}_s^+\zeta(x) dx,
	 \end{equation*}	
	 and the impulse
	 \begin{equation*}
	 	I(\zeta)= \int_{D} x_1\zeta(x)  dx.
	 \end{equation*}	For a constant $\mathcal W$, set the energy functional as $$\tilde E_{\mathcal W}(\zeta):=\frac{1}{2}\int_{D}\zeta(x)\mathcal{G}_s^+\zeta(x )  dx-\mathcal W \int_{D} x_1\zeta(x) dx.$$
	 For a function $\zeta_0$ and constant $\mathcal W>0$, we will consider the maximization problem
	 $$\sup_{\zeta\in\overline{\mathcal{R}(\zeta_0)^w}}  \tilde E_{\mathcal W}(\zeta).$$
	
	 The following two lemmas are needed.
	 \begin{lemma}\label{lem4-1}
	 	Suppose $\zeta\in L^1\cap L^r(D)$ for some $s^{-1}<r\leq +\infty$ if $0<s\leq \frac{1}{2}$ and $\frac{2}{2s-1}<r\leq +\infty$ if $\frac{1}{2}<s<1$. Then, one has
	 	\begin{equation}\label{4-1}
	 		|\mathcal{G}_s^+ \zeta(x)| \leq C\left( \|\zeta\|_r+   \ \|\zeta\|_1\right)\min\{x_1, x_1^{2s-\frac{2}{r}}\},\ \ \forall\ x\in D.
	 	\end{equation}
	 \end{lemma}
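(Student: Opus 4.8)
The plan is to estimate the kernel $G_s^+(x,y)$ pointwise by two complementary quantities and then split the integral defining $\mathcal{G}_s^+\zeta(x)=\int_{\mathbb{R}_+^2}G_s^+(x,y)\zeta(y)\,dy$ accordingly. First, since $|x-y|\le|x-\bar y|$ whenever $x,y\in\mathbb{R}_+^2$, one has the trivial bound $0\le G_s^+(x,y)\le c_s|x-y|^{2s-2}$. Second, writing $|z|^{2-2s}=(|z|^2)^{1-s}$ and applying the mean value theorem to $t\mapsto t^{-(1-s)}$ on the interval with endpoints $|x-y|^2$ and $|x-\bar y|^2$, together with the elementary identity $|x-\bar y|^2-|x-y|^2=4x_1y_1$, gives the cancellation bound $G_s^+(x,y)\le 4(1-s)c_s\,x_1 y_1\,|x-y|^{-(4-2s)}$. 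In particular, whenever $|x-y|\ge x_1$ one also has $y_1\le x_1+|x-y|\le 2|x-y|$, and hence $G_s^+(x,y)\le 8(1-s)c_s\,x_1\,|x-y|^{2s-3}$.

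Fix $x\in D$ and set $d=x_1$. I would decompose $\mathbb{R}_+^2$ into the near ball $\Omega_1=B(x,d)$ (which lies in $\mathbb{R}_+^2$), the intermediate annulus $\Omega_2=\{y:d\le|x-y|\le1\}$, and the far region $\Omega_3=\{y:|x-y|>\max\{1,d\}\}$; when $0<s\le\frac12$ one has $d\ge1$, so $\Omega_2=\varnothing$. On $\Omega_1$ use the trivial kernel bound and H\"older's inequality in the spirit of Lemma~\ref{lem2-1}: since $r>s^{-1}$ in both regimes, $(2-2s)r'<2$, so $\int_{B(0,d)}|z|^{-(2-2s)r'}dz=Cd^{\,2-(2-2s)r'}$, and raising to the power $1/r'$ produces exactly the factor $d^{\,2s-2/r}$, whence $\int_{\Omega_1}G_s^+(x,\cdot)\zeta\le C\|\zeta\|_r\,d^{\,2s-2/r}$. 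On $\Omega_2$ (present only when $\frac12<s<1$) and $\Omega_3$ I would use the bound $G_s^+\le C\,d\,|x-y|^{2s-3}$, splitting $\Omega_2$ further according to whether $y_1\le 2d$ or $y_1>2d$, so that in the first subregion the stronger bound $G_s^+\le C\,d^2|x-y|^{2s-4}$ is available; here the hypothesis $r>\frac{2}{2s-1}$ is exactly what makes $(3-2s)r'<2$, so that $\int_{|z|\le1}|z|^{-(3-2s)r'}dz<\infty$, while $(4-2s)r'>2$ always, which supplies the extra power of $d$ in the first subregion. On $\Omega_3$, where $|x-y|^{2s-3}\le\max\{1,d\}^{2s-3}$, I would estimate instead by the $L^1$ norm, obtaining $\int_{\Omega_3}G_s^+(x,\cdot)\zeta\le C\,d\max\{1,d\}^{2s-3}\|\zeta\|_1$.

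Finally, collecting the three contributions and comparing the resulting powers of $d=x_1$ with $\min\{x_1,x_1^{\,2s-2/r}\}$ — treating $x_1\le1$ and $x_1\ge1$ (equivalently the two regimes for $s$) separately, and using once more $r>s^{-1}$ resp.\ $r>\frac{2}{2s-1}$ — yields the claimed inequality. The step I expect to be the main obstacle is the analysis near the boundary $\{x_1=0\}$ (which only occurs when $\frac12<s<1$, since $D=\{x_1\ge1\}$ otherwise): there the trivial kernel bound is far from sufficient, and one must extract the full gain from the cancellation factor $x_1y_1$, choosing the threshold between the ``$y_1\le 2x_1$'' and ``$y_1>2x_1$'' subregions carefully so that the sharp power $x_1^{\,2s-2/r}$ is recovered rather than a weaker power of $x_1$; balancing these exponents against the integrability thresholds for $|z|^{-(3-2s)r'}$ and $|z|^{-(4-2s)r'}$ is the delicate bookkeeping at the heart of the argument.
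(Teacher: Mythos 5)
Your treatment of the region $x_1\ge 1$ is essentially the paper's argument: the trivial bound $G_s^+\le c_s|x-y|^{2s-2}$ plus H\"older on $|x-y|\le x_1$ (giving $x_1^{2s-2/r}\|\zeta\|_r$), and the mean-value cancellation bound $G_s^+\le 4c_sx_1y_1|x-y|^{2s-4}$ with $y_1\le 2|x-y|$ plus the $L^1$ norm on $|x-y|>x_1$ (giving $x_1^{2s-2}\|\zeta\|_1$). That part is fine and coincides with the paper.

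The gap is exactly the step you flag as the ``main obstacle'', and it cannot be closed as you describe. Under the lemma's own hypothesis $r>\frac{2}{2s-1}$ (the only case in which $x_1<1$ occurs, since $D=\{x_1\ge1\}$ for $s\le\frac12$) one has $2s-\frac2r>1$, so near the boundary the quantity $x_1^{2s-2/r}$ is \emph{smaller} than $x_1$, and a bound of that size is simply false: take $\zeta=1_{B((1,0),1/2)}$ and $x=(x_1,0)$ with $x_1\to0$; the mean-value expansion of the kernel gives $\mathcal{G}_s^+\zeta(x)\sim c\,x_1$ with $c>0$, which is not $O(x_1^{2s-2/r})$. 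So no choice of threshold between the subregions $y_1\le 2x_1$ and $y_1>2x_1$ can ``recover the sharp power $x_1^{2s-2/r}$'': the contribution from $\zeta$ sitting at unit distance from the wall is genuinely of order $x_1$. What is true — and what the paper actually proves and later uses (see how \eqref{4-1} is invoked in Lemma \ref{lem4-2}, where only the factor $x_1$ matters) — is the linear bound $|\mathcal{G}_s^+\zeta(x)|\le C(\|\zeta\|_r+\|\zeta\|_1)\,x_1$ for $x_1\le1$, together with $C(\|\zeta\|_r+\|\zeta\|_1)\,x_1^{2s-2/r}$ for $x_1\ge1$. The paper obtains the linear bound differently from you: it estimates $\|\nabla\mathcal{G}_s^+\zeta\|_\infty\le C(\|\zeta\|_r+\|\zeta\|_1)$ — this is precisely where $r>\frac{2}{2s-1}$ enters, as it makes $|z|^{-(3-2s)}$ locally $L^{r'}$ — and then uses $\mathcal{G}_s^+\zeta\equiv0$ on $\{x_1=0\}$ and the fundamental theorem of calculus. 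Your direct splitting in fact already yields the same conclusion: your $\Omega_2$ and $\Omega_3$ estimates give $C x_1(\|\zeta\|_r+\|\zeta\|_1)$, and your $\Omega_1$ term $Cx_1^{2s-2/r}\|\zeta\|_r\le Cx_1\|\zeta\|_r$ for $x_1\le1$; so the fix is not more delicate bookkeeping but simply to record this two-regime bound (which is the content of the lemma as it is proved and used) rather than the literal formula with $\min\{x_1,x_1^{2s-2/r}\}$, whose near-boundary reading is a slip in the statement. As written, however, your final assertion that collecting the three contributions ``yields the claimed inequality'' does not go through.
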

	 \begin{proof}
	 	We first consider the case $x_1\geq 1$. By the mean value theorem, there holds
	 	\begin{equation*}
	 		G_s^+(x,y)\leq \frac{4c_s x_1y_1}{|x-y|^{4-2s}}, \ \ \forall x, y\in  \mathbb{R}_+.
	 	\end{equation*}
	 	Therefore, using H\"older's inequality and  $y_1<x_1+|x-y|\leq 2|x-y|$ if $|x-y|>x_1$, we have
	 	\begin{align*}
	 		|\mathcal{G}_s^+\zeta(x)|&\leq \int_{\mathbb{R}_+^2} G_s^+(x,y)|\zeta(y)| dy\\
	 		&\leq \int_{|x-y|\leq x_1} \frac{c_{2,s}}{|x-y|^{2-2s}} |\zeta(y)| dy + \int_{\{|x-y|> x_1\}} \frac{4x_1y_1}{|x-y|^{4-2s}} |\zeta(y)| dy\\
	 		&\leq C\left(x_1^{2s-\frac{2}{r}}\|\zeta\|_r+ x_1^{2s-2}\|\zeta\|_1\right).
	 	\end{align*}
	 	This proves \eqref{4-1} in the case $0<s< 1$ and $x_1>1$. Now, we turn to the remaining  case $\frac{1}{2}<s<1$ and $0<x_1<1$. By H\"older's inequality, we find
	 	\begin{align*}
	 		|\nabla\mathcal{G}_s^+\zeta(x)|&\leq C\int_{\mathbb{R}_+}  \frac{|\zeta(y)|}{|x-y|^{3-2s}} dy\\
	 		&\leq \int_{|x-y|\leq  1} \frac{c_{2,s}}{|x-y|^{3-2s}} |\zeta(y)| dy + \int_{\{|x-y|>  1\}}  |\zeta(y)| dy\\
	 		&\leq C\left( \|\zeta\|_r+  \|\zeta\|_1\right).
	 	\end{align*}
	 	Noticing that $\mathcal{G}_s^+\zeta(x)\Big|_{x_1=0}\equiv0$, we conclude $$|\mathcal{G}_s^+\zeta(x)|\leq x_1\|\nabla\mathcal{G}_s^+\zeta\|_\infty \leq C\left( \|\zeta\|_q+  \|\zeta\|_1\right)x_1.$$
	 	The proof is thus finished.
	 \end{proof}

	 \begin{lemma}\label{lem4-2}
	 	Suppose $ x_1\zeta\in L^1(\mathbb{R}_+^2)$ and $\zeta\in L^1\cap L^r(D)$ for some $r$ with $s^{-1}<r<+\infty$ if $0<s\leq \frac{1}{2}$ and $\frac{2}{2s-1}<r< +\infty$ if $\frac{1}{2}<s<1$. If $\zeta$ is Steiner symmetric in the $x_2$-variable, then for $x\in D$, there holds
	 	\begin{equation}\label{4-2}
	 		|\mathcal{G}_s^+ \zeta(x)| \leq C\left(\left(|x_2|^{-\frac{1}{2r}} +   |x_2|^{-\frac{1}{2 }} \right)\left( \|\zeta\|_r+   \ \|\zeta\|_1\right)\min\{1, x_1^{2s-\frac{2}{r}-1}\}  +|x_2|^{s-2}\|x_1\zeta\|_1\right)x_1.
	 	\end{equation}
	 \end{lemma}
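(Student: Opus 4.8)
The plan is to use that $\zeta$, hence $\mathcal G_s^+\zeta$, is even in $x_2$ to reduce to $x_2>0$, and then to dispose of bounded $x_2$: if $0<x_2\le1$ then $|x_2|^{-1/(2r)}\ge1$, and since $\min\{1,x_1^{2s-2/r-1}\}\,x_1=\min\{x_1,x_1^{2s-2/r}\}$, the inequality \eqref{4-2} follows immediately from Lemma \ref{lem4-1} (our $r$ lies in the range allowed there). So it remains to treat $x_2>1$, and for such $x$ I would split off the far-field and near-diagonal parts,
\[
\mathcal G_s^+\zeta(x)=\underbrace{\int_{\{|x-y|\ge x_2/2\}}G_s^+(x,y)\zeta(y)\,dy}_{=:\,\mathrm I}+\underbrace{\int_{\{|x-y|<x_2/2\}}G_s^+(x,y)\zeta(y)\,dy}_{=:\,\mathrm{II}} .
\]

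For $\mathrm I$ I would use only the elementary bound $0\le G_s^+(x,y)\le Cx_1y_1|x-y|^{2s-4}$, proved by the mean value theorem exactly as in Lemma \ref{lem4-1} (here $|x-\bar y|^2-|x-y|^2=4x_1y_1$), together with $|x-y|\ge x_2/2$, to get
\[
|\mathrm I|\le Cx_1x_2^{2s-4}\int_{\mathbb R^2_+}y_1\zeta(y)\,dy=Cx_1x_2^{2s-4}\|x_1\zeta\|_1\le Cx_1x_2^{s-2}\|x_1\zeta\|_1 ,
\]
using $2s-4\le s-2$ and $x_2\ge1$. This is exactly the last term of \eqref{4-2}, and it is the only place the hypothesis $x_1\zeta\in L^1$ enters.

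The core is $\mathrm{II}$, where the Steiner symmetry is used. On $\{|x-y|<x_2/2\}$ we have $|y_2-x_2|<x_2/2$, hence $|y_2|>x_2/2>0$; since $\zeta(y_1,\cdot)$ is even and non-increasing in $|y_2|$, integrating it over $(-|y_2|,|y_2|)$ gives the pointwise bound $\zeta(y)\le \tfrac{1}{2|y_2|}\nu(y_1)\le x_2^{-1}\nu(y_1)$, where $\nu(y_1):=\int_{\mathbb R}\zeta(y_1,t)\,dt$ and $\|\nu\|_{L^1(\mathbb R_+)}=\|\zeta\|_1$. For a parameter $\theta\in(0,1)$ I would write $\zeta=\zeta^{1-\theta}\zeta^{\theta}\le (x_2^{-1}\nu(y_1))^{1-\theta}\zeta^{\theta}$ and apply Hölder in $y$ with three exponents, placing $\zeta^{\theta}$ in $L^{r/\theta}$ (contributing $\|\zeta\|_r^{\theta}$), $\nu(y_1)^{1-\theta}$ in $L^{1/(1-\theta)}$, and the kernel in the conjugate exponent $L^{r'/\theta}$. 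To control the kernel factor one must first split $\{|x-y|<x_2/2\}$ according to $|x-y|\le x_1$ or $x_1<|x-y|<x_2/2$: on the inner piece use $G_s^+\le c_s|x-y|^{2s-2}$, on the outer piece use $G_s^+\le 2c_sx_1|x-y|^{2s-3}$ (via $y_1\le x_1+|x-y|\le2|x-y|$); the $L^{r'/\theta}$-integrability of these powers on the relevant balls, together with integrating $\nu(y_1)$ over $y_1\in\mathbb R_+$ (using $\int\nu=\|\zeta\|_1$) and over the appropriate $y_2$-slice, forces $(2-2s)r'<2$ and $(3-2s)r'<2$, which is precisely why the hypotheses read $r>s^{-1}$ for $0<s\le\tfrac12$ and $r>\tfrac2{2s-1}$ for $\tfrac12<s<1$. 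Choosing $\theta\in((1-s)r',1)$ (nonempty under these hypotheses) one extracts the prefactor $x_2^{-(1-\theta)}$ and a positive power of $\min\{x_1,x_2\}$; a short computation shows $\theta$ can be chosen so that the $x_1$-behaviour is dominated by $\min\{x_1,x_1^{2s-2/r}\}=\min\{1,x_1^{2s-2/r-1}\}x_1$ and the $x_2$-decay by $|x_2|^{-1/(2r)}$ (and, since the hypotheses force $r>2$, also by $|x_2|^{-1/2}$). One caveat: when $\tfrac12<s<1$ and $0<x_1<1$ this direct estimate overshoots the correct power of $x_1$, so there one should first use $\mathcal G_s^+\zeta|_{\{x_1=0\}}\equiv0$ to write $|\mathcal G_s^+\zeta(x_1,x_2)|\le x_1\sup_{0<t_1<1}|\nabla_x\mathcal G_s^+\zeta(t_1,x_2)|$ and rerun the same near/far decomposition for $\nabla_x\mathcal G_s^+\zeta$, whose kernel obeys $|\nabla_xG_s^+(x,y)|\le C|x-y|^{2s-3}$ in general (this is $L^{r'}$-integrable on small balls exactly because $r>\tfrac2{2s-1}$) and the sharper $|\partial_{x_1}G_s^+(x,y)|\le C(x_1+y_1)|x-y|^{2s-4}$ near $\{x_1=0\}$ for the far-field piece.

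The main obstacle is this near-diagonal estimate for $\mathrm{II}$: one must make the singularity of $G_s^+$, the Steiner pointwise bound $\zeta(y)\lesssim x_2^{-1}\nu(y_1)$, and the interpolation/Hölder exponents conspire to recover simultaneous decay in $x_1$ and in $x_2$ with precisely the stated exponents, and the bookkeeping genuinely branches over $0<s\le\tfrac12$ versus $\tfrac12<s<1$ (and, in the latter, over $x_1<1$ versus $x_1\ge1$). Everything else — the mean value theorem bound for $G_s^+$, convergence of the kernel integrals, and the elementary identities $\min\{1,x_1^{2s-2/r-1}\}x_1=\min\{x_1,x_1^{2s-2/r}\}$ and $|x_2|^{-(1-1/r)}\le|x_2|^{-1/2}$ for $x_2\ge1$, $r>2$ — is routine.
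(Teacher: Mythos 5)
Your reduction to $x_2>1$, your treatment of the case $x_2\le 1$ via Lemma \ref{lem4-1}, and your far-field term $\mathrm I$ (cut at $|x-y|\ge x_2/2$, screened kernel $G_s^+\le Cx_1y_1|x-y|^{2s-4}$, giving $Cx_1x_2^{s-2}\|x_1\zeta\|_1$) are all fine. The genuine gap is in the near-diagonal term $\mathrm{II}$, and it is not just unfinished bookkeeping: the scheme you describe cannot produce the stated $|x_2|^{-\frac{1}{2r}}$ decay. First, the region $\{|x-y|<x_2/2\}$ has vertical extent of order $x_2$, so the factor you gain from the pointwise bound, $x_2^{-(1-\theta)}$, is exactly cancelled by the H\"older factor $\bigl(\int_{\{|x-y|<x_2/2\}}\nu(y_1)\,dy\bigr)^{1-\theta}\le (Cx_2\|\zeta\|_1)^{1-\theta}$ (since $\nu$ is only in $L^1$, the slice integration costs a full factor $x_2$): no $x_2$-decay survives on the outer annulus. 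Second, even on the inner ball $\{|x-y|\le x_1\}$, where the vertical extent is only $\sim x_1$ and the prefactor $x_2^{-(1-\theta)}$ does survive, you need $1-\theta\ge \tfrac1{2r}$ to reach the decay $|x_2|^{-\frac{1}{2r}}$, while kernel integrability forces $\theta>(1-s)r'$; these are incompatible near the endpoint of the admissible range (e.g.\ $s=\tfrac12$, $r=2.1$: $(1-s)r'\approx 0.95$ but $1-\tfrac1{2r}\approx 0.76$). Third, and most structurally, the only consequence of Steiner symmetry you retain is $\zeta(y)\le \nu(y_1)/(2|y_2|)$, and this, together with $\|\zeta\|_1,\|\zeta\|_r,\|x_1\zeta\|_1$ and kernel bounds, does not imply \eqref{4-2}: a (non-symmetric) configuration consisting of a small bump of radius $\delta$ at $(x_1,x_2)$ saturating $\zeta=x_2^{-1}\nu(y_1)$, fed by a tall, thin, low-amplitude column near the $x_1$-axis (which inflates $\nu$ and $\|\zeta\|_1\sim \delta m x_2$ but not $\|\zeta\|_r$), satisfies every fact your argument uses, yet for $s$ close to $\tfrac12$ and $r$ close to $s^{-1}$ one can choose $\delta=x_2^{-\alpha}$ so that the near-diagonal integral $\sim m\delta^{2s}$ exceeds the right-hand side of \eqref{4-2}. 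So any proof of $\mathrm{II}$ must use more of the Steiner symmetry than the pointwise $L^1$-based bound.

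The paper's proof supplies exactly that missing ingredient, and is much shorter: split $\zeta=\zeta_1+\zeta_2$ by the horizontal strip $\{|y_2-x_2|<\sqrt{|x_2|}\}$ (not a ball of radius $x_2/2$). For $\zeta_1$, Steiner symmetry gives Burton's strip estimate (equation (2.11) in \cite{Bu0}): $\|\zeta_1\|_q\le |x_2|^{-\frac{1}{2q}}\|\zeta\|_q$ for $1\le q\le r$ — i.e.\ smallness of the $L^1$ and $L^r$ norms of $\zeta$ near the level $x_2$, which is strictly stronger than your pointwise bound — and then Lemma \ref{lem4-1} applied to $\zeta_1$ yields the first term of \eqref{4-2} directly. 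For $\zeta_2$, one has $|x-y|\ge \sqrt{|x_2|}$ on its support, and the screened bound $G_s^+\le Cx_1y_1|x-y|^{2s-4}$ gives $Cx_1|x_2|^{s-2}\|x_1\zeta\|_1$ (this is also why the exponent $s-2$, rather than your $2s-4$ relaxed to $s-2$, appears naturally). If you replace your pointwise bound by the strip estimate and your ball cut by the strip cut, your outline collapses to the paper's argument; as written, the core step does not close.
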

	 \begin{proof}
	 	For $x\in\mathbb{R}_+^2$ fixed, let
	 	\begin{equation*}
	 		\zeta_1(y)=\left\{
	 		\begin{array}{lll}
	 			\zeta(y), \ \  & \text{if} \ \ |y_2-x_2|<\sqrt{|x_2|},\\
	 			0, & \text{if} \ \ |y_2-x_2|\geq\sqrt{|x_2|}.
	 		\end{array}
	 		\right.
	 	\end{equation*}
	 	Using  equation (2.11) in \cite{Bu0} (see also (6) in \cite{Bu6}), it is easy to see that for any $1\leq q\leq r$
	 	\begin{equation*}
	 		\|\zeta_1\|_q\leq \left(\frac{|x_2|^{\frac{1}{2}}}{|x_2|}\right)^{\frac{1}{q}}\|\zeta\|_q=|x_2|^{-\frac{1}{2q}}\|\zeta\|_q.
	 	\end{equation*}
	 	Hence, by \eqref{4-1}, we have
	 	\begin{equation}\label{4-3}
	 		\begin{split}
	 			|\mathcal{G}_s^+\zeta_1(x)|&\leq C\left(\left( \|\zeta_1\|_r+   \ \|\zeta_1\|_1\right)\min\{1, x_1^{2s-\frac{2}{r}-1}\}\right)x_1\\
	 			&\leq C\left(\left(|x_2|^{-\frac{1}{2r}} +   |x_2|^{-\frac{1}{2 }} \right)\left( \|\zeta\|_r+   \ \|\zeta\|_1\right)\min\{1, x_1^{2s-\frac{2}{r}-1}\} \right)x_1.
	 		\end{split}	
	 	\end{equation}
	 	Letting $\zeta_2=\zeta-\zeta_1$, we have
	 	\begin{align}\label{4-4}
	 		|\mathcal{G}_s^+\zeta_2(x)|&= c_{s}\int_{|x_2-y_2|>\sqrt{|x_2|}} \left(\frac{1}{|x-y|^{2-2s}}-\frac{1}{|x-\bar{y}|^{2-2s}}\right)|\zeta(y)|dy\nonumber\\
	 		& \leq C\int_{|x-y|>\sqrt{|x_2|}} \frac{x_1y_1}{|x-y|^{4-2s}}\zeta(y)dy \\
	 		&\leq \frac{Cx_1}{|x_2|^{2-s}}\|x_1\zeta\|_1,\nonumber
	 	\end{align}	
	 	which, together with \eqref{4-3}, gives \eqref{4-2} and completes the proof.
	 \end{proof}

	 The following property enables us to control the supports of maximizers.
	 \begin{lemma}\label{lem4-3}
	 	Suppose that $\zeta \in L^1\cap L^q(D)$ with some $s^{-1}<q<\infty$   and $\mathcal W>0$ is a given constant.
	 	Let $h=\zeta 1_V$ for  some set $V\subset \{\mathcal{G}_s^+\zeta- \mathcal W  x_1 \leq 0\}$, then $$\tilde E_{\mathcal W}(\zeta-h)\geq \tilde E_{\mathcal W}(\zeta)$$ with strict inequality unless $h\equiv0$.
	 \end{lemma}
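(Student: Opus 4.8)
The plan is to compute the difference $\tilde E_{\mathcal W}(\zeta-h)-\tilde E_{\mathcal W}(\zeta)$ explicitly and exhibit it as a sum of two nonnegative quantities, the second of which is strictly positive as soon as $h\not\equiv 0$. First I would check that all integrals below are finite: since $q>s^{-1}$, Lemma \ref{lem2-1} applied to the zero extension of $\zeta$ (using $G_s^+(x,y)\le c_s|x-y|^{2s-2}$) gives $\mathcal G_s^+\zeta\in L^\infty(D)$; because $0\le h\le\zeta$ we also have $h\in L^1\cap L^q(D)$ and $\mathcal G_s^+h\in L^\infty(D)$; finally, on $V\subset\{\mathcal G_s^+\zeta-\mathcal W x_1\le 0\}$ one has $x_1\le\mathcal W^{-1}\|\mathcal G_s^+\zeta\|_\infty$, so that $x_1 h\in L^1(D)$ with $\int_D x_1 h\,dx=\int_V x_1\zeta\,dx<\infty$ (this is the point that makes the difference of functionals well defined). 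Using the symmetry $G_s^+(x,y)=G_s^+(y,x)$ and the bilinearity of $(\zeta_1,\zeta_2)\mapsto\int_D \zeta_1\mathcal G_s^+\zeta_2$, a straightforward expansion yields
\[
\tilde E_{\mathcal W}(\zeta-h)-\tilde E_{\mathcal W}(\zeta)=\int_D h\,(\mathcal W x_1-\mathcal G_s^+\zeta)\,dx+\frac12\int_D h\,\mathcal G_s^+h\,dx .
\]

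For the first term, $h=\zeta 1_V\ge 0$ and $\mathcal W x_1-\mathcal G_s^+\zeta\ge 0$ on $V$ by the choice of $V$, so the integrand is nonnegative and $\int_D h(\mathcal W x_1-\mathcal G_s^+\zeta)\,dx\ge 0$. For the second term I would pass to the odd reflection: let $\tilde h(x):=h(x)-h(\bar x)$, which coincides with $h$ on $\mathbb R_+^2$ (since $h$ is supported in $D\subset\mathbb R_+^2$) and satisfies $\tilde h(\bar x)=-\tilde h(x)$. A change of variables $y\mapsto\bar y$ shows $\mathcal G_s\tilde h(x)=\mathcal G_s^+h(x)$ for $x\in\mathbb R_+^2$, and splitting $\int_{\mathbb R^2}\tilde h\,\mathcal G_s\tilde h$ over $\mathbb R_+^2$ and its reflection and using the oddness of both $\tilde h$ and $\mathcal G_s\tilde h$ gives
\[
\int_D h\,\mathcal G_s^+h\,dx=\frac12\int_{\mathbb R^2}\tilde h\,\mathcal G_s\tilde h\,dx=\frac{c_s}{2}\int_{\mathbb R^2}\int_{\mathbb R^2}\frac{\tilde h(x)\tilde h(y)}{|x-y|^{2-2s}}\,dx\,dy\ \ge\ 0 ,
\]
where the inequality is the positive-definiteness of the Riesz kernel $|z|^{-(2-2s)}$ on $\mathbb R^2$ (its Fourier transform is a positive constant times $|\xi|^{-2s}$). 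Moreover this quadratic form is \emph{strictly} positive whenever $\tilde h\not\equiv 0$, i.e. whenever $h\not\equiv 0$.

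Combining the two displays gives $\tilde E_{\mathcal W}(\zeta-h)\ge\tilde E_{\mathcal W}(\zeta)$; and if $h\not\equiv 0$ the second term is already strictly positive while the first is $\ge 0$, so the inequality is strict. The only step with genuine content is the nonnegativity (and strict positivity) of the quadratic form $\int_D h\,\mathcal G_s^+h$, which I expect to be the main — though standard — obstacle; it is handled by the odd-reflection identity above together with the classical positive-definiteness of the Riesz potential. The rest is bookkeeping: checking integrability on $V$ and expanding the functional by bilinearity and symmetry of the kernel.
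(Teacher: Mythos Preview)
Your proof is correct and follows essentially the same route as the paper: both expand $\tilde E_{\mathcal W}(\zeta-h)-\tilde E_{\mathcal W}(\zeta)$ bilinearly into the sum $\int_D h(\mathcal W x_1-\mathcal G_s^+\zeta)\,dx+\tfrac12\int_D h\,\mathcal G_s^+h\,dx$, observe the first term is nonnegative on $V$, and conclude via the positive-definiteness of the quadratic form $h\mapsto\int_D h\,\mathcal G_s^+h$. The only difference is that the paper simply asserts $\int h\,\mathcal G_s^+h\ge 0$ with equality iff $h\equiv 0$, whereas you supply the justification via odd reflection and the positive-definiteness of the Riesz kernel, and you also spell out the integrability checks that make the expansion legitimate.
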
	
	 \begin{proof}
	 	It is easy to see that
	 	\begin{equation*}
	 		\begin{split}
	 			\tilde E_{\mathcal W}(\zeta-h)&=\frac{1}{2}\int_{\mathbb R^2_+}(\zeta-h)(x)\mathcal{G}_s^+(\zeta-h)(x) d\nu- \mathcal W \int_{\mathbb R^2_+}(\zeta-h)(x) x_1  dx\\
	 			&=\tilde E_{\mathcal W}(\zeta)+\frac{1}{2}\int_{\mathbb R^2_+} h\mathcal{G}_s^+h -\int_{\mathbb R^2_+}h(x)\left(\mathcal{G}_s^+\zeta(x) - \mathcal W  x_1 \right)dx\\
	 			&\geq \tilde E_{\mathcal W}(\zeta)+\frac{1}{2}\int_{\mathbb R^2_+} h\mathcal{G}_s^+h,
	 		\end{split}
	 	\end{equation*}
	 	which implies $\tilde E_{\mathcal W}(\zeta-h)\geq \tilde E_{\mathcal W}(\zeta)$ since $\frac{1}{2}\int h\mathcal{G}_s^+h\geq 0$ and $\frac{1}{2}\int h\mathcal{G}_s^+h=0$ if and only if $h\equiv0$. The proof is thus complete.
	 \end{proof}

	 \begin{lemma}\label{lem4-4}
	 	Let  $0\leq \zeta_0\in L^1\cap L^q(D)$ with some  $q$ with $s^{-1}<q\leq +\infty$ if $0<s\leq \frac{1}{2}$ and $\frac{2}{2s-1}<q\leq +\infty$ if $\frac{1}{2}<s<1$  and $\mathcal W>0$ is a given constant. Then $$\sup_{\zeta\in\overline{\mathcal{R}(\zeta_0)^w}}  \tilde E_{\mathcal W}(\zeta)<+\infty,$$ and any maximizer (if exists) is supported in $[0, M_0]\times \mathbb R$, where $M_0$ is a  constant depending on $\|\zeta_0\|_1+\|\zeta_0\|_q$ and $\mathcal W$.
	 \end{lemma}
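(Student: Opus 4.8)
The plan is to prove the two assertions separately: finiteness of the supremum is soft, while the support bound rests on Lemma~\ref{lem4-3} together with the pointwise kernel estimates behind Lemmas~\ref{lem4-1}--\ref{lem4-2}. For finiteness, I would first record the standard fact that every $\zeta\in\overline{\mathcal R(\zeta_0)^w}$ obeys $\|\zeta\|_p\le\|\zeta_0\|_p$ for all $1\le p\le\infty$: for $1<p<\infty$ this follows from the layer--cake identity $\|\zeta\|_p^p=p(p-1)\int_0^\infty\alpha^{p-2}\big(\int_D(\zeta-\alpha)_+\,dx\big)\,d\alpha$ and the defining inequality of $\overline{\mathcal R(\zeta_0)^w}$, and the cases $p=1,\infty$ are immediate. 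Extending $\zeta$ by zero to $\mathbb R^2$ one has $\mathcal G_s^+\zeta\le\mathcal G_s\zeta$ and, since $\int_D x_1\zeta\ge0$, $\tilde E_{\mathcal W}(\zeta)\le\tfrac12\int_{\mathbb R^2}\zeta\,\mathcal G_s\zeta$; Lemma~\ref{lem2-2} and the interpolation $\|\zeta\|_{2-s}\le\|\zeta\|_1^\theta\|\zeta\|_q^{1-\theta}$ (legitimate since $1\le 2-s\le s^{-1}<q$) then bound the right--hand side by a constant depending only on $\|\zeta_0\|_1$ and $\|\zeta_0\|_q$, whence $\sup_{\overline{\mathcal R(\zeta_0)^w}}\tilde E_{\mathcal W}<+\infty$.

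Next, let $\zeta^*$ be a maximizer. Since $0\in\overline{\mathcal R(\zeta_0)^w}$ and $\tilde E_{\mathcal W}(0)=0$, we get $\tilde E_{\mathcal W}(\zeta^*)\ge0$, hence $\mathcal W\int_D x_1\zeta^*=E(\zeta^*)-\tilde E_{\mathcal W}(\zeta^*)\le E(\zeta^*)\le C(\|\zeta_0\|_1,\|\zeta_0\|_q)$; in particular $x_1\zeta^*\in L^1(D)$ and $\int_D x_1\zeta^*\le P$ with $P\le C/\mathcal W$. I would then apply Lemma~\ref{lem4-3} (with any finite exponent in $(s^{-1},\infty)$ in place of $q$ when $q=\infty$) to $\zeta^*$, taking $V=\{\mathcal G_s^+\zeta^*-\mathcal W x_1\le0\}$ and $h=\zeta^*\mathbf 1_V$: since $0\le\zeta^*-h\le\zeta^*$, the competitor $\zeta^*-h$ still lies in $\overline{\mathcal R(\zeta_0)^w}$, so maximality forces $\tilde E_{\mathcal W}(\zeta^*-h)\le\tilde E_{\mathcal W}(\zeta^*)$, and Lemma~\ref{lem4-3} then yields $h\equiv0$, i.e.\ $\zeta^*$ vanishes a.e.\ on $\{\mathcal G_s^+\zeta^*\le\mathcal W x_1\}$. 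Thus $\text{spt}(\zeta^*)\subset\{\mathcal G_s^+\zeta^*>\mathcal W x_1\}$.

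It remains to show $\mathcal G_s^+\zeta^*(x)\le\mathcal W x_1$ once $x_1$ is large. Fix a finite $r\in(s^{-1},q]$; on $D\times D$ one has $0\le G_s^+(x,y)\le\min\{c_s|x-y|^{2s-2},\,4c_sx_1y_1|x-y|^{2s-4}\}$, the second bound by the mean value theorem. Splitting $\mathcal G_s^+\zeta^*(x)=\int_{\{y_1\le x_1/2\}}+\int_{\{y_1>x_1/2\}}$: on the first region $|x-y|\ge x_1/2$, so by the second kernel bound and the impulse bound the contribution is $\le C_1P\,x_1^{2s-3}$, which tends to $0$; on the second region $\int_{\{y_1>x_1/2\}}\zeta^*\le 2P/x_1$ is small, and splitting it once more into $\{|x-y|\le1\}$ (where H\"older with exponents $r,r'$ gives $\le c_s\|\zeta_0\|_r\||z|^{2s-2}\|_{L^{r'}(B_1)}=:C_2$, finite because $q>s^{-1}$) and $\{|x-y|>1\}$ bounds its contribution by $C_2+2c_sP/x_1$. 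Altogether $\mathcal G_s^+\zeta^*(x)\le C_1P\,x_1^{2s-3}+C_2+2c_sP/x_1\le 2C_2$ once $x_1\ge M_1$, with $M_1$ chosen so that the first and third terms sum to at most $C_2$; combined with $\text{spt}(\zeta^*)\subset\{\mathcal G_s^+\zeta^*>\mathcal W x_1\}$ this shows no point of $\text{spt}(\zeta^*)$ with $x_1\ge M_1$ can satisfy $x_1\ge 2C_2/\mathcal W$, hence $\text{spt}(\zeta^*)\subset[0,M_0]\times\mathbb R$ with $M_0:=\max\{M_1,2C_2/\mathcal W\}$ depending only on $s$, $\mathcal W$, $\|\zeta_0\|_1$ and $\|\zeta_0\|_q$.

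The delicate point is the near--diagonal term $\int_{\{|x-y|\le1\}}G_s^+(x,y)\zeta^*(y)\,dy$: it cannot be made to decay in $x_1$, only bounded by a constant, so the argument closes only because this constant is dominated by the linearly growing barrier $\mathcal W x_1$. (For $0<s\le\tfrac12$ one may instead invoke Lemma~\ref{lem4-1} directly, which gives $\mathcal G_s^+\zeta^*(x)\le Cx_1^{2s-2/r}$ with $2s-2/r<1$ and hence the bound at once; the splitting above is genuinely needed only when $\tfrac12<s<1$, where the exponent in Lemma~\ref{lem4-1} exceeds $1$.)
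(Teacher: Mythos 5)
Your proof is correct, and while it shares the overall skeleton of the paper's argument (finiteness via Lemma \ref{lem2-2} together with the norm bounds inherited from $\overline{\mathcal{R}(\zeta_0)^w}$, and the use of Lemma \ref{lem4-3} to discard mass where $\mathcal{G}_s^+\zeta^*\le \mathcal W x_1$), the way you obtain the barrier estimate is genuinely different. The paper first invokes Lemma \ref{lem4-1}, together with the remark that $\zeta_0\in L^r$ for every $r\in[1,q]$, to produce one constant $M_0$ such that $\mathcal{G}_s^+\zeta(x)-\mathcal W x_1\le 0$ on $\{x_1\ge M_0\}$ \emph{for every} $\zeta\in\overline{\mathcal{R}(\zeta_0)^w}$ (no impulse information is used), and only then applies Lemma \ref{lem4-3} with the strip $V=(M_0,\infty)\times\mathbb{R}$. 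You reverse the order: you apply Lemma \ref{lem4-3} with the full set $V=\{\mathcal{G}_s^+\zeta^*\le\mathcal W x_1\}$, extract the impulse bound $I(\zeta^*)\le E(\zeta^*)/\mathcal W$ by comparing with $0\in\overline{\mathcal{R}(\zeta_0)^w}$, and then run your own two-region splitting, which only bounds $\mathcal{G}_s^+\zeta^*$ by a constant at large $x_1$ but suffices against the linear barrier. Each route buys something: the paper's yields a strip valid uniformly over the whole weak closure, a fact it reuses later (e.g. in the proof of Proposition \ref{prop4-7}), whereas your argument is cleaner on a point where the paper is slightly loose — for $\tfrac12<s<1$ the stated hypothesis of Lemma \ref{lem4-1} forces $r>\tfrac{2}{2s-1}$, hence $2s-\tfrac{2}{r}>1$, so the bound it provides grows (at least) linearly in $x_1$ and does not by itself beat $\mathcal W x_1$; one must rerun the $x_1\ge 1$ half of that lemma's proof with some $r\in(s^{-1},\tfrac{2}{2s-1})$, which is what the phrase about choosing $r\in[1,q]$ implicitly does. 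Your impulse-based splitting sidesteps Lemma \ref{lem4-1} entirely in that range, at the mild cost of controlling only maximizers rather than all elements of the class — which is all the present lemma requires, and your $M_0$ depends on the same data ($\|\zeta_0\|_1+\|\zeta_0\|_q$ and $\mathcal W$).
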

	 \begin{proof}
	 	The upper bounded of $\tilde E_{\mathcal W}$ over $ \overline{\mathcal{R}(\zeta_0)^w}$ follows from Lemma \ref{lem2-2}. By Lemma \ref{lem4-1} and the fact that $\zeta_0\in L^r$ for any $r\in[1,q]$, there is a constant $M_0$ depending on $\|\zeta_0\|_1+\|\zeta_0\|_q$ and $\mathcal W$ such that $\mathcal{G}_s^+\zeta(x)-\mathcal W x_1\leq 0$ for all $x\in D$ with $x_1\geq M_0$ and for any $\zeta\in  \overline{\mathcal{R}(\zeta_0)^w}$.  Suppose that $\zeta^0 \in \overline{\mathcal{R}(\zeta_0)^w}$ is a maximizer, let $h:=\zeta^0 1_{(M_0, \infty)\times \mathbb R}$, then we infer from Lemma \ref{lem4-3}   that $h\equiv0$ since $(M_0, \infty)\times \mathbb R\subset\{\mathcal{G}_s^+\zeta^0- \mathcal W  x_1 \leq 0\}$. The proof of this lemma is thus finished.
	 \end{proof}

	 To obtain the compactness of maximizing sequences, we need the following concentration compactness lemma  due to Lions \cite{L84}.
	 \begin{lemma}\label{lem4-6}
	 	Let $\{u_n\}_{n=1}^\infty$ be a sequence of nonnegative functions in $L^1(D)$ satisfying
	 	$$\limsup_{n\rightarrow \infty} \int_{D} u_n dx\rightarrow \mu,$$ for some $0\leq \mu<\infty$.
	 	Then, after passing to a subsequence, one of the following holds:\\
	 	(i) (Compactness) There exists a sequence $\{y_n\}_{n=1}^\infty$ in $\overline{\mathbb{R}_+^2}$ such that for arbitrary $\varepsilon>0$, there exists $R>0$ satisfying
	 	\begin{equation*}
	 		\int_{D\cap B_R(y_n)}u_n dx\geq \mu-\varepsilon, \quad \forall n\geq 1.
	 	\end{equation*}\\
	 	(ii) (Vanishing) For each $R>0$,
	 	\begin{equation*}
	 		\lim_{n\rightarrow \infty}\sup_{y\in D}  \int_{D\cap B_R(y_n)} u_n dx =0.
	 	\end{equation*} \\
	 	(iii) (Dichotomy) There exists a constant $0<\alpha<\mu$ such that for any $\varepsilon>0$, there exist $N=N(\varepsilon)\geq 1$ and $0\leq u_{i,n}\leq u_n, \,i=1,2$ satisfying
	 	\begin{equation*}
	 		\begin{cases}
	 			\|u_n-u_{1,n}-u_{2,n}\|_{L^1(D)}+|\alpha-\int_{D} u_{1,n} dx|+|\mu-\alpha-\int_{D} u_{2,n} dx|<\varepsilon,\quad \text{for}\,\,n\geq N,\\
	 			d_n:=\text{dist}(\text{spt}(u_{1,n}), \text{spt}(u_{2,n}))\rightarrow \infty, \quad \text{as}\,\,n\rightarrow \infty.
	 		\end{cases}	
	 	\end{equation*}
	 	Moreover, if $\mu=0$ then  only vanishing will occur.
	 \end{lemma}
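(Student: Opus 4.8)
The plan is to reproduce the classical concentration-compactness argument of Lions \cite{L84}; since the statement concerns only how the $L^1$-mass of the nonnegative functions $u_n$ is distributed in space, and uses nothing about $D$ beyond the fact that it is a subset of $\mathbb{R}^2$ on which Euclidean balls make sense, the whole-space proof transfers essentially verbatim, with $\mathbb{R}^2$ replaced by $D$ and balls replaced by $D\cap B_R(y)$.

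First I would introduce the L\'evy-type concentration functions
\begin{equation*}
	Q_n(t):=\sup_{y\in\overline{\mathbb{R}_+^2}}\int_{D\cap B_t(y)}u_n\,dx,\qquad t\geq0.
\end{equation*}
Each $Q_n$ is nonnegative, nondecreasing in $t$, and bounded above by $\int_D u_n\,dx$, hence uniformly bounded for $n$ large by the hypothesis $\limsup_n\int_D u_n\,dx\to\mu$. By Helly's selection theorem one passes to a subsequence along which $Q_n\to Q$ pointwise on $[0,\infty)$ for some nondecreasing bounded $Q$; put $\alpha:=\lim_{t\to+\infty}Q(t)\in[0,\mu]$. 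The three alternatives in the statement correspond exactly to $\alpha=0$, $\alpha=\mu$, and $0<\alpha<\mu$, and the last assertion is immediate since $\mu=0$ forces $\alpha=0$.

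The cases $\alpha=0$ and $\alpha=\mu$ are straightforward. If $\alpha=0$, then $Q_n(R)\to0$ for every fixed $R$, which is vanishing (ii). If $\alpha=\mu$, then for each $\varepsilon>0$ one chooses $R=R(\varepsilon)$ with $Q(R)>\mu-\varepsilon/2$, so that $Q_n(R)>\mu-\varepsilon$ for $n$ large, whence there exist points $y_n\in\overline{\mathbb{R}_+^2}$ with $\int_{D\cap B_R(y_n)}u_n\,dx>\mu-\varepsilon$; applying this with $\varepsilon=1/k$ and diagonalizing produces a single sequence $\{y_n\}$ for which, given $\varepsilon$, the inequality in (i) holds for all $n\geq 1$ after adjusting finitely many of the $y_n$.

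The delicate case, and the main obstacle, is dichotomy $0<\alpha<\mu$. Given $\varepsilon>0$, one first fixes $R_1$ with $Q(R_1)>\alpha-\varepsilon$, hence $Q_n(R_1)>\alpha-\varepsilon$ for large $n$, and selects $y_n$ realizing this up to $\varepsilon$. Since $Q(t)\to\alpha$ as $t\to+\infty$, the mass of $u_n$ in the annulus $B_R(y_n)\setminus B_{R_1}(y_n)$ is at most $Q_n(R)-\int_{D\cap B_{R_1}(y_n)}u_n\,dx$, which can be made smaller than $\varepsilon$ by taking $R_1$ large; the subtle point is that the outer radius must be allowed to grow with $n$, say $R=R_n$ with $R_n-R_1\to+\infty$, while keeping this annular mass $\to0$, which is possible precisely because $Q_n\to Q$ locally uniformly and $Q(t)\to\alpha$. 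One then sets $u_{1,n}:=u_n\,1_{D\cap B_{R_1}(y_n)}$ and $u_{2,n}:=u_n\,1_{D\setminus B_{R_n}(y_n)}$; by construction $\|u_n-u_{1,n}-u_{2,n}\|_{L^1(D)}$ equals the annular mass and is $<\varepsilon$ for $n\geq N(\varepsilon)$, while $\int_D u_{1,n}\,dx\to\alpha$ and $\int_D u_{2,n}\,dx\to\mu-\alpha$ along a further subsequence, and $\text{dist}(\text{spt}(u_{1,n}),\text{spt}(u_{2,n}))\geq R_n-R_1\to+\infty$, which is exactly (iii). As emphasized, every step is insensitive to the choice of $D\subset\mathbb{R}^2$, so the lemma follows from \cite{L84}.
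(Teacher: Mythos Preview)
Your proposal is correct and follows exactly the classical concentration-compactness argument of Lions that the paper invokes; in fact the paper's own proof simply states that the lemma is a slight reformulation of Lemma~1.1 in \cite{L84} and omits all details, so you have supplied considerably more than the paper does.
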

	 \begin{proof}
	 	This lemma is a slight reformulation of Lemma 1.1 in \cite{L84}, so we omit the proof.
	 \end{proof}
	
	 For a function $0\leq \zeta_0\in L^1\cap L^q(D)$  and a constant $\mathcal{W}>0$, we denote $S_{\zeta_0,\mathcal{W}}:=\sup_{\zeta\in\overline{\mathcal{R}(\zeta_0)^w}}  \tilde E_{\mathcal W}(\zeta)$ as the maximum value and $\tilde \Sigma_{\zeta_0,\mathcal{W}}:=\{\zeta\in\overline{\mathcal{R}(\zeta_0)^w}\mid \tilde E_{\mathcal W}(\zeta)=  S_{\zeta_0,\mathcal{W}}\}$ as the set of all the maximizers. To continue,  we  first show the   compactness  of    maximizing sequences  by using   Lemma \ref{lem4-6}.
	 \begin{proposition}\label{prop4-7}
	 	For $q$ with $\max\{2, s^{-1}\}<q\leq \infty$ if $0<s\leq \frac{1}{2}$ and $\frac{2}{2s-1}<q\leq +\infty$ if $\frac{1}{2}<s<1$, let  $0\leq \zeta_0\in   L^q(D)$ be a function  with $0<|\mathrm{spt}(\zeta_0)|<\infty$ and $\mathcal W>0$ be a given constant. Assume that $$\emptyset\not= \tilde \Sigma_{\zeta_0,\mathcal W}\subset \mathcal{R}(\zeta_0).$$ Suppose that $\{\zeta_n\}_{n=1}^\infty\subset \mathcal{R}_+(\zeta_0)$ is a maximizing sequence in the sense that
	 	\begin{equation}\label{4-5}
	 		\tilde{E}_{\mathcal{W}} (\zeta_n)\rightarrow S_{\zeta_0,\mathcal{W}},\quad \text{as}\ \ n\rightarrow \infty.
	 	\end{equation}
	 	Then, there exist $\zeta^0\in \tilde\Sigma_{\zeta_0,\mathcal{W}}$, a subsequence $\{\zeta_{n_k}\}_{k=1}^\infty$ and a sequence of real numbers $\{c_k\}_{k=1}^\infty$ such that as $k\rightarrow \infty$,
	 	\begin{equation}\label{4-6}
	 		\|\zeta_{n_k}(\cdot+c_k\mathbf{e}_2)- \zeta^0\|_2\to 0.
	 	\end{equation}
	 \end{proposition}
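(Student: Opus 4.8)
\textbf{Proof proposal for Proposition \ref{prop4-7}.}
The plan is to run a concentration--compactness argument in the spirit of \cite{BNL13}, after first confining the sequence to a fixed vertical strip.

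\emph{Step 1 (a priori confinement and positivity of the supremum).} Every $\zeta_n\in\mathcal R_+(\zeta_0)$ obeys $\|\zeta_n\|_1\le\|\zeta_0\|_1$ and $\|\zeta_n\|_q\le\|\zeta_0\|_q$, so by Lemma \ref{lem4-1} (with a suitable choice of its exponent) there is $M>0$, depending only on $\|\zeta_0\|_1+\|\zeta_0\|_q$ and $\mathcal W$, with $\mathcal G_s^+\zeta_n(x)\le\tfrac12\mathcal W x_1$ for $x_1\ge M$ and all $n$. Set $v_n:=\zeta_n 1_{[0,M]\times\mathbb{R}}\in\mathcal R_+(\zeta_0)$. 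Lemma \ref{lem4-3} gives $\tilde E_{\mathcal W}(v_n)\ge\tilde E_{\mathcal W}(\zeta_n)$, so $\{v_n\}$ is still maximizing, and the identity behind Lemma \ref{lem4-3} together with $\mathcal W x_1-\mathcal G_s^+\zeta_n\ge\tfrac12\mathcal W x_1$ on $\{x_1\ge M\}$ yields $\tfrac{\mathcal W}{2}\int_{\{x_1\ge M\}}x_1\zeta_n\le\tilde E_{\mathcal W}(v_n)-\tilde E_{\mathcal W}(\zeta_n)\le S_{\zeta_0,\mathcal W}-\tilde E_{\mathcal W}(\zeta_n)\to0$; hence $\int_{\{x_1\ge M\}}\zeta_n\to0$ and, by interpolation between $L^1$ and $L^q$ (recall $q>2$ in all cases), $\|\zeta_n-v_n\|_2\to0$. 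It therefore suffices to prove \eqref{4-6} for $\{v_n\}$, which I relabel $\{\zeta_n\}$ and which is supported in $[0,M]\times\mathbb{R}$. Finally, $0\in\overline{\mathcal R(\zeta_0)^w}$ gives $S_{\zeta_0,\mathcal W}\ge\tilde E_{\mathcal W}(0)=0$, and $S_{\zeta_0,\mathcal W}=0$ is impossible since it would make $0$ a maximizer, contradicting $\emptyset\ne\tilde\Sigma_{\zeta_0,\mathcal W}\subset\mathcal R(\zeta_0)$ and $0\notin\mathcal R(\zeta_0)$; thus $S_{\zeta_0,\mathcal W}>0$.

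\emph{Step 2 (concentration--compactness).} Pass to a subsequence with $\|\zeta_n\|_1\to\mu\in(0,\|\zeta_0\|_1]$ and apply Lemma \ref{lem4-6}. Vanishing is impossible: covering $\mathbb{R}^2$ by radius-$R$ balls of bounded overlap and applying Lemma \ref{lem2-2} on each, one gets $\int\zeta_n\mathcal G_s^+\zeta_n\le\int\zeta_n\mathcal G_s\zeta_n\le C(\sup_y\int_{B_R(y)}\zeta_n)^s+CR^{2s-2}\|\zeta_0\|_1^2$, so vanishing forces $\int\zeta_n\mathcal G_s^+\zeta_n\to0$ (let $n\to\infty$, then $R\to\infty$), hence $\tilde E_{\mathcal W}(\zeta_n)\le\tfrac12\int\zeta_n\mathcal G_s^+\zeta_n\to0<S_{\zeta_0,\mathcal W}$, a contradiction. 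Dichotomy is impossible: write $\zeta_n\approx\zeta_{1,n}+\zeta_{2,n}$ with $\zeta_{i,n}=\zeta_n 1_{V_{i,n}}$, masses $\approx\alpha$ and $\mu-\alpha$, and $d_n:=\text{dist}(\text{spt}\,\zeta_{1,n},\text{spt}\,\zeta_{2,n})\to\infty$; since $0\le G_s^+(x,y)\le c_s|x-y|^{2s-2}$ the interaction $\int\zeta_{1,n}\mathcal G_s^+\zeta_{2,n}\le c_s d_n^{2s-2}\|\zeta_0\|_1^2\to0$, so $\tilde E_{\mathcal W}(\zeta_n)=\tilde E_{\mathcal W}(\zeta_{1,n})+\tilde E_{\mathcal W}(\zeta_{2,n})+o(1)$. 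As the supports lie in the strip, an $x_2$-translation bringing $\text{spt}\,\zeta_{2,n}$ adjacent to $\text{spt}\,\zeta_{1,n}$ produces $\hat\zeta_n\in\overline{\mathcal R(\zeta_0)^w}$ (equimeasurable with $\zeta_{1,n}+\zeta_{2,n}\le\zeta_n$) with the same impulse and self-energies but strictly larger interaction; making the gain bounded below — and thereby obtaining $\tilde E_{\mathcal W}(\hat\zeta_n)\ge\tilde E_{\mathcal W}(\zeta_n)+\delta_0+o(1)$, contradicting maximality — is done, as in \cite{BNL13}, by applying Lemma \ref{lem4-6} recursively to the individual lumps and discarding any piece whose self-energy vanishes (such a piece also carries vanishing energy, so after finitely many reductions a genuinely concentrated near-maximizer remains, whose $L^2$-limit would be a maximizer lying in $\mathcal R(\zeta_0)$ and hence of full mass $\|\zeta_0\|_1$, contradicting the mass lost in the discarding). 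Hence compactness holds: there are $c_n\in\mathbb{R}$ (only the $x_2$-coordinate matters since $\text{spt}\,\zeta_n\subset[0,M]\times\mathbb{R}$) so that $\hat\zeta_n:=\zeta_n(\cdot+c_n\mathbf{e}_2)$ is tight: for every $\varepsilon>0$ there is $R$ with $\int_{B_R(0)^c}\hat\zeta_n<\varepsilon$ for all $n$.

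\emph{Step 3 (passage to the limit and strong convergence).} As $\{\hat\zeta_n\}$ is bounded in $L^1\cap L^q\subset L^2$, extract $\hat\zeta_{n_k}\rightharpoonup\zeta^0$ weakly in $L^2$; by weak closedness and $x_2$-translation invariance $\zeta^0\in\overline{\mathcal R(\zeta_0)^w}$, and $\zeta^0$ is supported in $[0,M]\times\mathbb{R}$. Tightness and the boundedness of $x_1$ on the strip give $\int x_1\hat\zeta_{n_k}\to\int x_1\zeta^0$ and $\|\hat\zeta_{n_k}\|_1\to\|\zeta^0\|_1$. By the regularity of the Riesz potential on $L^1\cap L^q$ with $q>s^{-1}$, the family $\{\mathcal G_s^+\hat\zeta_{n_k}\}$ is locally uniformly bounded and equicontinuous; testing the weak convergence against $G_s^+(x,\cdot)1_{B_R(0)}\in L^{q'}$ and bounding the remainder by $\int_{B_R(0)^c}\hat\zeta_{n_k}$ gives $\mathcal G_s^+\hat\zeta_{n_k}\to\mathcal G_s^+\zeta^0$ pointwise, hence locally uniformly. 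Combining this with the symmetry of $G_s^+$, the $L^\infty$-bound for $\mathcal G_s^+\zeta^0$ on the strip, and tightness yields $\int\hat\zeta_{n_k}\mathcal G_s^+\hat\zeta_{n_k}\to\int\zeta^0\mathcal G_s^+\zeta^0$, so $\tilde E_{\mathcal W}(\zeta^0)=\lim_k\tilde E_{\mathcal W}(\hat\zeta_{n_k})=S_{\zeta_0,\mathcal W}$. Thus $\zeta^0\in\tilde\Sigma_{\zeta_0,\mathcal W}\subset\mathcal R(\zeta_0)$, whence $\|\zeta^0\|_2=\|\zeta_0\|_2$; since $\|\hat\zeta_{n_k}\|_2\le\|\zeta_0\|_2$ and the norm is weakly lower semicontinuous, $\|\hat\zeta_{n_k}\|_2\to\|\zeta^0\|_2$, and weak convergence plus norm convergence in the Hilbert space $L^2$ gives $\|\hat\zeta_{n_k}-\zeta^0\|_2\to0$. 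Undoing the Step 1 truncation ($\|\zeta_n-v_n\|_2\to0$) yields \eqref{4-6}. The only genuinely delicate point is the exclusion of dichotomy: the interaction recovered on sliding two separated lumps together need not be bounded below if the lumps themselves spread out, so the contradiction must be extracted via the recursive use of Lemma \ref{lem4-6} and the final mass-recovery sketched above; carrying this out, using the strip confinement and the uniform lower bounds on the lump masses, is the bulk of the work.
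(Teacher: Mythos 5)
Your Steps 1 and 3 are essentially sound and run parallel to the paper: the confinement to a strip via Lemma \ref{lem4-1} and Lemma \ref{lem4-3}, the positivity of $S_{\zeta_0,\mathcal W}$ from $0\notin\mathcal{R}(\zeta_0)$, the exclusion of vanishing, and the upgrade from weak to strong $L^2$ convergence by showing the weak limit attains $S_{\zeta_0,\mathcal W}$, hence lies in $\mathcal{R}(\zeta_0)$, hence has full $L^2$ norm (the paper applies Lemma \ref{lem4-6} to $\zeta_n^2$ rather than to $\zeta_n$, but with $q>2$ this difference is immaterial). The genuine gap is the exclusion of dichotomy, which you yourself describe as ``the bulk of the work'' and leave as a sketch. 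Your proposed route — sliding the two lumps together and, when the interaction gain cannot be bounded below, applying Lemma \ref{lem4-6} recursively to the individual lumps and discarding pieces with vanishing self-energy — is not a proof: there is no argument that the recursion terminates (each application of the lemma may produce a new dichotomy with arbitrary mass split), no quantitative lower bound on the recovered interaction when both lumps merely fail to vanish, and the final step (``the $L^2$-limit of the surviving concentrated near-maximizer is a maximizer of full mass'') presupposes exactly the compactness statement you are trying to prove, so the sketch is circular in flavor.

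The paper avoids all of this by a different mechanism, which you never invoke: in the dichotomy case it first Steiner-symmetrizes each lump in the $x_2$-variable, then uses Lemma \ref{lem4-2} (which requires Steiner symmetry) together with Lemma \ref{lem4-3} to truncate each symmetrized lump to the fixed rectangle $[0,M_0]\times[-N_0,N_0]$ without decreasing $\tilde E_{\mathcal W}$, and translates the two truncated lumps into the adjacent rectangles $[0,M_0]\times[0,2N_0]$ and $[0,M_0]\times[-2N_0,0]$. Because the supports now lie in a fixed bounded set, $\tilde E_{\mathcal W}$ is weakly continuous along these pieces, so their weak limits $\zeta_1^{**},\zeta_2^{**}$ satisfy $\tilde E_{\mathcal W}(\zeta_1^{**})+\tilde E_{\mathcal W}(\zeta_2^{**})\ge S_{\zeta_0,\mathcal W}$, and the sum $\zeta^{**}=\zeta_1^{**}+\zeta_2^{**}\in\overline{\mathcal{R}(\zeta_0)^w}$ attains $S_{\zeta_0,\mathcal W}$ with vanishing cross term $\int\int\zeta_1^{**}G_s^+\zeta_2^{**}=0$. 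The hypothesis $\tilde\Sigma_{\zeta_0,\mathcal W}\subset\mathcal{R}(\zeta_0)$ then forces $\|\zeta^{**}\|_2^2=\mu$, so that $\|\zeta_1^{**}\|_2^2=\alpha>0$ and $\|\zeta_2^{**}\|_2^2=\mu-\alpha>0$; two nonzero nonnegative functions at bounded distance have strictly positive interaction, a contradiction. To complete your proof you need to replace the recursive sketch by this (or an equally complete) argument; as written, the dichotomy step is a genuine missing piece.
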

	 \begin{proof}
	 	Note that since $0\in \overline{\mathcal{R}(\zeta_0)^w}\setminus  \mathcal{R}(\zeta_0)$, the condition $\emptyset\not= \tilde\Sigma_{\zeta_0,\mathcal W}\subset \mathcal{R}(\zeta_0) $ implies that $0$ is not a maximizer and hence we have $S_{\zeta_0,\mathcal{W}}>0$.
	 	
	 	Take $u_n= \zeta_n^2$.  Since  $0\leq \int_{D} u_n dx\leq \|\zeta_0\|_2^2<\infty$,  we may assume that, up to a subsequence (still denoted by $\{u_n\}_{n=1}^\infty$),  $$\int_{D} u_n dx\to \mu$$ for some $0\leq \mu\leq  \|\zeta_0\|_2^2$.  Applying Lemma \ref{lem4-6},  we find that for a certain subsequence, still denoted by $\{\zeta_n\}_{n=1}^\infty$, one of the three cases in Lemma \ref{lem4-6} should occur. In what follows,  we divide the proof into three steps.
	 	
	 	\emph{Step 1. Vanishing excluded:} 
	 	Suppose that for each fixed $R>0$,
	 	\begin{equation}\label{3-10}
	 		\lim_{n\rightarrow \infty}\sup_{y\in D}  \int_{B_R(y)\cap D}  \zeta^2_n dx =0.
	 	\end{equation}
	   By the property of rearrangement and H\"older's inequality, we have for any $R>0$ and $1\leq \tau \leq 2$
	 	$$\int_{B_R(y)\cap D}  \zeta^\tau_n dx \to 0$$ as $n\to+\infty$ uniformly over $y\in D$. 
	 	On the other hand, we have $\mathcal{G}_s^+(\zeta_n (1-1_{B_R(y)}))(y)\leq C\frac{\|\zeta_n\|_1 }{R^{2-2s}}$.
	 	Therefore, we get $$\int\zeta_n\mathcal{G}_s^+\zeta_n \leq \frac{C  }{R^{2-2s}}+ o_n(1),$$ for any $R>0$ and hence $\lim_{n\rightarrow \infty} \tilde E_{\mathcal W}(\zeta_n)\leq 0$. This is a contradiction to $S_{\zeta_0,\mathcal{W}}>0$. Thus, vanishing can not occur.

	 	\emph{Step 2. Dichotomy excluded:}
	 	We may assume that $\zeta_n$ is supported in $[0, M_0]\times \mathbb{R}$, where $M_0$ is the constant obtained in Lemma \ref{lem4-4}. Suppose that there is a constant $\alpha\in (0, \mu)$ such that for any $\varepsilon>0$, there exist $N(\varepsilon)\geq 1$ and $0\leq \zeta_{i,n}\leq \zeta_n, \,i=1,2,3$ satisfying
	 	\begin{equation*}
	 		\begin{cases}
	 			\zeta_n=\zeta_{1,n}+\zeta_{2,n}+\zeta_{3,n},\\
	 			\int_{D}  \zeta^2_{3,n}dx +|\alpha-\alpha_n|+|\mu-\alpha-\beta_n|<\varepsilon,\quad \text{for}\,\,n\geq N(\varepsilon),\\
	 			d_n:=\text{dist}(\text{spt}(\zeta_{1,n}), \text{spt}(\zeta_{2,n}))\rightarrow \infty, \quad \text{as}\,\,n\rightarrow \infty,
	 		\end{cases}	
	 	\end{equation*}
	 	where $\alpha_n=\int_{D}  \zeta^2_{1,n}dx$ and $ \beta_n=\int_{D}  \zeta^2_{2,n}dx$. Using a diagonal argument, we obtain that there exists a subsequence, still denoted by $\{\zeta_n\}_{n=1}^\infty$, such that
	 	\begin{equation*}
	 		\begin{cases}
	 			\zeta_n=\zeta_{1,n}+\zeta_{2,n}+\zeta_{3,n}, \quad 0\leq \zeta_{i,n}\leq \zeta_n, \,i=1,2,3\\
	 			\int_{D}  \zeta^2_{3,n}dx +|\alpha-\alpha_n|+|\mu-\alpha-\beta_n|\rightarrow 0,\quad \text{as}\,\,n\rightarrow \infty,\\
	 			d_n =\text{dist}(\text{spt}(\zeta_{1,n}), \text{spt}(\zeta_{2,n}))\rightarrow \infty, \quad \text{as}\,\,n\rightarrow \infty.
	 		\end{cases}	
	 	\end{equation*}
	 	
	 	By direct calculations, one has\begin{small}
	 		\begin{align*}
	 			&\int_{D}  \zeta_n\mathcal{G}_s^+ \zeta_n=\int_{D} (\zeta_{1,n}+\zeta_{2,n}+\zeta_{3,n})\mathcal{G}_s^+   (\zeta_{1,n}+\zeta_{2,n}+\zeta_{3,n})\\
	 			=&\int_{D}  \zeta_{1,n} \mathcal{G}_s^+\zeta_{1,n} +\int_{D}  \zeta_{2,n} \mathcal{G}_s^+\zeta_{2,n}  +2\int_{D}  \zeta_{1,n}\mathcal{G}_s^+\zeta_{2,n} +\int_{D}  \zeta_{3,n}\mathcal{G}_s^+(2\zeta_n-\zeta_{3,n} ).
	 	\end{align*}\end{small}
	 	By \eqref{2-6-1} and H\"older's inequality, we derive
	 	\begin{align*}
	 			\int_{D}  \zeta_{3,n}\mathcal{G}_s^+(2\zeta_n-\zeta_{3,n} )
	 			\leq  C  \|\zeta_{3,n}\|_{2-s} \|2\zeta_n-\zeta_{3,n}\|_{2-s}^{1-s}\|2\zeta_n-\zeta_{3,n}\|_1^s \\
	 			\leq C|\text{spt}(\zeta_0)|^{\frac{s}{2(2-s)}}\|\zeta_{3,n}\|_2 (\|\zeta_0\|_1+\|\zeta_0\|_2) =o_n(1).
	 	\end{align*}
	 	It is obvious that
	 	\begin{align*}
	 		\int_{D}\int_{D} \zeta_{1,n}(x)G_s^+(x,y)\zeta_{2,n}(y)dxdy\leq \frac{ C \|\zeta_0\|_1^2}{d_n^{2-2s}}=o_n(1).
	 	\end{align*}
	 	Hence, we arrive at
	 	$$\tilde{E}_{\mathcal W}(\zeta_n)=\frac{1}{2}\int_{D}  \zeta_n\mathcal{G}_s^+ \zeta_n-\mathcal W\int_{D} x_1\zeta_n dx \leq \tilde{E}_{\mathcal W}(\zeta_{1,n})+\tilde{E}_{\mathcal W}(\zeta_{2,n})+o_n(1).$$
	 	
	 	Taking Steiner symmetrization in the $x_2$-variable $\zeta^*_{i,n}$ of $\zeta_{i,n}$ for $i=1,2$, by the rearrangement inequality, we obtain
	 	\begin{equation*}
	 		\tilde{E}_{\mathcal W}(\zeta_n)\leq \tilde{E}_{\mathcal W}(\zeta^*_{1,n})+\tilde{E}_{\mathcal W}(\zeta^*_{2,n})+o_n(1).
	 	\end{equation*}
	 	By Lemma \ref{lem4-2}, there exists a constant $N_0>0$ depending on $\|\zeta_0\|_1+ \|\zeta_0\|_q$, $\mathcal W $ and $M_0$ such that for all $\zeta\in\mathcal{R}_+(\zeta_0)$ with $\text{spt}(\zeta)\subset ([0,M_0]\times \mathbb{R})\cap D$, $$\mathcal{G}_s^+\zeta(x)- \mathcal W   x_1\leq 0,\quad \forall x\ \text {with}\  |x_2|>N_0$$
	 	Let $$\zeta^{**}_{i,n}(x)=\zeta^*_{i,n}1_{[0,M_0]\times[-N_0, N_0]} (x+(-1)^i N_0 \mathbf{e}_2),\quad i=1,2.$$
	 	Then, we find
	 	\begin{equation*}
	 		\tilde{E}_{\mathcal W}(\zeta_n)\leq \tilde{E}_{\mathcal W}(\zeta^{**}_{1,n})+\tilde{E}_{\mathcal W}(\zeta^{**}_{2,n})+o_n(1).
	 	\end{equation*}
	 	and
	 	$$\text{supp}(\zeta^{**}_{1,n})\subset [0,M_0]\times[0, 2N_0],\  \  \ \text{supp}(\zeta^{**}_{2,n})\subset [0,M_0]\times[- 2N_0,0].$$
	 	We may assume that $\zeta^{**}_{i,n}\rightarrow \zeta^{**}_{i}$   weakly in $L^r(D)$ for some $s^{-1}<r<q$ and $i=1,2$. Then, $\zeta^{**}:=\zeta^{**}_{1}+\zeta^{**}_{2}\in \overline{\mathcal{R}(\zeta_0)^w}$. Moreover, by the weak convergence, we get
	 	$$\lim_{n\rightarrow \infty} \tilde E_{\mathcal W}(\zeta^{**}_{i,n})=\tilde E_{\mathcal W} (\zeta^{**}_{i}),\,\,\text{for}\, i=1,2,$$
	 	and therefore we arrive at
	 	$$\tilde E_{\mathcal W}(\zeta^{**}_{1})+\tilde E_{\mathcal W}(\zeta^{**}_{2})\geq \limsup_{n\to\infty}\tilde E_{\mathcal W}(\zeta_n)=S_{\zeta_0, \mathcal W}.$$
	 	It can be seen that
	 	\begin{equation*}
	 		\begin{split}
	 			&\quad S_{\zeta_0, \mathcal W}\geq \tilde E_{\mathcal W}(\zeta^{**})=\tilde E_{\mathcal W}(\zeta^{**}_{1}+\zeta^{**}_{2})\\
	 			&=\tilde E_{\mathcal W}(\zeta_1^{**})+\tilde E_{\mathcal W}(\zeta_2^{**})+\int_{D}\int_{D} \zeta_1^{**}(x)G_s^+(x,y)\zeta_2^{**}(y)dxdy\\
	 			&\geq S_{\zeta_0, \mathcal W}+\int_{D}\int_{D} \zeta_1^{**}(x)G_s^+(x,y)\zeta_2^{**}(y)dxdy,
	 		\end{split}
	 	\end{equation*}
	 	from which we must have
	 	\begin{equation}\label{4-8}
	 		\tilde{E}_{\mathcal W}(\zeta^{**})=S_{\zeta_0, \mathcal W}\ \ \  \text{and}\ \ \  \int_{D}\int_{D} \zeta_1^{**}(x)G_s^+(x,y)\zeta_2^{**}(y)dxdy=0.
	 	\end{equation}
	 	Since $\tilde \Sigma_{\zeta_0,\mathcal W}\subset \mathcal{R}(\zeta_0)$ by the assumption,  we deduce $\zeta^{**}\in  \mathcal{R}(\zeta_0)$ and $\mu=\|\zeta^{**}\|^2_2=\|\zeta_1^{**}\|^2_2+\|\zeta_2^{**}\|^2_2$.
	 	
	 	On the other hand, since  $\|\zeta_1^{**}\|^2_2\leq \alpha$ and $\|\zeta_2^{**}\|^2_2\leq \mu-\alpha$ by the weak convergence, we find that  $\|\zeta_1^{**}\|^2_2= \alpha>0$ and $\|\zeta_1^{**}\|^2_2=\mu-\alpha>0$, which implies that both $\zeta_1^{**}$ and $\zeta_2^{**}$ are non-zero and hence $\int_{D}\int_{D} \zeta_1^{**}(x)G_s^+(x,y)\zeta_2^{**}(y)dxdy>0$, which is a contradiction to \eqref{4-8}.

	 	\emph{Step 3. Compactness:} Assume that there is a sequence $\{y_n\}_{n=1}^\infty$ in $\overline{D}$ such that for arbitrary $\varepsilon>0$, there exists $R>0$ satisfying
	 	\begin{equation}\label{4-9}
	 		\int_{D\cap B_R(y_n)}  \zeta_n^2 dx\geq \mu-\varepsilon, \quad \forall\, n\geq 1.
	 	\end{equation}
	 	We may assume that $y_n=(  y_{n,1}, 0)$ after a suitable translation in $x_2$-variable.
	 	Define $\zeta_n^0:=\zeta_n 1_{(0, M_0)\times \mathbb R}$ and $\zeta_n^R:=\zeta_n 1_{(0, M)\times (-R, R)}$. Then $\{\zeta_n^0\}_{n=1}^\infty$ is also a maximizing sequence in $\mathcal{R}_+(\zeta_0)$ by Lemma \ref{lem4-3}. Moreover, we infer from \eqref{4-9} that for arbitrary $\ep>0$, there exists $R>0$ such that $$\|\zeta_n^0-\zeta_n^R\|_2^2\leq \ep,\quad \forall\  n\geq 1.$$
	 	That is,
	 	\begin{equation}\label{4-10}
	 		\|\zeta_n^0-\zeta_n^R\|_2\to 0,\quad \text{as}\ R\to \infty, \quad \text{uniformly over} \  n.
	 	\end{equation}
	 	
	 	We may assume that $\zeta_n^0\to \zeta^0$ weakly in $L^2(D)\cap L^r(D)$ for some $s^{-1}<r<q$ and hence $\zeta_n^R\to \zeta^01_{(0, M_0)\times (-R, R)}$ weakly in $L^2(D)\cap L^r(D)$.
	 	By the weak convergence and $\zeta_n\in \mathcal{R}_+(\zeta_0)$, we find
	 	\begin{equation}\label{4-11}
	 		\|\zeta^0\|_2\leq \liminf_{n\to\infty}\|\zeta_n^0\|_2\leq \|\zeta_0\|_2.
	 	\end{equation}
	 	
	 	Using Lemma \ref{lem2-2} and H\"older's inequality, we conclude
	 	$|E(\zeta_n^0)-E(\zeta_n^R)|=o_R(1)$. That is, $E(\zeta_n^R)\to E(\zeta_n^0)$ as $R\to \infty$ uniformly over $n$ by \eqref{4-10}. On the other hand $E(\zeta_n^R)\to E(\zeta^R)$ as $n\to \infty$ for fixed $R$ by weak continuity of $E$ in functions supported on  bounded domains and $E(\zeta^R)\to E(\zeta^0)$ by the monotone convergence theorem. Therefore, we obtain $$E(\zeta_n^0)\to E(\zeta^0).$$
	 	
	 	As for the impulse, we split
	 	\begin{equation*}
	 		|I(\zeta_n^0)-I(\zeta^0)|\leq |I(\zeta_n^0)-I(\zeta_n^R)|+|I(\zeta_n^R)-I (\zeta^R)|+|I(\zeta^R)-I(\zeta^0)|.
	 	\end{equation*}
	 	For the first term, by H\"older's inequality, we deduce
	 	$$|I(\zeta_n^0)-I(\zeta_n^R)|\leq M_0 |\text{spt}(\zeta_n^0)|^{\frac{1}{2}}\|\zeta_n^0-\zeta_n^R\|_2\to 0, $$
	 	as $R\to \infty$ uniformly over $n$.
	 	For fixed $R$, we have the second term $|I(\zeta_n^R)-I(\zeta^R)|\to 0$ as $n\to \infty$ by the weak convergence. Since the third term $|I(\zeta^R)-I(\zeta^0)|\to0$ as $R\to \infty$ by  the monotone convergence theorem, we have $|I(\zeta_n^0)-I(\zeta^0)|\to 0$ by first letting $R\to \infty$ and then $n\to\infty$.
	 	
	 	Therefore, we have proved $\tilde{E}_{\mathcal W}(\zeta_n^0)\to \tilde{E}_{\mathcal W}(\zeta^0)$ and hence $\tilde{E}_{\mathcal W}(\zeta^0)=S_{\zeta_0,\mathcal W}$ and $\zeta^0\in \mathcal{R}(\zeta_0)$ by our assumption $\Sigma_{\zeta_0,\mathcal{W}}\subset \mathcal{R}(\zeta_0)$. Then we deduce that  $	\|\zeta^0\|_2=\|\zeta_0\|_2$ by the property of rearrangement, which implies $\lim_{n\to\infty}\|\zeta_n^0\|_2=\|\zeta^0\|_2$. So, we obtain the strong convergence $\|\zeta_n^0-\zeta^0\|_2\to 0$ by the uniform convexity of $L^2(D)$.
	 	
	 	Now we want to show that $\zeta_n\to \zeta^0$ strongly. Indeed, since the supports of $\zeta_n^0$ and $\zeta_n- \zeta^0_n$ are disjoint and $\zeta_n\in \mathcal{R}(\zeta_0)$, we conclude
	 	$$\|\zeta_n-\zeta_n^0\|_2^2=\|\zeta_n\|_2^2-\| \zeta_n^0\|_2^2\leq \|\zeta_0\|_2^2-\| \zeta_n^0\|_2^2\to \|\zeta_0\|_2^2-\| \zeta^0\|_2^2=0.$$
	 	Therefore, we obtain $\|\zeta_n-\zeta^0\|_2\to 0$ and finish the proof.	 	
	 \end{proof}	
	
	 To state our stability result, following \cite{BNL13}, we need to introduce some definitions first.
	 \begin{definition}\label{def2}
	 	Let $\xi_0\in L^1\cap L^p(\mathbb R_+^2)$ be a given function.   $\xi\in L^\infty_{loc}(\left[0,T\right), L^1(\mathbb{R}^2_+))\cap  L^\infty_{loc}(\left[0,T\right), L^p(\mathbb{R}^2_+))$ is called a \emph{ $L^p$-regular solution} in $(0, T)$ of \eqref{1-1} with initial data $\xi_0$ if
	 	\begin{itemize}
	 		\item [(i)] $\bar\xi(x,t):=\xi(x,t)-\xi(\bar x,t )$ satisfies \eqref{1-1}
	 		in the sense of distributions with initial data $\bar\xi_0(x):=\xi_0(x)-\xi_0(\bar x)$;
	 		\item [(ii)] $E(\xi(t,\cdot))$ and $I(\xi(t,\cdot))$  are constant and $\xi(t)\in \mathcal{R}(\xi_0)$ for $t\in[0, T)$;
	 		\item [(iii)] $\xi $ is   non-negative for $t\in(0, T)$ provided that $\xi_0$ is non-negative;
	 		\item [(iv)] For $0<s\leq \frac{1}{2}$, we require that $\xi $ is  supported in $D$ for $t\in(0, T)$ provided that $\xi_0$ is  supported in $D$.
	 	\end{itemize}
	 \end{definition}

 Generally speaking, an $L^p$-regular solution is a weak solution to \eqref{1-1} such that its kinetic energy, impulse and distribution  are conserved, which is true for sufficiently regular solutions; see \cite{BSV19} for some discussion about the conservation laws.  The existence of $L^p$-regular solutions for the Euler equation was obtained in \cite{BNL13} using the transport nature of the Euler equation, see also \cite{Abe}. Note that the gSQG equations are also transport equations. So it is possible to modify the method in \cite{Abe,BNL13} to show the existence of $L^p$-regular solutions defined above for the gSQG equation  provided that the existence of sufficiently smooth solutions for the Cauchy problem of the gSQG equation is a priori known. Thus, the $L^p$-regular solutions for the gSQG equation may be proved to exist for small $T$ due to the local well-posedness theory. For large $T$, though a general theory for the global well-posedness for the Cauchy problem of the gSQG equation remains a challenging open problem now, various global solutions known as relative equilibria are constructed as mentioned in the introduction, which are trivial examples of the $L^p$-regular solutions. Invariant measures and a large class of global solutions for the SQG equation are also obtained in \cite{Fol}. Besides, certain blow-up scenarios have been ruled out analytically in \cite{Cor1, CorF} and numerically in \cite{Con2}. Therefore, we believe that the $L^p$-regular solutions are reasonable to exist for large $T$ and for a large class of initial values.

 We say that an initial data $\xi_0$ is \emph{admissible} if $\xi_0$ is nonnegative, supported in $D$ if $0<s\leq \frac{1}{2}$ and there exists a $L^\infty$-regular solution with initial data $\xi_0$ for $T=+\infty$.
	
	 Now, we are ready to establish the following stability theorem on the set of maximizers.
	 \begin{theorem}\label{Sset}
	 	For $q$ with $\max\{2, s^{-1}\}<q\leq \infty$ if $0<s\leq \frac{1}{2}$ and $\frac{2}{2s-1}<q\leq +\infty$ if $\frac{1}{2}<s<1$, let  $0\leq \zeta_0\in L^q(D)$ be a function  with $0<|\mathrm{spt}(\zeta_0)|<\infty$ and $\mathcal W>0$ is a given constant.
	 	Suppose that $\tilde\Sigma_{\zeta_0,\mathcal W}$,  the set of maximizers of $\tilde{E}_{\mathcal W}:=E- \mathcal W  I$ over $\overline{\mathcal{R}(\zeta_0)^w}$, satisfies $$\emptyset\not= \tilde \Sigma_{\zeta_0,\mathcal W}\subset \mathcal{R}(\zeta_0), $$ and all elements of $\tilde \Sigma_{\zeta_0,\mathcal W}$ are admissible.  Then $\tilde \Sigma_{\zeta_0,\mathcal W}$ is orbitally stable in the following sense:
	 	
	 	For arbitrary $\eta>0$ and $M>0$, there exists $\delta>0$ such that   if there exist a $L^\infty$-regular solution $\xi(t)$ in $(0,T)$ with initial data $\xi_0$,  $\|\xi_0\|_\infty\leq M$  and
	 	\begin{equation*}
	 		\inf_{\zeta\in\tilde \Sigma_{\zeta_0,\mathcal W}}\{ \|\xi_0-\zeta\|_1+ \|\xi_0-\zeta\|_2+|I(\xi_0-\zeta)| \} \leq \delta,
	 	\end{equation*}
	 	then for all $t\in (0, T)$, we have
	 	\begin{equation}\label{s-1}
	 		\inf_{\zeta\in \tilde\Sigma_{\zeta_0,\mathcal W}} \left\{  \|\xi(t)-\zeta\|_1+\|\xi(t)-\zeta\|_2\right\}\leq \eta.
	 	\end{equation}
	
	 	If in addition $$I(\zeta_1)=I(\zeta_2),\quad \forall\  \zeta_1, \zeta_2\in \tilde \Sigma_{\zeta_0,\mathcal W}, $$
	 	then for all $t\in (0,T)$, we have
	 	\begin{equation}\label{s-2}
	 		\inf_{\zeta\in \tilde \Sigma_{\zeta_0,\mathcal W}} \left\{\|\xi(t)-\zeta\|_1+\|\xi(t)-\zeta\|_2+I(|\xi(t)-\zeta|)\right\}\leq \eta.
	 	\end{equation}
	 	
	 \end{theorem}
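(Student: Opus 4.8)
The plan is to argue by contradiction and reduce the statement to the compactness of maximizing sequences in Proposition \ref{prop4-7}, in the spirit of the concentration-compactness scheme of \cite{BNL13}. Suppose \eqref{s-1} fails for some $\eta_0>0$ and $M>0$: then there are admissible initial data $\xi_{0,n}$ with $\|\xi_{0,n}\|_\infty\le M$, $L^\infty$-regular solutions $\xi_n(t)$ on $(0,T_n)$, and times $t_n\in(0,T_n)$ with
$$\inf_{\zeta\in\tilde\Sigma_{\zeta_0,\mathcal W}}\Big\{\|\xi_{0,n}-\zeta\|_1+\|\xi_{0,n}-\zeta\|_2+|I(\xi_{0,n}-\zeta)|\Big\}\longrightarrow0,\qquad \inf_{\zeta\in\tilde\Sigma_{\zeta_0,\mathcal W}}\big\{\|\xi_n(t_n)-\zeta\|_1+\|\xi_n(t_n)-\zeta\|_2\big\}\ge\eta_0 .$$
Pick $\zeta_n\in\tilde\Sigma_{\zeta_0,\mathcal W}\subset\mathcal{R}(\zeta_0)$ almost realizing the first infimum. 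Enlarging $M$ so that $\|\zeta_0\|_\infty\le M$, interpolation yields $\xi_{0,n}\to\zeta_n$ in every $L^r$, $1\le r<\infty$; by Lemma \ref{lem2-2} this gives $E(\xi_{0,n})-E(\zeta_n)\to0$, while $|I(\xi_{0,n})-I(\zeta_n)|=|I(\xi_{0,n}-\zeta_n)|\to0$ is the third term of the hypothesis. Since $\tilde E_{\mathcal W}(\zeta_n)=S_{\zeta_0,\mathcal W}$, we conclude $\tilde E_{\mathcal W}(\xi_{0,n})\to S_{\zeta_0,\mathcal W}$, and recall $S_{\zeta_0,\mathcal W}>0$ as in the proof of Proposition \ref{prop4-7}.

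First I would show $\{\xi_n(t_n)\}$ is a maximizing sequence. By Definition \ref{def2}, $\xi_n(t_n)\in\mathcal{R}(\xi_{0,n})$ and $E(\xi_n(t_n))=E(\xi_{0,n})$, $I(\xi_n(t_n))=I(\xi_{0,n})$, so $\tilde E_{\mathcal W}(\xi_n(t_n))=\tilde E_{\mathcal W}(\xi_{0,n})\to S_{\zeta_0,\mathcal W}$; moreover the distribution function of $\xi_n(t_n)$ equals that of $\xi_{0,n}$, which converges (in $L^1(d\alpha)$, via the layer-cake identity $\int_0^\infty|\{\xi_{0,n}>\alpha\}\triangle\{\zeta_n>\alpha\}|\,d\alpha=\|\xi_{0,n}-\zeta_n\|_1$) to that of $\zeta_0$; in particular $\|\xi_n(t_n)\|_1$ and $\|\xi_n(t_n)\|_q$ are uniformly bounded. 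Applying Lemma \ref{lem4-3} with $V=\{\mathcal{G}_s^+\xi_n(t_n)-\mathcal W x_1\le0\}$ I would pass to the truncation $\hat\xi_n:=\xi_n(t_n)\,1_{(0,M_0)\times\mathbb R}$, with $M_0$ the uniform radius of Lemma \ref{lem4-4}: then $\hat\xi_n\in\mathcal{R}_+(\xi_{0,n})$, $\tilde E_{\mathcal W}(\hat\xi_n)\ge\tilde E_{\mathcal W}(\xi_n(t_n))$ so $\{\hat\xi_n\}$ is still maximizing, its $x_1$-support lies in $(0,M_0)$, and writing $\xi_n(t_n)=\hat\xi_n+h_n$ the proof of Lemma \ref{lem4-3} gives $\int h_n\,\mathcal{G}_s^+h_n\to0$.

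Next I would run the argument of Proposition \ref{prop4-7} on $\{\hat\xi_n\}$. The only new point is that the base function $\xi_{0,n}$ varies with $n$, but since its distribution converges to that of $\zeta_0$ and all estimates in the proof use only the uniform $L^1\cap L^q$ bounds (together with the interpolation $\|\cdot\|_{2-s}\le\|\cdot\|_1^{\theta}\|\cdot\|_q^{1-\theta}$, available because $q>2-s$ in both parameter ranges) and the hypothesis $\tilde\Sigma_{\zeta_0,\mathcal W}\subset\mathcal{R}(\zeta_0)$, the three alternatives of Lemma \ref{lem4-6} are treated exactly as before: vanishing contradicts $S_{\zeta_0,\mathcal W}>0$, dichotomy is excluded by the positivity of $\int\int\zeta_1^{**}G_s^+\zeta_2^{**}$, and compactness produces $\zeta^0\in\tilde\Sigma_{\zeta_0,\mathcal W}$, a subsequence, and reals $c_k$ with $\|\hat\xi_{n_k}(\cdot+c_k\mathbf{e}_2)-\zeta^0\|_2\to0$ and concentration of $\{\hat\xi_{n_k}(\cdot+c_k\mathbf{e}_2)\}$ in a fixed ball. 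Because $\hat\xi_n$ has $x_1$-support in $(0,M_0)$, $\zeta^0$ has bounded support by Lemma \ref{lem4-4}, and $\|\hat\xi_n\|_\infty\le M$, the $L^2$-convergence on a common bounded set upgrades to $\|\hat\xi_{n_k}(\cdot+c_k\mathbf{e}_2)-\zeta^0\|_1\to0$; hence $\|\hat\xi_{n_k}\|_1\to\|\zeta^0\|_1=\|\zeta_0\|_1$ and therefore $\|h_{n_k}\|_1=\|\xi_{n_k}(t_{n_k})\|_1-\|\hat\xi_{n_k}\|_1\to0$. Combining, $\|\xi_{n_k}(t_{n_k})(\cdot+c_k\mathbf{e}_2)-\zeta^0\|_1+\|\xi_{n_k}(t_{n_k})(\cdot+c_k\mathbf{e}_2)-\zeta^0\|_2\to0$, and since $\tilde\Sigma_{\zeta_0,\mathcal W}$ is invariant under $x_2$-translations (the kernel $G_s^+$ and the functionals $E$, $I$, and the class $\mathcal{R}(\zeta_0)$ all are), $\zeta^0(\cdot-c_k\mathbf{e}_2)\in\tilde\Sigma_{\zeta_0,\mathcal W}$, contradicting $\inf_{\zeta\in\tilde\Sigma_{\zeta_0,\mathcal W}}\{\cdots\}\ge\eta_0$. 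This proves \eqref{s-1}. For \eqref{s-2}, with $I\equiv I_*$ on $\tilde\Sigma_{\zeta_0,\mathcal W}$, I would further use $I(\xi_{n_k}(t_{n_k}))=I(\xi_{0,n_k})\to I_*=I(\zeta^0)$ and $I(\hat\xi_{n_k})\to I(\zeta^0)$ ($x_2$-translations do not change $x_1$), so $I(h_{n_k})=\int x_1h_{n_k}\to0$; then $\int x_1|\xi_{n_k}(t_{n_k})(\cdot+c_k\mathbf{e}_2)-\zeta^0|\le M_0\|\hat\xi_{n_k}(\cdot+c_k\mathbf{e}_2)-\zeta^0\|_1+\int x_1h_{n_k}\to0$, again a contradiction, which gives \eqref{s-2}.

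The main obstacle I expect is the bookkeeping of the truncation remainder $h_n$: one needs $\hat\xi_n=\xi_n(t_n)1_{(0,M_0)\times\mathbb R}$ to be simultaneously a maximizing sequence sitting in a rearrangement-type class to which Proposition \ref{prop4-7} applies, and to leave behind a term $h_n$ that is negligible in $L^1$-mass and in impulse; but these last facts only become available after the compactness (concentration) of $\{\hat\xi_n\}$ has been established, so the estimates on $h_n$ must be read off from the compactness conclusion together with the conservation of mass and impulse for the $L^\infty$-regular solution, which forces a careful ordering of the steps. A secondary technical point is that the base function in the rearrangement class varies with $n$; this is harmless because its distribution function converges to that of $\zeta_0$, but it has to be tracked through each alternative of the concentration-compactness dichotomy.
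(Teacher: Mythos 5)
Your overall scheme (contradiction, truncation via Lemma \ref{lem4-3}, concentration--compactness) matches the paper's, but the central step is different and, as written, has a genuine gap. The paper never applies Proposition \ref{prop4-7} to the perturbed solution itself: it first replaces $\omega^n(t_n)$ by an \emph{exact} rearrangement $\zeta_1^n\in\mathcal{R}(\zeta_0)$ whose $L^1\cap L^2$ distance to $\omega^n(t_n)$ equals the initial distance $\|\omega_0^n-\zeta_0^n\|_1+\|\omega_0^n-\zeta_0^n\|_2$, shows $\{\bar\zeta_1^n\}\subset\mathcal{R}_+(\zeta_0)$ is maximizing, applies Proposition \ref{prop4-7} \emph{in its stated setting}, and transfers the conclusion back by the triangle inequality. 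You instead run "the proof of Proposition \ref{prop4-7}" directly on $\hat\xi_n=\xi_n(t_n)1_{(0,M_0)\times\mathbb R}\in\mathcal{R}_+(\xi_{0,n})$, claiming the argument uses only uniform $L^1\cap L^q$ bounds. That is not accurate: the proof of Proposition \ref{prop4-7} uses $|\mathrm{spt}(\zeta_0)|<\infty$ and membership in the fixed class $\mathcal{R}_+(\zeta_0)$ precisely where $L^2$-smallness or $L^2$-concentration must be converted into $L^1$, $L^{2-s}$ and impulse smallness (the factor $|\mathrm{spt}(\zeta_0)|^{s/(2(2-s))}$ in the dichotomy remainder, and the tail estimate $|I(\zeta_n^0)-I(\zeta_n^R)|\leq M_0|\mathrm{spt}(\zeta_n^0)|^{1/2}\|\zeta_n^0-\zeta_n^R\|_2$ in the compactness step). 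The theorem explicitly allows perturbations $\xi_{0,n}$ with supports of infinite measure, so $\hat\xi_n$ has no uniform support-measure bound, the $L^2$-concentration \eqref{4-9} does not control the $L^1$ or impulse tails, and the compactness step is not "treated exactly as before"; repairing it requires an additional amplitude-truncation/layer-cake argument that your proposal does not supply.

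Two further points in the same direction. First, the truncation radius cannot be the $M_0$ of Lemma \ref{lem4-4}: that constant is calibrated to $\overline{\mathcal{R}(\zeta_0)^w}$, whereas $\xi_n(t_n)$ only satisfies $\|\xi_n(t_n)\|_\infty\leq M$ and $\|\xi_n(t_n)\|_1\leq\|\zeta_0\|_1+1$, so one needs the larger $M$-dependent radius $M_1$ used in the paper. Second, your upgrade from $L^2$ to $L^1$ convergence ("$L^2$-convergence on a common bounded set") is unjustified: there is no common bounded set containing $\mathrm{spt}(\hat\xi_n)$, and $L^2$-smallness of a far-away piece does not bound its mass. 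The paper gets the $L^1$ estimate instead from conservation of total mass together with the finite support of the limiting maximizer $\zeta_{**}$ (splitting the integral over $\mathrm{spt}(\zeta_{**})$ and its complement); the analogous device would also rescue your endgame once the compactness step is fixed, but as proposed both the compactness step and the $L^1$ upgrade are gaps.
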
	
	 \begin{proof}
	 	By Lemma \ref{lem4-4}, any maximizer (if exists) is supported in $[0, M_0]\times \mathbb R$.  So, we deduce that $\sup_{\zeta_1\in \tilde \Sigma_{\zeta_0,\mathcal W}} I(\zeta_1)\leq M_0  \|\zeta_0\|_1<\infty.$
	 	By   an  argument similar to the proof of Lemma \ref{lem4-4}, for $\zeta$ with the following properties:
	 	\begin{equation*}
	 		\zeta\geq 0,\ \ \ \|\zeta\|_1 \leq  \|\zeta_0\|_1+1, \ \ \  \|\zeta\|_\infty\leq  M,
	 	\end{equation*}
	 	there exists a constant $M_1\geq M_0$ independent of $\zeta$ such that
	 	$$\mathcal{G}_s^+\zeta(x)- \mathcal W  x_1 <0,\quad \forall x\ \text {with}\  x_1>M_1.$$
	 	
	 	Now we finish our proof  by  contradiction. Suppose on the contrary that there exist a sequence of non-negative functions $\{\omega_0^n\}_{n=1}^\infty$, each of which is admissible,  $\|\omega_0^n\|_\infty\leq M$ and	as $n\to \infty$,
	 	\begin{equation*}
	 		\inf_{\zeta\in \tilde\Sigma_{\zeta_0,\mathcal W}} \{\|\omega_0^n-\zeta\|_1+\|\omega_0^n-\zeta\|_2 +|I(\omega_0^n-\zeta)|\}\to 0,
	 	\end{equation*}
	 	while
	 	\begin{equation*}
	 		\sup_{t\in (0,T_n)}\inf_{\zeta\in \tilde \Sigma_{\zeta_0,\mathcal W}} \left\{  \|\omega^n(t)-\zeta\|_1+\|\omega^n(t)-\zeta\|_2\right\}\geq c_0>0,
	 	\end{equation*}
	 	for some positive constant $c_0$, where $\omega^n(t)$ is the $L^\infty$-regular  solution of \eqref{1-1} with initial data $\omega_0^n$  in $(0,T_n)$. By the choice of $\omega_0^n$, we infer from Lemma \ref{lem2-2} and the H\"older inequality that	as $n\to \infty$,
	 	\begin{equation}\label{4-14}
	 		\tilde{E}_{\mathcal W}(\omega_0^n)\to S_{\zeta_0,\mathcal W}.
	 	\end{equation}
	 	We  choose $t_n\in(0,T_n)$ such that for each $n$,
	 	\begin{equation}\label{4-15}
	 		\inf_{\zeta\in\tilde \Sigma_{\zeta_0,\mathcal W}} \left\{ \|\omega^n(t_n)-\zeta\|_1+\|\omega^n(t_n)-\zeta\|_2 \right\}\geq \frac{c_0}{2}>0.
	 	\end{equation}

	 	Take a sequence of functions $\zeta_0^n \in \tilde\Sigma_{\zeta_0,\mathcal W} \subset \mathcal{R}(\zeta_0)$ such that 	as $n\to \infty$,
	 	\begin{equation}\label{4-16}
	 		\|\omega_0^n-\zeta_0^n\|_1+\|\omega_0^n-\zeta_0^n\|_2+|I(\omega_0^n-\zeta_0^n)|\to 0.
	 	\end{equation}
	 	Then, $\{\zeta_0^n\}_{n=1}^\infty$ is a maximizing sequence due to Lemma \ref{lem2-2}.
	 	
	 	For a function $\omega$, we use $\bar{\omega}:=\omega 1_{(0,M_1)\times \mathbb{R}}$ to denote the restriction of $\omega$. By Lemma \ref{lem4-3}, the conservation of energy and impulse, we conclude
	 	\begin{equation}\label{4-17}
	 		\tilde{E}_{\mathcal W}(\overline{\omega}^n(t_n))\geq 	\tilde{E}_{\mathcal W}( \omega^n(t_n))=	\tilde{E}_{\mathcal W}( \omega^n_0)\to S_{\zeta_0,\mathcal W}.
	 	\end{equation}
	 	Since $\omega^n(t_n)$ is a rearrangement of $\omega^n_0$ by Definition \ref{def2}, one can find a rearrangement $\zeta_1^n\in \mathcal R(\zeta_0^n)=\mathcal R(\zeta_0)$ such that as $n\to+\infty$
	 	\begin{equation}\label{4-16-2}
	 	\|\omega^n(t_n)-\zeta_1^n\|_1+\|\omega^n(t_n)-\zeta_1^n\|_2 =	\|\omega_0^n-\zeta_0^n\|_1+\|\omega_0^n-\zeta_0^n\|_2 \to 0.
	 	\end{equation}
	  Then, we infer from  \eqref{2-6-1}, \eqref{4-16-2} and H\"older's inequality that
	 	\begin{equation}\label{4-18}
	 		\begin{split}
	 			|I(\bar\zeta_1^n )-I(\bar\omega^n(t_n))|&\leq M_1  \|\bar\zeta_1^n-\bar\omega^n(t_n)\|_1\\
	 			&\leq M_1  \| \zeta_1^n- \omega^n(t_n)\|_1\\
	 			&=  M_1  \| \zeta^n_0- \omega^n_0\|_1,
	 		\end{split}
	 	\end{equation}
	 	and
	 	\begin{equation}\label{4-19}
	 		\begin{split}
	 			|E(\bar\zeta_1^n )-E(\bar\omega^n(t_n))|&\leq C   \|\bar\zeta_1^n-\bar\omega^n(t_n)\|_{2-s}^{1-s}\leq C \|\zeta^n_0- \omega^n_0\|_{2-s}^{1-s}.
	 		\end{split}
	 	\end{equation}
	 	So, we deduce from \eqref{4-16}--\eqref{4-19} that $\{ \bar\zeta_1^n\}\subset \mathcal{R}_+(\zeta_0)$ is a maximizing sequence and hence by Proposition \ref{prop4-7} after some translations, we have
	 	\begin{equation}\label{4-20}
	 		\bar\zeta_1^n\to \zeta_{**} \ \  \text{strongly in}\ \  L^2(D),
	 	\end{equation}
	 	as $n\to \infty$ for some function $ \zeta_{**} \in \tilde\Sigma_{\zeta_0,\mathcal W}$, which implies $\zeta_1^n\to  \zeta_{**}$ strongly. Indeed, since the supports of $\bar\zeta_1^n$ and $\zeta_1^n- \bar\zeta_1^n$ are disjoint and $\zeta_1^n\in \mathcal{R}(\zeta_0)$, we conclude
	 	$$\|\zeta_1^n-\bar\zeta_1^n\|_2^2=\|\zeta_1^n\|_2^2-\|\bar\zeta_1^n\|_2^2\leq \|\zeta_0\|_2^2-\| \bar\zeta_1^n\|_2^2\to \|\zeta_0\|_2^2-\|\zeta_{**}\|_2^2=0,$$
	 	from which we get
	 	$$\|\zeta_1^n-\zeta_{**}\|_2\leq \|\zeta_1^n-\bar\zeta_1^n\|_2+\|\zeta_{**}-\bar\zeta_1^n\|_2\to 0.$$
	 	Hence, by \eqref{4-16-2} and  \eqref{4-20}, we deduce
	 	\begin{equation*}
	 		\begin{split}
	 			\quad\|\omega^n(t_n)-\zeta_{**}\|_2 &\leq  \|\zeta_1^n-\omega^n(t_n)\|_2+\|\zeta_1^n-\zeta_{**}\|_2\\
	 			&=\|\zeta^n_0-\omega^n_0\|_2+\|\zeta_1^n-\zeta_{**}\|_2=o_n(1).\end{split}
	 	\end{equation*}
	 	
	 	Next, we will estimate $\|\omega^n(t_n)-\zeta_{**}\|_1$.
	 	By the conservation of the $L^1$-norm and \eqref{4-20}, we have
	 	\begin{equation*}
	 		\begin{split}
	 			&\ \ \  \|\omega^n(t_n)-\zeta_{**}\|_1 \leq  \int_{\text{spt}(\zeta_{**})}  |\omega^n(t_n)-\zeta_{**}|dx+\int_{D\setminus\text{spt}(\zeta_{**})}   \omega^n(t_n) dx\\
	 			&\leq \int_{\text{spt}(\zeta_{**})}  |\omega^n(t_n)-\zeta_{**}|dx+\int_{D }   \omega^n(t_n) dx-\int_{D}  \zeta_0^n dx+\int_{D}   \zeta_{**} dx-\int_{\text{spt}(\zeta_{**})  }  \omega^n(t_n) dx\\
	 			&\leq 2\int_{\text{spt}(\zeta_{**})}  |\omega^n(t_n)-\zeta_{**}|dx+\int_{D }|\omega_0^n-\zeta_0^n| dx \\
	 			&\leq 2  |\text{supp}(\zeta_0)|^{\frac{1}{2}}  \| \omega^n(t_n)-\zeta_{**}\|_2+ \int_{D}|\omega_0^n-\zeta_0^n| dx=o_n(1),
	 		\end{split}
	 	\end{equation*}
	 	which contradicts the choice of $t_n$ in \eqref{4-15} and completes the proof of \eqref{s-1}. Here we have used $\int_{D}  \zeta_0^n dx=\int_{D}   \zeta_{**} dx= \int_{D}  \zeta_0  dx$ and $ |\text{spt}(\zeta_0^n)|= |\text{spt}(\zeta_{**}) |=|\text{spt}(\zeta_0)|$ due to the properties of rearrangement.
	 	
	 	If in addition $$I(\zeta_1)=I(\zeta_2),\quad \forall\  \zeta_1, \zeta_2\in \tilde\Sigma_{\zeta_0,\mathcal W}, $$ then by the conservation of the impulse, we find
	 	\begin{footnotesize}
	 		\begin{equation*}
	 			\begin{split}
	 				&I( |\omega(t)-\zeta_1|) \leq  \int_{\text{spt}(\zeta_1)} x_1 |\omega(t)-\zeta_1|dx+\int_{D\setminus\text{spt}(\zeta_1)}  x_1 \omega(t) dx\\
	 				&\leq \int_{\text{spt}(\zeta_1)} x_1 |\zeta_1  -  \omega(t)| dx+\int_{D}x_1   \omega(t) dx-\int_{D} x_1  \zeta_2 dx+\int_{D} x_1  \zeta_1 dx-\int_{ \text{spt}(\zeta_1)}x_1   \omega(t) dx\\
	 				&\leq 2\int_{\text{spt}(\zeta_1)} x_1  |\zeta_1  -  \omega(t)| dx+\int_{D}x_1   \omega_0 dx-\int_{D} x_1  \zeta_2 dx\\
	 				&\leq 2 M_0 |\text{spt}(\zeta_0)|^{\frac{1}{2}} 	\| \omega(t)-\zeta_1\|_2+ |I( \omega_0-\zeta_2)|, \quad \forall \ \zeta_1, \zeta_2 \in \tilde \Sigma_{\zeta_0,\mathcal W},
	 			\end{split}
	 		\end{equation*}
	 	\end{footnotesize}
	 	which implies \eqref{s-2} by \eqref{s-1} and taking $\delta$ smaller if necessary. 	
	 \end{proof}
	
	\begin{remark}
		Our proof also works well for the case $s=1$.  Compared with Theorem 1 in \cite{BNL13},  we admit perturbations with non-compact supports. This is achieved by   bringing in   the $L^1$-norm in our theorem. Note that, if the perturbation $\omega_0$ has a compact support with measure less than $A$ for some constant $A$ as in Theorem 1 in \cite{BNL13} and is closed to $\tilde \Sigma_{\zeta_0, \mathcal W}$ in $L^2$, then the H\"older inequality will imply that $\omega_0$ is closed to $\tilde \Sigma_{\zeta_0, \mathcal W}$ in $L^1$. Thus, one can obtain  a generalization of Theorem 1 in \cite{BNL13} by the argument in this paper.
	\end{remark}

	 \subsection{Maximizers in rearrangement class and the stability of traveling-wave solutions}\label{sec4-2}
	
	 In Sections \ref{sec2} and \ref{sec3}, for $J(t)=L t^{1+\frac{1}{p}}$  for some $L>0$ and  $p\in(1, \frac{1}{1-s})$, we have obtained existence and uniqueness of a traveling solution $\omega_\ep$.
	 To apply our stability result   Theorem \ref{Sset} to obtain the stability of $\omega_\ep$,  we consider the following maximizing problem
	 \begin{equation}\label{4-21}
	 	\begin{split}
	 		\sup_{\zeta\in\overline{\mathcal{R}(\omega_\ep)^w}}\tilde E_\ep(\zeta),
	 	\end{split}
	 \end{equation}
	 where (with abuse of notations for simplicity) $$\tilde E_{\ep}(\zeta)	:=\tilde E_{W \ep^{3-2s}}(\zeta)=\frac{1}{2}\int_{D}{\zeta \mathcal{G}_s^+\zeta}dx-W \ep^{3-2s}\int_{D}x_1 \zeta dx.$$
	 We    denote   $  \tilde\Sigma_\ep$ as the set of maximizers of $\tilde E_\ep$ over the rearrangement class $\overline{\mathcal{R}(\omega_\ep)^w}$. Our main result in this subsection is the following theorem.
	 \begin{theorem}\label{thm4-10}
	 	For $\ep>0$ small, the set of maximizers $\tilde\Sigma_\ep$ satisfies $$\emptyset\neq  \tilde\Sigma_\ep\subset \mathcal{R}(\omega_\ep).$$ Moreover, for each  $\zeta^\ep \in   \tilde\Sigma_\ep$,  the following claims hold.
	 	\begin{itemize}
	 		\item [(i).]$ {\zeta}^\varepsilon= g_\ep(\mathcal{G}_s^+\zeta^\ep- W\ep^{3-2s}x_1 -\tilde\mu_\ep)$ for some non-negative and non-decreasing function $g_\varepsilon:\mathbb{R}\to \mathbb{R}$ satisfying $g_\varepsilon(\tau)>0$ if $\tau>0$ and $g_\varepsilon(\tau)=0$ if $\tau\le0$ and some constant $\tilde \mu_\ep$.
	 		\item [(ii).]  $   \text{diam}\left(\mathrm{spt}({\zeta}^\varepsilon)\right)<\tilde R\ep$ for some constant  $0<\tilde R<\infty$ and up to a suitable translation in the $x_2$ direction $$\sup_{x\in \mathrm{spt}({\zeta}^\varepsilon)} |\ep x-(d_0,0)|=o(1).$$\end{itemize}
	 \end{theorem}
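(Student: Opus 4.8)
The plan is to establish the three assertions in order: first the existence of maximizers together with the inclusion $\tilde\Sigma_\ep\subset\mathcal R(\omega_\ep)$, then the profile representation (i), and finally the asymptotics (ii); the rearrangement machinery of \cite{Bu0,BNL13,Dou} and the asymptotic analysis of Sections~\ref{sec2}--\ref{sec3} do most of the work. For the first stage, recall from Theorem~\ref{thmax*} that $\omega_\ep\in L^1\cap L^\infty(D)$ has bounded support; hence $\tilde E_\ep$ is a \emph{strictly} convex functional on the convex, weakly compact set $\overline{\mathcal R(\omega_\ep)^w}\subset L^2(D)$, because the quadratic form $\zeta\mapsto\int_D\zeta\,\mathcal G_s^+\zeta$ is positive definite (this positivity is exactly what is used in the proof of Lemma~\ref{lem4-3}). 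Existence of a maximizer follows by the usual truncation device: after Steiner symmetrization and discarding the part of a maximizing sequence with $|x_2|$ large — which does not decrease $\tilde E_\ep$, by Lemmas~\ref{lem4-2} and~\ref{lem4-3} — one may assume uniformly bounded supports, and then weak continuity of $E$ and $I$ on bounded-support functions produces a maximizer. Since a strictly convex functional attains its maximum over a convex set only at extreme points, and the extreme points of $\overline{\mathcal R(\omega_\ep)^w}$ are precisely $\mathcal R(\omega_\ep)$, we conclude $\emptyset\neq\tilde\Sigma_\ep\subset\mathcal R(\omega_\ep)$. (Alternatively this whole stage follows from Burton's theory of maximization of convex functionals over rearrangement classes, \cite{Bu0}.)

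For (i): fix $\zeta^\ep\in\tilde\Sigma_\ep$ and set $u_\ep:=\mathcal G_s^+\zeta^\ep-W\ep^{3-2s}x_1$. The identity $\tilde E_\ep(\zeta)-\tilde E_\ep(\zeta^\ep)=\int_D u_\ep(\zeta-\zeta^\ep)\,dx+\frac{1}{2}\int_D(\zeta-\zeta^\ep)\,\mathcal G_s^+(\zeta-\zeta^\ep)\,dx$, together with the maximality of $\zeta^\ep$ and the nonnegativity of the last term, forces $\int_D u_\ep\zeta\le\int_D u_\ep\zeta^\ep$ for all $\zeta\in\overline{\mathcal R(\omega_\ep)^w}$; that is, $\zeta^\ep$ maximizes the \emph{linear} functional $\zeta\mapsto\int_D u_\ep\zeta$ over $\mathcal R(\omega_\ep)$. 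By the rearrangement structure theorem (\cite{Bu0}) there is a non-decreasing function $\phi_\ep$ with $\zeta^\ep=\phi_\ep\circ u_\ep$ a.e. on $D$; letting $\tilde\mu_\ep$ be the threshold value of $u_\ep$ separating $\{\zeta^\ep=0\}$ from $\{\zeta^\ep>0\}$ and setting $g_\ep(\tau):=\phi_\ep(\tau+\tilde\mu_\ep)$ for $\tau>0$ and $g_\ep(\tau):=0$ for $\tau\le0$ yields the representation in (i) with all the stated sign properties.

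For (ii): the uniform bounds are inherited for free, $\|\zeta^\ep\|_q=\|\omega_\ep\|_q\le C$ for all $q\in[1,\infty]$ and $|\text{spt}(\zeta^\ep)|=|\text{spt}(\omega_\ep)|\le\pi R^2$, by Theorem~\ref{thmax*}. Since $\mathcal R(\omega_\ep)\subset\overline{\mathcal R(\omega_\ep)^w}$ we have $\tilde E_\ep(\zeta^\ep)\ge\tilde E_\ep(\omega_\ep)$, and combining Theorem~\ref{thmax*}(iii), Lemma~\ref{lem2-25}, and the Riesz rearrangement inequality (which bounds $\frac{1}{2}\int\zeta^\ep\mathcal G_s\zeta^\ep$ by its value at the symmetric-decreasing rearrangement of $\omega_\ep$, which converges to $\omega_0$), one sees that as $\ep\to0$ the family $\{\zeta^\ep\}$ behaves like a maximizing sequence for the translation-invariant limiting problem $\sup\{\frac{1}{2}\int\zeta\mathcal G_s\zeta:\zeta\in\overline{\mathcal R(\omega_0)^w}\}$, whose maximizer is $\omega_0$, unique up to translation (Riesz equality cases; cf. Proposition~\ref{lem2-13}). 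I would then adapt the concentration-compactness argument in the proof of Proposition~\ref{prop4-7}, with $\ep\to0$ in place of $n\to\infty$ and the uniform bounds above: vanishing is excluded because $\tilde E_\ep(\zeta^\ep)$ stays bounded below by a positive constant, and dichotomy is excluded because a split configuration could be reassembled with a strictly positive extra interaction term, contradicting optimality against the limiting problem. Hence, after an $x_2$-translation, $\zeta^\ep\to\omega_0$ strongly in $L^2$ and, in particular, the mass $\kappa$ of $\zeta^\ep$ concentrates in a disk $B(p_\ep,r_0)$ with $p_\ep=(\ep^{-1}d^\zeta_\ep,0)$. From here the Euler--Lagrange tail estimates of Lemmas~\ref{lem2-21} and~\ref{lem2-22} apply almost verbatim: by (i), $\text{spt}(\zeta^\ep)\subset\{u_\ep\ge\tilde\mu_\ep\}$, and a lower bound $\tilde\mu_\ep\ge\mu_*/2$ together with the decay of $\mathcal G_s^+\zeta^\ep(x)$ as $x$ leaves $B(p_\ep,r_0)$ (Lemma~\ref{lem2-1}) confine $\text{spt}(\zeta^\ep)$ to a disk whose radius $\tilde R$ is bounded independently of $\ep$. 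Finally, performing the first variation of $\tilde E_\ep$ along horizontal translations of $\zeta^\ep$ (legitimate once the support is known to sit well inside $D$) gives, as in Lemma~\ref{lem2-28}, the identity $2(1-s)c_s\int\int\frac{\zeta^\ep(x)(x_1+y_1)\zeta^\ep(y)}{|x-\bar y|^{4-2s}}\,dx\,dy=W\ep^{3-2s}\kappa$; expanding the kernel around the concentration point exactly as in Lemma~\ref{lem2-29} yields $(d^\zeta_\ep)^{3-2s}=d_0^{3-2s}+o(1)$, i.e. $\sup_{x\in\text{spt}(\zeta^\ep)}|\ep x-(d_0,0)|=o(1)$.

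The hard part will be the concentration-compactness step in (ii): unlike in Proposition~\ref{prop4-7}, both the rearrangement class $\mathcal R(\omega_\ep)$ and the coefficient $W\ep^{3-2s}$ vary with $\ep$, and a priori the support of $\zeta^\ep$ could spread in the $x_1$-direction out to scale $\ep^{-c}$, so one must carefully track how $\tilde E_\ep$ degenerates to its translation-invariant limit $\frac{1}{2}\int\zeta\mathcal G_s\zeta$ and use the uniqueness of the limiting maximizer to rule out splitting and to pin down the bulk. Once the strong $L^2$-convergence to a translate of $\omega_0$ (hence the mass concentration) is in hand, the remaining steps are routine adaptations of arguments already carried out in Sections~\ref{sec2}--\ref{sec3}.
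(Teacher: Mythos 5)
There are two genuine gaps. First, your route to $\emptyset\neq\tilde\Sigma_\ep\subset\mathcal{R}(\omega_\ep)$ via strict convexity plus ``the extreme points of $\overline{\mathcal{R}(\omega_\ep)^w}$ are precisely $\mathcal{R}(\omega_\ep)$'' does not work with the weak closure used here. On the unbounded half-plane, $\overline{\mathcal{R}(\omega_\ep)^w}$ is defined by $\int(\zeta-\alpha)_+\le\int(\omega_\ep-\alpha)_+$ for all $\alpha>0$ and therefore allows mass loss: the zero function, and more generally truncated rearrangements such as $\zeta 1_S$ with $\zeta\in\mathcal{R}(\omega_\ep)$, are extreme points of this convex set without being rearrangements (the characterization ``extreme points $=$ rearrangements'' is a bounded-domain/mass-preserving statement). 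So strict convexity only places a maximizer among extreme points, not in $\mathcal{R}(\omega_\ep)$. The paper instead obtains the inclusion and the profile (i) simultaneously from the first-variation linear functional together with the \emph{uniqueness} part of Burton's lemma (Lemma \ref{lem4-13}), and this is exactly where your step (i) is also incomplete: Lemma \ref{lem4-13} requires that a.e.\ level set of $u_\ep=\mathcal{G}_s^+\zeta^\ep-W\ep^{3-2s}x_1$ has measure zero and that one works on a finite measure space. The paper verifies this by first showing (strict rearrangement inequality) that a maximizer is, up to translation, Steiner symmetric in $x_2$, so that $\mathcal{G}_s^+\zeta^\ep$ is strictly decreasing in $|x_2|$, and by confining the problem to $[0,M_\ep]\times[-N_\ep,N_\ep]$ via Lemmas \ref{lem4-1}--\ref{lem4-3}; you never check the level-set hypothesis, and without it the structure theorem you invoke is not applicable.

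Second, in (ii) the compactness step — which you yourself flag as the hard part — is not actually carried out, and your proposed limiting problem is not the one that makes the argument close. The paper's key observation (Lemma \ref{lem4-15}) is that once $\zeta^\ep\in\mathcal{R}(\omega_\ep)$ is known, equimeasurability gives $\int J(\zeta^\ep)=\int J(\omega_\ep)$, hence $\tilde E_\ep(\zeta^\ep)\ge\tilde E_\ep(\omega_\ep)$ upgrades to $E_\ep(\zeta^\ep)\ge E_\ep(\omega_\ep)\ge I_0+O(\ep^{2-2s})$, so $\{\zeta^\ep\}$ is a maximizing sequence of the \emph{penalized} limiting problem $E_0$ over $\mathcal{A}_0$, for which compactness up to translation is already available from \cite{L84}; no new concentration-compactness argument for a purely quadratic problem over $\overline{\mathcal{R}(\omega_0)^w}$ is needed (and for that $J$-less problem the exclusion of vanishing/dichotomy and of mass loss would have to be proved from scratch, since Proposition \ref{prop4-7} assumes a fixed rearrangement class and a fixed $\mathcal W$). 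Moreover, before transferring the half-plane estimates to whole-plane ones you must rule out concentration near $\{x_1=0\}$ and quantify the distance of the bulk from the boundary; this is the paper's Lemma \ref{lem4-16} ($z_{\ep,1}\to+\infty$, proved by comparing $\int\zeta^\ep\mathcal{G}_s^+\zeta^\ep$ with $\int\omega_\ep\mathcal{G}_s\omega_\ep$ and estimating the image-interaction term), and it is also needed before the lower bound on $\tilde\mu_\ep$ and the support-confinement step you describe. Your final virial-identity computation for the location is fine in spirit (it mirrors Lemmas \ref{lem2-28}--\ref{lem2-29}), but it only becomes legitimate after these missing compactness, boundary-separation and support bounds are in place.
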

	
	 In order to prove Theorem \ref{thm4-10}, a series of lemmas is needed.
	 \begin{lemma}\label{lem4-11}
	 	The energy satisfies 
	 	\begin{equation*}
	 		\max_{\zeta\in\overline{\mathcal{R}(\omega_\ep)^w}}\tilde E_\varepsilon(\zeta)\geq I_0+O(\ep^{2-2s}).
	 	\end{equation*}
	 \end{lemma}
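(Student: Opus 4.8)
The plan is to test the maximization problem \eqref{4-21} with the competitor $\omega_\ep$ itself. \textbf{Step 1: $\omega_\ep$ is admissible.} Since $\omega_\ep\geq0$ and $\omega_\ep\in L^1\cap L^\infty$, we have $\omega_\ep\in\mathcal{R}(\omega_\ep)\subset\mathcal{R}_+(\omega_\ep)\subset\overline{\mathcal{R}(\omega_\ep)^w}$. Moreover, by Lemmas \ref{lem2-22} and \ref{lem2-23}, for $\ep$ small $\mathrm{spt}(\omega_\ep)$ is contained in the disk $B(x_\ep,R)$ with $x_\ep=(\ep^{-1}d_\ep,0)$ and $d_\ep\to d_0>0$; hence $\mathrm{spt}(\omega_\ep)\subset\{x_1\geq1\}\subset D$ for $\ep$ sufficiently small, so $\omega_\ep$ indeed lies in the function space on $D$ underlying \eqref{4-21}, and every integral over $\mathbb{R}^2_+$ appearing in $E_\ep$ coincides with the corresponding integral over $D$.

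\textbf{Step 2: compare $\tilde E_\ep$ with $E_\ep$.} By the definition of $\tilde E_\ep=\tilde E_{W\ep^{3-2s}}$ and the definition \eqref{energy} of $E_\ep$, one has
\begin{equation*}
\tilde E_\ep(\omega_\ep)=E_\ep(\omega_\ep)+\int_{D}J(\omega_\ep)\,dx.
\end{equation*}
Since $J\geq0$ (hypothesis $(\mathbf{H_{1}'})$; here explicitly $J(t)=Lt^{1+1/p}\geq0$), the last term is nonnegative, so $\tilde E_\ep(\omega_\ep)\geq E_\ep(\omega_\ep)$.

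\textbf{Step 3: invoke the energy estimate of the constrained maximizer.} By Lemma \ref{lem2-20}(iii) (equivalently Theorem \ref{thmax*}(iii)), $E_\ep(\omega_\ep)\geq I_0+O(\ep^{2-2s})$. Combining with Steps 1 and 2,
\begin{equation*}
\max_{\zeta\in\overline{\mathcal{R}(\omega_\ep)^w}}\tilde E_\varepsilon(\zeta)\geq\tilde E_\ep(\omega_\ep)\geq E_\ep(\omega_\ep)\geq I_0+O(\ep^{2-2s}),
\end{equation*}
which is the assertion. There is essentially no obstacle here: the only points demanding a moment's care are that $\omega_\ep$ genuinely belongs to $\overline{\mathcal{R}(\omega_\ep)^w}$ and that its support sits inside $D$ for small $\ep$, both of which are immediate from the results of Section \ref{sec2}.
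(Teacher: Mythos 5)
Your proof is correct and follows the same route as the paper: test the supremum with $\omega_\ep\in\overline{\mathcal{R}(\omega_\ep)^w}$, use $\tilde E_\ep(\omega_\ep)=E_\ep(\omega_\ep)+\int J(\omega_\ep)\geq E_\ep(\omega_\ep)$, and conclude via the energy estimate $E_\ep(\omega_\ep)\geq I_0+O(\ep^{2-2s})$ from Lemma \ref{lem2-20}(iii). Your additional check that $\mathrm{spt}(\omega_\ep)\subset D$ for small $\ep$ is a harmless refinement of the paper's one-line argument.
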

	 \begin{proof}
	 	This is a simple consequence of the fact $\omega_\ep\in\overline{\mathcal{R}(\omega_\ep)^w}$ and Lemma \ref{lem2-20}.
	 \end{proof}
	
	 \begin{lemma}\label{lem4-12}
	 	$\tilde{E}_\ep$ attains its maximum value over $\overline{\mathcal{R}(\omega_\ep)^w}$ at some $\zeta^\varepsilon$, which is Steiner symmetric in the $x_2$-variable.
	 \end{lemma}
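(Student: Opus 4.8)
The plan is to run the direct method of the calculus of variations over the weakly closed convex set $\overline{\mathcal{R}(\omega_\ep)^w}$, using Steiner symmetrization in the $x_2$-variable together with the truncation Lemma \ref{lem4-3} to confine a maximizing sequence to a fixed bounded rectangle, and then extracting a weak limit. Since $\mathcal{G}_s^+\zeta$ becomes small (relative to $W\ep^{3-2s}x_1$) both for $x_1$ large and, after symmetrization, for $|x_2|$ large, the truncations cost no energy and the problem becomes compact.

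Fix $\ep>0$ small. By Lemma \ref{lem2-2}, $\tilde E_\ep$ is bounded above on $\overline{\mathcal{R}(\omega_\ep)^w}$, so there is a maximizing sequence $\{\zeta_n\}_{n=1}^\infty\subset\overline{\mathcal{R}(\omega_\ep)^w}$. Every $\zeta\in\overline{\mathcal{R}(\omega_\ep)^w}$ obeys $\|\zeta\|_1\le\kappa$ and $\|\zeta\|_\infty\le\|\omega_\ep\|_\infty$ (both immediate from the defining inequality $\int_D(\zeta-\alpha)_+\le\int_D(\omega_\ep-\alpha)_+$), so by Lemma \ref{lem4-1} there is a constant $M_0>0$, depending only on $\|\omega_\ep\|_1+\|\omega_\ep\|_\infty$ and $W\ep^{3-2s}$, with $\mathcal{G}_s^+\zeta(x)-W\ep^{3-2s}x_1\le0$ for all such $\zeta$ whenever $x_1\ge M_0$. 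Since multiplication by an indicator keeps a function in $\overline{\mathcal{R}(\omega_\ep)^w}$, Lemma \ref{lem4-3} with $V=\{x_1>M_0\}$ lets me replace $\zeta_n$ by $\zeta_n1_{(0,M_0)\times\mathbb R}$ without decreasing $\tilde E_\ep$; the new sequence is still maximizing and supported in $(0,M_0)\times\mathbb R$. Next I Steiner-symmetrize each $\zeta_n$ in the $x_2$-variable: this leaves the distribution function and $\int x_1\zeta_n$ unchanged, so $\zeta_n^\star\in\overline{\mathcal{R}(\omega_\ep)^w}$ with support still in $(0,M_0)\times\mathbb R$, while the rearrangement inequality quoted in the excerpt gives $\int\zeta_n^\star\mathcal{G}_s^+\zeta_n^\star\ge\int\zeta_n\mathcal{G}_s^+\zeta_n$; hence $\{\zeta_n^\star\}$ is again maximizing. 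Now Lemma \ref{lem4-2} yields a constant $N_0>0$ such that $\mathcal{G}_s^+\zeta_n^\star(x)-W\ep^{3-2s}x_1\le0$ whenever $|x_2|>N_0$, and a second application of Lemma \ref{lem4-3} with $V=\{|x_2|>N_0\}$ produces $\tilde\zeta_n:=\zeta_n^\star1_{(0,M_0)\times(-N_0,N_0)}$: this lies in $\overline{\mathcal{R}(\omega_\ep)^w}$, remains Steiner symmetric about $\{x_2=0\}$, and is a maximizing sequence supported in the fixed rectangle $Q:=[0,M_0]\times[-N_0,N_0]$.

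It remains to pass to the limit. The $\tilde\zeta_n$ are uniformly bounded in $L^\infty(Q)$, so after a subsequence $\tilde\zeta_n\rightharpoonup\zeta^\ep$ weakly-$*$ in $L^\infty(Q)$, in particular weakly in $L^2(Q)$. Being convex and strongly closed, $\overline{\mathcal{R}(\omega_\ep)^w}$ is weakly closed, so $\zeta^\ep\in\overline{\mathcal{R}(\omega_\ep)^w}$; and since the class of nonnegative functions that are even in $x_2$ and nonincreasing in $|x_2|$ for a.e.\ $x_1$ is convex and strongly closed, hence weakly closed, $\zeta^\ep$ is, up to a null set, Steiner symmetric about $\{x_2=0\}$. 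The impulse passes to the limit, $\int_Q x_1\tilde\zeta_n\to\int_Q x_1\zeta^\ep$, because $x_1$ is a bounded weight on $Q$. For the kinetic term, $\mathcal{G}_s^+$ restricted to $L^2(Q)$ is compact (its kernel $G_s^+(x,y)\le c_s|x-y|^{2s-2}$ is weakly singular on the bounded set $Q$; alternatively, by Lemma \ref{lem2-1} and Propositions 2.8--2.9 in \cite{Sil}, $\{\mathcal{G}_s^+\tilde\zeta_n\}$ is uniformly bounded in $C^\alpha(\overline Q)$ for some $\alpha\in(0,1)$, hence precompact in $C(\overline Q)$), so $\mathcal{G}_s^+\tilde\zeta_n\to\mathcal{G}_s^+\zeta^\ep$ strongly in $L^2(Q)$ and therefore $\int_Q\tilde\zeta_n\mathcal{G}_s^+\tilde\zeta_n\to\int_Q\zeta^\ep\mathcal{G}_s^+\zeta^\ep$. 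Combining, $\tilde E_\ep(\tilde\zeta_n)\to\tilde E_\ep(\zeta^\ep)$, so $\zeta^\ep$ attains the supremum in \eqref{4-21}; together with its Steiner symmetry this proves the lemma.

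The main obstacle is that the kinetic energy is not weakly upper semicontinuous on the unbounded domain $D$, since mass along a maximizing sequence can escape to $x_2\to\pm\infty$ (or $x_1\to\infty$). The decisive maneuver is the combination of Steiner symmetrization — which forces the $|x_2|$-decay of $\mathcal{G}_s^+\zeta$ quantified in Lemma \ref{lem4-2} — with the truncation Lemma \ref{lem4-3}, confining the maximizing sequence to a fixed bounded set on which $\mathcal{G}_s^+$ is compact; once this reduction is in place, weak closedness of $\overline{\mathcal{R}(\omega_\ep)^w}$ and the elementary continuity of the impulse finish the argument.
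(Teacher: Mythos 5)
Your proposal is correct and takes essentially the same route as the paper: truncation in $x_1$ via Lemmas \ref{lem4-1} and \ref{lem4-3}, Steiner symmetrization, truncation in $x_2$ via Lemma \ref{lem4-2}, and then weak-$*$ compactness on the resulting bounded rectangle. The only cosmetic difference is in the final passage to the limit, where you invoke compactness of the weakly singular operator on $L^2(Q)$ while the paper uses $G_s^+\in L^r_{loc}$ for $1\leq r<\frac{1}{1-s}$ together with the weak-$*$ convergence; both are routine.
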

	 \begin{proof}
	 	By Lemma \ref{lem4-1}, there exists a constant $1<M_\ep=O(\ep^{-1})$ depending  on $\ep$, $\omega_\ep$ and $W$ such that
	 	$$\mathcal{G}_s^+\zeta(x)- W\ep^{3-2s}x_1 < 0,\quad \forall x\ \text {with}\  x_1>M_\ep, \quad \forall \zeta\in \overline{\mathcal{R}(\omega_\ep)^w}.$$
	 	It is easy to check that $\tilde {E}_\ep$ is bounded from above over $\overline{\mathcal{R}(\omega_\ep)^w}$ by Lemma \ref{lem2-2}. Take a sequence $\{\zeta_j\}\subset \overline{\mathcal{R}(\omega_\ep)^w}$   such that as $j\to+\infty$
	 	\begin{equation*}
	 		\tilde{E}_\ep(\zeta_j)\to \sup_{\overline{\mathcal{R}(\omega_\ep)^w}}\tilde{E}_\ep.
	 	\end{equation*}
	 	We may assume that $\zeta_j$ is supported in $(0, M_\ep)\times \mathbb R$ by Lemma \ref{lem4-3}. We can also assume $\zeta_j$ is Steiner symmetric about the $x_1$-axis by replacing $\zeta_j$ with its own  Steiner symmetrization. Since $\int_{D} x_1  \zeta_j dx\leq R_\ep \kappa$,  there exists a constant $N_\ep>0$ such that $\mathcal{G}_s^+\zeta_j(x)-  W\ep^{3-2s} x_1   <0,\quad \forall x\in D\ \text {with}\  |x_2|>N_\ep,\quad\forall \  j$ due to Lemma \ref{lem4-2}. That is, we assume that  $\zeta_j$ is supported in $[0, M_\ep]\times [-N_\ep,N_\ep]$. There is a subsequence (still denoted by $\{\zeta_j\}$) such that as $j\to +\infty$, $\zeta_j\to \zeta^\varepsilon \in \overline{\mathcal{R}(\omega_\ep)^w}$ weakly star in $L^{\infty}(D)$. Since $G_s^+(\cdot,\cdot)\in L^{r}_{loc}(\mathbb{R}_+^2\times \mathbb{R}_+^2) $ for any $1\leq r<\frac{1}{1-s}$, we deduce that
	 	\begin{equation*}
	 		\lim_{j\to +\infty}\tilde{E}_\ep(\zeta_j)=\tilde{E}_\ep(\zeta^\varepsilon).
	 	\end{equation*}
	 	This means that $\zeta^\varepsilon$ is a maximizer and thus  the proof is finished.
	 \end{proof}
	 To study the properties of maximizers. We need the following lemma from \cite{Bu1}.
	 \begin{lemma}[Lemmas 2.4 and 2.9 in \cite{Bu1}]\label{lem4-13}
	 	Let $(\Omega,\nu)$ be a finite positive measure space. Let $\xi_0: \Omega\to \mathbb{R}$ and $\zeta_0: \Omega\to \mathbb{R}$ be $\nu$-measurable functions, and suppose that every level set of $\zeta_0$ has zero measure. Then there is a non-decreasing function $f$ such that $f\circ \zeta_0$ is a rearrangement of $\xi_0$. Moreover, if $\xi_0 \in L^q(\Omega,\nu)$ for some $1\le q<+\infty$ and $\zeta_0\in L^{q'}(\Omega,\nu)$, then $f\circ \zeta_0$ is the unique maximizer of linear functional
	 	\begin{equation*}
	 		M(\xi):=\int_\Omega \xi(x)\zeta_0(x)d\nu(x)
	 	\end{equation*}
	 	relative to $\overline{\mathcal{R}(\xi_0)^w}$.
	 \end{lemma}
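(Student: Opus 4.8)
The statement splits into an existence claim (construction of the non-decreasing $f$) and a variational claim (that $f\circ\zeta_0$ is the unique maximizer of $M$), and I would prove both through the classical machinery of decreasing rearrangements and the Hardy--Littlewood inequality, with the no-positive-measure-level-set hypothesis entering exactly at the points where strict monotonicity and a unique ordering are needed. Write $m:=\nu(\Omega)<\infty$ and let $\xi_0^\ast,\zeta_0^\ast:[0,m)\to\mathbb R$ denote the decreasing rearrangements of $\xi_0,\zeta_0$ relative to Lebesgue measure. The hypothesis that every level set of $\zeta_0$ is $\nu$-null is equivalent to continuity of the distribution function $t\mapsto\nu(\{\zeta_0>t\})$, which in turn is equivalent to saying that $\zeta_0^\ast$ is strictly decreasing, hence injective, on the interior of its range.

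To construct $f$, the key device is the rank map $\sigma(x):=\nu(\{\zeta_0>\zeta_0(x)\})$. Using continuity of the distribution function one checks that $\sigma$ pushes $\nu$ forward to Lebesgue measure on $[0,m)$, i.e. $\sigma$ is measure-preserving, and that $\zeta_0=\zeta_0^\ast\circ\sigma$ holds $\nu$-a.e. I would then set $f:=\xi_0^\ast\circ(\zeta_0^\ast)^{-1}$ on the range of $\zeta_0$; since $\zeta_0^\ast$ is strictly decreasing, $(\zeta_0^\ast)^{-1}$ is well defined and decreasing, so $f$, being a composition of two non-increasing maps, is non-decreasing. Because $f\circ\zeta_0=\xi_0^\ast\circ\sigma$ and $\sigma$ is measure-preserving, $f\circ\zeta_0$ is equimeasurable with $\xi_0^\ast$, hence with $\xi_0$; that is, $f\circ\zeta_0\in\mathcal R(\xi_0)$. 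Verifying that $\sigma$ is measure-preserving (it carries $\{\sigma<a\}$ onto a super-level set of $\zeta_0$ of measure exactly $a$) is the only nontrivial point here, and it is precisely where the absence of atoms is used.

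For the variational claim the upper bound comes from the Hardy--Littlewood inequality $M(\xi)=\int_\Omega\xi\zeta_0\,d\nu\le\int_0^m\xi^\ast\zeta_0^\ast\,ds$, valid for all $\xi\in L^q$ with $\zeta_0\in L^{q'}$ and independent of the sign of $\zeta_0$. For $\xi\in\mathcal R(\xi_0)$ this reads $M(\xi)\le\int_0^m\xi_0^\ast\zeta_0^\ast\,ds=:S$, while the comonotonicity of $f\circ\zeta_0=\xi_0^\ast\circ\sigma$ with $\zeta_0=\zeta_0^\ast\circ\sigma$ gives $M(f\circ\zeta_0)=S$. Since $\zeta_0\in L^{q'}$ makes $M$ weakly continuous and $\mathcal R(\xi_0)$ is weakly dense in $\overline{\mathcal R(\xi_0)^w}$, the supremum of $M$ over $\overline{\mathcal R(\xi_0)^w}$ equals $S$ and is attained, for instance by $f\circ\zeta_0$.

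The crux, and the step I expect to be the main obstacle, is uniqueness. Let $\eta\in\overline{\mathcal R(\xi_0)^w}$ satisfy $M(\eta)=S$. Because $M$ is linear and $\overline{\mathcal R(\xi_0)^w}$ is convex and weakly compact, the set of maximizers is a face; invoking the theorem (Ryff; see \cite{Bu1,Dou}) that the extreme points of $\overline{\mathcal R(\xi_0)^w}$ are precisely $\mathcal R(\xi_0)$, together with Krein--Milman, reduces matters to showing that $M$ has a unique maximizer within $\mathcal R(\xi_0)$. For $\xi\in\mathcal R(\xi_0)$ with $M(\xi)=S$, equality in Hardy--Littlewood forces $\xi$ to be comonotone with $\zeta_0$, i.e. $(\xi(x)-\xi(y))(\zeta_0(x)-\zeta_0(y))\ge0$ for $\nu\otimes\nu$-a.e. $(x,y)$; I would establish this by a two-point (bathtub) swapping argument, since any violation on a set of positive measure could be removed so as to strictly increase $M$. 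Finally, because every level set of $\zeta_0$ is $\nu$-null, comonotonicity pins $\xi$ down $\nu$-a.e. as a non-decreasing function of $\zeta_0$, and the constraint $\xi\in\mathcal R(\xi_0)$ forces that function to be $f$, so $\xi=f\circ\zeta_0$. The hypothesis is indispensable exactly here: on a level set of positive measure $\zeta_0^\ast$ is flat and $\eta$ could be permuted freely without changing $M$, destroying uniqueness. Turning the face/extreme-point reduction and the equality analysis into a clean argument valid on the full weak closure, rather than merely on $\mathcal R(\xi_0)$, is the part that will require the most care.
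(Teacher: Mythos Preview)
The paper does not prove this lemma; it is quoted as Lemmas 2.4 and 2.9 of Burton \cite{Bu1} and used as a black box. Your proposal therefore goes well beyond what the paper provides, and the route you sketch---decreasing rearrangements, the measure-preserving rank map $\sigma(x)=\nu(\{\zeta_0>\zeta_0(x)\})$, Hardy--Littlewood for the upper bound, and the Ryff/Krein--Milman extreme-point reduction for uniqueness on the weak closure---is essentially the standard argument behind Burton's original proofs. The outline is correct. Two places deserve extra care in a full writeup: for $q=1$ the weak compactness of $\overline{\mathcal R(\xi_0)^w}$ cannot come from reflexivity and should be obtained via Dunford--Pettis (uniform integrability being immediate from equimeasurability with $\xi_0$); and your face argument needs the observation that an extreme point of a face of a convex set is extreme in the ambient set, so that the unique extremal maximizer in $\mathcal R(\xi_0)$ really forces the entire maximizing face to collapse to a point.
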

	
	 \begin{lemma}\label{lem4-14}
	 	Let  $\zeta^\varepsilon \in \tilde\Sigma_\ep$ be a maximizer. Then, up to a translation in the $x_2$ direction, $\zeta^\ep$ must be Steiner symmetric in the $x_2$-variable and supported in $[0, M_\ep]\times [-N_\ep,N_\ep]$.   Moreover,   for $\ep>0$ small,  it holds    $$0\not\equiv \zeta^\varepsilon\in \mathcal{R}(\zeta_\ep)$$ and there exists a non-negative and non-decreasing function $g_\varepsilon: \mathbb{R}\to \mathbb{R}$ satisfying $g_\varepsilon(\tau)>0$ if $\tau>0$  and $g_\varepsilon(\tau)=0$ if $\tau\le 0$, such that for some constant $\tilde\mu_\varepsilon$,
	 	\begin{equation}\label{4-22}
	 		\zeta^\varepsilon(x)=g_\varepsilon(\mathcal{G}_1\zeta^\varepsilon(x)-W \ep^{3-2s}x_1-\tilde\mu_\varepsilon),\ \ \forall\,x\in \mathbb{R}_+^2.
	 	\end{equation}
	 
	 \end{lemma}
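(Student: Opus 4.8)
The plan is to apply the abstract rearrangement result Lemma~\ref{lem4-13} (Burton) to the profile of the ``stream function'' $\psi_\ep:=\mathcal{G}_s^+\zeta^\ep-W\ep^{3-2s}x_1$. First I would record the soft properties. By Lemma~\ref{lem4-12} a maximizer $\zeta^\ep$ exists, and replacing it with its Steiner symmetrization in $x_2$ (which stays in $\overline{\mathcal{R}(\omega_\ep)^w}$ and does not decrease $\tilde E_\ep$, by the rearrangement inequality) one may assume $\zeta^\ep$ is Steiner symmetric in $x_2$. The bound $\mathrm{spt}(\zeta^\ep)\subset[0,M_\ep]\times\mathbb{R}$ is given by Lemma~\ref{lem4-4} (through Lemma~\ref{lem4-3}), and a translation in $x_2$ together with Lemma~\ref{lem4-2}, exactly as in the proof of Lemma~\ref{lem4-12}, upgrades this to $\mathrm{spt}(\zeta^\ep)\subset[0,M_\ep]\times[-N_\ep,N_\ep]=:\Omega$. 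Nontriviality is immediate: $\tilde E_\ep(0)=0$ while $\tilde E_\ep(\zeta^\ep)=\sup_{\overline{\mathcal{R}(\omega_\ep)^w}}\tilde E_\ep\ge I_0+O(\ep^{2-2s})>0$ for $\ep$ small, by Lemma~\ref{lem4-11}.

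Next I would extract the Euler--Lagrange relation in linearized form. Since $\overline{\mathcal{R}(\omega_\ep)^w}$ is convex and $\tilde E_\ep$ is quadratic-minus-linear with $\tilde E_\ep'(\zeta)\phi=\int_D(\mathcal{G}_s^+\zeta-W\ep^{3-2s}x_1)\phi\,dx$, for every $\zeta\in\overline{\mathcal{R}(\omega_\ep)^w}$ the function $t\mapsto\tilde E_\ep\big(\zeta^\ep+t(\zeta-\zeta^\ep)\big)$ is defined on $[0,1]$, has nonpositive right derivative at $t=0$, and that derivative equals $\int_D\psi_\ep(\zeta-\zeta^\ep)\,dx$. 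Hence $\zeta^\ep$ maximizes the \emph{linear} functional $M(\zeta):=\int_D\psi_\ep\,\zeta\,dx$ over $\overline{\mathcal{R}(\omega_\ep)^w}$, so it also does so over the finite-measure box $\Omega$ outside of which every competitor of interest may be assumed to vanish.

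The heart of the matter, and the step I expect to be the main obstacle, is verifying the hypothesis of Lemma~\ref{lem4-13}: that every level set of $\psi_\ep$ in $\Omega$ has zero Lebesgue measure. By Lemma~\ref{lem4-1} and the fractional regularity theory already invoked in Lemmas~\ref{lem2-24}--\ref{lem2-25}, $\mathcal G_s^+\zeta^\ep\in C^{1,\alpha}_{loc}$, so $\psi_\ep$ is continuous. One checks directly that the half-plane kernel $G_s^+(x,y)$ is strictly decreasing in $(x_2-y_2)^2$ for $x,y\in\mathbb R^2_+$; combined with the Steiner symmetry of $\zeta^\ep$, the strict rearrangement inequality (see \cite{Lie}), and $\zeta^\ep\not\equiv\mathrm{const}$, this forces $\mathcal G_s^+\zeta^\ep(x_1,\cdot)$ to be, for each $x_1$, even and \emph{strictly} decreasing in $|x_2|$, hence so is $\psi_\ep(x_1,\cdot)=\mathcal G_s^+\zeta^\ep(x_1,\cdot)-W\ep^{3-2s}x_1$. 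Therefore $\{x_2:\psi_\ep(x_1,x_2)=c\}$ has at most two points for each $x_1$ and every $c$, so $|\{\psi_\ep=c\}|=0$ by Fubini. Making the strict monotonicity of $\mathcal G_s^+\zeta^\ep$ fully rigorous is the delicate part; for the levels $c\le0$ an alternative is available, using that $\mathcal G_s^+\zeta^\ep$ is $s$-harmonic, hence real-analytic, on $\mathbb R^2_+\setminus\mathrm{spt}(\zeta^\ep)$ (which, since $\mathrm{spt}(\zeta^\ep)$ is bounded and vertically convex by Steiner symmetry, is connected and unbounded) and cannot there coincide with the affine function $c+W\ep^{3-2s}x_1$ on a set of positive measure while decaying at infinity.

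With all level sets of $\psi_\ep$ null on $\Omega$, Lemma~\ref{lem4-13} yields a non-decreasing $g$ with $\zeta^\ep=g\circ\psi_\ep$ a.e.\ and, crucially, identifies $g\circ\psi_\ep$ as the \emph{unique} maximizer of $M$ over $\overline{\mathcal{R}(\omega_\ep)^w}$; since $g\circ\psi_\ep$ is a genuine rearrangement of $\omega_\ep$, this forces $\zeta^\ep\in\mathcal{R}(\omega_\ep)$, and because every element of $\tilde\Sigma_\ep$ is the unique maximizer of the linear functional built from its own $\psi_\ep$, we obtain $\emptyset\neq\tilde\Sigma_\ep\subset\mathcal{R}(\omega_\ep)$. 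Finally, $\zeta^\ep\ge0$ and (by Lemma~\ref{lem4-3}) $\zeta^\ep=0$ a.e.\ on $\{\psi_\ep\le0\}$ give $\tilde\mu_\ep:=\inf\{\tau:g(\tau)>0\}\in[0,\infty)$; setting $g_\ep(\tau):=g(\tau+\tilde\mu_\ep)$ produces a non-negative non-decreasing function with $g_\ep(\tau)>0$ for $\tau>0$ and $g_\ep(\tau)=0$ for $\tau\le0$, and the identity $\zeta^\ep(x)=g_\ep\big(\mathcal{G}_s^+\zeta^\ep(x)-W\ep^{3-2s}x_1-\tilde\mu_\ep\big)$ holds for all $x\in\mathbb R^2_+$ — both sides vanishing where $\psi_\ep\le\tilde\mu_\ep$ — which is \eqref{4-22}.
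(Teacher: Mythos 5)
Your proposal follows essentially the same route as the paper: convexity of $\overline{\mathcal{R}(\omega_\ep)^w}$ plus the one-sided first-order condition reduces matters to maximizing the linear functional $\zeta\mapsto\int_D(\mathcal{G}_s^+\zeta^\ep-W\ep^{3-2s}x_1)\zeta\,dx$, the Steiner symmetry makes every level set of the stream function Lebesgue-null, and Burton's Lemma \ref{lem4-13} then yields the non-decreasing profile together with the uniqueness of the linear maximizer, forcing $\zeta^\ep\in\mathcal{R}(\omega_\ep)$, after which the multiplier $\tilde\mu_\ep$ and $g_\ep$ are introduced just as in the paper. The only point to tighten is that the asserted symmetry of an \emph{arbitrary} maximizer (needed so the argument applies to every element of $\tilde\Sigma_\ep$, not just to a symmetrized representative) should be obtained from the \emph{strict} rearrangement inequality, exactly as the paper does, rather than assumed without loss of generality.
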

	 \begin{proof}
	 	For $\ep>0$ small, we see from Lemma \ref{lem4-11} that $\tilde {E}_\ep(\zeta^\varepsilon)>0$ and hence $\zeta^\ep\not\equiv 0$.

	 	Since $\overline{\mathcal{R}(\omega_\ep)^w}$ is a convex set (see e.g. \cite{Bu0, BNL13} and references therein). Thus for each $\zeta\in \overline{\mathcal{R}(\omega_\ep)^w}$, it holds $\zeta_\tau:=\zeta^\varepsilon+\tau(\zeta-\zeta^\varepsilon)\in \overline{\mathcal{R}(\omega_\ep)^w}$ for any $\tau\in [0,1]$. Noting that  $\zeta^\varepsilon$ is a maximizer of $\tilde E_\ep$, we have
	 	\begin{equation*}
	 		\frac{d}{d\tau}\bigg|_{\tau=0^+}\tilde{E}_\ep(\zeta_\tau)=\int_{D}(\zeta-\zeta^\varepsilon)\left(\mathcal{G}_s^+\zeta^\varepsilon - W\ep^{3-2s}x_1 \right) dx\le 0,
	 	\end{equation*}
	 	which yields
	 	\begin{equation*}
	 		\int_{D}\zeta\left(\mathcal{G}_s^+\zeta^\varepsilon - W\ep^{3-2s}x_1\right) dx\leq \int_{D}\zeta^\varepsilon \left(\mathcal{G}_s^+\zeta^\varepsilon -W \ep^{3-2s}x_1\right) dx, \quad \forall \  \zeta\in \overline{\mathcal{R}(\omega_\ep)^w}.
	 	\end{equation*}
	 	
	 	By the strict rearrangement inequality, we know that after a translation in the $x_2$ direction, $\zeta^\ep$ must be Steiner symmetric in the $x_2$-variable. Then 	by Lemmas \ref{lem4-1}-- \ref{lem4-3}, we find that $\zeta^\ep$ is supported in $[0, M_\ep]\times [-N_\ep,N_\ep]$.  Using the fact that $\zeta^\varepsilon$ is Steiner symmetric in the $x_2$-variable, one can verified   that $\mathcal{G}_s^+\zeta^\varepsilon$ is even and strictly decreasing with respect to $x_2$. It follows  that every level set of $\mathcal{G}_s^+\zeta^\varepsilon- W\ep^{3-2s}x_1$  has measure zero. By  Lemma \ref{lem4-13}, there exists a non-decreasing function $\tilde{g}_\varepsilon: \mathbb{R}\to \mathbb{R}$, such that,
	 	\begin{equation*}
	 		\tilde\zeta^\varepsilon(x)=\tilde{g}_\varepsilon\left(\mathcal{G}_s^+\zeta^\varepsilon(x)- W\ep^{3-2s}x_1\right).
	 	\end{equation*}
	 	for some $\tilde\zeta^\varepsilon\in \mathcal{R}(\omega_\ep)$. From the conclusion of Lemma \ref{lem4-13}, we also know $\tilde\zeta^\varepsilon(x)$ is the unique maximizer of the linear functional $\zeta \mapsto \int_{[0, M_\ep]\times [-N_\ep,N_\ep]} \left(\mathcal{G}_s^+\zeta^\varepsilon - W\ep^{3-2s}x_1\right) \zeta dx$ relative to $\overline{\mathcal{R}(\omega_\ep)^w}$. Hence we must have $\zeta^\varepsilon=\tilde\zeta^\varepsilon\in \mathcal{R}(\omega_\ep)$.
	 	
	 	Now, let
	 	\begin{equation*}
	 		\tilde\mu_\varepsilon:=\sup\left\{\mathcal{G}_s^+\zeta^\varepsilon(x)-W \ep^{3-2s}x_1\bigg|\ x\in \mathbb{R}_+^2\ \ \text{s.t.}\  \zeta^\varepsilon(x)=0 \right\}\in \mathbb{R},
	 	\end{equation*}
	 	and $g_\varepsilon(\cdot)=\max\{\tilde{g}_\varepsilon(\cdot+\tilde\mu_\varepsilon),0\}$. We have $$\zeta^\varepsilon(x)=g_\varepsilon\left(\mathcal{G}_s^+\zeta^\varepsilon(x)- W\ep^{3-2s}x_1-\tilde\mu_\varepsilon\right) \quad \text{for any} \ x\in D.$$
	 	
	 	The proof is thus complete.
	 \end{proof}
	
	 \begin{lemma}\label{lem4-15}
	 	Let  $\zeta^\varepsilon \in \tilde\Sigma_\ep\subset \mathcal{R}(\omega_\ep)$ be a maximizer. Then there exists correspondingly $z_\ep\subset [-R, M_\ep+R]\times \{0\}$ such that for $\ep>0$ small $$\|\zeta^\ep-\tilde\omega_\ep(\cdot-z_\ep)\|_1+\|\zeta^\ep-\tilde\omega_\ep(\cdot-z_\ep)\|_{2-s}=o(1).$$
	 	Here, $\tilde\omega_\ep$ is the translation of $ \omega_\ep$ with $\int x \tilde\omega_\ep=0$ and $R$ is the uniform constant such that $ \text{spt}(\tilde\omega_\ep)\subset B(0,R)$.
	 \end{lemma}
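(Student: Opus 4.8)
The plan is to prove that $E_0(\zeta^\ep)\to I_0$ as $\ep\to0$, so that along any sequence $\ep\to0$ the family $\{\zeta^\ep\}$ forms a maximizing sequence for the limiting problem \eqref{limitmax**}; the asserted convergence is then read off from Lions' concentration--compactness theorem combined with the uniqueness of the limiting maximizer (Proposition \ref{lem2-13}) and the $L^\infty$ convergence $\tilde\omega_\ep\to\omega_0$ of Lemma \ref{lem2-25}.

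First I would record the consequences of $\zeta^\ep\in\mathcal R(\omega_\ep)$. Since $\omega_\ep\in L^1\cap L^\infty$, $\int_D\omega_\ep=\kappa$, and $|\mathrm{spt}(\omega_\ep)|\le\pi R^2$ by Lemma \ref{lem2-22}, the same bounds pass to $\zeta^\ep$; in particular $\zeta^\ep\in\mathcal A_0$, $\|\zeta^\ep\|_{2-s}=\|\omega_\ep\|_{2-s}\le C$, and, because the entropy $\int_D J(\cdot)$ depends only on the distribution function and $J(0)=0$, $\int_D J(\zeta^\ep)=\int_D J(\omega_\ep)<\infty$. By Lemma \ref{lem4-14} I may also assume $\zeta^\ep$ is Steiner symmetric in the $x_2$-variable and supported in $[0,M_\ep]\times[-N_\ep,N_\ep]$.

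The key step is the two-sided energy bound. On the one hand, $\zeta^\ep\ge0$ and $D\subset\mathbb R_+^2$ give $\mathcal G_s^+\zeta^\ep\le\mathcal G_s\zeta^\ep$ and $\int_D x_1\zeta^\ep\ge0$, hence
$$\tilde E_\ep(\zeta^\ep)\le\tfrac12\int_D\zeta^\ep\,\mathcal G_s\zeta^\ep=E_0(\zeta^\ep)+\int_D J(\zeta^\ep)=E_0(\zeta^\ep)+\int_D J(\omega_\ep)\le I_0+\int_D J(\omega_\ep),$$
the last step because $\zeta^\ep\in\mathcal A_0$ and $I_0=\sup_{\mathcal A_0}E_0$. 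On the other hand, $\omega_\ep\in\mathcal R(\omega_\ep)\subset\overline{\mathcal R(\omega_\ep)^w}$ together with $\tilde E_\ep(\omega_\ep)=E_\ep(\omega_\ep)+\int_D J(\omega_\ep)$ and Lemma \ref{lem2-20}(iii) yields
$$\tilde E_\ep(\zeta^\ep)\ge\tilde E_\ep(\omega_\ep)=E_\ep(\omega_\ep)+\int_D J(\omega_\ep)\ge I_0+\int_D J(\omega_\ep)+O(\ep^{2-2s}).$$
Cancelling $\int_D J(\omega_\ep)$ between the two chains gives $I_0+O(\ep^{2-2s})\le E_0(\zeta^\ep)\le I_0$, so $E_0(\zeta^\ep)\to I_0$. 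I expect this cancellation to be the only genuinely nonroutine point: the comparison function $\omega_\ep$ no longer lies in $\mathcal A_\ep$ once we pass to the rearrangement class, but the inequality $\mathcal G_s^+\le\mathcal G_s$, the sign of the impulse, and the rearrangement invariance of $\int_D J$ conspire to make the stray entropy term disappear exactly.

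Finally I would extract compactness. Given any sequence $\ep_n\to0$, $\{\zeta^{\ep_n}\}\subset\mathcal A_0$ is a maximizing sequence for \eqref{limitmax**}, so by Theorem~II.2 and Corollary~II.1 of \cite{L84} (used already in Lemmas \ref{lem2-18}, \ref{lem2-21}, \ref{lem2-22}) there are, after passing to a subsequence, translations $y_n\in\mathbb R^2$ with $\zeta^{\ep_n}(\cdot+y_n)\to\bar\omega_0$ in $L^1\cap L^{2-s}(\mathbb R^2)$, where $\bar\omega_0$ maximizes $E_0$ over $\mathcal A_0$; by Proposition \ref{lem2-13} $\bar\omega_0$ is a translate of $\omega_0$, which I absorb into $y_n$ so that $\bar\omega_0=\omega_0$. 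Since $\int_{B(0,R)}\omega_0=\kappa>0$ and $\mathrm{spt}(\zeta^{\ep_n})\subset[0,M_{\ep_n}]\times\mathbb R$, the first coordinate of $y_n$ must lie in $[-R,M_{\ep_n}+R]$; a short argument using the evenness of $\zeta^{\ep_n}$ in $x_2$ together with conservation of mass (comparing $\int_{B(0,R)}\zeta^{\ep_n}(\cdot+y_n)$ with $\int_{B((0,-2y_{n,2}),R)}\zeta^{\ep_n}(\cdot+y_n)$) shows the second coordinate of $y_n$ stays bounded and tends to $0$, so one may replace $y_n$ by $z_{\ep_n}:=(y_{n,1},0)$ while keeping $\zeta^{\ep_n}(\cdot+z_{\ep_n})\to\omega_0$. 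Combining with $\|\tilde\omega_{\ep_n}-\omega_0\|_1+\|\tilde\omega_{\ep_n}-\omega_0\|_{2-s}\to0$ (from Lemma \ref{lem2-25} and the uniform support bound) and the triangle inequality gives $\|\zeta^{\ep_n}-\tilde\omega_{\ep_n}(\cdot-z_{\ep_n})\|_1+\|\zeta^{\ep_n}-\tilde\omega_{\ep_n}(\cdot-z_{\ep_n})\|_{2-s}\to0$; a standard contradiction argument then upgrades this to the stated $o(1)$ bound, uniform over all $\zeta^\ep\in\tilde\Sigma_\ep$, with a suitable $z_\ep\in[-R,M_\ep+R]\times\{0\}$.
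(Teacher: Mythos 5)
Your argument is correct and follows essentially the same route as the paper: you show $\{\zeta^\ep\}$ is a maximizing sequence for $E_0$ over $\mathcal{A}_0$ by comparing $\tilde E_\ep(\zeta^\ep)\ge \tilde E_\ep(\omega_\ep)$ and using the rearrangement invariance of $\int J$ (the paper phrases this as $E_\ep(\zeta^\ep)\ge I_0+O(\ep^{2-2s})$), then invoke Lions' concentration--compactness, the uniqueness of the limiting maximizer, Lemma \ref{lem2-25}, and the Steiner symmetry to place $z_\ep$ on the $x_1$-axis in $[-R,M_\ep+R]\times\{0\}$. Your extra detail on why the cross terms and the impulse can be discarded and on the second coordinate of the translation only makes explicit what the paper leaves implicit.
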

	 \begin{proof}
	 	Notice that $\zeta^\varepsilon \in  \mathcal{R}(\omega_\ep)$ is a rearrangement of $\omega_\ep$. So, we have $\int \zeta^\varepsilon=\int \omega_\ep=\kappa$ and $\int J(\zeta^\varepsilon)=\int J(\omega_\ep)$. Then, we deduce
	 	$$E_\ep(\zeta^\varepsilon)=\tilde E_\ep(\zeta^\varepsilon)-\int J(\zeta^\varepsilon)\geq \tilde E_\ep(\omega_\ep)-\int J(\omega_\ep)=E_\ep(\omega_\ep)\geq I_0+O(\ep^{2-2s}),$$ which implies that $\{\zeta^\ep\}$ is a maximizer sequence of $E_0$ over $\mathcal A_0$. Therefore, Theorem \uppercase\expandafter{\romannumeral2}.2 and Corollary \uppercase\expandafter{\romannumeral2}.1 in \cite{L84} provide a subsequence of $\{\zeta^\varepsilon\}$ and a sequence of points $\{z_\ep\}$ such that
	 	$$\|\zeta^\ep-\omega_0(\cdot-z_\ep)\|_1+\|\zeta^\ep-\omega_0(\cdot-z_\ep)\|_{2-s}=o(1),\ \ \text{as}\ \ep\to0.$$
	 	In view of Lemma \ref{lem2-25}, we obtain
	 	$$\|\zeta^\ep-\tilde\omega_\ep(\cdot-z_\ep)\|_1+\|\zeta^\ep-\tilde\omega_\ep(\cdot-z_\ep)\|_{2-s}=o(1),\ \ \text{as}\ \ep\to0.$$
	 	Since $\zeta^\ep$ and $\tilde\omega_\ep$ are even and non-increasing in $x_2$ and $\zeta^\ep$ is supported in $[0, M_\ep]\times [-N_\ep,N_\ep]$, we may take $z_\ep\in [-R, M_\ep+R]\times \{0\}$. This completes the proof of this lemma.
	 \end{proof}
	
	 \begin{lemma}\label{lem4-16}
	 	Let $z_\ep=(z_{\ep,1},0)$  be as in Lemma \ref{lem4-15}. Then  $z_{\ep,1}\to +\infty$ as $\ep\to0$.
	 \end{lemma}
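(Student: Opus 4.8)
The plan is a proof by contradiction. If $z_{\ep,1}\not\to+\infty$, then along a subsequence $z_{\ep,1}\le C_0$ for some finite $C_0$, and I will show that then $\zeta^\ep$ keeps a fixed fraction of its mass within a bounded distance of the line $\{x_1=0\}$, which forces the ``image'' part of the energy $\frac{c_s}{2}\iint\frac{\zeta^\ep(x)\zeta^\ep(y)}{|x-\bar y|^{2-2s}}\,dx\,dy$ to stay bounded below by a positive constant. Against this I will prove that this same quantity tends to $0$, and the contradiction finishes the proof.

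First I would produce a lower bound for $\tilde E_\ep(\zeta^\ep)$. Since $\zeta^\ep$ maximizes $\tilde E_\ep$ over $\overline{\mathcal R(\omega_\ep)^w}$, which contains $\omega_\ep$, and since $\omega_\ep$ is supported in $D$ for $\ep$ small (so the two functionals differ only by $-\int J$), we have $\tilde E_\ep(\zeta^\ep)\ge\tilde E_\ep(\omega_\ep)=E_\ep(\omega_\ep)+\int J(\omega_\ep)$. By Lemma~\ref{lem2-20}(iii), $E_\ep(\omega_\ep)=I_0+O(\ep^{2-2s})$, while $\int J(\omega_\ep)=\int J(\tilde\omega_\ep)\to\int_{\mathbb R^2} J(\omega_0)\,dx$ because $\tilde\omega_\ep\to\omega_0$ in $L^\infty$ with uniformly bounded supports by Lemma~\ref{lem2-25}. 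Hence
\[
\tilde E_\ep(\zeta^\ep)\ge I_0+\int_{\mathbb R^2}J(\omega_0)\,dx+o(1),\qquad \ep\to0 .
\]

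Next I would bound the kinetic energy $\frac12\int\zeta^\ep\mathcal G_s\zeta^\ep$ from above. Extending $\zeta^\ep$ by zero and applying the Riesz rearrangement inequality (see e.g.\ \cite{Lie}) with the symmetric--decreasing rearrangement $(\zeta^\ep)^\sharp$ on $\mathbb R^2$ gives $\int\zeta^\ep\mathcal G_s\zeta^\ep\le\int(\zeta^\ep)^\sharp\mathcal G_s(\zeta^\ep)^\sharp$. Since $(\zeta^\ep)^\sharp\ge0$ and $\int(\zeta^\ep)^\sharp=\kappa$, we get $(\zeta^\ep)^\sharp\in\mathcal A_0$, so $E_0((\zeta^\ep)^\sharp)\le I_0$; that is, $\frac12\int(\zeta^\ep)^\sharp\mathcal G_s(\zeta^\ep)^\sharp\le I_0+\int J((\zeta^\ep)^\sharp)=I_0+\int J(\omega_\ep)=I_0+\int J(\omega_0)+o(1)$, where $\int J((\zeta^\ep)^\sharp)=\int J(\zeta^\ep)=\int J(\omega_\ep)$ follows from $J(0)=0$ and $\zeta^\ep\in\mathcal R(\omega_\ep)$. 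Hence $\frac12\int\zeta^\ep\mathcal G_s\zeta^\ep\le I_0+\int J(\omega_0)+o(1)$. Writing $\mathcal G_s^+\zeta^\ep(x)=\mathcal G_s\zeta^\ep(x)-\int\frac{c_s\,\zeta^\ep(y)}{|x-\bar y|^{2-2s}}\,dy$ for $\zeta^\ep$ supported in $\mathbb R_+^2$, and discarding the nonnegative impulse $W\ep^{3-2s}\int x_1\zeta^\ep\ge0$,
\[
\tilde E_\ep(\zeta^\ep)\le\frac12\int\zeta^\ep\mathcal G_s\zeta^\ep-\frac{c_s}{2}\iint\frac{\zeta^\ep(x)\zeta^\ep(y)}{|x-\bar y|^{2-2s}}\,dx\,dy\le I_0+\int J(\omega_0)+o(1)-\frac{c_s}{2}\iint\frac{\zeta^\ep(x)\zeta^\ep(y)}{|x-\bar y|^{2-2s}}\,dx\,dy .
\]
Comparing with the lower bound yields $\iint\frac{\zeta^\ep(x)\zeta^\ep(y)}{|x-\bar y|^{2-2s}}\,dx\,dy\to0$.

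Finally I would close the argument. Assume $z_{\ep,1}\le C_0<\infty$ along a subsequence. By Lemma~\ref{lem4-15}, $\|\zeta^\ep-\tilde\omega_\ep(\cdot-z_\ep)\|_1=o(1)$ and $\mathrm{spt}(\tilde\omega_\ep)\subset B(0,R)$, so $\int_{B(z_\ep,R)}\zeta^\ep\ge\kappa-o(1)\ge\kappa/2$ for $\ep$ small; since $z_\ep=(z_{\ep,1},0)$, every point of $B(z_\ep,R)$ has first coordinate $\le C_0+R$ and second coordinate in $[-R,R]$. Hence for $x,y\in B(z_\ep,R)$ we have $|x-\bar y|^2=(x_1+y_1)^2+(x_2-y_2)^2\le 4(C_0+R)^2+4R^2=:\Lambda^2$, and therefore
\[
\frac{c_s}{2}\iint\frac{\zeta^\ep(x)\zeta^\ep(y)}{|x-\bar y|^{2-2s}}\,dx\,dy\ge\frac{c_s}{2\Lambda^{2-2s}}\Big(\int_{B(z_\ep,R)}\zeta^\ep\Big)^2\ge\frac{c_s\kappa^2}{8\Lambda^{2-2s}}>0 ,
\]
which contradicts the previous paragraph. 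This proves $z_{\ep,1}\to+\infty$. The main obstacle is the sharp two-sided control of $\tilde E_\ep(\zeta^\ep)$ by $I_0+\int J(\omega_0)$ up to $o(1)$ (one side from maximality of $\zeta^\ep$ together with Lemma~\ref{lem2-20}(iii), the other from the Riesz inequality and the definition of $I_0$); the remaining estimates are elementary.
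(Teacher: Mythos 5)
Your proof is correct, and it reaches the same key intermediate fact as the paper --- that the half-plane (image) correction to the kinetic energy must vanish as $\ep\to0$ --- by a somewhat different route. The paper's proof also starts from the maximality $\tilde E_\ep(\zeta^\ep)\ge\tilde E_\ep(\omega_\ep)$ and the observation that the impulse terms are only $O(\ep^{2-2s})$ (since the supports lie in $[0,M_\ep]\times\mathbb{R}$ with $M_\ep=O(\ep^{-1})$), but it then expands $\int\zeta^\ep\mathcal{G}_s^+\zeta^\ep$ directly, using the strong $L^1\cap L^{2-s}$ convergence $\zeta^\ep\approx\tilde\omega_\ep(\cdot-z_\ep)$ of Lemma \ref{lem4-15} together with estimates of the type \eqref{2-6-1}, to obtain $\int\zeta^\ep\mathcal{G}_s^+\zeta^\ep=\int\omega_\ep\mathcal{G}_s\omega_\ep-c_s\iint\frac{\tilde\omega_\ep(x)\tilde\omega_\ep(y)}{|x-\bar y+2z_\ep|^{2-2s}}\,dx\,dy+o(1)$; comparing with the lower bound forces this explicit image interaction at separation $2z_\ep$ to be $o(1)$, which is incompatible with bounded $z_{\ep,1}$ because $\tilde\omega_\ep\to\omega_0\not\equiv0$. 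You instead bound the whole-plane kinetic energy of $\zeta^\ep$ from above by $I_0+\int J(\omega_\ep)$ via the Riesz rearrangement inequality and the limiting problem (noting $(\zeta^\ep)^\sharp\in\mathcal{A}_0$ and $E_0\le I_0$), which squeezes the image term of $\zeta^\ep$ itself to zero, and you use Lemma \ref{lem4-15} only for the mass localization $\int_{B(z_\ep,R)}\zeta^\ep\ge\kappa/2$ in the final contradiction. Your route is slightly more self-contained in that it avoids the continuity-of-the-quadratic-form expansion (and, as you essentially observe, the $\int J(\omega_\ep)$ terms cancel exactly, so even the convergence $\int J(\omega_\ep)\to\int J(\omega_0)$ is dispensable), while the paper's expansion has the advantage of exhibiting the image interaction explicitly in terms of $\tilde\omega_\ep$ and $z_\ep$, making the final step immediate; both arguments are sound.
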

	 \begin{proof}
	 	By the inequality $\tilde E_\ep(\zeta^\varepsilon)\geq \tilde E_\ep(\omega_\ep)$ and $M_\ep=O(\ep^{-1})$ due to the proof of Lemma \ref{lem4-12}, we get
	 	\begin{equation}\label{4-23}
	 		\int \zeta^\ep\mathcal G_s^+ \zeta^\ep\geq \int \omega_\ep\mathcal G_s  \omega_\ep+o(1).
	 	\end{equation}
	 	On the other hand, by Lemma \ref{lem4-15}, we have
	 	\begin{equation*}
	 		\begin{split}
	 			\int \zeta^\ep\mathcal G_s^+ \zeta^\ep&=\int\int \tilde\omega_\ep(x-z_\ep)\left(\frac{c_s}{|x-y|^{2-2s}}-\frac{c_s}{|x-\bar y|^{2-2s}}\right)\tilde\omega_\ep(y-z_\ep) dxdy+o(1)\\
	 			&=\int \omega_\ep\mathcal G_s  \omega_\ep- \int\int  \frac{c_s\tilde\omega_\ep(x ) \tilde\omega_\ep(y )}{|x-\bar y+2z_\ep|^{2-2s}}  dxdy+o(1),
	 		\end{split}
	 	\end{equation*}
	 	which, combined with \eqref{4-23}, implies
	 	$$\int\int  \frac{c_s\tilde\omega_\ep(x ) \tilde\omega_\ep(y )}{|x-\bar y+2z_\ep|^{2-2s}}  dxdy=o(1).$$
	 	Thus, we must have $z_\ep=(z_{\ep,1},0)$ with $z_{\ep,1}\to +\infty$ as $\ep\to0$.
	 \end{proof}
	
	 Denote $\psi_\ep(x)=\mathcal G_s \zeta^\ep$ and $\psi_0=\mathcal G_s \omega_0$. Since $\|\zeta^\ep-\omega_0(\cdot-z_\ep)\|_1+\|\zeta^\ep-\omega_0(\cdot-z_\ep)\|_{2-s}=o(1)$ and $\|\zeta^\ep\|_\infty\leq \|\omega_\ep\|_\infty\leq C$, using Lemma \ref{lem2-1} and a  similar bootstrap argument as the proof of Corollary \ref{lem2-6}, we derive
	 $$\|\psi_\ep- \psi_0(\cdot-z_\ep)\|_\infty=o(1).$$
	
	 To continue our discussion, we next find a lower bound for the Lagrange multipliers.
	
	 \begin{lemma}\label{lem4-17} One has for some constant $\tilde\mu>0$,
	 	$$\liminf_{\ep\to0} \tilde \mu_\ep\geq \tilde\mu>0.$$ 
	 \end{lemma}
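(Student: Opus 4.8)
The plan is to adapt the argument of Lemma \ref{lem2-10} (the lower bound for the Lagrange multiplier of the limiting problem) to the translated profile $\zeta^\ep$. The inputs I would use are all available: the characterization of $\tilde\mu_\ep$ from Lemma \ref{lem4-14} as $\tilde\mu_\ep=\sup\{\mathcal{G}_s^+\zeta^\ep(x)-W\ep^{3-2s}x_1: \zeta^\ep(x)=0\}$, the $L^1$‑closeness $\|\zeta^\ep-\tilde\omega_\ep(\cdot-z_\ep)\|_1=o(1)$ with $z_\ep=(z_{\ep,1},0)$ from Lemma \ref{lem4-15}, the escape $z_{\ep,1}\to+\infty$ from Lemma \ref{lem4-16}, the support bound $\mathrm{spt}(\tilde\omega_\ep)\subset B(0,R)$ for a uniform $R$, the rearrangement property $\zeta^\ep\in\mathcal R(\omega_\ep)$, and $M_\ep=O(\ep^{-1})$ from the proof of Lemma \ref{lem4-12}. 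The idea is that $\zeta^\ep$ carries almost all its mass in $B(z_\ep,R)$, so $\mathcal{G}_s^+\zeta^\ep$ is bounded below by a fixed positive number on a slightly larger ball, while the support of $\zeta^\ep$ is too thin to fill that ball; hence there is a vanishing point where the Euler--Lagrange relation \eqref{4-22} forces $\tilde\mu_\ep$ to be at least that positive number, up to $o(1)$.

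Concretely, I would fix $r:=2R$. Since $\zeta^\ep$ is a rearrangement of $\omega_\ep$, $|\{\zeta^\ep>0\}|=|\{\omega_\ep>0\}|\le \pi R^2<\pi r^2=|B(z_\ep,r)|$, so $\{x\in B(z_\ep,r):\zeta^\ep(x)=0\}$ has positive measure; pick a point $x^\ast$ in it (its first coordinate satisfies $x_1^\ast\ge z_{\ep,1}-r>0$ for $\ep$ small because $z_{\ep,1}\to+\infty$). By \eqref{4-22} and the definition of $\tilde\mu_\ep$, $\tilde\mu_\ep\ge \mathcal{G}_s^+\zeta^\ep(x^\ast)-W\ep^{3-2s}x_1^\ast$. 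It then remains to bound this right‑hand side from below.

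For the potential term I would split $G_s^+(x^\ast,y)=\frac{c_s}{|x^\ast-y|^{2-2s}}-\frac{c_s}{|x^\ast-\bar y|^{2-2s}}$. From $\|\zeta^\ep-\tilde\omega_\ep(\cdot-z_\ep)\|_1=o(1)$ and $\int\tilde\omega_\ep=\kappa$ with $\mathrm{spt}(\tilde\omega_\ep(\cdot-z_\ep))\subset B(z_\ep,R)$ one gets $\int_{B(z_\ep,R)}\zeta^\ep\ge\kappa/2$ for $\ep$ small; since $|x^\ast-y|\le r+R=3R$ for $y\in B(z_\ep,R)$, the first kernel contributes at least $\frac{c_s\kappa}{2(3R)^{2-2s}}$. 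For the second kernel, $|x^\ast-\bar y|\ge x_1^\ast\ge z_{\ep,1}-r$ for every $y\in\mathbb R_+^2$ (because $y_1\ge0$), so its contribution is $\le \frac{c_s\kappa}{(z_{\ep,1}-r)^{2-2s}}=o(1)$ by Lemma \ref{lem4-16}. Finally $x_1^\ast\le z_{\ep,1}+r\le M_\ep+R+r$ with $M_\ep=O(\ep^{-1})$, so $W\ep^{3-2s}x_1^\ast=O(\ep^{2-2s})=o(1)$. Combining, $\tilde\mu_\ep\ge \frac{c_s\kappa}{2(3R)^{2-2s}}+o(1)$, so $\liminf_{\ep\to0}\tilde\mu_\ep\ge \tilde\mu$ with $\tilde\mu:=\frac{c_s\kappa}{4(3R)^{2-2s}}>0$.

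The one delicate point is bookkeeping rather than analysis: $\zeta^\ep$ is only defined almost everywhere, so both "$\zeta^\ep(x^\ast)=0$'' and the supremum defining $\tilde\mu_\ep$ should be understood through the continuous representative $g_\ep(\mathcal{G}_s^+\zeta^\ep-W\ep^{3-2s}x_1-\tilde\mu_\ep)$ provided by Lemma \ref{lem4-14}; with that convention the positive‑measure set $B(z_\ep,r)\cap\{\zeta^\ep=0\}$ genuinely contains admissible points, and the lower bound for $\mathcal{G}_s^+\zeta^\ep$ derived above holds at every point of $B(z_\ep,r)$, in particular at $x^\ast$. Everything else reduces to the elementary kernel estimates sketched here.
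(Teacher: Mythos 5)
Your proof is correct and follows essentially the same route as the paper: pick a vacant point of $\zeta^\ep$ in $B(z_\ep,2R)$ (which exists because $\zeta^\ep\in\mathcal{R}(\omega_\ep)$ has support of measure at most $\pi R^2$), invoke the sup-definition of $\tilde\mu_\ep$ from Lemma \ref{lem4-14}, and kill the image term and the $W\ep^{3-2s}x_1$ term using $z_{\ep,1}\to+\infty$ (Lemma \ref{lem4-16}) and $M_\ep=O(\ep^{-1})$. The only, immaterial, difference is in the lower bound for the direct potential at that point: the paper uses the uniform convergence $\|\psi_\ep-\psi_0(\cdot-z_\ep)\|_\infty=o(1)$ to get $\inf_{B(0,2R)}\psi_0+o(1)$, whereas you use the elementary mass-concentration estimate $\int_{B(z_\ep,R)}\zeta^\ep\ge\kappa/2$ coming from Lemma \ref{lem4-15}, which yields an explicit constant.
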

	 \begin{proof}
	 	Let $R$ be the constant such that $\text{spt}(\tilde \omega_\ep)\subset B(0,R)$ for every $\ep$ small. Since $\zeta^\ep$ is a rearrangement of $\omega_\ep$, there is a point $\tilde x\in B(z_\ep,2R)\setminus\text{spt}(\zeta^\ep)$. At the point $\tilde x$, by Lemmas \ref{lem4-14} and \ref{lem4-15}, we have
	 	\begin{equation*}
	 		\begin{split}
	 			\tilde \mu_\ep&\geq \mathcal{G}_s \zeta^\ep(\tilde x)-\int \frac{c_s \zeta^\ep(y)}{|\tilde x-\bar y|^{2-2s}} dy - W\ep^{3-2s}\tilde x_1\\
	 			&\geq \psi_0(\tilde x-z_\ep)-\int \frac{c_s \omega_0(y)}{|\tilde x-\bar y+z_\ep|^{2-2s}} dy+o(1)\\
	 			&\geq \inf_{x\in B(0, 2R)}\psi_0(x)+o(1),
	 		\end{split}
	 	\end{equation*}
	 	which proves this lemma by taking $\tilde \mu =\frac{1}{2}\inf_{x\in B(0, 2R)}\psi_0(x).$
	 \end{proof}

	 \begin{corollary}\label{lem4-18}
	 	There is a constant $\tilde R>0$ such that $\text{spt}(\zeta^\ep)$ is contained in a disk of radius $\tilde R$ for arbitrary $\ep$ small.
	 \end{corollary}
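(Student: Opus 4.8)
The plan is to mimic the argument of Lemma \ref{lem2-22}, the one simplification being that the Euler--Lagrange equation \eqref{4-22} for $\zeta^\ep$ holds on all of $D$ (rather than only inside a truncation ball as in \eqref{2-26}), so no cut-off needs to be tracked. First I would fix $\ep$ small enough that Lemma \ref{lem4-17} applies, giving $\tilde\mu_\ep\geq \tilde\mu/2=:\tilde\mu_*>0$. Because $g_\ep$ is nonnegative, nondecreasing and vanishes on $(-\infty,0]$, relation \eqref{4-22} forces $\zeta^\ep(x)=0$ at every $x\in D$ with $\mathcal{G}_s^+\zeta^\ep(x)-W\ep^{3-2s}x_1-\tilde\mu_\ep\leq 0$. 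Since $G_s^+(x,y)\leq c_s|x-y|^{2s-2}$ on $\mathbb{R}_+^2\times\mathbb{R}_+^2$ we have $\mathcal{G}_s^+\zeta^\ep\leq \mathcal{G}_s\zeta^\ep$, and $x_1\geq 0$ on $D$; hence it suffices to exhibit a fixed radius $\tilde R$ (independent of $\ep$) such that $\mathcal{G}_s\zeta^\ep(x)<\tilde\mu_*$ whenever $|x-z_\ep|>\tilde R$, where $z_\ep$ is the translation point from Lemma \ref{lem4-15}.

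To do this I would use that $\mathrm{spt}\big(\tilde\omega_\ep(\cdot-z_\ep)\big)\subset B(z_\ep,R)$ with $R$ independent of $\ep$, and $\|\zeta^\ep-\tilde\omega_\ep(\cdot-z_\ep)\|_1=o(1)$ by Lemma \ref{lem4-15}, so $\|\zeta^\ep 1_{B(z_\ep,R)^c}\|_1=o(1)$ as $\ep\to0$, while $\|\zeta^\ep\|_\infty=\|\omega_\ep\|_\infty\leq C$ because $\zeta^\ep\in\mathcal{R}(\omega_\ep)$. For $|x-z_\ep|>\tilde R\geq R$ I would split
\[
\mathcal{G}_s\zeta^\ep(x)=\int_{B(z_\ep,R)}\frac{c_s\,\zeta^\ep(y)}{|x-y|^{2-2s}}\,dy+\int_{B(z_\ep,R)^c}\frac{c_s\,\zeta^\ep(y)}{|x-y|^{2-2s}}\,dy=:A_1+A_2 .
\]
Here $A_1\leq c_s\kappa\,(\tilde R-R)^{2s-2}$ since $\int\zeta^\ep=\kappa$, which is $<\tilde\mu_*/2$ once $\tilde R$ is chosen large and then fixed. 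For $A_2$ I would interpolate $\|\zeta^\ep 1_{B(z_\ep,R)^c}\|_p\leq \|\zeta^\ep\|_\infty^{1-1/p}\|\zeta^\ep 1_{B(z_\ep,R)^c}\|_1^{1/p}$ with some $p>s^{-1}$ and invoke Lemma \ref{lem2-1} in the case $p>s^{-1}$ ($q=\infty$) to get
\[
A_2\leq \|\mathcal{G}_s(\zeta^\ep 1_{B(z_\ep,R)^c})\|_\infty\leq C\big(\|\zeta^\ep 1_{B(z_\ep,R)^c}\|_1^{a}+\|\zeta^\ep 1_{B(z_\ep,R)^c}\|_1^{b}\big)=o(1)
\]
for some $a,b>0$, hence $A_2<\tilde\mu_*/2$ for all $\ep$ small.

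Combining the two bounds gives $\mathcal{G}_s\zeta^\ep(x)<\tilde\mu_*\leq\tilde\mu_\ep$ for $|x-z_\ep|>\tilde R$ and all sufficiently small $\ep$, so by the first paragraph $\zeta^\ep(x)=0$ there; thus $\mathrm{spt}(\zeta^\ep)\subset B(z_\ep,\tilde R)$, a disk of radius $\tilde R$ independent of $\ep$, which is the assertion. The one place needing care is the uniformity in $\ep$ of the tail bound $A_2=o(1)$: this relies simultaneously on the $L^1$-smallness of $\zeta^\ep$ off $B(z_\ep,R)$ supplied by Lemma \ref{lem4-15} and on the $\ep$-uniform $L^\infty$-bound coming from $\zeta^\ep\in\mathcal{R}(\omega_\ep)$; the remainder is a routine repetition of the estimates in Lemma \ref{lem2-22}.
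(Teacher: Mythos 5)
Your proposal is correct and follows exactly the route the paper intends: it combines the representation formula of Lemma \ref{lem4-14} with the lower bound on $\tilde\mu_\ep$ from Lemma \ref{lem4-17} and then repeats the splitting argument of Lemma \ref{lem2-22}, with the tail mass controlled by the $L^1$-closeness to $\tilde\omega_\ep(\cdot-z_\ep)$ from Lemma \ref{lem4-15} and the uniform $L^\infty$ bound coming from $\zeta^\ep\in\mathcal{R}(\omega_\ep)$. This is precisely the omitted argument the paper alludes to, so nothing further is needed.
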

	 \begin{proof}
	 	Using Lemmas  \ref{lem4-14} and   \ref{lem4-17}, one can prove this lemma through an argument similar to Lemma \ref{lem2-22}, so we omit the details.
	 \end{proof}
	
	 Let $\tilde x_\ep:=\kappa^{-1}\int x\zeta^\ep$ be the center of mass of $\zeta^\ep$. We have the following estimate of the location.
	 \begin{corollary}\label{lem4-19} It holds
	 	$$|\ep \tilde x_\ep-( d_0,0)|=o(1).$$
	 \end{corollary}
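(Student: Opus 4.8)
The plan is to run the same comparison argument that proved Lemma~\ref{lem2-23}, now exploiting that $\zeta^\ep$ maximizes $\tilde E_\ep$ over $\overline{\mathcal R(\omega_\ep)^w}$. Write $\tilde x_\ep=(\tilde x_{\ep,1},0)$, which is legitimate because $\zeta^\ep$ is Steiner symmetric in $x_2$ by Lemma~\ref{lem4-14}, and set $d_\ep:=\ep\,\tilde x_{\ep,1}$; it then suffices to show $d_\ep\to d_0$, since $\ep\tilde x_\ep=(d_\ep,0)$. First I would record two preliminary facts. By Corollary~\ref{lem4-18} the support of $\zeta^\ep$ lies in $B(\tilde x_\ep,\tilde R)$, and since $M_\ep=O(\ep^{-1})$ (as in the proof of Lemma~\ref{lem4-12}) one gets $\tilde x_{\ep,1}\le M_\ep+\tilde R=O(\ep^{-1})$, hence $d_\ep$ is bounded. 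On the other hand, comparing $\mathrm{spt}(\zeta^\ep)\subset B(\tilde x_\ep,\tilde R)$ with $\mathrm{spt}(\tilde\omega_\ep(\cdot-z_\ep))\subset B(z_\ep,R)$ and using that both functions have mass $\kappa$ and are $o(1)$-close in $L^1$ (Lemma~\ref{lem4-15}), the two disks cannot be disjoint, so $|\tilde x_\ep-z_\ep|=O(1)$; together with $z_{\ep,1}\to+\infty$ (Lemma~\ref{lem4-16}) this gives $\tilde x_{\ep,1}\to+\infty$.

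Next comes the comparison. For $\ep$ small the translate $\hat\zeta^\ep(x):=\zeta^\ep\big(x+\tilde x_\ep-(\ep^{-1}d_0,0)\big)$ is supported in $B((\ep^{-1}d_0,0),\tilde R)\subset D$, so $\hat\zeta^\ep\in\mathcal R(\omega_\ep)\subset\overline{\mathcal R(\omega_\ep)^w}$ and maximality yields $\tilde E_\ep(\zeta^\ep)\ge\tilde E_\ep(\hat\zeta^\ep)$. Splitting $\mathcal G_s^+\zeta=\mathcal G_s\zeta-c_s\int_{\mathbb R_+^2}|\cdot-\bar y|^{2s-2}\zeta(y)\,dy$ and noting that $\hat\zeta^\ep$ is an exact translate of $\zeta^\ep$, the self-energy $\tfrac12\int\zeta\mathcal G_s\zeta$ is identical for the two functions; hence, using $\int x_1\zeta^\ep=\kappa\ep^{-1}d_\ep$ and $\int x_1\hat\zeta^\ep=\kappa\ep^{-1}d_0$,
\begin{equation*}
	0\le \frac{c_s}{2}\left(\int\int\frac{\hat\zeta^\ep(x)\hat\zeta^\ep(y)}{|x-\bar y|^{2-2s}}\,dx\,dy-\int\int\frac{\zeta^\ep(x)\zeta^\ep(y)}{|x-\bar y|^{2-2s}}\,dx\,dy\right)-W\kappa\,\ep^{2-2s}(d_\ep-d_0).
\end{equation*}

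Finally I would pass to a subsequence along which $d_\ep\to d_*\in[0,\infty)$. Since $\mathrm{diam}\,\mathrm{spt}(\zeta^\ep)\le 2\tilde R$, $\mathrm{diam}\,\mathrm{spt}(\hat\zeta^\ep)\le 2\tilde R$ and $\int\zeta^\ep=\int\hat\zeta^\ep=\kappa$, a first-order Taylor expansion of $|x-\bar y|^{2s-2}$ about $(2\tilde x_{\ep,1},0)$, resp. $(2\ep^{-1}d_0,0)$, gives, when $d_*>0$,
\begin{equation*}
	\frac{c_s}{2}\int\int\frac{\zeta^\ep(x)\zeta^\ep(y)}{|x-\bar y|^{2-2s}}=\frac{c_s\kappa^2}{2^{3-2s}d_\ep^{2-2s}}\,\ep^{2-2s}(1+o(1)),\qquad
	\frac{c_s}{2}\int\int\frac{\hat\zeta^\ep(x)\hat\zeta^\ep(y)}{|x-\bar y|^{2-2s}}=\frac{c_s\kappa^2}{2^{3-2s}d_0^{2-2s}}\,\ep^{2-2s}(1+o(1)).
\end{equation*}
Dividing the displayed inequality by $\ep^{2-2s}$ and letting $\ep\to0$ yields $h(d_*)\le h(d_0)$ for $h(\tau):=\frac{c_s\kappa}{2^{3-2s}\tau^{2-2s}}+W\tau$, which is strictly convex on $(0,\infty)$ with unique minimizer $d_0$ (the same $h$ that appears in Lemma~\ref{lem2-23}); hence $d_*=d_0$. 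If instead $d_*=0$, then since $\tilde x_{\ep,1}=\ep^{-1}d_\ep\to+\infty$ the left integral still behaves like $\frac{c_s\kappa^2}{2^{3-2s}d_\ep^{2-2s}}\ep^{2-2s}(1+o(1))$, so after dividing by $\ep^{2-2s}$ its negative contribution tends to $-\infty$ while every other term stays bounded, contradicting the inequality; thus $d_*=0$ is excluded. Since every subsequence of $\{d_\ep\}$ has a further subsequence converging to $d_0$, the whole sequence converges and $\ep\tilde x_\ep=(d_\ep,0)\to(d_0,0)$. The only points needing care are checking admissibility of the comparison function $\hat\zeta^\ep$ (immediate for $\ep$ small because $\ep^{-1}d_0\to\infty$), controlling the Taylor remainders uniformly so that they are genuinely $o(\ep^{2-2s})$ relative to the leading $O(\ep^{2-2s})$ terms, and ruling out $d_*=0$ — none of which introduces an essentially new difficulty beyond what was already handled in Lemma~\ref{lem2-23}.
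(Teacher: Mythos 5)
Your proof is correct and follows essentially the same route as the paper, which simply invokes the comparison argument of Lemma \ref{lem2-23} and omits the details: translate the maximizer to be centered at $(\ep^{-1}d_0,0)$, use maximality, cancel the translation-invariant self-energy, and expand the image-interaction term to reduce to the strict minimality of $h(\tau)=\frac{c_s\kappa}{2^{3-2s}\tau^{2-2s}}+W\tau$ at $d_0$. The additional steps you supply --- using Lemmas \ref{lem4-14}--\ref{lem4-16} to get $d_\ep=\ep\tilde x_{\ep,1}=O(1)$ and $\tilde x_{\ep,1}\to+\infty$, and excluding the degenerate limit $d_*=0$ --- are exactly the adjustments needed because $\overline{\mathcal{R}(\omega_\ep)^w}$ carries no support constraint, and they are sound.
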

	 \begin{proof}
	 	The proof is similar to Lemma \ref{lem2-23}, so we omit the details.
	 \end{proof}

     Now, we are able to prove Theorem \ref{thm4-10}.

     \noindent{\bf Proof of Theorem \ref{thm4-10}:}
     The claims in Theorem \ref{thm4-10} follow from the above lemmas. \qed

     Using Theorem \ref{thm4-10} and the uniqueness result Theorem \ref{thmU}, we obtain the following conclusion about the set of maximizers.
	 \begin{proposition}\label{lem4-20}
	 	For $\ep>0$ sufficiently small, one has $$\tilde \Sigma_\ep=\{\omega_\ep(\cdot+c\mathbf{e}_2)\mid c\in\mathbb{R}\}.$$
	 \end{proposition}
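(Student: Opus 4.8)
The plan is to derive the proposition from the structural and asymptotic description of $\tilde\Sigma_\ep$ in Theorem \ref{thm4-10} together with the uniqueness Theorem \ref{thmU}. Fix $\ep>0$ small. By Theorem \ref{thm4-10} we already know $\tilde\Sigma_\ep\ne\emptyset$ and $\tilde\Sigma_\ep\subset\mathcal{R}(\omega_\ep)$; I would start by taking an arbitrary $\zeta^\ep\in\tilde\Sigma_\ep$ and noting that, being an equimeasurable rearrangement of $\omega_\ep$, it satisfies $\zeta^\ep\ge0$, $\int_D\zeta^\ep=\kappa$, $\zeta^\ep\in L^1\cap L^\infty(D)$, and (since $J(t)=Lt^{1+1/p}$ depends only on the distribution function of its argument) $\int_D J(\zeta^\ep)=\int_D J(\omega_\ep)$. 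The first genuine step is to show that, after a suitable translation $\zeta^\ep_c:=\zeta^\ep(\cdot+c\mathbf{e}_2)$ in the $x_2$-direction, one has $\zeta^\ep_c\in\mathcal{A}_\ep$: this uses Theorem \ref{thm4-10}(ii) (together with Corollaries \ref{lem4-18} and \ref{lem4-19}), which ensures that after such a translation $\sup_{x\in\mathrm{spt}(\zeta^\ep_c)}|\ep x-(d_0,0)|=o(1)$, so that $\mathrm{spt}(\zeta^\ep_c)\subset B(\ep^{-1}(d_0,0),\ep^{-1}d_0/2)$ for $\ep$ small; combined with $\zeta^\ep_c\ge0$ and $\int_D\zeta^\ep_c=\kappa$, this gives membership in $\mathcal{A}_\ep$.

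Next I would run an energy comparison. On $\mathcal{A}_\ep$ one has $E_\ep=\tilde E_\ep-\int_D J(\cdot)$, and both $E_\ep$ and $\tilde E_\ep$ are invariant under translations in $x_2$ because the kernel $G_s^+$ and the impulse $\int_D x_1(\cdot)$ are. Since $\zeta^\ep$ maximizes $\tilde E_\ep$ over $\overline{\mathcal{R}(\omega_\ep)^w}\ni\omega_\ep$, it follows that $\tilde E_\ep(\zeta^\ep)\ge\tilde E_\ep(\omega_\ep)$, and hence
$$E_\ep(\zeta^\ep_c)=E_\ep(\zeta^\ep)=\tilde E_\ep(\zeta^\ep)-\int_D J(\zeta^\ep)\ \ge\ \tilde E_\ep(\omega_\ep)-\int_D J(\omega_\ep)=E_\ep(\omega_\ep)=e_\ep.$$
On the other hand $\zeta^\ep_c\in\mathcal{A}_\ep$ gives $E_\ep(\zeta^\ep_c)\le e_\ep$, so $\zeta^\ep_c$ is a maximizer of $E_\ep$ over $\mathcal{A}_\ep$. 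Theorem \ref{thmU} then forces $\zeta^\ep_c=\omega_\ep(\cdot+c'\mathbf{e}_2)$ for some $c'\in\mathbb{R}$, i.e. $\zeta^\ep$ is an $x_2$-translate of $\omega_\ep$; this proves the inclusion $\tilde\Sigma_\ep\subset\{\omega_\ep(\cdot+c\mathbf{e}_2)\mid c\in\mathbb{R}\}$.

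For the reverse inclusion I would use that each $\omega_\ep(\cdot+c\mathbf{e}_2)$ lies in $\mathcal{R}(\omega_\ep)\subset\overline{\mathcal{R}(\omega_\ep)^w}$ (a translation by $c\mathbf{e}_2$ is a measure-preserving bijection of $D$ onto itself) and that $\tilde E_\ep(\omega_\ep(\cdot+c\mathbf{e}_2))=\tilde E_\ep(\omega_\ep)$ is independent of $c$; picking $\zeta^\ep\in\tilde\Sigma_\ep$, which we have just shown equals some translate $\omega_\ep(\cdot+c_0\mathbf{e}_2)$, we get $\sup_{\overline{\mathcal{R}(\omega_\ep)^w}}\tilde E_\ep=\tilde E_\ep(\zeta^\ep)=\tilde E_\ep(\omega_\ep)$, so every translate $\omega_\ep(\cdot+c\mathbf{e}_2)$ attains the supremum and thus belongs to $\tilde\Sigma_\ep$. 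The only delicate point in all of this is the verification in the first paragraph that a suitable $x_2$-translate of $\zeta^\ep$ sits in $\mathcal{A}_\ep$ for all small $\ep$ (one must carefully track both the bounded diameter of $\mathrm{spt}(\zeta^\ep)$ and its location on the $\ep^{-1}$ scale); once this is in place the rest reduces to the rearrangement-invariance of $\int_D J$ and the translation-invariance of $E_\ep$ and $\tilde E_\ep$.
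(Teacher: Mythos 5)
Your proposal is correct and follows essentially the same route as the paper: Theorem \ref{thm4-10} places every maximizer of $\tilde E_\ep$ in $\mathcal{R}(\omega_\ep)$ with support localized so that (after an $x_2$-translation) it lies in $\mathcal{A}_\ep$, the rearrangement-invariance of $\int J$ converts $\tilde E_\ep$-maximality into $E_\ep$-maximality, and Theorem \ref{thmU} then identifies the maximizer as a translate of $\omega_\ep$. You merely spell out the $x_2$-translation step and the reverse inclusion, which the paper compresses into ``one can see easily''.
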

     \begin{proof}
     	Recall that  $\Sigma_{\varepsilon}\subset 	\mathcal{A}_\ep$ denotes the set of maximizers of $E_\ep$ defined by \eqref{energy} over $\mathcal A_\ep$. The uniqueness result Theorem \ref{thmU} states that
     	$\Sigma_{\varepsilon}=\left\{ \omega_\ep(\cdot+c\mathbf{e}_2)\mid c\in\mathbb{R}\right\}.$
     	
     	  Then, by Theorem \ref{thm4-10}, we have $\emptyset\neq \tilde \Sigma_\ep\subset \mathcal{R}(\omega_\ep)$ and for arbitrary $\zeta^\ep\in\tilde \Sigma_\ep$, it holds $\text{spt}(\zeta^\ep)\subset B(\ep^{-1}(d_0,0), \ep^{-1} d_0/2).$
     	This implies $\tilde\Sigma_\ep\subset \mathcal{A}_\ep$.
     	Notice that $ \int  J( \zeta)dx=\int  J( \omega_\ep)dx, \quad \forall \zeta\in \mathcal{R}(\omega_\ep)$ due to the property of rearrangement. Then one can see easily that
     	$$\tilde\Sigma_\ep= \Sigma_{\varepsilon}=\left\{ \omega_\ep(\cdot+c\mathbf{e}_2)\mid c\in\mathbb{R}\right\}.$$
     \end{proof}

Having made all the preparation, we are now ready to give proof of  Theorem \ref{thmS} and end our paper.

    \noindent{\bf Proof of Theorem \ref{thmS}:} The orbital stability of $\omega_\ep$ follows from a combination of Theorem \ref{Sset} and Proposition \ref{lem4-20}.   \qed \\
    \vspace{0.2cm} 
\noindent{\bf Acknowledgments:} This work was supported by NNSF of China (Grant 11831009).

     \phantom{s}
     \thispagestyle{empty}

\end{document}